\theoremstyle{plain}
\newtheorem{thm}{Theorem}[section]
\newtheorem{prop}[thm]{Proposition}
\newtheorem{lem}[thm]{Lemma}
\newtheorem{cor}[thm]{Corollary}
\theoremstyle{definition}
\newtheorem{defin}[thm]{Definition}
\newtheorem{rem}[thm]{Remark}
\newtheorem{ex}[thm]{Example}
\newcommand{\aff}{{\operatorname{aff}}}
\newcommand{\A}{{\mathscr{A}}}
\newcommand{\adm}{{\operatorname{adm}}}
\newcommand{\BT}{{\mathscr{X}}}
\newcommand{\C}{{\mathcal{C}}}
\newcommand{\CC}{{\mathbb{C}}}
\newcommand{\Cscr}{{\mathscr{C}}}
\newcommand{\Cbar}{{\overline{C}}}
\newcommand{\Coeff}{{\operatorname{Coeff}}}
\newcommand{\D}{{\mathcal{D}}}
\newcommand{\Diag}{{\operatorname{Diag}}}
\newcommand{\E}{{\mathcal{E}}}
\newcommand{\End}{{\operatorname{End}}}
\newcommand{\F}{{\mathcal{F}}}
\newcommand{\fg}{{\operatorname{fg}}}
\newcommand{\id}{{\operatorname{id}}}
\newcommand{\im}{{\operatorname{im}}}
\newcommand{\ind}{{\operatorname{ind}}}
\newcommand{\GG}{{\mathbb{G}}}
\newcommand{\G}{{\mathcal{G}}}
\newcommand{\GL}{{\operatorname{GL}}}
\renewcommand{\H}{{\operatorname{H}}}
\newcommand{\Hom}{{\operatorname{Hom}}}
\newcommand{\K}{{\mathcal{K}}}
\newcommand{\Mod}{{\operatorname{Mod}}}
\renewcommand{\o}{{\mathfrak{o}}}
\newcommand{\op}{{\operatorname{op}}}
\newcommand{\ori}{{\mathit{or}}}
\newcommand{\Pbar}{{\overline{P}}}
\newcommand{\Qp}{{\mathbb{Q}_p}}
\newcommand{\res}{{\mathit{res}}}
\newcommand{\Rep}{{\operatorname{Rep}}}
\newcommand{\St}{{\operatorname{St}}}
\renewcommand{\t}{{\mathfrak{t}}}
\newcommand{\TT}{{\mathbb{T}}}
\newcommand{\Ubar}{{\overline{U}}}
\newcommand{\Y}{{\mathscr{Y}}}
\begin{document}
\setcounter{section}{0}


{\Large\bf Coefficient systems on the Bruhat-Tits building\\ and pro-$p$ Iwahori-Hecke modules}\\[6ex]
{\sc Jan Kohlhaase}\\[1ex]
\footnotetext{{\it 2000 Mathematics Subject Classification}. Primary 20C08, 22E50.}


{\bf Abstract.} Let $G$ be the group of rational points of a split connected reductive group over a nonarchimedean local field of residue characteristic $p$. Let $I$ be a pro-$p$ Iwahori subgroup of $G$ and let $R$ be a commutative quasi-Frobenius ring. If $H=R[I\backslash G/I]$ denotes the pro-$p$ Iwahori-Hecke algebra of $G$ over $R$ we clarify the relation between the category of $H$-modules and the category of $G$-equivariant coefficient systems on the semisimple Bruhat-Tits building of $G$. If $R$ is a field of characteristic zero this yields alternative proofs of the exactness of the Schneider-Stuhler resolution and of the Zelevinski conjecture for smooth $G$-representations generated by their $I$-invariants. In general, it gives a description of the derived category of $H$-modules in terms of smooth $G$-representations and yields a functor to generalized $(\varphi,\Gamma)$-modules extending the constructions of Colmez, Schneider and Vign\'eras.


\tableofcontents


\section*{Introduction}\label{section_0}
\addcontentsline{toc}{section}{Introduction}

Let $K$ be a nonarchimedean local field and let $\GG$ be a split connected reductive group over $K$. If $R$ is a commutative unital ring then we denote by $\Rep_R^\infty(G)$ the category of $R$-linear smooth representations of the locally profinite group $G=\GG(K)$. It lies at the heart of the local Langlands program in its various forms.\\

There are two particularly important techniques to study this category. One of them is by means of the category $\Coeff_G(\BT)$ of $G$-equivariant coefficient systems of $R$-modules on the semisimple Bruhat-Tits building $\BT$ of $G$. It is linked to the category $\Rep_R^\infty(G)$ through functors
\[
 \xymatrix{\Rep_R^\infty(G)\ar@<4pt>[rr]^{\F_{(\cdot)}}&&\Coeff_G(\BT).\ar@<4pt>[ll]^{\H_0(\BT,\;\cdot\,)}
 }
\]
Here $\F_V\in\Coeff_G(\BT)$ denotes the fixed point system of a representation $V\in\Rep_R^\infty(G)$ (cf.\ Example \ref{fixed_point_system}) and $\H_0(\BT,\F)\in\Rep_R^\infty(G)$ denotes the $0$-th homology group of the oriented chain complex $\C_c^\ori(\BT_{(\bullet)},\F)$ of a coefficient system $\F\in\Coeff_G(\BT)$ (cf.\ \S\ref{subsection_2_1}). If $R$ is the field of complex numbers the precise relation between the two categories was the subject of the seminal article \cite{SS} of Schneider and Stuhler. As an outcome one obtains functorial finite projective resolutions of complex smooth $G$-representations, a proof of the Zelevinski conjecture and a description of large parts of  $\Rep_R^\infty(G)$ as a localization of $\Coeff_G(\BT)$.\\

If $I$ is a compact open subgroup of $G$ then we denote by $H=R[I\backslash G/I]$ the corresponding Hecke algebra over $R$ and by $\Mod_H$ the category of left $H$-modules. The $R$-algebra $H$ can also be realized as the opposite endomorphism ring of the compactly induced smooth $G$-representation $X=\ind_I^G(R)$ whence $X$ is naturally a right $H$-module. By Frobenius reciprocity there is a pair of adjoint functors
\[
 \xymatrix{\Rep_R^\infty(G)\ar@<4pt>[rr]^{(\cdot)^I}&&\Mod_H.\ar@<4pt>[ll]^{X\otimes_H(\cdot)}
 }
\]
If $R$ is the field of complex numbers and if $I$ is an Iwahori subgroup then the precise relation between these categories was clarified by Bernstein and A.\ Borel. They showed that if $\Rep_R^I(G)\subseteq\Rep_R^\infty(G)$ denotes the full subcategory of representations generated by their $I$-invariants then the above functors give mutually quasi-inverse equivalences $\Rep_R^I(G)\cong\Mod_H$ of abelian categories  (cf.\ \cite{Ber}, Corollaire 3.9). This was used crucially by Kazhdan and Lusztig to establish the local Langlands correspondence for this kind of representations.\\

Let $p$ denote the characteristic of the residue class field of $K$. The emerging $p$-adic and mod-$p$ variants of the local Langlands program make it necessary to consider the case where $p$ is nilpotent in $R$ and $I$ is a pro-$p$ Iwahori subgroup of $G$ (cf.\ \S\ref{subsection_1_1}). If $R=\overline{\mathbb{F}}_p$ and if $G=\GL_2(\Qp)$ or $G=\mathrm{SL}_2(\Qp)$ then Ollivier and Koziol showed that $\Rep_R^I(G)\cong\Mod_H$ via the above functors (cf.\ \cite{Koz}, Corollary 5.3 and \cite{Oll1}, Th\'eor\`eme 1.2 (a)). However, this is not true in general and the precise relation between the categories $\Rep_R^I(G)$ and $\Mod_H$ is unknown. Likewise, if $R=\overline{\mathbb{F}}_p$ and if $G=\GL_2(K)$ then Paskunas, Breuil and Hu used coefficient systems to construct interesting examples of $G$-representations in \cite{BP}, \cite{Hu} and \cite{Pas}. However, the relation between the categories $\Rep_R^\infty(G)$ and $\Coeff_G(\BT)$ in the modular setting has never been studied systematically.\\

We continue to assume that $I$ is a pro-$p$ Iwahori subgroup of $G$. The aim of the present article is to clarify the relation between the categories $\Mod_H$ and $\Coeff_G(\BT)$ and to give applications to the theory of smooth $R$-linear $G$-representations. For the general setup $R$ is allowed to be any commutative unital ring. For most of the deeper results, however, we will assume in addition that $R$ is a quasi-Frobenius ring, i.e.\ that $R$ is noetherian and selfinjective. The most important case for arithmetic applications is $R=S/tS$ where $S$ is a principal ideal domain and $t\in S$ is non-zero (cf.\ \cite{Lam}, Example 3.12). Of course, $R$ could still be any field.\\

At the beginning of the article we gather the necessary input from the theory of Bruhat-Tits buildings, pro-$p$ Iwahori-Hecke algebras and coefficient systems. We also generalize Paskunas' notion of a diagram from $\GL_2(K)$ to any $G$ (cf.\ Remark \ref{comparison_Paskunas} and Proposition \ref{diagrams}).\\

Depending on $R$ the augmented chain complex $0\to\C_c^\ori(\BT_{(\bullet)},\F_V)\to V\to 0$ of the fixed point system $\F_V$ of a representation $V\in\Rep_R^\infty(G)$ may or may not be exact (cf.\ \cite{OS1}, Remark 3.2). However, Ollivier and Schneider observed that the complex $\C_c^\ori(\BT_{(\bullet)},\F_V)^I$ of $I$-invariants is always acyclic (cf.\ \cite{OS1}, Theorem 3.4, which was inspired by the work \cite{Bro} of Broussous). It therefore seems natural to consider the functor
\[
 M:\Coeff_G(\BT)\longrightarrow\Mod_H,\quad M(\F)=\H_0(\C_c^\ori(\BT_{(\bullet)},\F)^I).
\]
In fact, the above acyclicity result holds for a larger class of coefficient systems that we study in \S\ref{subsection_2_2}. Note that the order of the functors $\H_0$ and $(\cdot)^I$ is a subtle point here. If $p$ is nilpotent in $R$ then it can generally not be reversed.\\

In order to construct a functor in the other direction let $F$ be an arbitrary face of $\BT$. We denote by $P_F^\dagger=\{g\in G\;|\;gF=F\}$ the stabilizer of $F$ in $G$, by $P_F$ the parahoric subgroup of $G$ corresponding to $F$ and by $I_F$ the pro-$p$ radi\-cal of $P_F$ (cf.\ \S\ref{subsection_1_1}). Given an $H$-module $M$ we consider the smooth $R$-linear $P_F^\dagger$-representation $\t_F(M)=\im(X^{I_F}\otimes_HM\to\Hom_H(\Hom_H(X^{I_F},H),M))$. This is a local version of a construction appearing in \cite{OS2}. Letting $F$ vary we obtain the functor
\[
\F(\cdot):\Mod_H\longrightarrow\Coeff_G(\BT),\quad\F(M)=(\t_F(M))_F.
\]
In \S\ref{subsection_3_2} we single out a full subcategory $\C$ of $\Coeff_G(\BT)$ such that the functors $M(\cdot)$ and $\F(\cdot)$ are mutually quasi-inverse equivalences $\Mod_H\cong\C$ of additive categories (cf.\ Theorem \ref{equivalence} and Theorem \ref{quasi_inverse_coefficient_system}). In particular, the functor $\F(\cdot):\Mod_H\to\Coeff_G(\BT)$ is fully faithful. For any $H$-module $M$ we obtain a functorial resolution
\begin{equation}\label{generalized_GP_resolution}
 0\longrightarrow\C_c^\ori(\BT_{(\bullet)},\F(M))^I\longrightarrow M\longrightarrow 0
\end{equation}
generalizing the Gorenstein projective resolution of $M$ constructed by Ollivier and Schneider if $R$ is a field (cf.\ Proposition \ref{acyclic} (ii), Theorem \ref{quasi_inverse_coefficient_system} and Remark \ref{GP_resolution}).\\

The definition of the category $\C$ and the proof of the above equivalence relies on the representation theory of finite reductive groups as developed by Cabanes (cf.\ \cite{Cab}). In \S\ref{subsection_3_1} we take up this theory and reprove it in a framework which is sufficiently general for our purposes. First of all, we need to work over an arbitrary quasi-Frobenius ring $R$ and the underlying $R$-modules of our representations are not necessarily finitely generated. Moreover, beyond representations of the finite reductive groups $P_F/I_F$ we are interested in representations of the groups $P_F^\dagger/I_F$. Since the corresponding Hecke algebras $H_F^\dagger$ are generally not selfinjective (cf.\ Remark \ref{not_selfinjective}) the strategy of Cabanes does not apply directly. In Definition \ref{condition_H} we introduce a certain condition (H) on smooth $R$-linear representations of $P_F$ or $P_F^\dagger$. It is a generalization of condition $(**)$ in \cite{Cab} to filtered unions. We obtain full subcategories of $\Rep_R^\infty(P_F)$ and $\Rep_R^\infty(P_F^\dagger)$ which are equivalent to the corresponding categories of Hecke modules (cf.\ Theorem \ref{Cabanes} and Theorem \ref{Cabanes_general} (ii)). We also show that a quasi-inverse is given by a variant of the above functor $\t_F$ (cf.\ Theorem \ref{quasi_inverse_OS}). Even in the case of finite reductive groups this does not seem to have been observed before. \\

Given a representation $V\in\Rep_R^\infty(G)$ the corresponding fixed point system $\F_V$ may or may not belong to the category $\C$ (cf.\ the discussion after Remark \ref{basic_0_diagrams}). However, it turns out that an object $\F\in\Coeff_G(\BT)$ belongs to the category $\C$ if and only if for any vertex $x$ of $\BT$ the restriction of $\F$ to the star of $x$ is a Ronan-Smith sheaf associated to a $P_x$-representation satisfying condition (H) (cf.\ Proposition \ref{characterization_C}). At the end of \S\ref{subsection_3_2} we also relate our constructions to the torsion theory of Ollivier and Schneider introduced in \cite{OS2}. Given a Hecke module $M\in\Mod_H$ there is a functorial $G$-equivariant surjection $\H_0(\BT,\F(M))\to\t(M)$ whose precise behavior presently remains unclear (cf.\ Proposition \ref{comparison_representations} and Remark \ref{t_isomorphism}).\\

The second part of our article is concerned with applications to the category $\Rep_R^\infty(G)$ of smooth $R$-linear $G$-representations. We denote by $\Rep_R^I(G)$ the full subcategory of $\Rep_R^\infty(G)$ consisting of all representations generated by their $I$-invariants.\\

In \S\ref{subsection_4_1} we are mainly concerned with the case that $R$ is a field of characteristic zero. In this case we simply have $\F(M)\cong\F_X\otimes_HM$ for any $M\in\Mod_H$ and $\F(V^I)\cong\F_V$ for any $V\in\Rep_R^I(G)$ (cf.\ Theorem \ref{description_functors_0}). According to Bernstein, the functor $(\cdot)^I:\Rep_R^I(G)\to\Mod_H$ is an equivalence of abelian categories. Since we could not find a reference checking the hypotheses of \cite{Ber}, Corollaire 3.9, we give a quick argument relying on the known case of a first congruence subgroup (cf.\ Theorem \ref{Bernstein}). As a consequence, the $0$-th homology functor $\H_0(\BT,\cdot):\C\to\Rep_R^I(G)$ is an equivalence of categories (cf.\ Corollary \ref{H_0_equivalence} and Corollary \ref{Schneider_Stuhler} (i)). Moreover, we can follow an argument of Broussous to show that the augmented oriented chain complex $0\to\C_c^\ori(\BT_{(\bullet)},\F_V)\to V\to 0$ is exact for any $V\in\Rep_R^I(G)$ (cf.\ Corollary \ref{Schneider_Stuhler} (ii)). This is a particular case of a much more general result of Schneider and Stuhler (cf.\ \cite{SS}, Theorem II.3.1). Finally, we use Bernstein's theorem to interprete the Zelevinski involution in terms of certain Ext-duals on the category $\Mod_H$. If $\GG$ is semisimple we use the homological properties of $H$ as established by Ollivier and Schneider to prove the main properties of the Zelevinski involution (cf.\ Theorem \ref{Zelevinski}). In fact, this interpretation of the Zelevinski involution shows that it has good properties way beyond the case of admissible representations studied classically (cf.\ Remark \ref{d_step_duality}).\\

At the end of \S\ref{subsection_4_1} we assume that $p$ is nilpotent in $R$. If $M\in\Mod_H$ then the exactness of the complex $\C_c^\ori(\BT_{(\bullet)},\F(M))$ remains an open problem. At least, we can treat the case that the semisimple rank of $\GG$ is equal to one. Under these assumptions the augmented complex
\[
 0\longrightarrow\C_c^\ori(\BT_{(1)},\F(M))\longrightarrow\C_c^\ori(\BT_{(0)},\F(M))\longrightarrow\H_0(\BT,\F(M))\longrightarrow 0
\]
is exact and there is an $H$-linear embedding $M\hookrightarrow\H_0(\BT,\F(M))^I$ (cf.\ Proposition \ref{semisimple_rank_one}). If the underlying $R$-module of $M$ is finitely generated the $G$-representation $\H_0(\BT,\F(M))$ may thus be called finitely presented. However, it is generally not irreducible and admissible and the embedding $M\hookrightarrow\H_0(\BT,\F(M))^I$ is generally not an isomorphism (cf.\ Remark \ref{non_admissible}). We also show $\F(\cdot)\cong\F_X\otimes_H(\cdot)$ if $G=\mathrm{SL}_2(K)$, $G=\GL_2(K)$ or $G=\mathrm{PGL}_2(K)$ and if the residue class field of $K$ is the field with $p$ elements (cf.\ Proposition \ref{exceptional_flat}).\\

If $p$ is nilpotent in $R$ then the functor $(\cdot)^I:\Rep_R^I(G)\to\Mod_H$ is generally not fully faithful (cf.\ \cite{Oll1}, Th\'eor\`eme). In \S\ref{subsection_4_2} we introduce the full subcategory $\Rep_R^\ind(G)$ of $\Rep_R^I(G)$ consisting of all representations isomorphic to a finite direct sum of compactly induced representations $\ind_{P_F^\dagger}^G(V_F)$ where $V_F\in\Rep_R^\infty(P_F^\dagger)$ satisfies the generalized condition (H) of Cabanes and $F$ is contained in the closure of the chamber fixed by $I$. Likewise, we denote by $\Mod_H^\ind$ the full subcategory of $\Mod_H$ consisting of all $H$-modules isomorphic to a finite direct sum of scalar extensions $H\otimes_{H_F^\dagger}M_F$ with $M\in\Mod_{H_F^\dagger}$ (cf.\ Definition \ref{reductive_ind}). If $R$ is an arbitrary quasi-Frobenius ring then the functor $(\cdot)^I:\Rep_R^\ind(G)\to\Mod_H^\ind$ turns out to be an equivalence of additive categories (cf.\ Theorem \ref{equivalence_induced_objects}). Particular cases of its fully faithfulness have also been shown by Ollivier and Vign\'eras (cf.\ Remark \ref{fully_faithful_hyperspecial}). However, the building theoretic arguments needed to treat the general case are much more involved (cf.\ Proposition \ref{invariants_induction_H} and Remark \ref{subtle_set_of_roots}). Our proof also uses the full force of the relation between $P_F^\dagger$-representations and $H_F^\dagger$-modules as developed in \S\ref{subsection_3_1}.\\

In Proposition \ref{module_homotopy_equivalence} we use the functorial resolutions (\ref{generalized_GP_resolution}) to show that in a suitable sense the inclusion $\Mod_H^\ind\subseteq\Mod_H$ induces a triangle equivalence on the level of bounded derived categories. One can then use the equivalence $\Rep_R^\ind(G)\cong\Mod_H^\ind$ to realize the bounded derived category of $H$-modules as a somewhat exotic localization of the homotopy category of bounded complexes over $\Rep_R^\ind(G)$ (cf.\ Theorem \ref{homotopy_equivalence}). However, our results rather indicate that the true relation between $\Rep_R^I(G)$ and $\Mod_H$ is that of a Quillen equivalence. We will come back to this in a future work. Moreover, since the equivalence $\Rep_R^\ind(G)\cong\Mod_H^\ind$ is generally not compatible with the homological properties of the categories $\Rep_R^\infty(G)$ and $\Mod_H$ the relation to Schneider's derived equivalence in \cite{Sch}, Theorem 9, is presently unclear.\\

In \S\ref{subsection_4_3} we relate our constructions to the theory of generalized $(\varphi,\Gamma)$-modules as developed by Schneider and Vign\'eras (cf.\ \cite{SV}). Note that the more general definitions and constructions of \cite{SV} make sense for any artinian coefficient ring $R$. We choose a Borel subgroup $\Pbar$ of $G$ and a suitable vector chamber $\Cscr^0$ of $\BT$ which is stabilized by a certain submonoid $\Pbar^+$ of $\Pbar$. Any vector chamber $\Cscr$ of $\BT$ contained in $\Cscr^0$ gives rise to the subcomplex $\BT^+(\Cscr)=\Pbar^+\Cscr$ of the building $\BT$. For any coefficient system $\F\in\Coeff_G(\BT)$ we consider the complex of $R$-modules
\begin{equation}\label{etale_complex_intro}
 \varinjlim_{\Cscr\subseteq\Cscr^0}\C_c^\ori(\BT^+_{(\bullet)}(\Cscr),\F)^*
\end{equation}
where $(\cdot)^*$ denotes the $R$-linear dual and the transition maps in the inductive limit are dual to the inclusions $\C_c^\ori(\BT^+_{(\bullet)}(\Cscr'),\F)\subseteq \C_c^\ori(\BT^+_{(\bullet)}(\Cscr),\F)$ for all vector chambers $\Cscr'\subseteq\Cscr\subseteq\Cscr^0$. This procedure is vaguely reminiscent of passing to the stalk at a boundary point of $\BT$. We observe that the complex (\ref{etale_complex_intro}) carries actions of the completed monoid rings $R\llbracket\Pbar^+\rrbracket$ and $R\llbracket(\Pbar^+)^{-1}\rrbracket$ introduced in \cite{SV}, \S1. In fact, the $R\llbracket\Pbar^+\rrbracket$-module structure is always \'etale (cf.\ Proposition \ref{etale}). If $p$ is nilpotent in $R$ and if the semisimple rank of $\GG$ is equal to one then the complex (\ref{etale_complex_intro}) is acyclic and its $0$-th cohomology group is non-trivial (cf.\ Proposition \ref{non_vanishing}). Moreover, if the $R$-modules $\F_F$ are finitely generated for any face $F$ of $\BT$ then there is an isomorphism of complexes of $R\llbracket(\Pbar^+)^{-1}\rrbracket$-modules
\[
 \varinjlim_{\Cscr\subseteq\Cscr^0}\C_c^\ori(\BT^+_{(\bullet)}(\Cscr),\F)^* \cong
 D(\C_c^\ori(\BT_{(\bullet)},\F).
\]
Here $D$ is the functor introduced in \cite{SV}, \S2. In fact, under suitable assumptions we establish the existence of an $E_2$-spectral sequence of \'etale $R\llbracket\Pbar^+\rrbracket$-modules
\[
 D^j\H_i(\BT,\F)\Longrightarrow\H^{j+i}(\varinjlim_{\Cscr\subseteq\Cscr^0}\C^\ori_c(\BT_{(i)}^+(\Cscr),\F)^*)
\]
where $(D^j)_{j\geq 0}$ is the universal $\delta$-functor of Schneider and Vign\'eras (cf.\ Proposition \ref{etale_spectral_sequence} (ii) and \cite{SV}, \S4). Here we need to put ourselves in the situation of \cite{SV}. More precisely, we assume $K=\Qp$ and $R=o/\pi^no$ for the valuation ring $o$ of some finite field extension of $\Qp$, a uniformizer $\pi\in o$ and a positiver integer $n$. We note in passing that the proof of Proposition \ref{etale_spectral_sequence} (i) shows the invariance of the $\delta$-functor $(D^j)_{j\geq 0}$ under central isogenies -- a result which seems interesting on its own.\\

Assume in addition that the semisimple rank of $\GG$ is equal to one and that $\F=\F(M)$ for some $M\in\Mod_H$. If the underlying $R$-module of $M$ is finitely generated and if the center of $G$ acts on $M$ through a character then the above spectral sequence degenerates  (cf.\ Proposition \ref{etale_spectral_sequence} (ii)). In this case the \'etale $R\llbracket \Pbar^+\rrbracket$-module $D^j\H_0(\BT,\F(M))$ is the $j$-th cohomology group of the complex (\ref{etale_complex_intro}) and we have $D^j\H_0(\BT,\F(M))=0$ for all $j\geq 1$. Note that the $\Pbar$-representation $\H_0(\BT,\F(M))$ is actually finitely presented (cf.\ Proposition \ref{semisimple_rank_one} and the proof of Proposition \ref{SV_functor}). However, we do not know if it is admissible so that \cite{SV}, Remark 11.4, might not be applicable.\\

Finally, assume that $G=\GL_2(\Qp)$, $R=o/\pi o$ and $\F=\F(V^I)$ for some admissible representation $V\in\Rep_R^I(G)$ admitting a central character. If $o$ is chosen suitably then Ollivier's equivalence of categories and the comparison results of \cite{SV}, \S11, show that the complex (\ref{etale_complex_intro}) eventually leads to the \'etale $(\varphi,\Gamma)$-module corresponding to $V$ under the $p$-adic local Langlands correspondence of Colmez (cf.\ Remark \ref{generalized_phi_Gamma}). We hope that our geometric constructions will be useful in extending this correspondence to other groups.\\

{\bf Acknowledgments.} We would like to emphasize that the present article would have been unthinkable without the previous work of Cabanes, Ollivier, Schneider and Vign\'eras. The articles \cite{Cab} and \cite{OS1} have been a particularly important source of inspiration. When writing this article the author was a member of the SFB/TR 45 {\it ''Periods, Moduli Spaces, and Arithmetic of Algebraic Varieties``}. He gratefully acknowledges the financial support of the DFG. He would also like to thank the Heinloths for an enlightening discussion about locally constant coefficient systems.\\

{\bf Notation and conventions.} Throughout the article $R$ will denote a fixed commutative unital ring. Let $K$ be a nonarchimedean local field with normalized valuation $\mathrm{val}$ and valuation ring $\o$. We denote by $k$ the residue class field of $K$ and by $p$ and $q$ the characteristic and the cardinality of $k$, respectively. For any unital ring $S$ we denote by $\Mod_S$ the category of $S$-modules. Unless specified otherwise an $S$-module will always mean a left $S$-module. For any topological monoid $J$ we denote by $\Rep_R^\infty(J)$ the category of $R$-linear smooth representations of $J$, i.e.\ the category of all $R$-modules $V$ carrying an $R$-linear action of $J$ such that the stabilizer of any element $v\in V$ is open in $J$. We shall write $\Hom_J(V,W)$ for the $R$-module of $R$-linear and $J$-equivariant maps between two objects $V,W\in\Rep_R^\infty(J)$. If $J$ is a group and if $J_0\subseteq J$ is an open subgroup then we denote by $\ind_{J_0}^J:\Rep_R^\infty(J_0)\to\Rep_R^\infty(J)$ the compact induction functor (cf.\ \cite{Vig1}, \S I.5).


\section{A reminder on the Bruhat-Tits building}\label{section_1}


\subsection{Stabilizers and Bruhat decompositions}\label{subsection_1_1}

Let $\GG$ denote a split connected reductive group over $K$. We fix a maximal split $K$-torus $\TT$ of $\GG$ and let $\CC$ denote the connected component of the center of $\GG$. Let $d$ denote the semisimple rank of $\GG$, i.e.\ the dimension of a maximal split $K$-torus of the derived group of $\GG$. This is equal to the dimension of $\TT/\CC$. By $G=\GG(K)$ and $T=\TT(K)$ we denote the group of $K$-rational points of $\GG$ and $\TT$, respectively.\\

Let $\BT$ denote the semisimple Bruhat-Tits building of $G$ (cf.\ \cite{Tit}) and let $\A=X_*(\TT/\CC)\otimes_{\mathbb{Z}}\mathbb{R}$ denote the apartment of $\BT$ corresponding to $T$. Recall that $\BT$ is a $d$-dimensional polysimplicial complex with a simplicial action of $G$ whose $0$-dimensional (resp.\ $d$-dimensional) faces are usually called the vertices (resp.\ the chambers) of $\BT$. For any face $F$ of $\BT$ we denote by
\[P_F^\dagger=\{g\in G\;|\;gF=F\}\]
the stabilizer of $F$ in $G$. We denote by $\BT^1$ the enlarged Bruhat-Tits building of $G$ in the sense of \cite{BT2}, \S4.2.16, and denote by $\mathrm{pr}:\BT^1\to\BT$ the projection map. The pointwise stabilizer of $\mathrm{pr}^{-1}(F)$ in $G$ is the group of $\o$-rational points
\[\GG_F(\o)\subseteq\GG_F(K)=\GG(K)=G\]
of a smooth group scheme $\GG_F$ over $\o$ with generic fiber $\GG$ (cf.\ \cite{Tit}, \S3.4.1). We denote by $\mathring{\GG}_F$ the connected component of $\GG_F$ and by
\[P_F=\mathring{\GG}_F(\o)\]
its group of $\o$-rational points. It is called the parahoric subgroup of $G$ associated with $F$. Let $\pi_F:P_F=\mathring{\GG}_F(\o)\to\mathring{\GG}_F(k)$ denote the group homomorphism induced by the residue class map $\o\to k$, and let $\mathrm{R}^u(\mathring{\GG}_{F,k})$ denote the unipotent radical of the special fiber $\mathring{\GG}_{F,k}$ of $\mathring{\GG}_F$. We then obtain the pro-$p$ group
\[I_F=\pi_F^{-1}(\mathrm{R}^u(\mathring{\GG}_{F,k})(k))\subseteq P_F\]
which is in fact the pro-$p$ radical of $P_F$.\\

The origin $x_0$ of $\A=X_*(\TT/\CC)\otimes_{\mathbb{Z}}\mathbb{R}$ is a hyperspecial vertex in $\BT$. Throughout the article we fix a chamber $C$ in $\A$ containing $x_0$ and set
\[
I=I_C\quad\mbox{and}\quad I'=P_C.
\]
The subgroups $I$ and $I'$ are called a pro-$p$ Iwahori subgroup and an Iwahori subgroup of $G$, respectively. The chamber $C$ determines a set $\Phi^+$ of positive roots of the root system $\Phi\subseteq X^*(\TT/\CC)$ of $(\GG,\TT)$. We shall also view the elements of $\Phi$ as characters of $\TT$ and $T$.\\

If $T_0$ denotes the maximal compact subgroup of $T$ and if $N_G(T)$ denotes the normalizer of $T$ in $G$ then we denote by
\[
W=N_G(T)/T_0\cong T/T_0\rtimes W_0
\]
the extended Weyl group of $(G,T)$. Here $W_0=N_G(T)/T$ denotes the finite Weyl group of $(G,T)$. The action of $G$ on $\BT$ restricts to an action of $W$ on $\A$ by affine automorphisms. We denote by $\langle\cdot,\cdot\rangle:X^*(\TT/\CC)\times X_*(\TT/\CC)\to\mathbb{Z}$ the canonical pairing and by $\nu:T\to X_*(\TT/\CC)$ the group homomorphism chacterized by
\[
\langle\alpha,\nu(t)\rangle=-\mathrm{val}(\alpha(t))\quad\mbox{for all}\quad\alpha\in\Phi.
\]
The action of $t\in T$ on $\A$ is then given by translation with $\nu(t)$.\\

If $F'$ and $F$ are faces of $\BT$ such that $F'$ is contained in the topological closure $\overline{F}$ of $F$ then by \cite{BT2}, Proposition 4.6.24 (i), or \cite{Tit}, \S3.4.3, we have the inclusions
\begin{equation}\label{inclusions}
I_{F'}\subseteq I_F\subseteq P_F\subseteq P_{F'}\subseteq P_{F'}^\dagger
\end{equation}
in which $I_{F'}$ and $P_{F'}$ are normal in $P_{F'}^\dagger$. Moreover, we have $I\cap P_F^\dagger=I\cap P_F$ by \cite{OS1}, Lemma 4.10, which combined with (\ref{inclusions}) yields
\begin{equation}\label{Iwahori_intersection_inclusion}
 I\cap P_F^\dagger=I\cap P_F\subseteq I\cap P_{F'}=I\cap P_{F'}^\dagger.
\end{equation}
By \cite{Vig2}, Proposition 1, the group $W$ is equipped with a length function $\ell:W\to\mathbb{N}$ such that
\[\Omega=\{w\in W\;|\;\ell(w)=0\}\]
is an abelian subgroup of $W$ contained in $P_C^\dagger$, hence normalizes $I$ and $I'$. According to \cite{Vig3}, Appendice, the group $\Omega$ is isomorphic to the quotient of $X_*(\TT)$ modulo the subgroup generated by the coroots $\check{\Phi}$. Consequently, $\Omega$ is finite if and only if $\GG$ is semisimple. Further, $W=W_\aff\rtimes\Omega$ is the semidirect product of $\Omega$ and the so-called affine Weyl group $W_\aff$. This is an affine Coxeter group generated by the reflections about the affine roots of $G$ (cf.\ \cite{OS1}, \S4.3). Moreover, the length function $\ell$ is constant on the double cosets $\Omega\backslash W/\Omega$. Let $T_1$ denote the unique pro-$p$ Sylow subgroup of $T_0$ and
\[
\tilde{W}=N_G(T)/T_1\cong (T_0/T_1)\rtimes W.
 \]
We denote by $\tilde{\Omega}$ and $\tilde{W}_\aff$ the preimage of $\Omega$ and $W_\aff$ under the surjection $\tilde{W}\to W$, respectively, so that $\tilde{W}/\tilde{\Omega}\cong W_\aff$. We extend the length function $\ell$ to $\tilde{W}$ by inflation, i.e.\ we have $\ell(\omega w\omega')=\ell(w)$ for all $\omega,\omega'\in\tilde{\Omega}$ and $w\in\tilde{W}$. Note that the group $\tilde{W}$ acts on $\A$ through its quotient $W$. \\

Denote by $G_\aff$ the subgroup of $G$ generated by the parahoric subgroups $P_F$ of all faces $F$ of $\BT$. By \cite{BT2}, Proposition 5.2.12, and \cite{Tit}, \S3.3.1, we have the Bruhat decompositions
\begin{equation}\label{Bruhat_decomposition}
G=\coprod_{w\in W}I'wI'=\coprod_{\tilde{w}\in\tilde{W}}I\tilde{w}I\quad\mbox{and}\quad G_\aff=\coprod_{w\in W_\aff}I'wI'=\coprod_{\tilde{w}\in\tilde{W}_\aff}I\tilde{w}I.
\end{equation}
Here we follow the usual abuse of notation using that the double cosets $I'wI'$ and $I\tilde{w}I$ do not depend on the choice of representatives of $w$ and $\tilde{w}$ in $N_G(T)$ because $T_0\subseteq I'$ and $T_1=T_0\cap I\subseteq I$. The group homomorphism $T_0/T_1\to I'/I$ is actually bijective.\\

For any face $F$ of $\BT$ contained in $\overline{C}$ we adopt the notation of \cite{OS1}, \S4, and denote by $W_F$ the subgroup of $W$ generated by all affine reflections fixing $F$ pointwise. Further, we let $\Omega_F=\{w\in\Omega\;|\;wF=F\}$. The subgroup $W_F^\dagger$ of $W$ generated by $W_F$ and $\Omega_F$ is the semidirect product $W_F^\dagger= W_F\rtimes\Omega_F$. The group $W_F$ is always finite whereas $\Omega_F$ is finite if and only if $\GG$ is semisimple. We also note that the canonical surjection $W\to W_0$ restricts to an isomorphism $W_{x_0}\cong W_0$ (cf.\ \cite{Tit}, \S1.9).\\

Let $\tilde{\Omega}_F$, $\tilde{W}_F$ and $\tilde{W}_F^\dagger$ denote the preimages of $\Omega_F$, $W_F$ and $W_F^\dagger$ in $\tilde{W}$, respectively. By \cite{OS1}, Lemma 4.9, these groups give rise to the decompositions
\begin{equation}\label{Bruhat_parahoric}
P_F=\coprod_{w\in W_F}I'wI'=\coprod_{\tilde{w}\in\tilde{W}_F}I\tilde{w}I\quad\mbox{and}\quad P_F^\dagger=\coprod_{w\in W_F^\dagger}I'wI'=\coprod_{\tilde{w}\in\tilde{W}_F^\dagger}I\tilde{w}I.
\end{equation}
It follows that $P_F^\dagger/P_F\cong\tilde{W}_F^\dagger/\tilde{W}_F\cong\Omega_F$.\\

We continue to assume that $F\subseteq\Cbar$. There is a set $D_F\subseteq W$ of representatives of the left cosets $W/W_F$ which is characterized by the property that for any $d\in D_F$ the element $d$ is of minimal length in $dW_F$ (cf.\ \cite{OS1}, Proposition 4.6). The set $D_F$ is stable under right multiplication with elements of $\Omega_F$ (cf.\ \cite{OS1}, Lemma 4.11). Moreover, if $F'$ is a face with $F'\subseteq\overline{F}\subseteq\Cbar$ then we have $W_F\subseteq W_{F'}$. Therefore, any element which is of minimal length in its left coset modulo $W_{F'}$ is also of minimal length in its left coset modulo $W_F$. Thus,
\begin{equation}\label{D_F_inclusion}
D_{F'}\subseteq D_F\quad\mbox{whenever}\quad F'\subseteq\overline{F}\subseteq\Cbar.
\end{equation}
\begin{rem}\label{Omega_in_DF}
Note that $\Omega\subseteq D_F$ for any face $F\subseteq\Cbar$. Indeed, if $\omega\in\Omega$ and $w\in W_F$ then $\ell(\omega w)=\ell(w)$ whence $\omega$ is of minimal length in $\omega W_F$.
\end{rem}
We denote by $\tilde{D}_F\subseteq\tilde{W}$ the preimage of $D_F$ in $\tilde{W}$ under the surjection $W\to\tilde{W}$. The length function $\ell:\tilde{W}\to\mathbb{N}$ factors through $W$ whence \cite{OS1}, Proposition 4.6 (i), implies
\begin{equation}\label{additive_length}
\ell(dw)=\ell(d)+\ell(w)\quad\mbox{for all}\quad d\in\tilde{D}_F,w\in\tilde{W}_F.
\end{equation}
\begin{lem}\label{face_representatives}
For any face $F$ of $\BT$ there is a unique face $[F]$ of $\BT$ which is contained in $\Cbar$ and $G_\aff$-conjugate to $F$. If $F'$ is another face of $\BT$ and if $[F']$ denotes its unique $G_\aff$-conjugate in $\Cbar$, then $F'\subseteq \overline{F}$ implies $[F']\subseteq\overline{[F]}$.
\end{lem}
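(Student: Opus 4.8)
The plan is to use the standard fact that the affine Weyl group $W_\aff$ acts transitively on the chambers of $\BT$, together with the fundamental domain property of $\overline{C}$ for the $W_\aff$-action on the apartment $\A$. First I would establish existence: given an arbitrary face $F$, pick any chamber $C'$ of $\BT$ with $F \subseteq \overline{C'}$. Since $\BT$ is the union of the $G_\aff$-translates of $\A$ (indeed $G_\aff$ acts transitively on apartments, as the $P_F$ generate enough of $G$), there is $g \in G_\aff$ with $gC' \subseteq \A$; then $gF \subseteq \A$ is a face of the apartment. Now $\overline{C}$ is a (strict) fundamental domain for the action of $W_\aff$ on $\A$, so there is a unique $w \in W_\aff$ with $w(gF) \subseteq \overline{C}$, and we set $[F] = wg F$. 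Since $W_\aff \subseteq G_\aff$ (the affine reflections lie in the parahoric subgroups of the walls they fix) we get $[F]$ is $G_\aff$-conjugate to $F$ and contained in $\overline{C}$.

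For uniqueness, suppose $F_1, F_2 \subseteq \overline{C}$ are both $G_\aff$-conjugate to $F$; say $F_2 = h F_1$ with $h \in G_\aff$. Using the Bruhat decomposition $G_\aff = \coprod_{w \in W_\aff} I' w I'$ from (\ref{Bruhat_decomposition}), write $h \in I' w I'$; the key point is that $I'$ fixes $\overline{C}$ pointwise, so $F_2 = h F_1 \subseteq h\overline{C}$ and one reduces to the case $h = w \in W_\aff$. Then $w F_1 = F_2$ with both faces in the fundamental domain $\overline{C}$ of the $W_\aff$-action on $\A$, which forces $F_1 = F_2$ (the fundamental domain meets each orbit exactly once, and distinct faces in $\overline{C}$ lie in distinct orbits). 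Alternatively, and perhaps more cleanly, one invokes \cite{OS1}, \S4: two faces of $\overline{C}$ are $W_\aff$-conjugate iff they are equal, since the faces of $\overline{C}$ are indexed by subsets of the set of affine simple reflections and $W_F = W_{F'}$ together with conjugacy forces equality of the parabolic subgroups, hence of the faces. I expect uniqueness to be the more delicate half; the main obstacle is pinning down precisely why $I'$-double-coset representatives may be taken in $W_\aff$ when acting on faces of $\overline{C}$, i.e.\ carefully using that $I'$ stabilizes $\overline{C}$ pointwise and that $W_\aff$ normalizes nothing unexpected.

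For the final assertion about closure-compatibility, suppose $F' \subseteq \overline{F}$. Choose $g \in G_\aff$ with $gF = [F]$. Since the $G$-action on $\BT$ is simplicial, $gF' \subseteq \overline{gF} = \overline{[F]} \subseteq \overline{C}$, and $gF'$ is $G_\aff$-conjugate to $F'$. Thus $gF'$ is a face of $\BT$ contained in $\overline{C}$ and $G_\aff$-conjugate to $F'$, so by the uniqueness just proved $gF' = [F']$. Hence $[F'] = gF' \subseteq \overline{[F]}$, as desired. This last step is essentially formal once existence and uniqueness are in hand: the single element $g$ carrying $F$ into $\overline{C}$ automatically carries the subface $F'$ to its canonical representative.
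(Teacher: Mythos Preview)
Your proposal is correct and follows essentially the same route as the paper. For existence, the paper also moves $F$ into $\A$ via an element of $I\subseteq G_\aff$ and then into $\Cbar$ via $W_\aff$; its justification is slightly cleaner than yours (it takes an apartment containing both $F$ and $C$ and then invokes \cite{BT}, Corollaire 7.4.9 (i), rather than your appeal to $G_\aff$-transitivity on apartments, which is not quite the right statement). The closure-compatibility argument is identical in both.

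The only real difference is uniqueness, where you unwind things explicitly via the Bruhat decomposition $G_\aff=I'W_\aff I'$, the fact that $I'=P_C$ fixes $\Cbar$ pointwise, and the strict fundamental-domain property of $\Cbar$ for $W_\aff$ on $\A$. The paper instead dispatches this in one line by citing $P_F^\dagger\cap G_\aff=P_F$ from \cite{OS1}, Lemma 4.10. Your version is more self-contained and makes the mechanism visible; the paper's is terser but outsources the work. So your worry that uniqueness is the delicate half is well-placed, but your Bruhat-decomposition reduction handles it cleanly.
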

\begin{proof}
Since $\BT$ is a building there is an apartment containing $F$ and $C$ (cf.\ \cite{BT}, Th\'eor\`eme 7.4.18 (i)). By \cite{BT}, Corollaire 7.4.9 (i), there is an element $i\in P_C^\dagger$ with $iF\subseteq\A$. By (\ref{Bruhat_parahoric}) we may even assume $i\in I$. By \cite{BGL}, V.3.2, Th\'eor\`eme 1, there is an element $w\in W_\aff$ with $wiF\subseteq\Cbar$. This proves the existence of $[F]$. The uniqueness follows from $P_F^\dagger\cap G_\aff=P_F$ (cf.\ \cite{OS1}, Lemma 4.10). The last statement is a direct consequence of the uniqueness because $G_\aff$ acts by simplicial automorphisms.
\end{proof}


\subsection{Hecke algebras}\label{subsection_1_2}

Following the notation of \cite{OS1}, \S2, we set
\[
X=\ind_I^G(R)\in\Rep_R^\infty(G).
\]
Note that $X$ is a right module over its opposite endomorphism ring
\[
H=\End_G(X)^\op,
\]
the so-called pro-$p$ Iwahori-Hecke algebra of $G$. By Frobenius reciprocity (cf.\ \cite{Vig1}, Proposition I.5.7 (ii)), $H$ can be identified with the $R$-module
\[
H=\Hom_G(X,X)\cong X^I=\ind_I^G(R)^I= R[I\backslash G/I]
\]
of $I$-biinvariant compactly supported maps $G\to R$. Under this identification, the multiplication of $H$ is the convolution product
\[
 (f\cdot f')(g')=\sum_{g\in G/I}f(g)f'(g^{-1}g').
\]
Let $V\in\Rep_R^\infty(G)$. By Frobenius reciprocity again, the $R$-module $V^I\cong\Hom_G(X,V)$ of $I$-invariants of $V$ naturally is a left $H$-module via $f\cdot\varphi=\varphi\circ f$ for any $f\in H=\End_G(X)^\op$ and $\varphi\in\Hom_G(X,V)$, i.e.\ we have the functor
\[
(\cdot)^I:\Rep_R^\infty(G)\longrightarrow\Mod_H,\quad V\mapsto V^I.
\]
Interpreting $f$ as an element of $R[I\backslash G/I]$ and $\varphi$ as an element of $V^I$, the above module structure is given by
\begin{equation}\label{convolution}
f\cdot\varphi=\sum_{g\in I\backslash G/I}\sum_{g'\in I/(I\cap gIg^{-1})}f(g)g'g\cdot\varphi\in V^I.
\end{equation}
Given $w\in\tilde{W}$ we set $\tau_w=IwI\in R[I\backslash G/I]=H$ and call $\tau_w$ the Hecke operator associated with $w$. By (\ref{Bruhat_decomposition}) the $R$-module $H$ is free with basis $(\tau_w)_{w\in\tilde{W}}$. The ring structure of $H$ is determined by the relations
\begin{eqnarray}
\label{braid_relations}
 \tau_{vw}&=&\tau_v\tau_w\quad\mbox{if}\quad\ell(vw)=\ell(v)+\ell(w)\quad\mbox{and}\\
\label{quadratic_relations}
 \tau_s^2&=&q\tau_{s^2}+\tau_s\theta_s
\end{eqnarray}
for any element $s\in\tilde{W}$ whose image in $W$ belongs to the set of distinguished generators of the Coxeter group $W_\aff$ (cf.\ \cite{Vig2}, Theorem 1). Here $\theta_s$ is a specific element of $R[T_0/T_1]\subseteq H$.\\

The following result is essentially proven in \cite{OS1}, Proposition 4.13.
\begin{lem}\label{closest_chamber}
Let $F$ be a face of $\BT$. Among all chambers of $\BT$ which contain $F$ in their closure there is a unique one $C(F)$ with minimal gallery distance to $C$. It satisfies $I_{C(F)}=I_F(I\cap P_F^\dagger)=I_F(I\cap P_F)$ and $I\cap P_F^\dagger=I\cap P_{C(F)}^\dagger=I\cap P_{C(F)}$. The chamber $C(F)$ is contained in any apartment of $\BT$ containing $F$ and $C$. Moreover, we have $gC(F)=C(gF)$ for all $g\in I$.
\end{lem}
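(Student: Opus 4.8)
The plan is to reduce everything to the cited statement \cite{OS1}, Proposition 4.13, and then to extract the additional assertions by elementary building-theoretic arguments. First I would recall that for a fixed face $F$, the chambers $C'$ with $F\subseteq\overline{C'}$ are precisely the chambers of the residue (link) of $F$, and that gallery distance to $C$ induces on this residue a well-defined notion of ``closest chamber''. The existence and uniqueness of $C(F)$, together with the formula $I_{C(F)}=I_F(I\cap P_F^\dagger)=I_F(I\cap P_F)$ and the equalities $I\cap P_F^\dagger=I\cap P_{C(F)}^\dagger=I\cap P_{C(F)}$, are exactly what \cite{OS1}, Proposition 4.13, provides once one matches notation; here one uses (\ref{inclusions}) and (\ref{Iwahori_intersection_inclusion}) to see $I\cap P_F^\dagger=I\cap P_F\subseteq I\cap P_{C(F)}$, and the reverse inclusion $I\cap P_{C(F)}^\dagger\subseteq I\cap P_F^\dagger$ follows since $F\subseteq\overline{C(F)}$ forces $P_{C(F)}^\dagger\subseteq P_F^\dagger$ (again by (\ref{inclusions})).

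Next I would prove that $C(F)$ lies in any apartment $\A'$ containing both $F$ and $C$. Since $\A'$ is a subcomplex containing $F$, it contains a full subcomplex of chambers having $F$ in their closure; combinatorial convexity of apartments (they are convex for the gallery metric, cf.\ \cite{BT}) means that a minimal gallery from $C$ to \emph{any} chamber of $\A'$ through $F$ stays in $\A'$. More directly: among the chambers of $\A'$ whose closure contains $F$, pick one, say $C'$, of minimal gallery distance to $C$ within $\A'$; a minimal gallery in $\BT$ from $C$ to $C'$ can be taken inside $\A'$ (apartments are isometrically embedded and convex), so $C'$ realizes the global minimum and hence $C'=C(F)$ by uniqueness. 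This shows $C(F)\subseteq\A'$.

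Finally, for the $I$-equivariance $gC(F)=C(gF)$ when $g\in I$: the element $g$ is a simplicial automorphism of $\BT$ fixing $C$ (indeed fixing $\overline{C}$ pointwise, since $I=I_C$ fixes $C$ and permutes—in fact fixes—its closure), hence $g$ preserves gallery distance to $C$. Therefore $g$ carries the chamber of minimal distance to $C$ among those containing $F$ in their closure to the chamber of minimal distance to $gC=C$ among those containing $gF$ in their closure; by the uniqueness already established this image is exactly $C(gF)$, so $gC(F)=C(gF)$.

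The only genuinely nontrivial point is the reference-matching in the first paragraph: one must check that the normalizations of ``closest chamber'' and of the groups $I_F$, $P_F$, $P_F^\dagger$ in \cite{OS1}, Proposition 4.13, coincide with those fixed in \S\ref{subsection_1_1}, and that the equality $I\cap P_F^\dagger=I\cap P_F$ from \cite{OS1}, Lemma 4.10, is invoked correctly to pass between the daggered and undaggered versions. Everything else is standard convexity of apartments plus the uniqueness statement, which makes the equivariance and the ``contained in every apartment'' claims essentially formal.
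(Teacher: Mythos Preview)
Your overall strategy is sound, and your arguments for the last two assertions (containment in any apartment via convexity/retractions, and $I$-equivariance via the fact that $I$ fixes $C$) are correct and arguably more transparent than the paper's ``follows from the construction''. However, the first paragraph has a genuine gap.

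The issue is that \cite{OS1}, Proposition 4.13, is stated only for faces $F$ lying in the \emph{standard apartment} $\A$, and the uniqueness there is uniqueness among chambers \emph{of $\A$} (with gallery distance computed in $\A$). It does not directly give you existence and uniqueness of $C(F)$ for an arbitrary face of $\BT$, nor the group-theoretic formulas for such $F$. The paper handles this by choosing $\gamma\in I$ with $\gamma F\subseteq\A$, applying the cited proposition to $\gamma F$, observing via retractions that gallery distance in $\A$ agrees with gallery distance in $\BT$, and then defining $C(F)=\gamma^{-1}C(\gamma F)$. Global uniqueness in $\BT$ still requires work: given another minimal chamber $D$, the paper conjugates $D$ into $\A$ by some $g\in I$, uses that $\A$ contains a unique $I$-conjugate of $F$ to conclude $gF=\gamma F$, deduces $gD=C(\gamma F)$ from uniqueness in $\A$, and finally uses $g\gamma^{-1}\in I\cap P_{\gamma F}^\dagger\subseteq I_{C(\gamma F)}$ to get $D=C(F)$. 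The group formulas then follow by conjugating the $\A$-version by $\gamma^{-1}\in I$. Your ``reference-matching'' caveat does not capture this; it is not a normalization issue but a question of the domain of applicability of the cited result.

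A smaller point: your chain $I\cap P_F^\dagger=I\cap P_F\subseteq I\cap P_{C(F)}$ has the nontrivial inclusion stated as if it followed from (\ref{inclusions}) and (\ref{Iwahori_intersection_inclusion}), but those give the \emph{opposite} inclusion $I\cap P_{C(F)}\subseteq I\cap P_F$ (since $F\subseteq\overline{C(F)}$ means $P_{C(F)}\subseteq P_F$). The inclusion $I\cap P_F\subseteq P_{C(F)}$ is exactly the nontrivial content of \cite{OS1}, Proposition 4.13 (ii), and again only holds a priori for $F\subseteq\A$.

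That said, your apartment argument can be upgraded to \emph{replace} the paper's uniqueness argument: using the retraction $\rho:\BT\to\A'$ centred at $C$, any chamber $D$ with $F\subseteq\overline{D}$ satisfies $d(C,\rho(D))\leq d(C,D)$ and $F\subseteq\overline{\rho(D)}$, so the minimum over $\BT$ is attained in $\A'$, where it is unique by the cited proposition (after conjugating $\A'$ to $\A$ by an element of $I$). This would streamline the paper's proof.
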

\begin{proof}
Let $\A'$ be an arbitrary apartment of $\BT$ containing $F$ and $C$. As seen in the proof of Lemma \ref{face_representatives} there is $\gamma\in I$ with $\gamma\A'=\A$. Thus, $\gamma F\subseteq\A$. According to \cite{OS1}, Proposition 4.13, there is a unique chamber $C(\gamma F)$ in $\A$ containing $\gamma F$ in its closure and which has minimal gallery distance to $C$. Note that the gallery distance of \cite{OS1}, Proposition 4.13, refers to the gallery distance computed in $\A$. By the existence of retractions, however, this agrees with the gallery distance computed in the entire building $\BT$ (cf.\ \cite{AB}, Corollary 4.34).  Let us put $C(F)=\gamma^{-1}C(\gamma F)\subseteq\A'$. Clearly, this is a chamber of $\BT$ containing $F$ in its closure. \\

Let us denote by $d(\cdot\,,\cdot)$ the gallery distance function and assume that $D$ is a chamber of $\BT$ containing $F$ in its closure and which has minimal gallery distance to $C$. As above, there is $g\in I$ such that $gD\subseteq\A$. This implies $gF\subseteq\A$ and hence $gF=\gamma F$ because $\A$ contains a unique $I$-conjugate of $F$ (cf.\ \cite{OS1}, Remark 4.17 (2)). On the one hand, this gives
\[d(gD,C)\geq d(C(\gamma F),C)\]
because $gD$ contains $gF=\gamma F$ in its closure and because of the minimality property of $C(\gamma F)$. On the other hand,
\[d(gD,C)=d(D,C)\leq d(g^{-1}C(\gamma F),C)=d(C(\gamma F),C)\]
because of the minimality property of $D$ and since $g$ fixes $C$. Note that $g^{-1}C(\gamma F)$ contains $g^{-1}\gamma F=F$ in its closure. Therefore, $d(gD,C)=d(C(\gamma F),C)$ and $gD=C(\gamma F)$ by the uniqueness of $C(\gamma F)$. By \cite{OS1}, Proposition 4.13 (ii), we have $g\gamma^{-1}\in I\cap P_{\gamma F}^\dagger\subseteq I_{C(\gamma F)}\subseteq P_{C(\gamma F)}^\dagger$, whence $D=g^{-1}C(\gamma F)=\gamma^{-1}C(\gamma F)=C(F)$. This proves the first part of the lemma. By (\ref{Iwahori_intersection_inclusion}) and \cite{OS1}, Proposition 4.13 (ii), we have $I_{C(\gamma F)}=I_{\gamma F}(I\cap P_{\gamma F}^\dagger)$ and $I\cap P_{\gamma F}=I\cap P_{\gamma F}^\dagger=I\cap P_{C(\gamma F)}^\dagger=I\cap P_{C(\gamma F)}$. Conjugation with $\gamma^{-1}\in I$ yields the second part of the lemma. The final assertions follow directly from the construction.
\end{proof}
Let $F$ be a face of $\BT$ and let the chamber $C(F)$ be as in Lemma \ref{closest_chamber}. Since $I_{C(F)}\subseteq P_F\subseteq P_F^\dagger$ we have the objects
\[X_F^\dagger=\ind_{I_{C(F)}}^{P_F^\dagger}(R)\in\Rep_R^\infty(P_F^\dagger)
\quad\mbox{and}\quad
X_F=\ind_{I_{C(F)}}^{P_F}(R)\in\Rep_R^\infty(P_F),\]
following the notation of \cite{OS1}, \S3.3. They are naturally right modules over their opposite endomorphism rings
\begin{equation}\label{H_F}
H_F^\dagger=\End_{P_F^\dagger}(X_F^\dagger)^\op
\quad\mbox{and}\quad
H_F=\End_{P_F}(X_F)^\op,
\end{equation}
the so-called Hecke algebras at $F$. As above, the $R$-algebras $H_F^\dagger$ and $H_F$ can be identified with the $R$-modules
\[
H_F^\dagger\cong R[I_{C(F)}\backslash P_F^\dagger/I_{C(F)}]
\quad\mbox{and}\quad
H_F\cong R[I_{C(F)}\backslash P_F/I_{C(F)}]
\]
of $I_{C(F)}$-biinvariant compactly supported maps $P_F^\dagger\to R$ and $P_F\to R$, respectively. The multiplication is again the convolution product. It makes $H_F$ an $R$-subalgebra of $H_F^\dagger$.\\

If $F\subseteq\Cbar$ then the $R$-modules $H_F$ and $H_F^\dagger$ are free with bases $(\tau_w)_{w\in\tilde{W}_F}$ and $(\tau_w)_{w\in\tilde{W}_F^\dagger}$, respectively. This follows from (\ref{Bruhat_parahoric}). Moreover, the $R$-algebra $H_F^\dagger$ may then be viewed as a subalgebra of $H$.
\begin{lem}\label{Hecke_generation}
As an $R$-algebra, $H$ is generated by the subalgebras $H_F^\dagger$ with $F\subseteq\Cbar$.
\end{lem}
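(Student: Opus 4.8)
The plan is to show that every basis element $\tau_{\tilde w}$ of $H$, with $\tilde w\in\tilde W$, lies in the subalgebra generated by the $H_F^\dagger$ with $F\subseteq\Cbar$, and to do this by induction on the length $\ell(\tilde w)$. Since $H$ is free over $R$ on $(\tau_{\tilde w})_{\tilde w\in\tilde W}$ by the Bruhat decomposition \eqref{Bruhat_decomposition}, this suffices. The case $\ell(\tilde w)=0$, i.e.\ $\tilde w\in\tilde\Omega$, should be handled first: here $\tilde w$ lies over an element of $\Omega\subseteq P_C^\dagger$, so $\tau_{\tilde w}\in H_C^\dagger$, which is one of the allowed subalgebras (note $C\subseteq\Cbar$). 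The distinguished generators $s$ of the affine Coxeter group $W_\aff$ correspond to the (lifts of) reflections about the walls of $\Cbar$; each such $s$ fixes pointwise a codimension-one face $F_s\subseteq\Cbar$, so $s\in \tilde W_{F_s}^\dagger$ and hence $\tau_s\in H_{F_s}^\dagger$. Likewise the elements $\theta_s\in R[T_0/T_1]$ lie in $H_C^\dagger$ since $T_0/T_1\hookrightarrow I'/I$ and $T_0\subseteq P_C=I'$. This takes care of all the ``atomic'' pieces.

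For the inductive step, take $\tilde w\in\tilde W$ with $\ell(\tilde w)=n\geq 1$. Choose a reduced expression in $\tilde W$: write $\tilde w=\tilde\omega\, s_1\cdots s_n$ with $\tilde\omega\in\tilde\Omega$ and each $s_i$ a lift of a distinguished generator of $W_\aff$, chosen so that $\ell(s_1\cdots s_n)=n$ (this is possible because $\tilde W=\tilde\Omega\ltimes \tilde W_\aff$, the length is inflated from $W_\aff$, and $\Omega$ normalizes the generating set). Then by the braid relations \eqref{braid_relations} we get $\tau_{\tilde w}=\tau_{\tilde\omega}\tau_{s_1}\cdots\tau_{s_n}$. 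Each $\tau_{s_i}$ lies in $H_{F_{s_i}}^\dagger$ and $\tau_{\tilde\omega}\in H_C^\dagger$ by the base case, so $\tau_{\tilde w}$ is a product of elements from the subalgebras $H_F^\dagger$, $F\subseteq\Cbar$. This proves the claim for $\tilde w$, completing the induction.

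Actually the cleanest route avoids even invoking induction explicitly: once one knows (i) $\tau_{\tilde\omega}\in H_C^\dagger$ for $\tilde\omega\in\tilde\Omega$, (ii) $\tau_s\in H_{F_s}^\dagger$ for each distinguished generator $s$ of $W_\aff$, and (iii) $\tau_{\tilde w}$ for general $\tilde w$ is, via the braid relations applied to a reduced word, a product of the $\tau_s$ and one $\tau_{\tilde\omega}$, one concludes directly that the subalgebra generated by $\{H_F^\dagger : F\subseteq\Cbar\}$ contains all $\tau_{\tilde w}$, hence all of $H$. The one genuinely load-bearing input is the existence, for each $\tilde w$, of a reduced decomposition $\tilde w = \tilde\omega s_1\cdots s_n$ with $\ell$ additive along the word; this is standard for the pro-$p$ Iwahori-Hecke algebra (it is implicit in the presentation \eqref{braid_relations}--\eqref{quadratic_relations} via \cite{Vig2}), so I would simply cite it.

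The main obstacle, such as it is, is purely bookkeeping: one must be careful that the length function on $\tilde W$ is the inflation of the one on $W_\aff\cong \tilde W/\tilde\Omega$, so that ``reduced in $\tilde W$'' is controlled by the Coxeter combinatorics of $W_\aff$ — this is exactly what lets the braid relation \eqref{braid_relations} peel off one generator at a time without length dropping unexpectedly. One should also make sure that for a distinguished generator $s$ of $W_\aff$ the corresponding face $F_s$ really is contained in $\Cbar$ (it is a codimension-one face of the closed chamber $\Cbar$ whose supporting wall is the one reflected by $s$), so that $H_{F_s}^\dagger$ is among the allowed subalgebras. No deeper building-theoretic facts beyond \eqref{Bruhat_parahoric} and the structure of $\tilde W$ are needed.
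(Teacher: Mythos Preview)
Your proposal is correct and follows essentially the same approach as the paper's proof: use the braid relations to see that $H$ is generated by the $\tau_{\tilde\omega}$ with $\tilde\omega\in\tilde\Omega$ (which lie in $H_C^\dagger$) and the $\tau_s$ for $s$ a lift of a distinguished generator of $W_\aff$ (which lie in $H_{F(s)}\subseteq H_{F(s)}^\dagger$ for the codimension-one face $F(s)\subseteq\Cbar$ fixed by $s$). Your version is more detailed but the argument is the same; one small quibble is that $\tilde W=\tilde\Omega\ltimes\tilde W_\aff$ is not literally a semidirect product (the intersection is $T_0/T_1$, not trivial), though the factorization you use still goes through.
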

\begin{proof}
By the braid relations (\ref{braid_relations}) the $R$-algebra $H$ is generated by the Hecke operators $\tau_\omega$ with $\omega\in\tilde{\Omega}$ and the Hecke operators $\tau_s$ such that the image of $s$ in $W$ belongs to the set of distinguished generators of the Coxeter group $W_\aff$. Note that $\tau_\omega\in H_C^\dagger$ for all $\omega\in\tilde{\Omega}$. Further, any $s$ as above induces an affine reflection of $\A$ pointwise fixing a unique codimension one face $F(s)$ in $\Cbar$. Thus, $\tau_s$ is an element of $H_{F(s)}\subseteq H_{F(s)}^\dagger$.
\end{proof}
Let $F$ be an arbitrary face of $\BT$. If $F'$ is a face of $\BT$ with $F'\subseteq\overline{F}\subseteq\overline{C(F')}$ then $C(F)=C(F')$ by the uniqueness assertion of Lemma \ref{closest_chamber}. In this case
\[
 H_F^\dagger\cap H_{F'}^\dagger=R[I_{C(F)}\backslash(P_F^\dagger\cap P_{F'}^\dagger)/I_{C(F)}]
\]
inside the $R$-module of all compactly supported maps $G\to R$. Note that by (\ref{inclusions}) we have $I_{C(F)}=I_{C(F')}\subseteq P_F\subseteq P_{F'}$ so that in this situation $X_F$ is a $P_F$-subrepresentation of $X_{F'}$ and
\[
H_{F'}\supseteq H_F\subseteq H_F^\dagger\cap H_{F'}^\dagger.
\]

If $F\subseteq\Cbar$ then any $G_\aff$-conjugate of $F$ is of the form $gdF$ for some $d\in\tilde{D}_F$ and $g\in I$ (cf.\ the proof of Lemma \ref{face_representatives}). As a consequence of Lemma \ref{closest_chamber} and \cite{OS1}, Proposition 4.13, we have
\begin{equation}\label{d_conjugation}
gdC=C(gdF)\quad\mbox{and}\quad gd\, I(gd)^{-1}=I_{C(gdF)}.
\end{equation}
Thus, we have the isomorphisms
\begin{equation}\label{phi_gd}
 \varphi_{gd,F}:\left\{
\begin{array}{l}
X_F^\dagger\longrightarrow X_{gdF}^\dagger,\\
H_F^\dagger\longrightarrow H_{gdF}^\dagger,\\
X_F\longrightarrow X_{gdF},\\
H_F\longrightarrow H_{gdF},
\end{array}\right.
\end{equation}
of $R$-modules all of which are given by $f\mapsto(g'\mapsto f(gdg'(gd)^{-1}))$. Here conjugation with $d$ is defined by choosing a representative in $N_G(T)$. Note, however, that $T_1\subseteq I_F$. Therefore, the $I_F$-biinvariance of $f$ and the fact that $I_F$ is a normal subgroup of $I$ imply that the value $f(gdg'd^{-1}g^{-1})$ does not depend on the choice of a representative of $d$ in $N_G(T)$.\\

Clearly, on $H_F^\dagger$ and $H_F$ the map $\varphi_{gd,F}$ is an isomorphism of $R$-algebras. On $X_F^\dagger$ (resp.\ $X_F$) it is an isomorphism of representations of $P_F^\dagger$ (resp.\ of $P_F$) if the action on $X_{gdF}^\dagger$ (resp.\ on $X_{gdF}$) is pulled back along conjugation with $gd$.\\

Note that if $F'\subseteq\overline{F}\subseteq\Cbar$ then the restrictions of $\varphi_{gd,F}$ and $\varphi_{gd,F'}$ to $H_F^\dagger\cap H_{F'}^\dagger$ agree for all $g\in I$ and $d\in\tilde{D}_{F'}\subseteq \tilde{D}_F$. Here the last inclusion results from (\ref{D_F_inclusion}). In particular, we have $\varphi_{gd,F'}|_{H_F}=\varphi_{gd,F'}$ as isomorphisms $H_F\to H_{gdF}$.\\

If $R$ is a field then the structural results in Proposition \ref{free} and Proposition \ref{I_F_invariants} below all appear in \cite{OS1}. However, the proofs work more generally.
\begin{prop}\label{free}Let $F$ be a face of $\BT$.
\begin{enumerate}[wide]
\item[(i)]$H_F^\dagger$ is free as a left and as a right module over $H_F$.
\item[(ii)]If $F\subseteq\Cbar$ then $H$ is a free left and a free right module over $H_F$ and $H_F^\dagger$.
\item[(iii)]If $F'$ is a face of $\BT$ with $F'\subseteq\overline{F}$ and $C(F')=C(F)$ then $H_{F'}$ is finitely generated and free as a left and as a right module over $H_F$.
\end{enumerate}
\end{prop}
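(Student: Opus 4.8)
The plan is to establish all three statements by reducing to explicit bases of Hecke operators. The key observation, available from the excerpt, is that when $F\subseteq\Cbar$ the algebras $H_F$, $H_F^\dagger$ and $H$ are free $R$-modules with bases $(\tau_w)_{w\in\tilde{W}_F}$, $(\tau_w)_{w\in\tilde{W}_F^\dagger}$ and $(\tau_w)_{w\in\tilde{W}}$ respectively, and that the braid relations (\ref{braid_relations}) together with the length-additivity (\ref{additive_length}) govern how these bases multiply. So the strategy throughout is: pick a set of coset representatives, and show that the corresponding Hecke operators form a basis of the larger algebra as a module over the smaller one by a triangularity argument with respect to the basis $(\tau_w)$.

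For (i), first reduce to the case $F\subseteq\Cbar$ using the isomorphisms $\varphi_{gd,F}$ of (\ref{phi_gd}): any face is $I$-conjugate (after applying a suitable $d\in\tilde{D}_{[F]}$) to its representative $[F]$ in $\Cbar$, and $\varphi_{gd,F}$ gives compatible $R$-algebra isomorphisms $H_F\cong H_{[F]}$ and $H_F^\dagger\cong H_{[F]}^\dagger$, so freeness over $H_F$ is preserved. Now assume $F\subseteq\Cbar$. Since $\tilde{W}_F^\dagger=\tilde{W}_F\rtimes\tilde\Omega_F$ and $\ell$ vanishes on $\tilde\Omega_F$, for $\omega\in\tilde\Omega_F$ and $w\in\tilde W_F$ we have $\ell(\omega w)=\ell(w)=\ell(w\omega)$, so by (\ref{braid_relations}) $\tau_{\omega w}=\tau_\omega\tau_w$ and $\tau_{w\omega}=\tau_w\tau_\omega$. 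Hence $\{\tau_\omega\}_{\omega\in\tilde\Omega_F}$ is simultaneously a left and a right $H_F$-basis of $H_F^\dagger$: the products $\tau_\omega\tau_w$ (resp.\ $\tau_w\tau_\omega$) run bijectively over $\{\tau_v\}_{v\in\tilde W_F^\dagger}$, which is the $R$-basis. This also shows $\tau_\omega$ is a unit in $H_F^\dagger$ with inverse $\tau_{\omega^{-1}}$, which makes the left/right symmetry transparent.

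For (ii), again assume $F\subseteq\Cbar$ (the general case is not needed since (ii) is only asserted for such $F$). Take $\tilde D_F\subseteq\tilde W$, the preimage of the minimal-length coset representatives $D_F$ for $W/W_F$. By (\ref{additive_length}) we have $\ell(dw)=\ell(d)+\ell(w)$ for $d\in\tilde D_F$, $w\in\tilde W_F$, so $\tau_{dw}=\tau_d\tau_w$ by (\ref{braid_relations}); since $\tilde W=\coprod_{d\in\tilde D_F}d\tilde W_F$, the family $\{\tau_d\}_{d\in\tilde D_F}$ is a left $H_F$-basis of $H$. For the right module structure, use instead representatives of $W_F\backslash W$ of minimal length (equivalently, invert: $\{\tau_{d^{-1}}\}$), giving $\ell(wd)=\ell(w)+\ell(d)$ and hence a right $H_F$-basis. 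Freeness over $H_F^\dagger$ then follows by combining with (i): $H$ is free over $H_F$, $H_F$ is a direct summand (even free factor) situation — more directly, $\tilde W=\coprod \tilde D_F^\dagger \tilde W_F^\dagger$ for suitable minimal-length representatives $\tilde D_F^\dagger$ of $W/W_F^\dagger$, and the same length-additivity argument applies once one checks $\ell(dw)=\ell(d)+\ell(w)$ for $d\in\tilde D_F^\dagger$ and $w\in\tilde W_F^\dagger$ (this reduces to the affine-Weyl statement since $\ell$ is inflated from $W$ and constant on $\tilde\Omega_F$-cosets, together with Remark \ref{Omega_in_DF}).

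For (iii), the hypothesis $F'\subseteq\overline F$ with $C(F')=C(F)$ forces $I_{C(F)}=I_{C(F')}$, so $X_F$ is a $P_F$-subrepresentation of $X_{F'}$ and $H_F\subseteq H_{F'}$ as convolution algebras; moreover one reduces to $F,F'\subseteq\Cbar$ via $\varphi_{gd,F'}|_{H_F}=\varphi_{gd,F'}$ (the compatibility noted just before Proposition \ref{free}). Then $W_F\subseteq W_{F'}$ and the index is finite because $W_{F'}$ is finite; picking minimal-length representatives of $W_{F'}/W_F$ inside $\tilde W_{F'}$ and applying (\ref{additive_length}) (which holds for $D_F\supseteq D_{F'}$) gives the finite left $H_F$-basis of $H_{F'}$, and symmetrically on the right. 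The main obstacle I anticipate is purely bookkeeping: carefully checking the length-additivity $\ell(dw)=\ell(d)+\ell(w)$ in each of the three coset decompositions, in particular ensuring that minimal-length representatives of $W/W_F^\dagger$ (rather than $W/W_F$) still satisfy it despite $\Omega_F$ possibly being infinite. Everything else is a direct transfer of the $R$-field arguments of \cite{OS1} to general $R$, which works because all the relevant modules are $R$-free with the stated bases and the braid relations are identities over any $R$.
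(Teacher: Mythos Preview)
Your approach is essentially the paper's: reduce to $F\subseteq\Cbar$ via the isomorphisms (\ref{phi_gd}), then exhibit explicit bases of Hecke operators using minimal-length coset representatives and the braid relations. However, there is a recurring bookkeeping error concerning $T_0/T_1$. In part (i) you write $\tilde W_F^\dagger = \tilde W_F\rtimes\tilde\Omega_F$, but this is false: the semidirect product $W_F^\dagger = W_F\rtimes\Omega_F$ holds only at the level of $W$, not $\tilde W$. Both $\tilde W_F$ and $\tilde\Omega_F$ contain $T_0/T_1$, so the multiplication map $\tilde\Omega_F\times\tilde W_F\to\tilde W_F^\dagger$ is $|T_0/T_1|$-to-one, not bijective. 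Concretely, if $\omega_1,\omega_2\in\tilde\Omega_F$ have the same image in $\Omega_F$ then $\omega_2=\omega_1 t$ with $t\in T_0/T_1$, and $\tau_{\omega_2}=\tau_{\omega_1}\tau_t$ with $\tau_t\in H_F$; thus your proposed family $(\tau_\omega)_{\omega\in\tilde\Omega_F}$ is $H_F$-linearly dependent and cannot be a basis. The fix, as in the paper, is to index by a set $S\subseteq\tilde\Omega_F$ of representatives of $\tilde\Omega_F/(T_0/T_1)\cong\Omega_F$.

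The same issue recurs in part (ii): the full preimage $\tilde D_F$ of $D_F$ in $\tilde W$ is too large to index an $H_F$-basis of $H$, since any two lifts of the same $d\in D_F$ differ by an element of $T_0/T_1\subseteq\tilde W_F$; you must choose one lift per element of $D_F$ (and, for freeness over $H_F^\dagger$, one representative per class in $D_F/\Omega_F$, as the paper notes). Once this is corrected throughout, your argument goes through and matches the paper's.
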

\begin{proof}
As for (i), let $[F]\subseteq\Cbar$ be as in Lemma \ref{face_representatives}. By construction, there are elements $g\in I$ and $w\in W_\aff$ with $F=gw[F]$. If $d\in \tilde{D}_{[F]}$ is chosen so that its image in $W$ is the element of minimal length in $wW_{[F]}$ we also have $gd[F]=F$. The compatible isomorphisms $\varphi_{gd,F}$ in (\ref{d_conjugation}) then allow us to assume $F\subseteq\Cbar$. In this situation, the decomposition (\ref{Bruhat_parahoric}) and the braid relations (\ref{braid_relations}) imply that if $S\subseteq\tilde{\Omega}_F$ denotes a complete set of representatives of the coset space $\tilde{\Omega}_F/(T_0/T_1)\cong\Omega_F$ then the family $(\tau_\omega)_{\omega\in S}$ is a basis of $H_F^\dagger$ as a left and as a right $H_F$-module.\\

The proof of (ii) is identical to that of \cite{OS1}, Proposition 4.21 (i). It relies only on the braid relations of $H$ which hold over any coefficient ring $R$. We note that a basis of $H$ as a right $H_F^\dagger$-module is given by the elements $\tau_d$ if $d$ runs through a system of representatives of $D_F$ modulo $\Omega_F$.\\

As for (iii), the above arguments, Lemma \ref{face_representatives} and the relation $\varphi_{gd,F'}|_{H_F}=\varphi_{gd,F'}$ again allow us to assume $F'\subseteq\overline{F}\subseteq\Cbar$. In this situation, (\ref{additive_length}) and the braid relations (\ref{braid_relations}) imply that the family $(\tau_{d^{-1}})_{d\in\tilde{W}_{F'}\cap\tilde{D}_F}$ (resp.\ the family $(\tau_d)_{d\in\tilde{W}_{F'}\cap\tilde{D}_F}$) is a basis of $H_{F'}$ as a left (resp.\ as a right) $H_F$-module. Note that the group $\tilde{W}_{F'}$ and its subset $\tilde{W}_{F'}\cap\tilde{D}_F$ are finite.
\end{proof}
\begin{rem}\label{group_ring}If $F\subseteq\Cbar$ then there is a more precise version of Proposition \ref{free} (i). In fact, the decomposition (\ref{Bruhat_parahoric}) implies that mapping $\omega\in\tilde{\Omega}_F$ to the Hecke operator $\tau_\omega=I\omega I=\omega I\in H_F^\dagger$ gives a well-defined and injective homomorphism $R[\tilde{\Omega}_F]\to H_F^\dagger$ of $R$-algebras. As in \cite{OS1}, Lemma 4.20 (i), the $R$-algebra $H_F^\dagger$ is a twisted tensor product of $H_F$ and $R[\tilde{\Omega}_F]$ over $R[T_0/T_1]$. Note that $\tau_\omega$ is a unit in $H_F^\dagger$ with inverse $\tau_\omega^{-1}=\tau_{\omega^{-1}}$. If $V\in\Rep_R^\infty(P_F^\dagger)$ and if $m\in V^I$ then $\tau_\omega\cdot m=\omega\cdot m$ by (\ref{convolution}).
\end{rem}
\begin{prop}\label{I_F_invariants}Let $F$ be a face of $\BT$.
\begin{enumerate}[wide]
\item[(i)]The map $X_F\otimes_{H_F}H_F^\dagger\to X_F^\dagger$, $f\otimes h\mapsto h(f)$, is a well-defined isomorphism of $(R[P_F],H_F^\dagger)$-bimodules.
\item[(ii)]If $F\subseteq\Cbar$ then the maps $X_F\otimes_{H_F}H\to X^{I_F}$ and $X_F^\dagger\otimes_{H_F^\dagger}H\to X^{I_F}$, $f\otimes h\mapsto h(f)$, are well-defined isomorphisms of $(R[P_F],H)$- and $(R[P_F^\dagger],H)$-bimodules, respectively.
\item[(iii)]If $F'$ is a face of $\BT$ with $F'\subseteq\overline{F}$ and $C(F')=C(F)$ then the map $X_F\otimes_{H_F}H_{F'}\to X_{F'}^{I_F}$, $f\otimes h\mapsto h(f)$, is a well-defined isomorphism of $(R[P_F],H_{F'})$-bimodules.
\end{enumerate}
\end{prop}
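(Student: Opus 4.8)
The plan is to treat the three statements in sequence, reducing (ii) and (iii) to (i) by means of the isomorphisms $\varphi_{gd,F}$ from (\ref{phi_gd}) and the freeness results of Proposition \ref{free}. For part (i), I would first check that the map $f\otimes h\mapsto h(f)$ is well-defined: an element $h\in H_F^\dagger=\End_{P_F^\dagger}(X_F^\dagger)^\op$ acts on $X_F^\dagger$, and since $X_F\subseteq X_F^\dagger$ is the $P_F$-subrepresentation spanned by functions supported on $P_F$, the assignment is $H_F$-balanced precisely because the $H_F$-action on $X_F$ is the restriction of the $H_F^\dagger$-action (here one uses that $H_F$ is an $R$-subalgebra of $H_F^\dagger$, as recorded after (\ref{H_F})). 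It is visibly a homomorphism of $(R[P_F],H_F^\dagger)$-bimodules. For bijectivity I would use the explicit basis from Remark \ref{group_ring}: reducing via $\varphi_{gd,F}$ to the case $F\subseteq\Cbar$, the algebra $H_F^\dagger$ is free over $H_F$ with basis $(\tau_\omega)_{\omega\in S}$, $S$ a set of representatives of $\tilde\Omega_F/(T_0/T_1)\cong\Omega_F$, so the source becomes $\bigoplus_{\omega\in S}X_F\cdot\tau_\omega$. On the target, the Bruhat decomposition $P_F^\dagger=\coprod_{\tilde w\in\tilde W_F^\dagger}I\tilde wI$ together with $\tilde W_F^\dagger=\tilde W_F\cdot S$ (from $P_F^\dagger/P_F\cong\Omega_F$) decomposes $X_F^\dagger$ as a direct sum of translates of $X_F$, and one checks that $\tau_\omega$ acts on $X_F$ by the translation realizing exactly this decomposition, so the map is an isomorphism summand by summand.

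For part (ii), I would again reduce to $F\subseteq\Cbar$ (here this is automatic since we are given $F\subseteq\Cbar$). Then $I_{C(F)}=I_F(I\cap P_F)$ by Lemma \ref{closest_chamber}, and I claim $X^{I_F}=\ind_I^G(R)^{I_F}$ decomposes according to the double cosets $P_F\backslash G/I$, or better: by the Bruhat decomposition (\ref{Bruhat_decomposition}) and the coset description following Proposition \ref{free}(ii), a basis of $H$ as a right $H_F$-module (resp.\ right $H_F^\dagger$-module) is given by $\tau_d$ with $d$ running through representatives of $D_F$ (resp.\ of $D_F$ modulo $\Omega_F$), using (\ref{additive_length}). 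One then shows $X^{I_F}=\bigoplus_d d^{-1}X_F$ — each summand being the functions in $X^{I_F}$ supported on the appropriate union of $I$-cosets — and that the map $f\otimes\tau_d\mapsto \tau_d(f)$ matches this summand decomposition with $X_F\otimes_{H_F}H=\bigoplus_d X_F\cdot\tau_d$. This is essentially the argument of \cite{OS1}, and the point is that it only uses the braid relations and the combinatorics of $\tilde D_F$, both valid over any $R$. The second isomorphism follows from the first by composing with (i): $X_F^\dagger\otimes_{H_F^\dagger}H\cong(X_F\otimes_{H_F}H_F^\dagger)\otimes_{H_F^\dagger}H\cong X_F\otimes_{H_F}H\cong X^{I_F}$, and this chain is manifestly $(R[P_F^\dagger],H)$-equivariant.

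For part (iii), the hypotheses $F'\subseteq\overline F$ and $C(F')=C(F)$ give $I_{C(F)}=I_{C(F')}$, so $X_F$ is a $P_F$-subrepresentation of $X_{F'}$ (as noted before Proposition \ref{free}), and $H_{F'}$ is free over $H_F$ with basis $(\tau_{d^{-1}})_{d\in\tilde W_{F'}\cap\tilde D_F}$ by Proposition \ref{free}(iii). The same summand-matching argument as in (ii), now carried out inside $X_{F'}$ rather than inside $X$, using the Bruhat decomposition (\ref{Bruhat_parahoric}) of $P_{F'}$ in place of that of $G$, gives $X_{F'}^{I_F}=\bigoplus_d d^{-1}X_F$ and identifies it with $X_F\otimes_{H_F}H_{F'}$ as $(R[P_F],H_{F'})$-bimodules.

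The main obstacle is the careful verification in (i) and (ii) that the $I_F$-invariants (resp.\ the relevant translates of $X_F$ inside $X_F^\dagger$ or $X$) decompose as a direct sum indexed by the Bruhat representatives \emph{and} that the Hecke operators $\tau_w$ act by exactly the expected translations realizing that decomposition — i.e.\ matching the ring-theoretic direct sum on the tensor-product side with the geometric direct sum of function spaces on the induced-representation side. Once this bookkeeping is in place the bimodule-equivariance is formal. Since all the underlying decompositions (Bruhat, the coset sets $D_F$, $\tilde\Omega_F$) and the braid relations hold over an arbitrary commutative ring $R$, no selfinjectivity or field hypothesis on $R$ is needed, which is the point of the remark preceding the proposition.
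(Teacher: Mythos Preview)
Your plan is correct and follows essentially the same route as the paper: reduce to $F\subseteq\Cbar$ via the isomorphisms $\varphi_{gd,F}$, use the explicit free bases of $H_F^\dagger$, $H$, $H_{F'}$ over $H_F$ from Proposition~\ref{free} and Remark~\ref{group_ring}, and match the resulting tensor-product decomposition with a support decomposition of the target; the second isomorphism in (ii) is deduced from the first together with (i) exactly as you propose. The paper additionally carries out the ``main obstacle'' you flag in (ii): for each $d\in\tilde D_F$ it verifies that $f\mapsto\tau_{d^{-1}}(f)$ maps $X_F$ isomorphically onto the summand of $X^{I_F}$ supported on $P_Fd^{-1}I$, the key point being the intersection computation $I_Fd^{-1}Id\cap w^{-1}Iw=I\cap w^{-1}Iw$ obtained from Lemma~\ref{closest_chamber}.
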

\begin{proof}
The inclusions $P_F\subseteq P_F^\dagger$ and $H_F\subseteq H_F^\dagger$ make $X_F$ a subobject of the $(R[P_F],H_F)$-bimodule $X_F^\dagger$. Since $H_F^\dagger$ acts on $X_F\otimes_{H_F}H_F^\dagger$ and $X_F^\dagger$ by homomorphisms of left $R[P_F]$-modules, the map in (i) is well-defined and $(R[P_F],H_F^\dagger)$-linear. The corresponding assertions in (ii) and (iii) are proved similarly, noting that the actions of $I_F$ on $X_F$ and $X_F^\dagger$ are trivial. The rest of the proof of part (i) is identical to that of \cite{OS1}, Lemma 4.24. Note that as in the proof of Proposition \ref{free} (i) we may assume $F\subseteq\Cbar$.\\

Now consider part (ii). Using (i) it suffices to prove the bijectivity of the map $X_F\otimes_{H_F}H\to X^{I_F}$. Fix $d\in \tilde{D}_F$ and let $X^{I_F}_d$ denote the $R$-submodule of $X^{I_F}$ consisting of all functions supported on $P_Fd^{-1}I$. As in the proof of \cite{OS1}, Proposition 4.25, it suffices to see that the map $X_F\to X^{I_F}_d$ given by $f\mapsto\tau_{d^{-1}}(f)$ is bijective. Note that this makes use of Proposition \ref{free} (ii). By (\ref{Bruhat_parahoric}) we have the decomposition
\[
P_F=\coprod_{w\in W_F}\coprod_{g\in I/(wIw^{-1}\cap I)}gwI.
\]
The characteristic functions of the sets $gwI$ form a basis of the $R$-module $X_F$. Applying $\tau_{d^{-1}}$ each of them is mapped to the characteristic function of the set $gwId^{-1}I$. We need to see that these form a basis of the $R$-module $X^{I_F}_d$ or equivalently that the decomposition
\[
P_Fd^{-1}I=\bigcup_{w\in \tilde{W}_F}\bigcup_{g\in I/(wIw^{-1}\cap I)}gwId^{-1}I
\]
is disjoint and consists of double cosets modulo $(I_F,I)$. According to the proof of \cite{OS1}, Proposition 4.25, we have $Id^{-1}I=I_Fd^{-1}I$ where $I_F$ is a normal subgroup of $P_F$. Therefore, $gwId^{-1}I=gwI_Fd^{-1}I=I_Fgwd^{-1}I$ is indeed a double coset as required. In order to see that the above union is disjoint, let $g,g'\in I$ and $w,w'\in \tilde{W}_F$ be such that $I_Fgwd^{-1}I=I_Fg'w'd^{-1}I$. Since $I_F\subseteq I$ left multiplication with $I$ gives $w=w'$ by (\ref{Bruhat_decomposition}). Consequently, $w^{-1}g^{-1}g'w\in I_Fd^{-1}Id\cap w^{-1}Iw$. Now $dC=C(dF)$ by \cite{OS1}, Proposition 4.13, whence
\begin{eqnarray*}
I_Fd^{-1}Id\cap w^{-1}Iw & = & I_F(d^{-1}Id\cap P_F)\cap w^{-1}Iw\\
& = & d^{-1}(I_{dF}(I\cap P_{dF}))d\cap w^{-1}Iw \\
& = & d^{-1}I_{C(dF)}d\cap w^{-1}Iw = I\cap w^{-1}Iw
\end{eqnarray*}
by Lemma \ref{closest_chamber}. Note that $wIw^{-1}$ and $I_F$ are subgroups of $P_F$. This proves (ii). Given Proposition \ref{free} (iii), the proof of part (iii) is analogous.
\end{proof}


\section{Coefficient systems}\label{section_2}


\subsection{Coefficient systems and diagrams}\label{subsection_2_1}

Let $\Y$ be a polysimplicial complex or more generally any subset of a polysimplicial complex which is a union of some of its faces. Following \cite{God}, \S I.3.3, \cite{RS}, \S1, or \cite{SS}, \S{II.2}, a coefficient system of $R$-modules on $\Y$ is a family $\F=((\F_F)_F, (r^F_{F'})_{F'\subseteq\overline{F}})$ of $R$-modules $\F_F$ indexed by the faces $F$ of $\Y$, together with $R$-linear maps $r^F_{F'}:\F_F\to\F_{F'}$ for any pair of faces $F,F'$ of $\Y$ with $F'\subseteq\overline{F}$ such that
\[
 r_F^F=\id_{\F_F}\quad\mbox{and}\quad r^F_{F''}=r^{F'}_{F''}\circ r^F_{F'}
\]
whenever $F''\subseteq \overline{F}'\subseteq\overline{F}$. The maps $r^F_{F'}$ are usually called the restriction maps of the coefficient system $\F$.\\

A homomorphism $f:\F\to\G$ of coefficient systems of $R$-modules on $\Y$ is a family $(f_F)_F$ of $R$-linear maps $f_F:\F_F\to\G_F$ indexed by the faces $F$ of $\Y$ such that the diagram
\[
 \xymatrix{\F_F\ar[d]_{r^F_{F'}}\ar[r]^{f_F} & \G_F\ar[d]^{r^F_{F'}} \\
 \F_{F'}\ar[r]_{f_{F'}} & \G_{F'} }
\]
commutes whenever $F$ and $F'$ are faces of $\Y$ with $F'\subseteq\overline{F}$. We denote by $\Coeff(\Y)$ the category of $R$-linear coefficient systems on $\Y$, oppressing the symbol $R$ from the notation. It is an $R$-linear abelian category in the obvious way.
\begin{ex}\label{constant_system}
For any $R$-module $M$ the family $\K_M=((M)_F,(\id_M)_{F'\subseteq\overline{F}})$ is a coefficient system of $R$-modules on $\Y$. It is called the {\it constant} coefficient system on $\Y$ associated with $M$.
\end{ex}
Assume there is a group $J$ which acts on $\Y$ by simplicial automorphisms. Given $j\in J$ and $\F\in\Coeff(\Y)$ we let $j_*\F$ denote the object of $\Coeff(\Y)$ defined by $j_*\F_F=\F_{jF}$ with transition maps $j_*r^F_{F'}=r^{jF}_{jF'}$. Note that we have $j_*i_*\F=(ij)_*\F$ for any $j,i\in J$. Further, any homomorphism $f:\F\to\G$ in $\Coeff(\Y)$ naturally induces a homomorphism $j_*f:j_*\F\to j_*\G$ by $(j_*f)_F=f_{jF}$. A $J$-equivariant coefficient system of $R$-modules on $\Y$ is an object $\F\in\Coeff(\Y)$ together with a family $(c_j)_{j\in J}$ of homomorphisms $c_j:\F\to j_*\F$ in $\Coeff(\Y)$ such that $c_1=\id_\F$ and $j_*c_i\circ c_j=c_{ij}$ for all $j,i\in J$. The latter is usually called the cocycle relation.\\

If $\F$ and $\G$ are $J$-equivariant coefficient systems on $\Y$ and if $(c_j)_{j\in J}$ and $(d_j)_{j\in J}$ denote the $J$-actions on $\F$ and $\G$, respectively, then a morphism $f:\F\to\G$ in $\Coeff(\Y)$ is called $J$-equivariant if $j_*f\circ c_j=d_j\circ f$ for all $j\in J$. We denote by $\Coeff_J(\Y)$ the category of $J$-equivariant coefficient systems of $R$-modules on $\Y$. Note that a morphism $f=(f_F)_F$ in $\Coeff_J(\Y)$ is an isomorphism if and only if $f_F$ is bijective for all faces $F$ of $\Y$.\\

If $\F\in\Coeff_J(\Y)$ with $J$-action $(c_j)_{j\in J}$ and if $F$ is a face of $\Y$ then the stabilizer of $F$ in $J$ acts on $\F_F$ by $jm=c_j(m)$ for all $m\in\F_F$. If $F'$ and $F$ are faces of $\Y$ with $F'\subseteq\overline{F}$ then the restriction map $r^F_{F'}$ is equivariant for the intersection of the stabilizer groups of $F$ and $F'$.\\

In the particular case of the Bruhat-Tits building $\Y=\BT$ and $J=G$, we say that $\F$ is of level zero if the action of $I_F$ on $\F_F$ is trivial for any face $F$ of $\BT$. We denote by $\Coeff_G^0(\BT)$ the full subcategory of $\Coeff_G(\BT)$ consisting of all $G$-equivariant coefficient systems on $\BT$ which are of level zero.
\begin{ex}\label{fixed_point_system}
Given a smooth $R$-linear $G$-representation $V\in\Rep_R^\infty(G)$ the associated fixed point system $\F_V\in\Coeff_G^0(\BT)$ is defined by $(\F_V)_F=V^{I_F}$ with the inclusions induced by (\ref{inclusions}) as restriction maps. For any face $F$ of $\BT$ and any element $g\in G$ the map $c_{g,F}:V^{I_F}\to V^{I_{gF}}=V^{gI_Fg^{-1}}$ is defined by $c_{g,F}(v)=gv$.
\end{ex}
Let $\Y$ be a polysimplicial complex of dimension $d$. For $0\leq i\leq d$ we denote by $\Y_i$ the set of $i$-dimensional faces of $\Y$ and by $\Y_{(i)}$ the set of oriented $i$-dimensional faces of $\Y$ in the sense of \cite{SS}, \S II.1. The elements of $\Y_{(i)}$ are pairs $(F,c)$ where $c$ is an orientation of $F$ with the convention that the $0$-dimensional faces always carry the trivial orientation. Note that if $i>0$ then also $(F,-c)\in\Y_{(i)}$ where $-c$ denotes the orientation opposite to $c$. If $(F,c)\in\Y_{(i)}$ and if $F'\in\Y_{i-1}$ with $F'\subseteq\overline{F}$ then we denote by $\partial^F_{F'}(c)$ the induced orientation of $F'$. It satisfies $\partial_{F'}^F(-c)=-\partial_{F'}^F(c)$.\\

Given an object $\F\in\Coeff(\Y)$ we denote by $(\C^\ori_c (\Y_{(\bullet)},\F),\partial_\bullet)$ the oriented chain complex
\begin{equation}\label{oriented_chain_complex}
 0\longrightarrow\C_c^\ori(\Y_{(d)},\F)\stackrel{\partial_{d-1}}{\longrightarrow} \ldots \stackrel{\partial_1}{\longrightarrow}\C_c^\ori(\Y_{(1)},\F) \stackrel{\partial_0}{\longrightarrow}\C_c^\ori(\Y_{(0)},\F) \longrightarrow 0
\end{equation}
of $\F$ in analogy to \cite{SS}, \S II.2. Recall that $\C_c^\ori(\Y_{(i)},\F)$ denotes the $R$-module of $i$-dimensional oriented chains on $\Y$ with values in $\F$, i.e.\ the $R$-module of finitely supported maps
\[f:\Y_{(i)}\to\coprod_{(F,c)\in\Y_{(i)}}\F_F\]
such that $f((F,c))\in\F_F$ for any element $(F,c)\in\Y_{(i)}$ and such that $f((F,-c))=-f((F,c))$ in case $i\geq 1$. The $R$-linear differentials are defined by
\begin{equation}\label{differentials}
 \partial_i(f)(F',c')=\sum_{\scriptsize\begin{array}{ccc}(F,c)\in\Y_{(i+1)}\\F'\subseteq\overline{F},\;\partial^F_{F'}(c)=c'\end{array}}r^F_{F'}(f(F,c))
\end{equation}
for any $f\in\C_c^\ori(\Y_{(i+1)},\F)$ and $(F',c')\in\Y_{(i)}$ with $0\leq i\leq d-1$. Formally, we let $\partial_{d+1}=\partial_{-1}=0$. For $0\leq i\leq d$ the $R$-module
\[
 H_i(\Y,\F)=\ker(\partial_{i-1})/\im(\partial_i)
\]
is called the $i$-th homology group of $\Y$ with coefficients in $\F$. If $\Y$ carries a simplicial action of the group $J$ and if $\F$ is a $J$-equivariant coefficient system on $\Y$ with $J$-action $(c_j)_{j\in J}$ then (\ref{oriented_chain_complex}) is a complex of smooth $R$-linear $J$-representations via
\[
(j\cdot f)(F,c)=c_{j,j^{-1}F}(f(j^{-1}F,j^{-1}c)).
\]
In this situation, the homology groups of $\Y$ with coefficients in $\F$ are objects of $\Rep^\infty_R(J)$, as well.
\begin{defin}\label{local_system}
A coefficient system $\F\in\Coeff(\Y)$ is called locally constant if the transition maps $r^F_{F'}:\F_F\to\F_{F'}$ are bijective for all faces $F',F$ of $\Y$ with $F'\subseteq\overline{F}$.
\end{defin}
Recall that the open star of a face $F'$ of $\Y$ is defined as
\[\St(F')=\bigcup_{F'\subseteq\overline{F}}F.\]
It is a contractible open neighborhood of $F'$ in $\Y$. Note also that for any subset $\mathscr{Z}$ of $\Y$ which is a union of faces we have the restriction functor $\Coeff(\Y)\to\Coeff(\mathscr{Z})$ defined by sending a coefficient system $\F$ on $\Y$ to
\[
 \F|_{\mathscr{Z}}=((\F_F)_{F\subseteq\mathscr{Z}},(r^F_{F'})_{F'\subseteq\overline{F}\subseteq\mathscr{Z}})\quad\mbox{in}\quad \Coeff(\mathscr{Z}).
\]
\begin{rem}\label{criterion_loc_constant}A coefficient system $\F$ is locally constant if and only if for all faces $F'$ of $\Y$ the restriction of $\F$ to $\St(F')$ is isomorphic to a constant coefficient system. Indeed, if $\F$ is locally constant then $(r^F_{F'})_{F'\subseteq\overline{F}}:\F|_{\St(F')}\to\K_{\F_{F'}}$ is an isomorphism in $\Coeff(\St(F'))$. One can show that on a simply connected polysimplicial complex any locally constant coefficient system is isomorphic to a constant coefficient system as in Example \ref{constant_system}.
\end{rem}
We view $\Cbar$ as a finite subcomplex of $\BT$. The following terminology was first introduced in \cite{Pas}, \S5.5.
\begin{defin}\label{diagram}A diagram of $R$-modules on $\Cbar$ is a $P_C^\dagger$-equivariant coefficient system $((\D_F)_F,(r^F_{F'})_{F'\subseteq\overline{F}},(c_g)_{g\in P_C^\dagger})$ on $\Cbar$ together with an $R$-linear action of $P_F^\dagger$ on $\D_F$ for each face $F\subseteq\Cbar$ such that
\begin{enumerate}[wide]
 \item[(i)] the $(P_F^\dagger\cap P_C^\dagger)$-action agrees with the action induced by $(c_g)_{g\in P_F^\dagger\cap P_C^\dagger}$,
 \item[(ii)] for all $g\in P_C^\dagger$ and for all $h\in P_F^\dagger$ we have $ghg^{-1}\circ c_{g,F} = c_{g,F}\circ h$,
 \item[(iii)] for all faces $F'\subseteq\overline{F}$ the restriction map $r^F_{F'}$ is $(P_F^\dagger\cap P_{F'}^\dagger)$-equivariant.
\end{enumerate}
\end{defin}
In other words, a diagram is a $P_C^\dagger$-equivariant coefficient system of $R$-modules on $\Cbar$ such that on each face $F\subseteq\Cbar$ the action of $P_C^\dagger\cap P_F^\dagger$ on $\D_F$ is extended to $P_F^\dagger$ in a way which is compatible with restriction and such that for any $g\in P_C^\dagger$ the maps $c_{g,F}:\D_F\to\D_{gF}$ are $P_F^\dagger$-equivariant if the action on $\D_{gF}$ is pulled back along conjugation with $g$.\\

A homomorphism $f:\D\to\E$ of diagrams is a homomorphism of the underlying $P_C^\dagger$-equivariant coefficient systems on $\Cbar$ such that for any face $F\subseteq\Cbar$ the $R$-linear map $f_F:\D_F\to\E_F$ is $P_F^\dagger$-equivariant. We denote by $\Diag(\Cbar)$ the corresponding category of diagrams on $\Cbar$. A diagram $\D$ is called of level zero if the action of $I_F$ on $\D_F$ is trivial for any face $F\subseteq\Cbar$. We denote by $\Diag^0(\Cbar)\subseteq\Diag(\Cbar)$ the full subcategory of diagrams on $\Cbar$ which are of level zero.
\begin{rem}\label{comparison_Paskunas}
If $G=\GL_2(K)$ then the notion of a diagram was originally introduced by Paskunas (cf.\ \cite{Pas}, Definition 5.14). His definition does not literally agree with ours. Namely, in the case of a tree the closed chamber $\Cbar$ is an edge with two adjacent vertices whereas Paskunas only works with an edge and one of its vertices. Since these form a complete set of representatives of the $G$-orbits in $\BT$ there is indeed some redundance in Definition \ref{diagram}. However, the case of a tree is special in that the above set of representatives also reflects all possible face relations in $\BT$. In the general case, it is not clear that a complete system of representatives can be chosen in such a way that any two incident faces also have incident representatives. In any case, the aim is to use the transitivity properties of the $G$-action on $\BT$ to reduce the information encoded in a $G$-equivariant coefficient systems to a finite amount of data. Taking into account \cite{Pas}, Theorem 5.17, and Proposition \ref{diagrams} below, the two definitions lead to equivalent categories.
\end{rem}
\begin{prop}\label{diagrams}
The restriction functor
\begin{eqnarray*}
 \res:\Coeff_G(\BT)&\longrightarrow&\Diag(\Cbar)\\
 ((\F_F)_{F\subseteq\BT}, (r^F_{F'})_{F'\subseteq\overline{F}\subseteq\BT},(c_g)_{g\in G})&\longmapsto&((\F_F)_{F\subseteq\Cbar}, (r^F_{F'})_{F'\subseteq\overline{F}\subseteq\Cbar},(c_g)_{g\in P_C^\dagger})\\
 (f_F)_{F\subseteq\BT}&\longmapsto&(f_F)_{F\subseteq\Cbar}
\end{eqnarray*}
is a well-defined equivalence of categories. It restricts to an equivalence of categories $\Coeff_G^0(\BT)\longrightarrow\Diag^0(\Cbar)$.
\end{prop}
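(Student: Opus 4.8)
The plan is to construct an explicit quasi-inverse functor $\iota:\Diag(\Cbar)\to\Coeff_G(\BT)$ and verify that $\iota$ and $\res$ are mutually inverse equivalences. The construction of $\iota$ on objects uses the transitivity of the $G$-action: every face $F$ of $\BT$ is of the form $F=g[F]$ with $[F]\subseteq\Cbar$ and $g\in G$ (using Lemma \ref{face_representatives} together with the fact that $P_C^\dagger$ maps any face of the standard apartment into $\Cbar$, as in the proof of Lemma \ref{face_representatives}). Given a diagram $\D$ on $\Cbar$ one sets $(\iota\D)_F=\D_{[F]}$, and for a chosen $g$ with $F=g[F]$ one transports the restriction maps and the $G$-equivariance structure along $g$. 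First I would check that this is well-defined: if $g[F]=g'[F]$ then $g^{-1}g'$ stabilizes $[F]$, hence lies in $P_{[F]}^\dagger$, and the extra $P_{[F]}^\dagger$-action that is part of the data of a diagram is exactly what is needed to make the identification independent of the choice of $g$; similarly axiom (iii) of Definition \ref{diagram} guarantees compatibility with face inclusions $F'\subseteq\overline{F}$, after reducing via Lemma \ref{face_representatives} to the case $[F']\subseteq\overline{[F]}$, and axiom (ii) gives the cocycle relation for the resulting $(c_g)_{g\in G}$.

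The main obstacle, and the step deserving the most care, is precisely this well-definedness: a face $F\subseteq\overline{C(F)}$ can have its orbit representative reached in genuinely different ways, and unlike the tree case (Remark \ref{comparison_Paskunas}) one cannot choose orbit representatives so that incident faces have incident representatives. So one must show that the restriction map $(\iota\D)_F\to(\iota\D)_{F'}$ one writes down does not depend on the auxiliary elements $g,g'$ with $g[F]=F$, $g'[F']=F'$. The clean way is to first treat inclusions $F'\subseteq\overline{F}$ with both faces inside $\Cbar$ but not necessarily with $C(F')=C(F)$; here one uses that some element of $P_C^\dagger$ (in fact of $I$, by (\ref{Bruhat_parahoric})) carries any apartment containing $F,C$ onto the standard one, together with axioms (i) and (iii). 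The intertwining identity (ii) is what reconciles two different choices. I would organize this as a lemma: for $g,h\in G$ with $gF_0=hF_0=F$ (where $F_0\subseteq\Cbar$), the isomorphisms $\D_{F_0}\to(\iota\D)_F$ defined via $g$ and via $h$ agree, because $h^{-1}g\in P_{F_0}^\dagger$ and $c_{h^{-1}g,F_0}$ is, by the extra diagram structure, the chosen $P_{F_0}^\dagger$-action.

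Next I would define $\iota$ on morphisms by $(\iota f)_F=f_{[F]}$ transported along the chosen $g$; functoriality and independence of choices follow from the $P_F^\dagger$-equivariance of $f_{[F]}$ built into the definition of a morphism of diagrams. Then the two composites: $\res\circ\iota$ is visibly isomorphic to the identity on $\Diag(\Cbar)$ since for $F\subseteq\Cbar$ one may take $g=1$; for $\iota\circ\res$ one produces a natural isomorphism to the identity on $\Coeff_G(\BT)$ whose component at a face $F$ is $c_{g,[F]}:\F_{[F]}\to\F_F$ for a chosen $g$ with $g[F]=F$ — this is an isomorphism because $\F$ is $G$-equivariant (Section \ref{subsection_2_1}: a morphism in $\Coeff_G(\BT)$ is an isomorphism iff it is so facewise), and it is independent of the choice of $g$ and natural by the cocycle relation for $\F$ itself. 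Finally, for the level-zero statement I would simply observe that $\F$ is of level zero iff $I_{[F]}$ acts trivially on $\F_{[F]}$ for all $F\subseteq\Cbar$: the inclusion $\Coeff_G^0(\BT)\subseteq\Coeff_G(\BT)$ matches $\Diag^0(\Cbar)\subseteq\Diag(\Cbar)$ under $\res$ because $I_{gF}=gI_Fg^{-1}$ and the $G$-action intertwines the two $I$-radical actions, so level-zeroness is detected on orbit representatives; hence the equivalence restricts as claimed.
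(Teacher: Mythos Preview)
Your strategy---construct a quasi-inverse by sending a diagram $\D$ to the coefficient system with $(\iota\D)_F=\D_{[F]}$ and then check the two composites---is exactly the paper's. But there is a genuine gap in how you propose to define the $G$-action on $\iota\D$. You write that one ``transports the $G$-equivariance structure along $g$,'' yet a diagram carries no $G$-equivariance: it only has the $P_C^\dagger$-structure $(c_g)_{g\in P_C^\dagger}$ and the individual $P_{[F]}^\dagger$-actions. For $h\in G$ the element $g_{hF}^{-1}hg_F$ takes $[F]$ to $[hF]$, both inside $\Cbar$, but it need not lie in either $P_C^\dagger$ or $P_{[F]}^\dagger$. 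The key input you are missing is that any such element factors as $g_1g_2$ with $g_1\in P_C^\dagger$ and $g_2\in P_{[F]}^\dagger$; this is \cite{OS1}, Lemma~4.12~(ii) and Remark~4.14, and is what allows one to set $c_{h,F}=c_{g_1,[F]}\circ g_2$ using only the diagram data. The well-definedness and cocycle checks you emphasize then reduce, via axioms (i) and (ii) of Definition~\ref{diagram}, to the ambiguity in this factorization---which is precisely an element of $P_C^\dagger\cap P_{[F]}^\dagger$.

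A second, smaller point: the paper deliberately chooses the transporters $g_F$ in $G_\aff$ rather than in $G$. This matters for the restriction maps: with $g_F\in G_\aff$ one gets $g_FF'=[F']$ on the nose (by the uniqueness in Lemma~\ref{face_representatives}), so $g_{F'}g_F^{-1}\in P_{[F']}^\dagger\cap G_\aff=P_{[F']}$, and the nesting $P_{[F']}\subseteq P_{[F'']}$ from (\ref{inclusions}) makes the transitivity check go through cleanly. With an arbitrary $g\in G$ you only get $g^{-1}F'\in\Cbar$, which is $P_C^\dagger$-conjugate to $[F']$ but possibly distinct from it, and you would need an extra layer of identifications via the $P_C^\dagger$-structure. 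Your ``independence of choice'' framing is fine in spirit, but it does not sidestep these two issues; it just repackages them.
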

\begin{proof}Let $\F$ be a $G$-equivariant coefficient system on $\BT$. Endowing $\F_F$ with the induced $R$-linear action of $P_F^\dagger$, the family $\res(\F)$ is clearly a diagram on $\Cbar$ and $\res$ is a functor. Since it preserves objects of level zero, it suffices to prove the first assertion.\\

For any face $F$ of $\BT$ we fix an element $g_F\in G_\aff$ such that $g_FF\subseteq\Cbar$ (cf.\ Lemma \ref{face_representatives}). If $F\subseteq\Cbar$ then we choose $g_F=1$. Depending on these choices we shall construct a quasi-inverse of the restriction functor as follows. We let $[F]=g_FF$ which is independent of $g_F\in G_\aff$ by Lemma \ref{face_representatives}. Let $\D$ be a diagram on $\Cbar$ with $P_C^\dagger$-action $(c_g)_{g\in P_C^\dagger}$. If $F$ is a face of $\BT$ we define the $R$-module
\[\F_F=\D_{[F]}.\]
If $F'$ is a face of $\BT$ with $F'\subseteq\overline{F}$ then $g_FF'\subseteq g_F\overline{F}\subseteq\Cbar$ whence $g_FF'=[F']=g_{F'}F'$ by the uniqueness assertion of Lemma \ref{face_representatives}. In other words, $g_{F'}g_F^{-1}\in P_{[F']}^\dagger\cap G_\aff=P_{[F']}$ where the last equality comes from \cite{OS1}, Lemma 4.10. We define
\[r^F_{F'}=g_{F'}g_F^{-1}\circ r^{[F]}_{[F']},\]
using the given $P_{[F']}^\dagger$-action on $\D_{[F']}$. Let us first check that the family $\F=((\F_F)_F,(r^F_{F'})_{F'\subseteq\overline{F}})$ lies in $\Coeff(\BT)$. Clearly, $r^F_F=\id_{\F_F}$ for all faces $F$ of $\BT$. If $F$, $F'$ and $F''$ are faces of $\BT$ with $F''\subseteq\overline{F'}\subseteq\overline{F}$ then $g_{F'}g_F^{-1}\in P_{F'}\subseteq P_{F''}$ by (\ref{inclusions}) and we have
\begin{eqnarray*}
r^F_{F''} &=& g_{F''}g_F^{-1}\circ r^{[F]}_{[F'']}\\
&=& g_{F''}g_{F'}^{-1}\circ g_{F'}g_F^{-1}\circ r^{[F']}_{[F'']}\circ r^{[F]}_{[F']}\\
&=& (g_{F''}g_{F'}^{-1} \circ r^{[F']}_{[F'']})\circ ( g_{F'}g_F^{-1} \circ r^{[F]}_{[F']})\\
&=& r^{F'}_{F''}\circ r^F_{F'}
\end{eqnarray*}
because of the $(P_{[F']}^\dagger\cap P_{[F'']}^\dagger)$-equivariance of $r^{[F']}_{[F'']}$.\\

In order to define the required $G$-action on $\F$ let $g\in G$. Then $g_{gF}gg_F^{-1}$ maps $[F]=g_FF$ to $[gF]$, both of which are contained in $\Cbar$. By \cite{OS1}, Lemma 4.12 (ii) and Remark 4.14, there are elements $g_1\in P_C^\dagger$ and $g_2\in P_{[F]}^\dagger$ with $g_{gF}gg_F^{-1}=g_1g_2$. We then define $c_{g,F}:\F_F\to\F_{gF}$ as the composition
\[\F_F=\D_{[F]}\stackrel{g_2}{\longrightarrow}\D_{[F]}\stackrel{c_{g_1,[F]}}{\longrightarrow}\D_{[gF]}=\F_{gF},\]
using the $P_C^\dagger$-action on $\D$ and the $P_{[F]}^\dagger$-action on $\D_{[F]}$. Note first that the $R$-linear map $c_{g,F}$ is independent of the chosen product decomposition. Indeed, if $g_1'\in P_C^\dagger$ and $g_2'\in P_{[F]}^\dagger$ with $g_{gF}gg_F^{-1}=g_1g_2=g_1'g_2'$ then $g_1^{-1}g_1'=g_2(g_2')^{-1}\in P_C^\dagger\cap P_{[F]}^\dagger$ whence
\begin{eqnarray*}
c_{g_1,[F]}\circ g_2 &=& c_{g_1,[F]}\circ g_2(g_2')^{-1}g_2'\\
&=& c_{g_1,[F]} \circ c_{g_2(g_2')^{-1},[F]} \circ g_2'\\
&=& c_{g_1,[F]} \circ c_{g_1^{-1}g_1',[F]} \circ g_2'\\
&=& c_{g_1',[F]} \circ g_2'
\end{eqnarray*}
as $R$-linear maps $\D_{[F]}\to\D_{[gF]}$ by the compatibility of the actions of $P_C^\dagger$ on $\D$ and $P_{[F]}^\dagger$ on $\D_{[F]}$. Consequently, if $F\subseteq\Cbar$ and if $g\in P_C^\dagger$ then $g_F=g_{gF}=1$ by convention and the action map $c_{g,F}:\F_F=\D_F\to\D_{gF}=\F_{gF}$ agrees with the original one of the diagram $\D$. Similarly, if $F\subseteq\Cbar$ and if $g\in P_F^\dagger$ then $c_{g,F}$ is equal to the original action of $g$ on $\F_F=\D_F$ as an element of $P_F^\dagger$. Moreover, we have $c_{1,F}=\id_{\F_F}$ for all faces $F$ of $\BT$.\\

Let us now show that the families $c_g=(c_{g,F})_{F\subseteq\BT}$ satisfy the cocycle relation $h_*c_g\circ c_h=c_{gh}$ for all $h,g\in G$. To see this, let $F$ be an arbitrary face of $\BT$. Choose elements $g_1,g_1'\in P_C^\dagger$, $g_2\in P_{[F]}^\dagger$ and $g_2'\in P_{[hF]}^\dagger$ such that $g_{hF}hg_F^{-1}=g_1g_2$ and $g_{ghF}gg_{hF}^{-1}=g_1'g_2'$. In particular, we have $g_1[F]=[hF]$ and $g_1'[hF]=[ghF]$ whence
\[
g_{ghF}ghg_F^{-1}=g_{ghF}gg_{hF}^{-1}\cdot g_{hF}hg_F^{-1}=g_1'g_2'\cdot g_1g_2=g_1'g_1\cdot(g_1^{-1}g_2'g_1)\cdot g_2
\]
where $g_1 $ and $g_1'$ are contained in $P_C^\dagger$ and $g_1^{-1}g_2'g_1$ and $g_2$ are contained in $P_{[F]}^\dagger$. We use this decomposition and Definition \ref{diagram} (ii) to compute
\begin{eqnarray*}
c_{gh,F} &=& c_{g_1'g_1,[F]}\circ g_1^{-1}g_2'g_1g_2\\
&=& c_{g_1',[hF]}\circ c_{g_1,[F]}\circ g_1^{-1}g_2'g_1\circ g_2\\
&=& c_{g_1',[hF]}\circ g_2' \circ c_{g_1,[F]} \circ g_2\\
&=& c_{g,hF}\circ c_{h,F}.
\end{eqnarray*}
Using the cocycle relation we can also show that $c_g=(c_{g,F})_F:\F\to g_*\F$ is a homomorphism of coefficient systems on $\BT$. Fix faces $F'$ and $F$ of $\BT$ with $F'\subseteq\overline{F}$ and choose $g_1,g_1'\in P_C^\dagger$, $g_2\in P_{[F]}^\dagger$ and $g_2'\in P_{[F']}^\dagger$ with
\[g_{gF}gg_F^{-1}=g_1g_2\quad\mbox{and}\quad g_{gF'}gg_{F'}^{-1}=g_1'g_2'.\]
Since $P_C^\dagger P_F^\dagger=P_C^\dagger P_F$ by (\ref{Bruhat_parahoric}) we may assume $g_2\in P_F$, whence $g_2\in (P_F^\dagger\cap P_{F'}^\dagger)$ by (\ref{inclusions}). Recall that $g_FF'=g_{F'}F'=[F']$, $g_{gF}gF'=g_{gF'}gF'=[gF']$ and that consequently $g_1g_2[F']=g_{gF}gg_F^{-1}[F']=[gF']$. We can therefore consider
\[
 \xymatrix@C=2cm{\D_{[F]}\ar[r]^{g_2}\ar[d]_{r^{[F]}_{[F']}} & \D_{[F]} \ar[r]^{c_{g_1,[F]}} \ar[d]^{r^{[F]}_{[F']}} & \D_{[gF]} \ar[d]^{r^{[gF]}_{[gF']}} \\
 \D_{[F']} \ar[r]_{g_2} & \D_{[F']} \ar[r]_{c_{g_1,[F']}} & \D_{[gF']}.}
\]
Since the two small squares are commutative by the definition of $\D$, also the outer square is commutative and we get
\begin{eqnarray*}
 r^{gF}_{gF'}\circ c_{g,F} &=& g_{gF'}g_{gF}^{-1}\circ r^{[gF]}_{[gF']} \circ c_{g_1,[F]}\circ g_2\\
 &=& g_{gF'} g_{gF}^{-1} \circ c_{g_1,[F']}\circ g_2 \circ r^{[F]}_{[F']}\\
 &=& c_{g_{gF'} g_{gF}^{-1},[gF']}\circ c_{g_1,[F']}\circ c_{g_2,[F']} \circ r^{[F]}_{[F']}\\
 &=& c_{g_{gF'} g_{gF}^{-1}g_1g_2,[F']}\circ r^{[F]}_{[F']}\\
 &=& c_{g_1'g_2'g_{F'}g_F^{-1},[F']}\circ r^{[F]}_{[F']}\\
 &=& c_{g_1',[F']}\circ g_2'\circ g_{F'}g_F^{-1}\circ r^{[F]}_{[F']} = c_{g,[F']}\circ r^F_{F'}.
\end{eqnarray*}
If $f:\D_1\to\D_2$ is a homomorphism of diagrams and if $\F_1$ and $\F_2$ denote the associated $G$-equivariant coefficient systems on $\BT$ as above, then we extend $f$ to homomorphism $f:\F_1\to\F_2$ in $\Coeff_G(\BT)$ by setting $f_F=f_{[F]}$. If $F'$ and $F$ are faces of $\BT$ with $F'\subseteq\overline{F}$ then
\begin{eqnarray*}
 f_{F'}\circ r^F_{F'} &=& f_{[F]'}\circ g_{F'}g_F^{-1}\circ r^{[F]}_{[F']}= g_{F'}g_F^{-1}\circ f_{[F']}\circ r^{[F]}_{[F']}\\
 &=&g_{F'}g_F^{-1}\circ r^{[F]}_{[F']} \circ f_{[F]}= r^F_{F'}\circ f_F
\end{eqnarray*}
by the properties of $f:\D_1\to\D_2$ because $g_{F'}g_F^{-1}\in P_{[F']}^\dagger$. Moreover, if $g\in G$ write $g_{gF}gg_F^{-1}=g_1g_2$ with $g_1\in P_C^\dagger$ and $g_2\in P_{[F]}^\dagger$. Since $g_1[F]=g_1g_2[F]=g_{gF}gg_F^{-1}[F]=g_{gF}gF=[gF]$ we can compute
\begin{eqnarray*}
f_{gF}\circ c_{g,F} &=& f_{[gF]} \circ c_{g_1,[F]} \circ g_2 = f_{g_1[F]} \circ c_{g_1,[F]} \circ g_2\\
&=& c_{g_1,[F]} \circ f_{[F]}\circ g_2 = c_{g_1,[F]} \circ g_2 \circ f_{[F]}\\
&=& c_{g,F} \circ f_F
\end{eqnarray*}
by the properties of $f:\D_1\to\D_2$.\\

Clearly, these constructions yield a functor $\Diag(\Cbar)\to\Coeff_G(\BT)$ whose composition with the restriction functor is the identity on $\Diag(\Cbar)$. Conversely, if $\F\in\Coeff_G(\BT)$ and if $\E$ denotes the $G$-equivariant coefficient system on $\BT$ constructed from the diagram $\res(\F)$, then the family $\iota=(c_{g_F,F})_{F\subseteq\BT}:\F_F\to\F_{g_FF}=\E_F$ of $R$-linear maps is an isomorphism in $\Coeff_G(\BT)$. Indeed, each map $c_{g_F,F}$ is bijective with inverse $c_{g_F^{-1},g_FF}$ by the cocycle relation. If $F'$ and $F$ are faces of $\BT$ with $F'\subseteq\overline{F}$ then
\begin{eqnarray*}
r^F_{F'} \circ \iota_F &=&r^F_{F'} \circ c_{g_F,F}  = c_{g_{F'}g_F^{-1},[F']}\circ r^{[F]}_{[F']} \circ c_{g_F,F} \\
&=& c_{g_{F'}g_F^{-1},[F']} \circ c_{g_F,F'} \circ r^F_{F'}\\
&=& c_{g_{F'},F'} \circ r^F_{F'} = \iota_{F'}\circ r^F_{F'},
\end{eqnarray*}
using that $g_FF'=[F']$. Finally, if $g\in G$ and if $g_{gF}gg_F^{-1}=g_1g_2$ with $g_1\in P_C^\dagger$ and $g_2\in P_{[F]}^\dagger$ then
\begin{eqnarray*}
 c_{g,F}\circ\iota_F &=& c_{g,F}\circ c_{g_F,F} = c_{g_1,[F]}\circ c_{g_2,[F]}\circ c_{g_F,F}\\
 &=& c_{g_1g_2g_F,F} = c_{g_{gF},gF} \circ c_{g,F} = \iota_{gF}\circ c_{g,F}.
\end{eqnarray*}
Clearly, the formation of the isomorphism $\iota$ is functorial in the coefficient system $\F$. This completes the proof of the proposition.
\end{proof}


\subsection{Acyclic coefficient systems on the standard apartment}\label{subsection_2_2}

Let $\F\in\Coeff_G^0(\BT)$ be of level zero. Denote its restriction maps by $r^F_{F'}$ and its $G$-action by $(c_g)_{g\in G}$. Slightly generali\-zing \cite{OS1}, \S3.2, we define the coefficient system $\F^I\in\Coeff(\A)$ by setting
\[
 \F_F^I=\F_F^{I\cap P_F^\dagger}=\F_F^{I_F(I\cap P_F^\dagger)}=\F_F^{I_{C(F)}}\in\Mod_R\quad\mbox{for any face}\quad F\subseteq\A,
\]
where the chamber $C(F)\subseteq\A$ is as in Lemma \ref{closest_chamber}. The restriction maps $t^F_{F'}:\F_F^I\to\F_{F'}^I$ are defined by setting
\[
 t^F_{F'}(m)=\sum_{g\in(I\cap P_{F'}^\dagger)/(I\cap P_F^\dagger)}g\cdot r^F_{F'}(m)\quad\mbox{for all faces}\quad F'\subseteq\overline{F}\subseteq\A,
\]
making use of the inclusion relation (\ref{Iwahori_intersection_inclusion}). By the same reference the sum does not depend on the choice of the representatives $g$ because $m$ is $(I\cap P_F^\dagger)$-invariant and $r^F_{F'}$ is equivariant for the action of $I\cap P_F^\dagger=I\cap P_F\subseteq (P_F^\dagger\cap P_{F'}^\dagger)$. This also shows $t^F_{F''}=t^{F'}_{F''}\circ t^F_{F'}$ if $F''\subseteq\overline{F}'\subseteq\overline{F}$ so that we obtain the functor
\[(\F\mapsto\F^I):\Coeff_G^0(\BT)\to\Coeff(\A).\]
We need the following straightforward generalization of \cite{OS1}, Proposition 3.3. The idea of the proof goes back to Broussous (cf.\ \cite{Bro}, Proposition 11).
\begin{prop}\label{restriction_chain_complexes}
 Let $\F\in\Coeff_G^0(\BT)$. Restricting $I$-invariant oriented chains from $\BT$ to $\A$ induces an isomorphism
\[
 (\C^\ori_c(\BT_{(\bullet)},\F)^I,\partial_{\bullet})\stackrel{\cong}{\longrightarrow}(\C_c^\ori(\A_{(\bullet)},\F^I),\partial_{\bullet})
\]
of complexes of $R$-modules.
\end{prop}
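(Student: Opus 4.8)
The map is the obvious one: an $I$-invariant oriented chain $f \in \C_c^\ori(\BT_{(i)},\F)^I$ is sent to its restriction $f|_{\A_{(i)}}$, which makes sense because $\F_F^I = \F_F^{I \cap P_F^\dagger}$ is exactly the $R$-module in which the values $f(F,c)$ of an $I$-invariant chain at faces $F \subseteq \A$ must lie (the stabilizer $I \cap P_F^\dagger = I \cap P_F$ fixes the orientation, so $I$-invariance of $f$ forces $f(F,c) \in \F_F^{I\cap P_F^\dagger} = \F_F^I$). First I would check this restriction map is a chain map, i.e. commutes with the differentials: starting from the formula (\ref{differentials}) for $\partial_i$ on $\BT$ one groups the faces $F \supseteq F'$ in $\BT$ according to their $I_{C(F')}$-orbits, or rather $(I\cap P_{F'}^\dagger)$-orbits, and uses that $f$ is $I$-invariant together with the definition of $t^F_{F'}$ via the sum over $(I\cap P_{F'}^\dagger)/(I\cap P_F^\dagger)$; this is a routine bookkeeping that I would not spell out in full.

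**Injectivity.** This is the heart of the matter and rests on the transitivity of the $I$-action on the relevant faces of $\BT$. The key geometric input is that $\A$ contains exactly one $I$-conjugate of each face, and more precisely that for a face $F' \subseteq \A$ every face $F$ of $\BT$ with $F' \subseteq \overline F$ is $I$-conjugate — in fact $I_{C(F')}$-conjugate — to a face of $\St(F') \cap \A$; this is the content underlying Lemma \ref{closest_chamber} and \cite{OS1}, Remark 4.17 (and its proof via the chamber $C(F')$). Consequently an $I$-invariant chain $f$ on $\BT$ is completely determined by its values on faces contained in $\A$: given any oriented face $(F,c)$ of $\BT$, pick $g \in I$ with $gF \subseteq \A$, and then $f(F,c) = g^{-1} \cdot (g_* f)(gF, gc)$ is forced by $I$-invariance. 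Hence if $f|_{\A_{(\bullet)}} = 0$ then $f = 0$, giving injectivity.

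**Surjectivity.** Conversely, given a chain $h \in \C_c^\ori(\A_{(i)},\F^I)$ I would define $f$ on all of $\BT_{(i)}$ by the formula just described: for $(F,c)$ with $gF \subseteq \A$ for some $g \in I$, set $f(F,c) = c_{g^{-1},gF}(h(gF,gc))$. The points to verify are: (a) well-definedness, i.e. independence of the choice of $g$ — if $g_1 F, g_2 F \subseteq \A$ then $g_2 g_1^{-1}$ stabilizes the face $g_1 F \subseteq \A$, so lies in $I \cap P_{g_1F}^\dagger$, which acts trivially on $h(g_1F, \cdot) \in \F_{g_1F}^{I\cap P_{g_1F}^\dagger}$ by the very definition of $\F^I$; here one also uses that such an element fixes the orientation, using (\ref{Iwahori_intersection_inclusion}) and Lemma \ref{closest_chamber}. (b) $f$ is $I$-invariant by construction. (c) $f$ has finite support because $h$ does and $G$, hence $I$, acts with finitely many orbits having finite isotropy on faces; more precisely the support of $f$ is contained in finitely many $I$-orbits, each of which meets $\A$ in a bounded set. (d) Restricting $f$ back to $\A$ recovers $h$, since for $F \subseteq \A$ one may take $g = 1$. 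Finally $f$ is an $I$-invariant chain landing in the correct modules because $f(F,c) \in \F_F^{I \cap P_F^\dagger} = \F_F^{I_{C(F)}}$, which for $F \subseteq \A$ is precisely $\F_F^I$.

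**Main obstacle.** The genuinely substantive point is the geometric transitivity statement used in both injectivity and surjectivity — that the value of an $I$-invariant chain at a face $F'\subseteq\A$ together with the $I$-action pins down its values on the whole star of $F'$ in $\BT$, i.e. that $I$ (even $I_{C(F')}$) acts transitively on the faces of $\BT$ of a given type incident to $F'$, with the isotropy groups acting trivially on the $\F_F^I$. This is exactly what Lemma \ref{closest_chamber} and the cited results of \cite{OS1} (Proposition 4.13, Remark 4.17) provide, and the careful handling of orientations — checking that the relevant isotropy elements in $I \cap P_F^\dagger$ preserve orientations, which is where (\ref{Iwahori_intersection_inclusion}) and the identity $I \cap P_F^\dagger = I \cap P_F$ enter — is the one place requiring genuine care. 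Everything else (the chain-map property, finiteness of supports) is formal bookkeeping, and I would follow Broussous's argument as in \cite{Bro}, Proposition 11, and \cite{OS1}, Proposition 3.3, verbatim once the level-zero hypothesis is in place.
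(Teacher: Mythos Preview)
Your proposal is correct and follows essentially the same approach as the paper: the paper defers the bijectivity in each degree to \cite{OS1}, Proposition 3.3 (which is exactly the argument you sketch for injectivity and surjectivity, using that every face of $\BT$ has a unique $I$-conjugate in $\A$ and that $I\cap P_F^\dagger$ fixes orientations), and then verifies the chain-map property by the same bookkeeping you outline, grouping the faces $F\supseteq F'$ into $(I\cap P_{F'}^\dagger)$-orbits so that the differential on $\BT$ collapses to the sum defining $t^F_{F'}$. Your identification of the orientation-preservation (via \cite{OS1}, Lemma 3.1) and the uniqueness of $I$-conjugates in $\A$ (via \cite{OS1}, Remark 4.17) as the substantive inputs is exactly right.
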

\begin{proof}
Let $0\leq i\leq d$. That restriction $\C^\ori_c(\BT_{(i)},\F)^I\to\C_c^\ori(\A_{(i)},\F^I)$ is a well-defined isomorphism of $R$-modules is proven exactly as in \cite{OS1}, Proposition 3.3. In order to see that the restriction maps commute with the differentials we only slightly need to adjust the notation. Letting $(F',c')\in\BT_{(i)}$ and $f\in\C_c^\ori(\BT_{(i+1)},\F)^I$ we rewrite equation (3.7) of \cite{OS1} as
\begin{eqnarray*}
 \partial_{i+1}(f)(F',c')
 &=&
 \sum_{F\in\A_{i+1}\atop F'\subseteq\overline{F}}\sum_{g\in(I\cap P_{F'}^\dagger)/(I\cap P_F^\dagger)}r^{gF}_{F'}(c_{g,F}(f(F,c)))\\
 &=&
 \sum_{F\in\A_{i+1}\atop F'\subseteq\overline{F}}\sum_{g\in(I\cap P_{F'}^\dagger)/(I\cap P_F^\dagger)}r^{gF}_{gF'}(c_{g,F}(f(F,c)))\\
 &=&
 \sum_{F\in\A_{i+1}\atop F'\subseteq\overline{F}}\sum_{g\in(I\cap P_{F'}^\dagger)/(I\cap P_F^\dagger)}g\cdot r^{F}_{F'}(f(F,c))\\
 &=&
 \sum_{F\in\A_{i+1}\atop F'\subseteq\overline{F}}t^F_{F'}(f(F,c))=\partial_{i+1}(f)(F',c'),
\end{eqnarray*}
where $c$ always induces the orientation $c'$. Here we use that any face of $\BT$ has a unique $I$-conjugate in $\A$ and that $g(F',c')=(F',c')$ for any $g\in I\cap P_{F'}^\dagger$ (cf.\ \cite{OS1}, Lemma 3.1 and Remark 4.17.2).
\end{proof}
The coefficient system $\F^I\in\Coeff(\A)$ carries some additional structure coming from the $G$-action on $\F$. First of all, by Frobenius reciprocity
\[
 \F^I_F=\F_F^{I_{C(F)}}\cong\Hom_{P_F^\dagger}(X_F^\dagger,\F_F)\in\Mod_{H_F^\dagger}
\]
is a left module over the Hecke algebra $H_F^\dagger$ introduced in \S\ref{subsection_1_2}. Moreover, if $F'$ and $F$ are faces of $\A$ with $F'\subseteq\overline{F}\subseteq\overline{C(F')}$ then $C(F')=C(F)$ by the uniqueness assertion of Lemma \ref{closest_chamber}. In this case, the restriction map $t^F_{F'}:\F^I_F\to\F^I_{F'}$ is obtained from $r^F_{F'}:\F_F\to\F_{F'}$ by passage to the invariants under $I_{C(F)}=I_{C(F')}$. Since $r^F_{F'}$ is $(P_F^\dagger\cap P_{F'}^\dagger)$-equivariant it follows once more from Frobenius reciprocity that $t^F_{F'}$ is linear with respect to $H_F^\dagger\cap H_{F'}^\dagger=R[I_{C(F)}\backslash(P_F^\dagger\cap P_{F'}^\dagger)/I_{C(F)}]$.\\

Let $V\in\Rep_R^\infty(G)$ and let $\F_V\in\Coeff_G^0(\BT)$ denote the associated fixed point system (cf.\ Example \ref{fixed_point_system}). If $F'$ and $F$ are faces of $\A$ with $F'\subseteq\overline{F}$ and $C(F')=C(F)$ then the restriction map $t^F_{F'}$ of $\F_V^I\in\Coeff(\A)$ is the identity on $(\F_V^I)_F=V^{I_{C(F)}}=V^{I_{C(F')}}=(\F_V^I)_{F'}$. As was observed in \cite{OS1}, Theorem 3.4, this leads to the fact that the complexes $\C_c^\ori(\BT_{(\bullet)},\F_V)^I\cong\C_c^\ori(\A_{(\bullet)},\F_V^I)$ are acyclic, i.e.\ have trivial homology in positive degrees. This can be proven in the following slightly more general situation.
\begin{prop}\label{acyclic}
Let $\F\in\Coeff_G^0(\BT)$ and assume that the restriction maps $t^F_{F'}:\F_F^I\to\F_{F'}^I$ of the coefficient system $\F^I\in\Coeff(\A)$ are bijective for all faces $F'$ and $F$ of $\A$ with $F'\subseteq\overline{F}$ and $C(F')=C(F)$. Then the following is true.
\begin{enumerate}[wide]
\item[(i)]The complexes $\C_c^\ori(\BT_{(\bullet)},\F)^I$ and $\C_c^\ori(\A_{(\bullet)},\F^I)$ are acyclic.
\item[(ii)]For all faces $F\subseteq\Cbar$ and all vertices $x\in\overline{F}$ the map
\[
\iota_F:\F_F^I\stackrel{t^F_x}{\longrightarrow}\F_x^I\hookrightarrow\bigoplus_{y\in\BT_0}\F_y^I\cong\C_c^\ori(\BT_{(0)},\F)^I\to \H_0(\C_c^\ori(\BT,\F)^I)
\]
is an isomorphism of $H_F^\dagger$-modules which is independent of the choice of $x$.
\item[(iii)]If $F'$ and $F$ are faces of $\A$ with $F'\subseteq\overline{F}\subseteq\Cbar$ then $\iota_F=\iota_{F'}\circ t^F_{F'}$.
\end{enumerate}
\end{prop}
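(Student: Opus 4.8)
The plan is to transfer everything to the standard apartment $\A$ via Proposition~\ref{restriction_chain_complexes} and then to filter $\A$ by gallery distance to $C$. Note first that under our hypothesis $\F^I$ is locally constant on $\Cbar$ (every face $F\subseteq\Cbar$ satisfies $C(F)=C$), and more generally on each set $U_D=\{F\subseteq\overline{D}\mid C(F)=D\}$, $D$ a chamber of $\A$. For $n\geq 0$ let $\A^{(n)}$ be the finite subcomplex of $\A$ which is the union of the closed chambers of gallery distance $\leq n$ from $C$; then $\A^{(0)}=\Cbar$, $\A^{(n-1)}\subseteq\A^{(n)}$, $\A=\bigcup_n\A^{(n)}$, and $\C_c^\ori(\A_{(\bullet)},\F^I)=\varinjlim_n\C_c^\ori(\A^{(n)}_{(\bullet)},\F^I)$. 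Since homology commutes with filtered colimits, for (i) it suffices to analyze the inclusions $j_n\colon\C_c^\ori(\A^{(n-1)}_{(\bullet)},\F^I)\hookrightarrow\C_c^\ori(\A^{(n)}_{(\bullet)},\F^I)$.

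The key point is the following. A face of $\A^{(n)}$ lies outside $\A^{(n-1)}$ precisely when its closest chamber has distance exactly $n$; and for $F\subseteq\overline{D}$ with $d(C,D)=n$ one has, by the uniqueness in Lemma~\ref{closest_chamber}, either $C(F)=D$ or $d(C,C(F))<n$. Hence the cokernel of $j_n$ is degreewise split and decomposes as a direct sum $\bigoplus_D Q_D^\bullet$ of complexes, $D$ running over the chambers at distance exactly $n$ from $C$, where $Q_D^\bullet$ is the complex of oriented chains supported on $U_D$. On $U_D$ the coefficient system $\F^I$ is locally constant, and $U_D$ is the open star, inside the polysimplex $\overline{D}$, of the proper non-empty face $F_D$ of $D$ obtained as the intersection of those panels of $D$ not separating it from $C$; this is the building-theoretic input, contained in substance in the proof of \cite{OS1}, Theorem~3.4. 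Consequently $Q_D^\bullet\cong Q_D^{\bullet,\ZZ}\otimes_{\ZZ}\F^I_{F_D}$, where $Q_D^{\bullet,\ZZ}$ is the integral oriented chain complex of this open star; since $\overline{D}$ and its subcomplex $\overline{D}\setminus U_D$ (a star-shaped union of panels) are contractible, $Q_D^{\bullet,\ZZ}$ is a bounded acyclic complex of finitely generated free abelian groups, hence null-homotopic, and therefore so is $Q_D^\bullet$. A degreewise split short exact sequence of complexes with null-homotopic quotient has a homotopy equivalence for its inclusion; thus every $j_n$ ($n\geq 1$), and hence $\C_c^\ori(\Cbar_{(\bullet)},\F^I)\hookrightarrow\C_c^\ori(\A^{(n)}_{(\bullet)},\F^I)$, is a homotopy equivalence. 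Passing to homology and to the colimit gives $\H_i(\C_c^\ori(\A_{(\bullet)},\F^I))\cong\H_i(\C_c^\ori(\Cbar_{(\bullet)},\F^I))$ for all $i$; as $\Cbar$ is a single polysimplex carrying the locally constant system $\F^I|_{\Cbar}$, the latter complex is homotopy equivalent to $\F^I_C$ placed in degree $0$. Together with Proposition~\ref{restriction_chain_complexes} this proves (i) and yields a distinguished isomorphism $\kappa\colon\H_0(\C_c^\ori(\BT,\F)^I)\xrightarrow{\ \sim\ }\F^I_C$.

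For (ii) and (iii), observe that $I\cap P_F^\dagger=I\cap P_F=I$ for every $F\subseteq\Cbar$ (indeed $I=I_C\subseteq P_C\subseteq P_F$ by (\ref{inclusions})), so in the defining formula for the restriction maps of $\F^I$ the relevant coset spaces are trivial: $t^F_{F'}=r^F_{F'}|_{\F^I_F}$ for all $F'\subseteq\overline{F}\subseteq\Cbar$, and, transporting along Proposition~\ref{restriction_chain_complexes}, $\iota_F$ is the map $m\mapsto[\,\delta_x\otimes r^F_x(m)\,]$ for any vertex $x\in\overline{F}$. Then (iii) is immediate upon choosing $x\in\overline{F'}$ and using $r^F_x=r^{F'}_x\circ r^F_{F'}$. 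Independence of $x$ follows from the identity $\delta_x\otimes r^F_x(m)-\delta_{x'}\otimes r^F_{x'}(m)\in\im(\partial_0)$, obtained by telescoping $\partial_0$ over the edge chains $\delta_{(e,c)}\otimes r^F_e(m)$ along a path of edges of $\overline{F}$ from $x$ to $x'$. For bijectivity, (iii) applied with $F'=x$ a vertex gives $\iota_F=\iota_x\circ r^F_x$; here $r^F_x=t^F_x$ is bijective by hypothesis (as $C(x)=C=C(F)$), and $\iota_x$ is bijective because tracing $\kappa$ through the identifications of the second paragraph shows $\kappa\circ\iota_x=(r^C_x)^{-1}$. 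Finally, $H_F^\dagger$-linearity is checked on the basis $(\tau_w)_{w\in\tilde{W}_F^\dagger}$ of $H_F^\dagger$: writing $\tau_w\cdot m=\sum_{g'}(g'w)\cdot m$ with $g'w$ running over the left $I$-cosets in $IwI$, and using $(g'w)F=F$ together with the $G$-equivariance $c_{g'w,x}\circ r^F_x=r^F_{(g'w)x}\circ c_{g'w,F}$, one finds that the chain $\tau_w\cdot(\delta_x\otimes r^F_x(m))$ (Hecke action on $\H_0$) equals $\sum_{g'}\delta_{(g'w)x}\otimes r^F_{(g'w)x}((g'w)m)$, which differs from $\delta_x\otimes r^F_x(\tau_w\cdot m)=\sum_{g'}\delta_x\otimes r^F_x((g'w)m)$ by a sum of boundaries (again moving each base vertex $(g'w)x$ back to $x$ inside $\overline{F}$ via the identity above); hence $\iota_F(\tau_w\cdot m)=\tau_w\cdot\iota_F(m)$.

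I expect the main obstacle to be the building-theoretic claim underlying the second paragraph --- that the faces newly appearing at level $n$ organize into the blocks $Q_D^\bullet$ and that each block is the oriented chain complex of the open star of a proper face of the polysimplex $\overline{D}$ with locally constant coefficients, hence null-homotopic. This rests on the uniqueness of $C(F)$ (Lemma~\ref{closest_chamber}) and on the gate property of residues in the Coxeter complex $\A$, and is essentially the content of the proof of \cite{OS1}, Theorem~3.4, to which one should refer for the combinatorial details.
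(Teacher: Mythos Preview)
Your argument for (i), (iii), the independence of the vertex $x$, and the bijectivity of $\iota_F$ is essentially the paper's own: reduce to $\A$ via Proposition~\ref{restriction_chain_complexes}, filter by gallery distance, use that $\F^I$ is constant on each new piece $\overline{D}\setminus\A(n-1)$, and trace the isomorphism back to the contractible polysimplex $\Cbar$. The paper likewise defers the combinatorics to \cite{OS1}, Theorem~3.4.

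There is, however, a genuine gap in your $H_F^\dagger$-linearity argument. When you say the two sums ``differ by a sum of boundaries (again moving each base vertex $(g'w)x$ back to $x$ inside $\overline{F}$ via the identity above)'', you are invoking the vertex-independence identity for the individual terms $(g'w)m$. But that identity was proved only for elements of $\F_F^I$: the edge-chain with coefficient $r^F_e(m')$ lies in $\C_c^\ori(\BT_{(1)},\F)^I$ precisely because $m'\in\F_F^I$. For a general $w\in\tilde{W}_F^\dagger$ the individual $(g'w)m$ need not be $I$-invariant, so the termwise $1$-chains need not be $I$-invariant, and there is no reason their sum should be either; you have produced a preimage under $\partial_0$ in $\C_c^\ori(\BT_{(1)},\F)$, not in $\C_c^\ori(\BT_{(1)},\F)^I$. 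The paper sidesteps this by splitting the check in two. For $v\in\tilde{W}_F$ one has $IvI\subseteq P_F\subseteq P_x$ by (\ref{inclusions}) and (\ref{Bruhat_parahoric}), so every $(g'v)x=x$ and no vertex-moving is needed; equivalently, the composite $\F_F\xrightarrow{r^F_x}\F_x\hookrightarrow\C_c^\ori(\BT_{(0)},\F)$ is $P_F$-equivariant, hence $H_F$-linear on $I$-invariants. For $\omega\in\tilde{\Omega}_F$ there is a single coset, so $\tau_\omega m=\omega m\in\F_F^I$ and $\omega x\in\overline{F}$, and now the already-established independence of the vertex applies directly to give $[\delta_{\omega x}\otimes r^F_{\omega x}(\omega m)]=[\delta_x\otimes r^F_x(\omega m)]$. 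Since $H_F^\dagger$ is generated by $H_F$ and $R[\tilde{\Omega}_F]$ (Remark~\ref{group_ring}), this suffices.
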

\begin{proof}
As for (i), by Proposition \ref{restriction_chain_complexes} it suffices to prove that the complex $\C_c^\ori(\A_{(\bullet)},\F^I)$ is acyclic. The proof is almost identical to that of \cite{OS1}, Theorem 3.4. We just indicate the few formal changes that have to be made.\\

For any $n\in\mathbb{N}$ denote by $\A(n)$ the subcomplex of all faces $F$ of $\A$ such that $C(F)$ and $C$ have gallery distance less than or equal to $n$. By assumption, $(t^C_F)_{F\subseteq\Cbar}:\K_{\F_C^I}\to\F^I|_\Cbar$ is an isomorphism from the constant coefficient system $\K_{\F_C^I}$ on $\Cbar=\A(0)$ to $\F^I|_\Cbar$. Since $\Cbar$ is contractible the complex $\C_c^\ori(\Cbar_{(\bullet)},\F^I|_\Cbar)$ is acyclic and the map
\[
\F_F^I\stackrel{t^F_x}{\longrightarrow}\F_x^I\hookrightarrow\bigoplus_{y\in\Cbar_0}\F_y^I\cong\C_c^\ori(\Cbar_{(0)},\F^I)\to H_0(\C_c^\ori(\Cbar_{(\bullet)},\F^I|_\Cbar))
\]
is bijective. In order to complete the proof one shows inductively that the complexes $\C_c^\ori(\A(n)_{(\bullet)},\F^I|_{\A(n)})/\C_c^\ori(\A(n-1)_{(\bullet)},\F^I|_{\A(n-1)})$ are exact for any $n\geq 1$. Given a chamber $D$ of distance $n$ from $C$ set $\sigma\overline{D}=\overline{D}\cup\A(n-1)$. Again, the restriction of $\F^I$ to $\overline{D}\backslash\A(n-1)$ is isomorphic to the constant coefficient system with value $\F_D^I$. Therefore, one can proceed as in the proof of \cite{OS1}, Theorem 3.4.\\

As for (ii), let $y$ be another vertex of $\A$ contained in $\overline{F}$. Since $F$ is a polysimplex there is a sequence of vertices $x=x_0,x_1,\ldots,x_n=y$ of $\A$ such that $x_i$ and $x_{i+1}$ lie in a common one dimensional face contained in $\overline{F}$ for all $0\leq i<n$. By induction on $n$ and the transitivity of the restriction maps of $\F^I$ we may assume that $F$ is one dimensional with vertices $x$ and $y$. Let $m\in\F_F^I$ and let $c$ denote the orientation of $F$ inducing the trivial orientation on $x$. Note that this implies $\partial^F_{y}(-c)=1$. Define the oriented $1$-chain $f_m$ on $\A$ by
\[
f_m(F',c')=\left\{
\begin{array}{r@{\quad}l}
m,&\mbox{if }(F',c')=(F,c),\\
-m,&\mbox{if }(F',c')=(F,-c),\\
0,&\mbox{else.}
\end{array}
\right.
\]
Then $\partial_0(f_m)(x)=t^F_x(m)$, $\partial_0(f_m)(y)=-t^F_{y}(m)$ and $\partial_0(f_m)(z)=0$ for all vertices $z$ of $\A$ distinct from $x$ and $y$. This shows that $t^F_x(m)-t^F_{y}(m)=\partial_0(f_m)$ in $\bigoplus_{z\in\A_0}\F_z^I$, hence maps to zero in $H_0(\A,\F^I)=\mathit{coker}(\partial_0)$. This proves the independence of $\iota_F$ from the choice of $x$ and we get (iii) as an immediate consequence.\\

That $\iota_F$ is bijective was shown in the course of the proof of (i). In order to see that it is $H_F^\dagger$-linear note that the map $\F_F\stackrel{r^F_x}{\longrightarrow}\F_x\longrightarrow\C_c^\ori(\BT_{(0)},\F)$ is $P_F$-equivariant because $P_F\subseteq P_F^\dagger\cap P_x^\dagger$ by (\ref{inclusions}). Passing to $I$-invariants we conclude that $\iota_F$ is $H_F$-linear. According to Remark \ref{group_ring} it remains to show that $\iota_F$ is $\tilde{\Omega}_F$-equivariant. More generally, if $\omega\in\tilde{\Omega}$ and $m\in\F_F^I$ consider the commutative diagram
\begin{equation}\label{iota_equivariant}
\begin{gathered}
 \xymatrix{
 \F_F^I\ar[r]^{t^F_x}\ar[d]^{c_{\omega,F}}&\F_x^I\ar@{^{(}->}[r]\ar[d]^{c_{\omega,x}}& \C_c^\ori(\BT_{(0)},\F)^I\ar[d]^{\omega=\tau_\omega} \ar[r] & \H_0(\C_c^\ori(\BT,\F)^I) \ar[d]^{\omega=\tau_\omega}\\
 \F_{\omega F}^I\ar[r]_{t^{\omega F}_{\omega x}}&\F_{\omega x}^I\ar@{^{(}->}[r]&\C_c^\ori(\BT_{(0)},\F)^I \ar[r] & \H_0(\C_c^\ori(\BT,\F)^I).
 }
\end{gathered}
\end{equation}
Since the upper horizontal composition is $\iota_F$ and since the lower one is $\iota_{\omega F}$ we obtain $\iota_{\omega F}(c_{\omega,F}(m))=\omega\cdot\iota_F(m)$. In case $\omega\in\tilde{\Omega}_F$ this shows the required equivariance property of $\iota_F$.
\end{proof}
If $p$ is invertible in $R$ and if $\F\in\Coeff_G^0(\BT)$ then the condition imposed on $\F$ in Proposition \ref{acyclic} admits the following characterization .
\begin{lem}\label{acyclic_locally_constant}
Let $\F\in\Coeff_G^0(\BT)$, let $F$ be a face of $\BT$ and let $\gamma\in P_F^\dagger$. If $\tau_\gamma\in H_F^\dagger$ denotes the characteristic function of the double coset $I_{C(F)}\gamma I_{C(F)}$ then the diagram
\begin{equation}\label{restriction_Hecke_operator}
\begin{gathered}
 \xymatrix{
 \F_{C(F)} \ar[rr]^{t^{C(F)}_F} \ar[d]_{c_{\gamma,C(F)}} && \F_F^{I_{C(F)}} \ar[d]^{\tau_\gamma} \\
 \F_{\gamma C(F)} \ar[rr]_{t^{\gamma C(F)}_F} && \F_F^{I_{C(F)}}
 }
\end{gathered}
\end{equation}
commutes. If $p$ is invertible in $R$ then $\F$ satisfies the hypotheses of Proposition \ref{acyclic} if and only if the coefficient system $\F^I\in\Coeff(\A)$ is locally constant in the sense of Definition \ref{local_system}.
\end{lem}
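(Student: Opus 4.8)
The plan is to establish the commutativity of the square (\ref{restriction_Hecke_operator}) by a direct computation, and then to feed it into the proof of the equivalence together with the invertibility of certain Hecke operators. For the square, put $J=I_{C(F)}$. Since $\F$ is of level zero we have $\F_{C(F)}=\F_{C(F)}^J$, so for $m\in\F_{C(F)}$ the element $n=c_{\gamma,C(F)}(m)$ is fixed by $\gamma J\gamma^{-1}$. By Frobenius reciprocity the action of $\tau_\gamma$ on $\F_F^J$ is given by (\ref{convolution}), and as $t^{C(F)}_F(m)=r^{C(F)}_F(m)$ this gives $\tau_\gamma\cdot t^{C(F)}_F(m)=\sum_{g\in J/(J\cap\gamma J\gamma^{-1})}c_{g\gamma,F}(r^{C(F)}_F(m))$. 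Since $g\gamma$ stabilizes $F$, the $G$-equivariance of $\F$ yields $c_{g\gamma,F}\circ r^{C(F)}_F=r^{g\gamma C(F)}_F\circ c_{g\gamma,C(F)}$, and the cocycle relation gives $c_{g\gamma,C(F)}(m)=c_{g,\gamma C(F)}(n)$; hence $\tau_\gamma\cdot t^{C(F)}_F(m)=\sum_{g}r^{g\gamma C(F)}_F(c_{g,\gamma C(F)}(n))$. It remains to recognise the right-hand side as $t^{\gamma C(F)}_F(n)$: one checks that each summand depends only on the class of $g$ modulo $J\cap\gamma J\gamma^{-1}$ and that this is exactly the index set implicit in $t^{\gamma C(F)}_F$, the one substantial point being the identity $J\cap P_{\gamma C(F)}^\dagger=J\cap\gamma J\gamma^{-1}$, which follows from Lemma \ref{closest_chamber} applied with the pro-$p$ Iwahori $J$ in place of $I$, together with the elementary fact that any pro-$p$ subgroup of a parahoric subgroup is contained in its pro-$p$ radical. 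This proves the square commutes, over any $R$.

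Now to the equivalence. One implication is immediate: if $\F^I$ is locally constant then all its transition maps $t^F_{F'}$ are bijective, in particular those with $C(F')=C(F)$, so the hypotheses of Proposition \ref{acyclic} hold. Conversely, assume these transition maps are bijective whenever $C(F')=C(F)$; we must show $t^F_{F'}$ is bijective for every pair $F'\subseteq\overline F$ of faces of $\A$. Since $C(D)=D$ for a chamber $D$, the pair $(C(F),F)$ satisfies the standing hypothesis, so $t^{C(F)}_F$ is bijective; from $t^{C(F)}_{F'}=t^F_{F'}\circ t^{C(F)}_F$ it is therefore enough to prove $t^D_{F'}$ bijective for an arbitrary chamber $D\subseteq\A$ with $F'\subseteq\overline D$. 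If $C(F')=D$ this holds by hypothesis. If not, $D$ and $C(F')$ are distinct chambers of $\A$ containing $F'$; using that the residue of $F'$ in $\A$ is gallery connected, choose a minimal gallery $C(F')=D_0,D_1,\dots,D_k=D$ inside it. The reflections $s_i$ exchanging $D_{i-1}$ and $D_i$ lie in $W_\aff$ and fix $F'$ pointwise; set $w=s_k\cdots s_1\in W_\aff$, so that $wC(F')=D$, and lift $w$ to $\tilde w\in N_G(T)\cap P_{F'}^\dagger$.

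Applying the square (\ref{restriction_Hecke_operator}) with $(F',\tilde w)$ in place of $(F,\gamma)$ — and noting that for a chamber $D\subseteq\A$ the map $t^D_{F'}$ occurring there is the transition map of $\F^I$ — gives
\[
 t^D_{F'}=\tau_{\tilde w}\circ t^{C(F')}_{F'}\circ c_{\tilde w,C(F')}^{-1}.
\]
Here $c_{\tilde w,C(F')}$ is bijective with inverse $c_{\tilde w^{-1},D}$; $t^{C(F')}_{F'}$ is bijective by hypothesis since $C(C(F'))=C(F')$; and $\tau_{\tilde w}$ is a unit of $H_{F'}^\dagger$. For the last point, $\tau_{\tilde w}$ is a product of the standard generators of $H_{F'}^\dagger$ indexed by the Coxeter generators of $W_{F'}$ and of elements $\tau_\omega$ with $\omega\in\tilde\Omega_{F'}$ (the latter units by Remark \ref{group_ring}); reducing via the algebra isomorphisms $\varphi_{gd,F'}$ of (\ref{phi_gd}) to the case $F'\subseteq\Cbar$, each such generator is a distinguished generator of $W_\aff$ and for a lift $\tilde s$ the quadratic relation (\ref{quadratic_relations}) reads $\tau_{\tilde s}(\tau_{\tilde s}-\theta_{\tilde s})=q\tau_{\tilde s^2}$, whose right-hand side is a unit once $q$ — equivalently $p$, as $q$ is a power of $p$ — is invertible in $R$ (note $\tilde s^2\in\tilde\Omega_{F'}$, so $\tau_{\tilde s^2}$ is a unit); hence $\tau_{\tilde s}$, and with it $\tau_{\tilde w}$, is a unit. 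Thus $t^D_{F'}$ is a composite of bijections, completing the proof.

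I expect the main obstacle to be the bookkeeping in the first step, namely matching the two index sets, which rests on the pro-$p$ identity $J\cap P_{\gamma C(F)}^\dagger=J\cap\gamma J\gamma^{-1}$ and on keeping straight which group one averages over. In the second step the two supporting facts — transitivity of $W_{F'}$ on the chambers of $\A$ through $F'$ (handled by gallery connectedness of the residue) and invertibility of $\tau_{\tilde w}$ in $H_{F'}^\dagger$ once $q$ is invertible (handled by transporting the quadratic relations via (\ref{phi_gd})) — are each routine but do require the reductions indicated.
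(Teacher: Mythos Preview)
Your proposal is correct and follows essentially the same line as the paper. For the commutativity of (\ref{restriction_Hecke_operator}) both you and the paper compute the convolution action of $\tau_\gamma$ and identify the resulting coset sum with $t^{\gamma C(F)}_F$; the paper does this by working out $I_{C(F)}\cap I_D=I_F(I\cap P_D)$ from the structural equalities of Lemma~\ref{closest_chamber} and then exhibiting the bijection $(I\cap P_F)/(I\cap P_D)\cong I_{C(F)}/(I_{C(F)}\cap I_D)$, whereas you argue directly that $J\cap P_D^\dagger=J\cap I_D$ (your justification via ``Lemma~\ref{closest_chamber} with $J$ in place of $I$'' is a little loose---the clean way is $J\subseteq G_\aff$ together with $P_D^\dagger\cap G_\aff=P_D$ from \cite{OS1}, Lemma~4.10, and then the pro-$p$ fact you cite). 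For the second part, both arguments reduce to showing $t^D_{F'}$ is bijective for an arbitrary chamber $D$ through $F'$, produce an element of $N_G(T)\cap P_{F'}^\dagger$ sending $C(F')$ to $D$, and invoke the square; the paper then cites \cite{Vig2}, Corollary~1 (via the transport (\ref{phi_gd})) for the invertibility of the Hecke operator, while you unwind this by hand from the quadratic relation. These are presentational differences, not different strategies.
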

\begin{proof}
Letting $D=\gamma C(F)$ the action of $\tau_\gamma$ on $\F_F^{I_{C(F)}}$ is given by
\[
 \tau_\gamma(m)=\sum_{g\in I_{C(F)}/(I_{C(F)}\cap \gamma I_{C(F)}\gamma^{-1})}g\gamma m =
 \sum_{g\in I_{C(F)}/(I_{C(F)}\cap I_D)}g\gamma m.
\]
By Proposition \ref{closest_chamber} we have $I_{C(F)}=I_F(I\cap P_F)$ and $I_D=I_D(I\cap P_D)$ with $I_F\subseteq I_D$. Thus, $I_{C(F)}\cap I_D=I_F(I_D\cap I\cap P_F)=I_F(I\cap I_D)$ because $I_D\subseteq P_F$. On the other hand, the equality $I_D=I_D(I\cap P_D)$ implies $I\cap P_D\subseteq I_D\cap I$ whence $I\cap I_D=I\cap P_D$. Altogether, $I_{C(F)}\cap I_D=I_F(I\cap P_D)$. Therefore, the inclusion $I\cap P_F\subseteq I_{C(F)}$ induces a bijection $(I\cap P_F)/(I\cap P_D)\cong I_{C(F)}/(I_{C(F)}\cap I_D)$. This proves the commutativity of (\ref{restriction_Hecke_operator}).\\

Assume that $\F$ satisfies the hypotheses of Proposition \ref{acyclic} and that $p$ is invertible in $R$. In order to see that $\F^I$ is locally constant let $D$ be an arbitrary chamber of $\A$ with $F\subseteq\overline{D}$. By the transitivity of restriction it suffices to see that $t^D_F$ is bijective. By the proof of Lemma \ref{face_representatives} there are elements $g,h\in N_G(T)\cap G_\aff$ with $gD=hC(F)=C$. Moreover, the uniqueness assertion in Lemma \ref{face_representatives} implies that $\gamma=g^{-1}h\in P_F^\dagger$. Since the maps $c_{\gamma,F}$ and $t^{C(F)}_F$ in (\ref{restriction_Hecke_operator}) are bijective it suffices to see that $\tau_\gamma$ induces a bijective endomorphism of  $\F_F^{I_{C(F)}}$. However, if $p$ is invertible in $R$ then $\tau_\gamma$ is a unit in $H_F^\dagger$. Indeed, pulling back along the isomorphism (\ref{phi_gd}) this follows from the fact that $\tau_w$ is a unit in $H$ for any $w\in\tilde{W}$ (cf.\ \cite{Vig2}, Corollary 1).
\end{proof}
Assume that $p$ is invertible in $R$ and that $\F\in\Coeff_G^0(\BT)$ satisfies the hypotheses of Proposition \ref{acyclic}. It follows from Remark \ref{criterion_loc_constant} and Lemma \ref{acyclic_locally_constant} that $\F^I\in\Coeff(\A)$ is isomorphic to a constant coefficient system. The acyclicity result in Proposition \ref{acyclic} (i) is then obvious because $\A$ is contractible. This situation was considered by Broussous in \cite{Bro}, page 746.


\section{The equivalence of categories}\label{section_3}


\subsection{Representations and Hecke modules of stabilizer groups}\label{subsection_3_1}

Throughout this subsection we fix an arbitrary face $F$ of $\BT$ and let the chamber $C(F)$ be as in Lemma \ref{closest_chamber}. In the case of finite dimensional representations over a field, the following condition was first introduced by Cabanes (cf.\ \cite{Cab}, Definition 1).
\begin{defin}\label{condition_H}
\begin{enumerate}[wide]
\item[(i)]We say that an object $V\in\Rep_R^\infty(P_F)$ satisfies condition (H) if $V\cong\varinjlim_{j\in J}V_j$ is isomorphic to the inductive limit of objects $V_j\in\Rep_R^\infty(P_F)$ such that the transition maps of the inductive system are injective and such that for each $j\in J$ there is a non-negative integer $n_j$ and an element $\varphi_j\in\End_{P_F}(X_F^{n_j})$ with $V_j\cong\im(\varphi_j)$ in $\Rep_R^\infty(P_F)$. We denote by $\Rep_R^{H}(P_F)$ the full subcategory of $\Rep_R^\infty(P_F)$ consisting of all representations satisfying condition (H).\\[-3ex]
\item[(ii)]We say that an object $V\in\Rep_R^\infty(P_F^\dagger)$ satisfies condition (H) if it is an object of $\Rep_R^H(P_F)$ when viewed as a $P_F$-representation via restriction. We denote by $\Rep_R^H(P_F^\dagger)$ the full subcategory of $\Rep_R^\infty(P_F^\dagger)$ consisting of all representations satisfying condition (H).
\end{enumerate}
\end{defin}
The notation (H) is supposed to reflect the close relation to the respective categories of Hecke modules described below.
\begin{rem}\label{H_I_F_trivial}
Apparently, the condition $V_j\cong\im(\varphi_j)$ in Definition \ref{condition_H} (i) is equivalent to $V_j$ being both a quotient and a submodule of a finite direct sum of copies of $X_F$. It follows that the action of $I_F$ is trivial on any smooth representation satisfying condition (H). If a representation satisfies condition (H) and if its underlying $R$-module is noetherian then there is an isomorphism of $P_F$-representations $V\cong\im(\varphi)$ for some non-negative integer $n$ and some element $\varphi\in\End_{P_F}(X_F^n)$. The categories $\Rep_R^H(P_F)$ and $\Rep_R^H(P_F^\dagger)$ are closed under arbitrary direct sums and inductive limits with injective transition maps.
\end{rem}
Condition (H) is preserved under induction in the following sense.
\begin{lem}\label{induction_H}\begin{enumerate}[wide]
\item[(i)]If $V\in\Rep_R^H(P_F)$ then $\ind_{P_F}^{P_F^\dagger}(V)\in\Rep_R^H(P_F^\dagger)$.
\item[(ii)]Let $F'$ be a face of $\BT$ with $F'\subseteq\overline{F}$ and $C(F')=C(F)$. If $V\in\Rep_R^H(P_F)$ then $\ind_{P_F}^{P_{F'}}(V)\in\Rep_R^H(P_{F'})$.
\end{enumerate}
\end{lem}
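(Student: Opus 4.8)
The two assertions are of the same type and share the same strategy, so I would treat them in parallel. In both cases the point is that the target parahoric (either $P_F^\dagger$ or $P_{F'}$) contains $P_F$ as an open subgroup of finite index, and that the Hecke module $X_F$ over the smaller group induces up nicely. For (i), the key structural input is Proposition \ref{I_F_invariants}(i): the map $X_F\otimes_{H_F}H_F^\dagger\to X_F^\dagger$ is an isomorphism of $(R[P_F],H_F^\dagger)$-bimodules, together with Proposition \ref{free}(i), which tells us that $H_F^\dagger$ is free of finite rank as a left (and right) $H_F$-module. For (ii), the analogous inputs are Proposition \ref{I_F_invariants}(iii) (the isomorphism $X_F\otimes_{H_F}H_{F'}\cong X_{F'}^{I_F}$ of $(R[P_F],H_{F'})$-bimodules) and Proposition \ref{free}(iii) ($H_{F'}$ finitely generated free over $H_F$). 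Note that under the hypothesis $C(F')=C(F)$ we have $I_{C(F)}=I_{C(F')}$, so $X_{F'}^{I_F}$ makes sense and $I_F$ acts trivially on $X_{F'}$.

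First I would reduce to the case of a single $\im(\varphi)$. By Definition \ref{condition_H}(i), write $V\cong\varinjlim_{j\in J}V_j$ with injective transition maps and $V_j\cong\im(\varphi_j)$ for some $\varphi_j\in\End_{P_F}(X_F^{n_j})$. Since $\ind_{P_F}^{P_F^\dagger}$ (resp.\ $\ind_{P_F}^{P_{F'}}$) is exact and commutes with inductive limits with injective transition maps, and since by Remark \ref{H_I_F_trivial} the category $\Rep_R^H(P_F^\dagger)$ (resp.\ $\Rep_R^H(P_{F'})$) is closed under such inductive limits, it suffices to show that $\ind_{P_F}^{P_F^\dagger}(\im\varphi)$ satisfies condition (H) when $\im\varphi$ is viewed as a $P_F$-representation. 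Here $\ind_{P_F}^{P_F^\dagger}(\im\varphi)$ should itself be realized as the image of an endomorphism of a finite direct sum of copies of $X_F^\dagger$, so that condition (H) for $P_F^\dagger$-representations (via restriction to $P_F$) can be checked once we know $X_F^\dagger$, restricted to $P_F$, is a finite direct sum of copies of $X_F$. That last fact follows from Proposition \ref{I_F_invariants}(i) combined with the freeness in Proposition \ref{free}(i): as an $R[P_F]$-module, $X_F^\dagger\cong X_F\otimes_{H_F}H_F^\dagger\cong X_F^{\,m}$ where $m$ is the rank of $H_F^\dagger$ over $H_F$.

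The core computation is then the following. Exactness of $\ind_{P_F}^{P_F^\dagger}$ gives $\ind_{P_F}^{P_F^\dagger}(\im\varphi)\cong\im(\ind_{P_F}^{P_F^\dagger}(\varphi))$, where $\ind_{P_F}^{P_F^\dagger}(\varphi)$ is an endomorphism of $\ind_{P_F}^{P_F^\dagger}(X_F^{n})$. By Frobenius reciprocity $\ind_{P_F}^{P_F^\dagger}(X_F)=\ind_{P_F}^{P_F^\dagger}\ind_{I_{C(F)}}^{P_F}(R)\cong\ind_{I_{C(F)}}^{P_F^\dagger}(R)=X_F^\dagger$ as $P_F^\dagger$-representations (transitivity of compact induction). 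Hence $\ind_{P_F}^{P_F^\dagger}(X_F^n)\cong (X_F^\dagger)^n$, and restricting to $P_F$ and using $X_F^\dagger|_{P_F}\cong X_F^{\,m}$ we get $(X_F^\dagger)^n|_{P_F}\cong X_F^{\,mn}$. Therefore $\ind_{P_F}^{P_F^\dagger}(\im\varphi)$, restricted to $P_F$, is the image of an endomorphism of $X_F^{\,mn}$ lying in $\End_{P_F}(X_F^{mn})$ — namely the image of $\ind_{P_F}^{P_F^\dagger}(\varphi)$ under the identification, which is $P_F^\dagger$-equivariant hence a fortiori $P_F$-equivariant. This is exactly condition (H). For part (ii) one runs the identical argument with $P_F^\dagger$ replaced by $P_{F'}$, using $\ind_{P_F}^{P_{F'}}(X_F)\cong X_{F'}$ (transitivity of induction, valid since $I_{C(F)}=I_{C(F')}$) and $X_{F'}|_{P_F}\cong X_F^{\,m'}$ from Proposition \ref{I_F_invariants}(iii) and Proposition \ref{free}(iii).

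I expect the only mildly delicate point to be the bookkeeping that the endomorphism $\ind_{P_F}^{P_F^\dagger}(\varphi)$ of $(X_F^\dagger)^n$, after restriction along $P_F\hookrightarrow P_F^\dagger$ and transport across the isomorphism $X_F^\dagger|_{P_F}\cong X_F^m$, genuinely lands in $\End_{P_F}(X_F^{mn})$ and has the same image as $\ind_{P_F}^{P_F^\dagger}(\im\varphi)$ — but this is formal, since restriction is an exact functor that takes $P_F^\dagger$-equivariant maps to $P_F$-equivariant maps and preserves images, and the isomorphism of $R[P_F]$-modules $X_F^\dagger\cong X_F^m$ is fixed once and for all. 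There is no serious obstacle; the proof is essentially an assembly of Proposition \ref{free}, Proposition \ref{I_F_invariants}, transitivity of compact induction, and the closure properties recorded in Remark \ref{H_I_F_trivial}.
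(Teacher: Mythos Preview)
Your argument for (i) has a genuine gap: you assume that $P_F$ has finite index in $P_F^\dagger$, equivalently that $H_F^\dagger$ is free of \emph{finite} rank over $H_F$. But $P_F^\dagger/P_F\cong\Omega_F$, and the paper notes explicitly that $\Omega_F$ is finite if and only if $\GG$ is semisimple. In the general case $(X_F^\dagger)^n|_{P_F}\cong\bigoplus_{j\in J}X_F$ is an infinite direct sum, so $\im(\psi)$ with $\psi=\ind_{P_F}^{P_F^\dagger}(\varphi)$ is not directly the image of an endomorphism of some $X_F^N$. The paper repairs this with a filtration argument: for any finite $J'\subseteq J$ there is a finite $J''\subseteq J$ with $\psi(\bigoplus_{J'}X_F)\subseteq\bigoplus_{J''}X_F$, because $X_F$ is finitely generated as a $P_F$-representation; hence each $\psi(\bigoplus_{J'}X_F)$ is simultaneously a quotient of $X_F^{|J'|}$ and a subobject of $X_F^{|J''|}$ (so satisfies (H) by Remark \ref{H_I_F_trivial}), and $\im(\psi)$ is their filtered union with injective transition maps.

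For (ii) your parallel is muddled. Condition (H) for $P_{F'}$ is formulated in terms of $X_{F'}$, not $X_F$, so restricting back to $P_F$ proves the wrong statement. Also the isomorphism you quote is not correct: Proposition \ref{I_F_invariants}(iii) gives $X_{F'}^{I_F}\cong X_F\otimes_{H_F}H_{F'}$, not $X_{F'}|_{P_F}\cong X_F^{m'}$. In fact (ii) is easier than (i), and the paper dispatches it in one line: transitivity gives $\ind_{P_F}^{P_{F'}}(X_F)=X_{F'}$, so by exactness $\ind_{P_F}^{P_{F'}}(\im\varphi)=\im(\ind_{P_F}^{P_{F'}}\varphi)$ is already the image of an element of $\End_{P_{F'}}(X_{F'}^n)$. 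No restriction step is needed.
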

\begin{proof}
As for (i), note that compact induction preserves inductive limits with injective transition maps. Thus, we may assume $V=\im(\varphi)$ for some $\varphi\in\End_{P_F}(X_F^n)$. Setting $\psi=\ind_{P_F}^{P_F^\dagger}(\varphi)$ this yields $\ind_{P_F}^{P_F^\dagger}(V)=\im(\psi)$ where $\psi$ is an endomorphism of the $P_F^\dagger$-representation $\ind_{P_F}^{P_F^\dagger}(X_F^n)\cong (X_F^\dagger)^n$. As a $P_F$-representation $(X_F^\dagger)^n\cong\oplus_{j\in J}X_F$ is a direct sum of copies of $X_F$ (cf.\ Proposition \ref{I_F_invariants} (i)). If $J'\subseteq J$ is finite then there is $J''\subseteq J$ finite with $\psi(\oplus_{j\in J'}X_F)\subseteq\oplus_{j\in J''}X_F$ because the $P_F$-representation $X_F$ is finitely generated. Thus, $\psi(\oplus_{j\in J'}X_F)$ and $\im(\psi)$ satisfy condition (H). Part (ii) follows from $\ind_{P_F}^{P_{F'}}(X_F)=X_{F'}$ and the exactness of compact induction.
\end{proof}
We say that a representation $V\in\Rep_R^\infty(P_F)$ (resp.\ $V\in\Rep_R^\infty(P_F^\dagger))$ is generated by its $I_{C(F)}$-invariants if
\[
R[P_F]\cdot V^{I_{C(F)}}=V\quad(\mbox{resp.\ } R[P_F^\dagger]\cdot V^{I_{C(F)}}=V).
\]
Clearly, $X_F$ and $X_F^\dagger$ have this property. Like condition (H) it is insensitive to restriction.
\begin{lem}\label{invariants_restriction}
If $V\in\Rep_R^\infty(P_F^\dagger)$ then $R[P_F^\dagger]\cdot V^{I_{C(F)}}=R[P_F]\cdot V^{I_{C(F)}}$.
\end{lem}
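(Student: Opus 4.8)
The inclusion $R[P_F]\cdot V^{I_{C(F)}}\subseteq R[P_F^\dagger]\cdot V^{I_{C(F)}}$ is trivial since $P_F\subseteq P_F^\dagger$, so the content is the reverse inclusion. The plan is to reduce everything to the decomposition $P_F^\dagger=\coprod_{w\in W_F^\dagger}I'wI'$ from (\ref{Bruhat_parahoric}), or rather its refinement through $W_F^\dagger=W_F\rtimes\Omega_F$: it suffices to show that for every $\omega\in\Omega_F$ and every $v\in V^{I_{C(F)}}$ we have $gv\in R[P_F]\cdot V^{I_{C(F)}}$ for $g$ any representative of $\omega$. Indeed, given that, an arbitrary element of $P_F^\dagger$ can be written (using $P_F^\dagger=P_F\cdot\Omega_F$, which follows from $P_F^\dagger/P_F\cong\Omega_F$ established right after (\ref{Bruhat_parahoric})) as a product $h\cdot g$ with $h\in P_F$ and $g$ a lift of some $\omega\in\Omega_F$, so $hgv\in R[P_F]\cdot(R[P_F]\cdot V^{I_{C(F)}})=R[P_F]\cdot V^{I_{C(F)}}$, and since $V^{I_{C(F)}}$ spans $R[P_F^\dagger]\cdot V^{I_{C(F)}}$ over $R[P_F^\dagger]$ we are done.

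So the crux is: for $\omega\in\Omega_F$ with lift $g\in N_G(T)$ and $v\in V^{I_{C(F)}}$, show $gv\in R[P_F]\cdot V^{I_{C(F)}}$. Here I would exploit that $I_{C(F)}$ is normalized by a large enough group. First note $\omega\in\Omega_F$ means $gF=F$, and by Lemma \ref{closest_chamber} (the final assertion, applied suitably, or by the uniqueness of $C(F)$) the element $g$ also stabilizes $C(F)$, hence $g$ normalizes $I_{C(F)}$ — so $gv$ is again $I_{C(F)}$-invariant, i.e.\ $gv\in V^{I_{C(F)}}$. That's the whole point: multiplication by a lift of an element of $\Omega_F$ preserves $V^{I_{C(F)}}$, so $gv\in V^{I_{C(F)}}\subseteq R[P_F]\cdot V^{I_{C(F)}}$ directly, with no need to move through $P_F$ at all.

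Let me re-examine whether $g$ normalizes $I_{C(F)}$. We have $\Omega_F\subseteq\Omega$, and $\Omega$ is contained in $P_C^\dagger$ and normalizes $I=I_C$ and $I'=P_C$; more to the point, $\omega\in\Omega_F$ stabilizes $F$, and $C(F)$ is canonically attached to $F$ with the property (Lemma \ref{closest_chamber}) that $gC(F)=C(gF)$ for $g\in I$ — but here $g\notin I$ in general. Instead I would argue: $C(F)$ is the unique chamber containing $F$ in its closure at minimal gallery distance from $C$; since $g\in\Omega_F\subseteq\Omega\subseteq P_C^\dagger$ fixes $C$ and sends $F$ to $F$, it permutes the chambers through $F$ while preserving gallery distances from $C$, hence fixes $C(F)$; therefore $g$ normalizes $P_{C(F)}$ and its pro-$p$ radical $I_{C(F)}$. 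This is the one step I expect to require a little care — pinning down that a length-zero element stabilizing $F$ must fix $C(F)$ — but it follows cleanly from the uniqueness characterization of $C(F)$ and the fact that $\Omega$ acts on $\A$ by length-preserving (hence gallery-distance-preserving) automorphisms fixing $C$. Once that is in place the lemma is immediate. An alternative, purely Hecke-theoretic route avoiding building geometry: by Remark \ref{group_ring}, for $V\in\Rep_R^\infty(P_F^\dagger)$ and $m\in V^{I_{C(F)}}$ one has $\tau_\omega\cdot m=\omega\cdot m\in V^{I_{C(F)}}$ for $\omega\in\tilde\Omega_F$, which says exactly that the $\Omega_F$-translates of $I_{C(F)}$-invariants are again $I_{C(F)}$-invariant — I would cite this as the quickest justification.
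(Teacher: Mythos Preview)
Your proof is correct and follows essentially the same approach as the paper: use $P_F^\dagger/P_F\cong\Omega_F$ and show that lifts of $\Omega_F$ normalize $I_{C(F)}$, so they preserve $V^{I_{C(F)}}$. The paper's argument is slightly more streamlined: it first reduces to $F\subseteq\Cbar$ ``by conjugation and transport of structure'' (via the isomorphisms (\ref{phi_gd})), after which $C(F)=C$ and $I_{C(F)}=I$, and the fact that $\Omega$ normalizes $I$ is already stated in \S\ref{subsection_1_1}. Your geometric argument via the uniqueness characterization of $C(F)$ is correct, but note that your use of (\ref{Bruhat_parahoric}) and of $\Omega_F$ already presupposes $F\subseteq\Cbar$ (these are only defined in that case), so you are implicitly in the reduced situation anyway and the uniqueness argument collapses to the known fact $\Omega\subseteq P_C^\dagger$ normalizes $I$. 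Your alternative via Remark~\ref{group_ring} is exactly the paper's reasoning in Hecke-theoretic language.
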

\begin{proof}
By conjugation and transport of structure we may assume $F\subseteq\Cbar$. But then $P_F^\dagger/P_F\cong\Omega_F$ and $\omega I\omega^{-1}=I$ for any element $\omega\in N_G(T)\cap P_F^\dagger$ whose image in $W$ lies in $\Omega_F$. Since any such element stabilizes $V^I$, the claim follows.
\end{proof}
We now clarify the relation between condition (H) and the condition ($A$+$A^*$) of \cite{Cab}, Proposition 8. In the case of a finitely generated $R$-module the latter means that $V$ and its $R$-linear contragredient $V^*$ are both generated by their $I_{C(F)}$-invariants. Recall that a ring is called quasi-Frobenius if it is noetherian and selfinjective (cf.\ \cite{Lam}, \S15).
\begin{lem}\label{properties_H}Let $V$ be a smooth $R$-linear representation of $P_F$ or of $P_F^\dagger$.
\begin{enumerate}[wide]
\item[(i)]If $V$ satisfies condition (H) then it is generated by its $I_{C(F)}$-invariants.
\item[(ii)]Assume that $R$ is a quasi-Frobenius ring and that the underlying $R$-module of $V$ is finitely generated. Then $V$ satisfies condition (H) if and only if $V$ and its contragredient $V^*=\Hom_R(V,R)$ are generated by their $I_{C(F)}$-invariants.
\item[(iii)]Assume that $R$ is a quasi-Frobenius ring and that the order of the finite group $P_F/I_F$ is invertible in $R$. If $V$ is generated by its $I_{C(F)}$-invariants and if the underlying $R$-module of $V$ is finitely generated projective then also $V^*$ is generated by its $I_{C(F)}$-invariants. In particular, $V$ satisfies condition (H).
\end{enumerate}
\end{lem}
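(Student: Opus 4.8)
Everything will be deduced from two structural facts about $X_F$. First, since $I_{C(F)}$ is an open, hence finite-index, subgroup of the profinite group $P_F$, the representation $X_F=\ind_{I_{C(F)}}^{P_F}(R)$ is the permutation module $R[I_{C(F)}\backslash P_F]$: it is free of finite rank over $R$ and self-dual, i.e.\ the $R$-linear dual $X_F^\ast$ is again isomorphic to $X_F$ as a $P_F$-representation (equivalently, here $\ind=\mathrm{Ind}=\mathrm{Coind}$). Second, $X_F$ is generated by its $I_{C(F)}$-invariant function supported on $I_{C(F)}$; consequently (a) every quotient of a finite direct sum $X_F^n$ in $\Rep_R^\infty(P_F)$ is generated by its $I_{C(F)}$-invariants, and (b) every $V\in\Rep_R^\infty(P_F)$ that is simultaneously a quotient of some $X_F^n$ and a subrepresentation of some $X_F^m$ satisfies condition (H): composing a surjection $X_F^n\twoheadrightarrow V$, the injection $V\hookrightarrow X_F^m$, and suitable coordinate projections and inclusions $X_F^N\twoheadrightarrow X_F^n$, $X_F^m\hookrightarrow X_F^N$ with $N=\max(n,m)$ produces $\varphi\in\End_{P_F}(X_F^N)$ with $\im(\varphi)\cong V$. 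Throughout, the statements for $P_F^\dagger$ reduce to those for $P_F$: condition (H) is defined by restriction, being generated by $I_{C(F)}$-invariants is insensitive to restriction by Lemma \ref{invariants_restriction}, and $R$-linear duality commutes with restriction. For part (i), since being generated by $I_{C(F)}$-invariants passes to filtered inductive limits with injective transition maps (the limit is the union of the images, each generated by invariants lying in $V^{I_{C(F)}}$), I may assume $V\cong\im(\varphi)$ for some $\varphi\in\End_{P_F}(X_F^n)$; then $V$ is a quotient of $X_F^n$ and (a) applies.

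For the forward implication of part (ii), assertion (i) already gives that $V$ is generated by its $I_{C(F)}$-invariants; since the underlying $R$-module is finitely generated over the noetherian ring $R$, Remark \ref{H_I_F_trivial} yields $V\cong\im(\varphi)$ for a single $\varphi\in\End_{P_F}(X_F^n)$, so $V$ is both a quotient and a submodule of $X_F^n$. Applying $(-)^\ast$ and using $X_F^\ast\cong X_F$, the same holds for $V^\ast$, which therefore satisfies (H) by (b) and is generated by its invariants by (i). For the converse, finite generation makes $V^{I_{C(F)}}$ a finitely generated $R$-module, so the hypothesis on $V$ produces a $P_F$-equivariant surjection $X_F^n\twoheadrightarrow V$, and the hypothesis on $V^\ast$ a surjection $X_F^m\twoheadrightarrow V^\ast$. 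Dualizing the latter and using that over the quasi-Frobenius ring $R$ the canonical map $V\to V^{\ast\ast}$ is injective (bijective for finitely generated $V$; here one only needs that $R$ is a cogenerator over itself), one gets a $P_F$-equivariant injection $V\hookrightarrow(X_F^m)^\ast\cong X_F^m$. Now (b) shows that $V$ satisfies condition (H).

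For part (iii), first note that $I_F$, being a normal subgroup of $P_F$ fixing the generating set $V^{I_{C(F)}}$, acts trivially on $V$; thus $V$ is inflated from the finite group $\overline P=P_F/I_F$, whose order is invertible in $R$, and so is $X_F$. Finite generation over $R$ together with the hypothesis that $V$ is generated by its invariants gives a $P_F$-equivariant surjection $\pi\colon X_F^n\twoheadrightarrow V$; as $V$ is projective over $R$, $\pi$ splits $R$-linearly, and a Maschke-type averaging over $\overline P$ (using $|\overline P|\in R^\times$) upgrades the splitting to a $P_F$-equivariant one. Hence $V$ is a $P_F$-direct summand of $X_F^n$, so it satisfies condition (H) (take $\varphi$ the corresponding idempotent), and dualizing, $V^\ast$ is a direct summand of $(X_F^n)^\ast\cong X_F^n$, hence satisfies (H) and, by (i), is generated by its $I_{C(F)}$-invariants.

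The delicate point is the converse of part (ii): it is the only place where the quasi-Frobenius hypothesis is genuinely needed, and where the self-duality $X_F^\ast\cong X_F$ has to be combined correctly with the ``quotient $+$ sub $\Rightarrow$ (H)'' mechanism. This is the analogue, over an arbitrary quasi-Frobenius ring and without the finiteness assumptions of \cite{Cab}, of the equivalence between Cabanes' conditions $(A+A^\ast)$ and $(**)$; everything else is a bookkeeping reduction to representations of $X_F$ and its finite direct sums.
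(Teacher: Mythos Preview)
Your proof is correct and follows essentially the same line as the paper's. The only noteworthy difference is in part (iii): the paper dualizes first to obtain an injection $V^*\hookrightarrow X_F^n$, then splits it using that over a quasi-Frobenius ring projectives coincide with injectives, and finally averages; you instead split the surjection $X_F^n\twoheadrightarrow V$ directly (using only that $V$ is $R$-projective) and then dualize. Your route is marginally cleaner in that it yields condition (H) for $V$ without appealing to (ii) and without invoking the projective-equals-injective property of quasi-Frobenius rings, but the two arguments are transparently dual to one another.
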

\begin{proof}
In part (i) we may assume $V=\im(\varphi)$ for some $\varphi\in\End_{P_F}(X_F^n)$. But then $V$ is a quotient of $X_F$ hence is generated by its $I_{C(F)}$-invariants.\\

As for (ii) we can write $V=\im(\varphi)$ as above. Dualizing the embedding $V=\im(\varphi)\hookrightarrow X_F^n$ yields a surjection $(X_F^*)^n\to V^*$ because $R$ is selfinjective. Note that $X_F\cong X_F^*$ in $\Rep_R^\infty(P_F)$. Indeed, for $p\in P_F$ let $\psi_p\in X_F^*$ denote the element determined by $\psi_p(f)=f(p)$. Then the map $(f\mapsto\sum_{p\in P_F/I_{C(F)}}f(p)\psi_p):X_F\to X_F^*$ is an isomorphism in $\Rep_R^\infty(P_F)$. Consequently, also $V^*$ is generated by its $I_{C(F)}$-invariants.\\

Conversely, assume that $V$ and $V^*$ are generated by their $I_{C(F)}$-invariants. Since $V$ is finitely generated and $R$ is noetherian also $V^{I_{C(F)}}$ is a finitely generated $R$-module. Choose a non-negative integer $n$ and an $R$-linear surjection $R^n\to V^{I_{C(F)}}$ of trivial $I_{C(F)}$-representations. Applying $\ind_{I_{C(F)}}^{P_F}$ and composing with the natural map $\ind_{I_{C(F)}}^{P_F}(V^{I_{C(F)}})\to V$ we obtain a homomorphism $X_F^n\to V$ in $\Rep_R^\infty(P_F)$ which is surjective because $V$ is generated by its $I_{C(F)}$-invariants. In a similar manner, one constructs a surjection $X_F^m\to V^*$. Passing to the $R$-linear dual, we obtain an injective homomorphism $V\cong (V^*)^*\hookrightarrow X_F^m$ because over a selfinjective ring any finitely generated module is reflexive (cf.\ \cite{Lam}, Theorem 15.11). Thus, $V$ satisfies condition (H).\\

As for (iii), we construct a surjection $X_F^n\twoheadrightarrow V$ in $\Rep_R^\infty(P_F)$ as above. Passing to the $R$-linear dual, there is an injection $V^*\hookrightarrow(X_F^*)^n\cong X_F^n$ in $\Rep_R^\infty(P_F)$. It admits an $R$-linear section because over a quasi-Frobenius ring the classes of projective and injective modules coincide (cf.\ \cite{Lam}, Theorem 15.9). By our assumption, the embedding $V^*\hookrightarrow X_F^n$ even admits a $P_F/I_F$-linear section by the usual averaging construction. Consequently, $V^*$ is generated by its $I_{C(F)}$-invariants and satisfies (H) by (ii).
\end{proof}
Over fields, the following fundamental results are due to Sawada, Tinberg, Schneider and Ollivier, respectively (cf.\ \cite{Saw}, Theorem 2.4, \cite{Tin}, Proposition 3.7 and \cite{OS1}, Proposition 5.5). However, the proofs work more generally.
\begin{prop}\label{selfinjective}
\begin{enumerate}[wide]
\item[(i)]The ring homomorphism $R\to H_F$ is an $\mathrm{id}_R$-Frobe\-nius extension. In particular, the rings $R$ and $H_F$ have the same injective dimension. If $R$ is a quasi-Frobenius ring then so is $H_F$.
\item[(ii)]
Assume that $\GG$ is semisimple. Then the ring homomorphism $R\to H_F^\dagger$ is an $\mathrm{id}_R$-Frobenius extension. In particular, the rings $R$ and $H_F^\dagger$ have the same injective dimension and if $R$ is a quasi-Frobenius ring then so is $H_F^\dagger$.
\end{enumerate}
\end{prop}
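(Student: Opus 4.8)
The plan is to prove both parts by exhibiting an explicit $\mathrm{id}_R$-Frobenius system for the extension at hand, i.e.\ an $R$-linear form together with finitely many ``dual'' pairs of elements satisfying the two standard Frobenius identities. Once this is done for $R\to H_F$, the remaining assertions are formal: $H_F$ is free of finite positive rank over $R$ by \eqref{Bruhat_parahoric}, so the restriction functor $\Mod_{H_F}\to\Mod_R$ is exact and admits the exact functor $H_F\otimes_R(\cdot)$ as a left adjoint; the Frobenius property (with trivial twist on $R$) identifies $H_F\otimes_R(\cdot)$ with the coinduction $\Hom_R(H_F,\cdot)$, so restriction and $H_F\otimes_R(\cdot)$ are each simultaneously a left and a right adjoint of the other and hence both preserve injectives. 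Feeding an injective resolution of $R$ through $H_F\otimes_R(\cdot)$ and one of $H_F$ through restriction, and using $\mathrm{injdim}_R H_F=\mathrm{injdim}_R R$ (because $H_F$ is free of finite positive rank over $R$), one obtains $\mathrm{injdim}_{H_F}H_F=\mathrm{injdim}_R R$; if moreover $R$ is noetherian and selfinjective then $H_F$, being module-finite over the central noetherian ring $R$, is noetherian of injective dimension zero, hence quasi-Frobenius. The same reasoning applies to $H_F^\dagger$ as soon as $\GG$ is semisimple, for then $\Omega_F$ is finite and $H_F^\dagger$ is again free of finite positive rank over $R$ by \eqref{Bruhat_parahoric}. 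Finally, transporting structure along the algebra isomorphisms $\varphi_{gd,F}$ of \eqref{phi_gd}, exactly as in the proofs of Proposition \ref{free} and Proposition \ref{I_F_invariants}, we may assume throughout that $F\subseteq\Cbar$, so that $H_F$ and $H_F^\dagger$ carry the bases $(\tau_w)_{w\in\tilde{W}_F}$ and $(\tau_w)_{w\in\tilde{W}_F^\dagger}$.

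For part (i) I would construct the Frobenius form $\epsilon_F\colon H_F\to R$ on the basis $(\tau_w)_{w\in\tilde{W}_F}$. Writing $w_F$ for the longest element of the finite Coxeter group $W_F$ and fixing a lift of it to $\tilde{W}_F$, the form to use is the one supported on the basis elements of top length $\ell(w_F)$, suitably normalised on the $T_0/T_1$-part: this is dictated by the observation that when $q$ is not invertible in $R$ the quadratic relations \eqref{quadratic_relations} degenerate and $H_F$ behaves like a ($0$-Hecke-type) algebra whose symmetrizing form detects the longest element rather than the identity, while the $T_0/T_1$-part merely contributes the group algebra $R[T_0/T_1]$, which is a Frobenius $R$-algebra for trivial reasons. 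The substance is then the perfectness of the pairing $(h,h')\mapsto\epsilon_F(hh')$ over the arbitrary commutative ring $R$: using the braid relations \eqref{braid_relations}, the quadratic relations \eqref{quadratic_relations} and the decomposition \eqref{Bruhat_parahoric} one computes $\epsilon_F(\tau_v\tau_w)$ on basis vectors and finds that it vanishes unless $v$ and $w$ are ``complementary'' in $\tilde{W}_F$ (so that $\tau_v\tau_w$ has a non-zero coefficient at the relevant top-length basis vector), in which case it is a unit of $R$; reading this off produces the dual bases explicitly and verifies the two-sided identity. It is convenient to organise the computation by factoring $R\to H_F$ through $R\to R[T_0/T_1]$ and treating $H_F$ as a generic Iwahori--Hecke algebra over $R[T_0/T_1]$ in the sense of \eqref{braid_relations}--\eqref{quadratic_relations}. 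I expect this perfectness computation --- in particular the bookkeeping required when $q$ or $|T_0/T_1|$ fails to be invertible in $R$ --- to be the one genuinely delicate point.

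For part (ii) I would factor $R\to H_F^\dagger$ as $R\to H_F\to H_F^\dagger$ and use that the composite of an $\mathrm{id}_R$-Frobenius extension with an $\mathrm{id}_{H_F}$-Frobenius extension is an $\mathrm{id}_R$-Frobenius extension, with composed Frobenius system. By part (i) it therefore suffices to show that $H_F\to H_F^\dagger$ is an $\mathrm{id}_{H_F}$-Frobenius extension, and it is precisely here that semisimplicity of $\GG$ enters, through the finiteness of $\Omega_F$. By Remark \ref{group_ring} the Hecke operators $\tau_\omega$ with $\omega\in\tilde{\Omega}_F$ are units of $H_F^\dagger$ with $\tau_\omega^{-1}=\tau_{\omega^{-1}}$, and conjugation by them normalises $H_F$ via the algebra automorphisms $\varphi_{\omega,F}$; by Proposition \ref{free} (i) the family $(\tau_\omega)_{\omega\in S}$, for $S$ a set of representatives of $\Omega_F$ in $\tilde{\Omega}_F$, is at the same time a left and a right $H_F$-basis of $H_F^\dagger$. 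Hence $H_F^\dagger$ is a crossed product of $H_F$ by the finite group $\Omega_F$, and a crossed product of a ring by a finite group is an $\mathrm{id}$-Frobenius extension of that ring, with Frobenius form the projection onto the coefficient of the identity and dual bases furnished by the units $(\tau_\omega)_{\omega\in S}$ and their inverses; so $H_F\to H_F^\dagger$ is $\mathrm{id}_{H_F}$-Frobenius. Composing with part (i) gives that $R\to H_F^\dagger$ is an $\mathrm{id}_R$-Frobenius extension, whereupon the formal consequences recorded in the first paragraph finish the proof. Note that the argument genuinely collapses when $\GG$ is not semisimple, in accordance with the failure of selfinjectivity of $H_F^\dagger$ in that case (cf.\ Remark \ref{not_selfinjective}).
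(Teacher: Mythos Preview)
Your approach to part (i) and the formal consequences (same injective dimension, quasi-Frobenius transfer) is essentially that of the paper: reduce to $F\subseteq\Cbar$ via the isomorphisms $\varphi_{gd,F}$, factor $R\to H_F$ through $R\to R[T_0/T_1]$, and build the Frobenius form on $H_F$ using the longest element $w_0$ of $W_F$. One refinement worth noting: the paper shows that $R[T_0/T_1]\to H_F$ is an $\alpha$-Frobenius extension where $\alpha$ is the \emph{nontrivial} automorphism $\xi\mapsto w_0\xi w_0^{-1}$ of $R[T_0/T_1]$; the twist becomes invisible only after composing with the $\id_R$-Frobenius extension $R\to R[T_0/T_1]$, since $\alpha$ is $R$-linear. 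Your sketch suppresses this twist, and the word ``unit'' for the diagonal pairing values is slightly optimistic (the matrix $(\theta(\tau_{\tilde v}\tau_{\tilde w^{-1}w_0}))_{v,w}$ is merely invertible, being triangular with respect to a suitable order with units along the diagonal), but the outline is correct. The paper also notes an alternative shortcut: invertibility of this matrix can be checked after reduction modulo every maximal ideal of $R$, where the field case is due to Tinberg.

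Your treatment of part (ii) differs genuinely from the paper's. The paper simply reruns the part-(i) argument with the finite group $W_F^\dagger$ in place of $W_F$ (which is legitimate once $\GG$ is semisimple, although $W_F^\dagger$ is no longer a Coxeter group and one has to appeal to the analysis in \cite{OS1}, Proposition 5.5). You instead factor $R\to H_F\to H_F^\dagger$ and use transitivity of Frobenius extensions together with the observation that $H_F^\dagger$ is a crossed product of $H_F$ by the finite group $\Omega_F$ (via Proposition \ref{free} (i) and Remark \ref{group_ring}), hence an $\id_{H_F}$-Frobenius extension with Frobenius form the projection to the $\tau_1$-coefficient and dual bases $(\tau_\omega)_{\omega\in S}$, $(\tau_{\omega^{-1}})_{\omega\in S}$. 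This is correct and arguably cleaner: it isolates exactly where finiteness of $\Omega_F$ enters and avoids redoing the delicate perfectness computation. The paper's route, on the other hand, is more uniform and yields an explicit Frobenius form on $H_F^\dagger$ directly, without passing through the intermediate ring.
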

\begin{proof}
Using the isomorphisms (\ref{phi_gd}) we may assume $F\subseteq\Cbar$. As for (i), the group $P_F$ is compact and the $R$-module $H_F$ is finitely generated and free of rank $|\tilde{W}_F|$. In particular, if $R$ is noetherian then so is $H_F$.\\

By \cite{OS1}, Example 5.1, the ring homomorphism $R\to R[T_0/T_1]$ is an $\id_R$-Frobenius extension. Choose an element $w_0\in \tilde{W}_F$ which is of maximal length and consider the $R$-linear ring automorphism $\alpha$ of $R[T_0/T_1]$ given by $\xi\mapsto w_0\xi w_0^{-1}$. By the transitivity of Frobenius extensions it suffices to see that the canonical injection $R[T_0/T_1]\to H_F$ is an $\alpha$-Frobenius extension. We fix representatives $\tilde{v}\in\tilde{W}_F$ of the elements $v\in W_F\cong\tilde{W}_F/(T_0/T_1)$. The $R[T_0/T_1]$-algebra $H_F$ admits the two bases $(\tau_{\tilde{v}})_{v\in W_F}$ and $(\tau_{{\tilde{w}}^{-1}w_0})_{w\in W_F}$. We define the map $\theta:H_F\to R[T_0/T_1]$ by
\[
 \theta(\sum_{w\in W_F}a_w\tau_{\tilde{w}})=\sum_{\xi\in T_0/T_1}a_{\xi w_0}\tau_{\tilde{\xi}}.
\]
It suffices to see that the matrix $(\theta(\tau_{\tilde{v}}\tau_{{\tilde{w}}^{-1}w_0}))_{v,w\in W_F}$ over $R[T_0/T_1]$ is invertible (cf.\ \cite{OS1}, Lemma 5.2 and Lemma 5.3). This is done as in \cite{OS1}, Proposition 5.4 (i), relying on the relation (\ref{braid_relations}) and (\ref{quadratic_relations}) of $H$ which hold over any coefficient ring $R$ (cf.\ \cite{Vig2}, Theorem 1). Alternatively, it suffices to see that the above matrix is invertible after reduction modulo every maximal ideal of the commutative ring $R$. However, this leads to the case of a field which is treated in \cite{Tin}, Proposition 3.7. If $\GG$ is semisimple then also $P_F^\dagger$ is compact. The arguments for $H_F^\dagger$ are then similar on replacing $W_F$ by the finite group $W_F^\dagger$.
\end{proof}
\begin{rem}\label{not_selfinjective}
If $\GG$ is not semisimple then $H_F^\dagger$ is not selfinjective unless $R=0$. In fact, as in \cite{OS1}, Proposition 5.5, its selfinjective dimension is equal to the rank of the center $\CC$ of $\GG$ for any non-zero quasi-Frobenius ring $R$.
\end{rem}
The following theorem is essentially due to Cabanes (cf.\ \cite{Cab}, Theorem 2). We follow his arguments over arbitrary quasi-Frobenius rings and also treat representations whose underlying $R$-modules are not finitely generated.
\begin{thm}\label{Cabanes}Assume that $R$ is a quasi-Frobenius ring. The functor of $I_{C(F)}$-invariants restricts to an equivalence $\Rep_R^{H}(P_F)\longrightarrow\Mod_{H_F}$ of additive categories.
\end{thm}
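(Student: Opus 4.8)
The plan is to exhibit $(\cdot)^{I_{C(F)}}$ as the right adjoint of a tensor functor and to check that the unit and counit of the adjunction are isomorphisms on the relevant subcategories, following the strategy of Cabanes \cite{Cab}. Write $\Gamma=I_{C(F)}$, $P=P_F$, $X=X_F$, $H=H_F$. By transport of structure along the isomorphisms (\ref{phi_gd}), which carry $(\cdot)^\Gamma$ and $H$ to the corresponding data at the face $[F]\subseteq\Cbar$, one reduces to $F\subseteq\Cbar$. Frobenius reciprocity identifies $(\cdot)^\Gamma$ with $\Hom_P(X,-)$ and makes it the right adjoint of $T:=X\otimes_H(\cdot)\colon\Mod_H\to\Rep_R^\infty(P)$; denote by $\eta\colon\id_{\Mod_H}\Rightarrow(\cdot)^\Gamma\circ T$ and $\epsilon\colon T\circ(\cdot)^\Gamma\Rightarrow\id$ the unit and counit. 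Since $X$ is finitely generated in $\Rep_R^\infty(P)$, the functor $(\cdot)^\Gamma$ commutes with arbitrary direct sums and with filtered colimits having injective transition maps, while $T$ commutes with all colimits. The base cases are immediate: $X^\Gamma\cong H$ as the regular left $H$-module, so $\eta_{H^{(J)}}$ is an isomorphism and $T(H^{(J)})=X^{(J)}$; and one has the self-duality $X\cong X^*$ in $\Rep_R^\infty(P)$ (as in the proof of Lemma \ref{properties_H} (ii)) and the fact that $H$ is quasi-Frobenius (Proposition \ref{selfinjective} (i)), both of which enter repeatedly.

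For essential surjectivity I would show, for every $M\in\Mod_H$, that $T(M)\in\Rep_R^H(P)$ and that $\eta_M$ is an isomorphism; together with the full faithfulness established below this yields the equivalence onto $\Mod_H$. A presentation $H^{(J_1)}\to H^{(J_0)}\to M\to 0$ and right exactness of $T$ produce $X^{(J_1)}\to X^{(J_0)}\to T(M)\to 0$, exhibiting $T(M)$ as a quotient of a direct sum of copies of $X$; after reducing, if necessary, to $M$ with noetherian underlying module by a filtered-colimit argument (cf.\ Remark \ref{H_I_F_trivial}), dualizing and using $X\cong X^*$ together with the reflexivity of finitely generated $H$-modules (\cite{Lam}, Theorem 15.11) produces an embedding of $T(M)$ into a direct sum of copies of $X$, so $T(M)\in\Rep_R^H(P)$ by Remark \ref{H_I_F_trivial}. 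Granting the exactness statement of the last paragraph, applying $(\cdot)^\Gamma$ to the two-term resolution of $T(M)$ and comparing with the chosen presentation of $M$ through $\eta_M$ forces $\eta_M$ to be an isomorphism by a diagram chase.

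For full faithfulness it suffices to prove that $\epsilon_V\colon T(V^\Gamma)\to V$ is an isomorphism for every $V\in\Rep_R^H(P)$, the natural map $\Hom_P(V,W)\to\Hom_H(V^\Gamma,W^\Gamma)$ being obtained by composing with $\epsilon_V$ under adjunction. Writing $V=\varinjlim_j\im(\varphi_j)$ with $\varphi_j\in\End_P(X^{n_j})$ and injective transition maps, and using that $\epsilon$ respects such colimits, one reduces to $V=\im(\varphi)$ with $\varphi\in\End_P(X^n)$, so that $V$ is at once a quotient $X^n\twoheadrightarrow V$ and a subobject $V\hookrightarrow X^n$ (Remark \ref{H_I_F_trivial}). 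Applying $(\cdot)^\Gamma$, left exact on the inclusion and right exact on the surjection by the exactness statement below, identifies $V^\Gamma$ with the image in $H^n$ of the endomorphism $\varphi^\Gamma$; since $V$ is generated by $V^\Gamma$ (Lemma \ref{properties_H} (i)) through $X^n\twoheadrightarrow V$, a direct check using the compatibility of $X^n\twoheadrightarrow V$, of $V^\Gamma\hookrightarrow H^n$ and of $\epsilon$ shows that $\epsilon_V$ is bijective.

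The decisive point — the step I expect to be the main obstacle — is the exactness property underlying both reductions: for every short exact sequence $0\to A\to X^n\to C\to 0$ in $\Rep_R^\infty(P)$ with $C$ satisfying condition (H), the induced sequence $0\to A^\Gamma\to H^n\to C^\Gamma\to 0$ is exact. As $(\cdot)^\Gamma$ is only left exact and $\Gamma$ is pro-$p$, the surjectivity of $H^n\to C^\Gamma$ is a genuine vanishing statement, and the plan is to deduce it from left exactness applied to the $R$-linear dual sequence $0\to C^*\to X^n\to A^*\to 0$ (using $X\cong X^*$), together with the compatibility of $(\cdot)^\Gamma$ with the duality $V\mapsto V^*$ on representations with noetherian underlying module and with the duality $N\mapsto\Hom_H(N,H)$ on finitely generated $H$-modules, the latter being a duality because $H$ is quasi-Frobenius (\cite{Lam}, Theorem 15.11). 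This is precisely Cabanes' argument in \cite{Cab} for finite-dimensional representations over a field; over an arbitrary quasi-Frobenius ring $R$ one reduces the residual invertibility assertions to the residue fields $R/\m$, as in the proof of Proposition \ref{selfinjective}, and treats representations with non-noetherian underlying module by the filtered-colimit arguments of Remark \ref{H_I_F_trivial}.
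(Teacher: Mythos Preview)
Your strategy of showing that the unit $\eta_M$ and counit $\epsilon_V$ of the adjunction $(T,(\cdot)^\Gamma)$ are isomorphisms does not work, because it is equivalent to the flatness of $X_F$ over $H_F$, and this fails in general. Concretely: if $\epsilon_V\colon X\otimes_H V^\Gamma\to V$ is an isomorphism for every $V\in\Rep_R^H(P)$, then applying it to the object $V=R[P]\cdot M\subseteq X^n$ attached to an arbitrary embedding $M\hookrightarrow H^n$ (which does lie in $\Rep_R^H(P)$ with $V^\Gamma=M$, as the paper shows) forces the natural map $X\otimes_H M\to X^n$ to be injective. Since every finitely generated $H$-module arises this way, $X$ would be flat, hence projective, over $H$. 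But for $P_F/I_F\cong\GL_2(\mathbb{F}_q)$ with $q>p$ and $R$ of characteristic $p$, the module $X_F$ is known not to be projective over $H_F$; compare the ``exceptional'' hypotheses in Proposition \ref{exceptional_flat}. The same obstruction invalidates your claim that $T(M)$ satisfies condition (H): your dualizing step only embeds $T(M)^*$ into a power of $X$, and passing from this to an embedding of $T(M)$ requires $T(M)^*$ to be generated by its $\Gamma$-invariants, which is essentially Lemma \ref{properties_H} (ii) and already presupposes that $T(M)$ satisfies (H).

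The paper avoids this by never asserting that $X\otimes_H M$ is the right object. For essential surjectivity one embeds $M\hookrightarrow H^n$ and takes $V=R[P]\cdot M$ inside $X^n$, i.e.\ the \emph{image} of $X\otimes_H M\to X^n$; the equality $V^\Gamma=M$ is then forced by a direct argument using only the selfinjectivity of $H_F$ (extend an embedding $V^\Gamma/M\hookrightarrow H^m$ to $H^n\to H^m$, lift to $X^n\to X^m$, and observe that the lift kills $V$). Fullness is obtained similarly: a map $V^\Gamma\to W^\Gamma$ is extended to $H^n\to H^n$ by selfinjectivity, realized as $(\cdot)^\Gamma$ of some $f\in\End_P(X^n)$, and one checks that $f(V)\subseteq W$ because both are generated by their $\Gamma$-invariants. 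Accordingly the quasi-inverse is not $T$ but the image functor $\t_F(M)=\im(\tau_{M,F})$ of Theorem \ref{quasi_inverse_OS}, which is a proper quotient of $X\otimes_H M$ precisely when $X$ fails to be flat.
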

\begin{proof}
As before, we may assume $F\subseteq\Cbar$. Denote by $\Rep_R^H(P_F)^\fg$ the full subcategory of $\Rep_R^H(P_F)$ consisting of all objects whose underlying $R$-modules are finitely generated. Moreover, denote by $\Mod_{H_F}^\fg$ the category of finitely generated $H_F$-modules. In a first step we show that the functor $(\cdot)^I$ induces an equivalence of categories $\Rep_R^H(P_F)^\fg\to\Mod_{H_F}^\fg$. Note that an $H_F$-module is finitely generated if and only if its underlying $R$-module is finitely generated because $H_F$ is finitely generated over $R$.\\

To prove the essential surjectivity, let $M$ be a finitely generated $H_F$-module. Since $H_F$ is quasi-Frobenius (cf.\ Proposition \ref{selfinjective} (i)) there is a non-negative integer $n$ and an $H_F$-linear embedding $M\hookrightarrow H_F^n$ (cf.\ \cite{Lam}, Theorem 15.11). Since $H_F^n=(X_F^n)^I$ we may set $V=R[P_F]\cdot M$ viewed as a subrepresentation of $X_F^n$. We claim that $V^I=M$ where $M\subseteq V^I$ is true by definition.\\

Since $V^I\subseteq H_F^n$, the $H_F$-module $V^I/M$ is finitely generated. As above, there is a non-negative integer $m$ and an $H_F$-linear embedding $\overline{g}:V^I/M\hookrightarrow H_F^m$. The $H_F$-linear map $V^I\twoheadrightarrow V^I/M\stackrel{\overline{g}}{\to}H_F^m$ extends to an $H_F$-linear map $g:H_F^n\to H_F^m$ by the selfinjectivity of $H_F$. Note that the functor $(\cdot)^I$ induces an isomorphism $\Hom_{P_F}(X_F^n,X_F^m)\longrightarrow\Hom_{H_F}(H_F^n,H_F^m)$. Consequently, there is an element $f\in\Hom_{P_F}(X_F^n,X_F^m)$ such that $g=f^I$ is the restriction of $f$ to the $I$-invariants of $X_F^n$. By construction, $f(M)=g(M)=0$ whence $f(V)=0$ because $f$ is $P_F$-equivariant and $M$ generates $V$. This implies $\overline{g}(V^I/M)=g(V^I)=f(V^I)=0$ whence $V^I/M=0$ by the injectivity of $\overline{g}$. Therefore, $V^I=M$ as claimed.\\

By construction, $V$ is a subobject of $X_F^n$. Since $V^I=M$ is a finitely generated $H_F$-module it is also finitely generated over $R$. As in the proof of Lemma \ref{properties_H} (ii) we see that $V$ is also a quotient of $X_F^n$ for $n$ sufficiently large. Consequently, $V$ satisfies condition (H) and its underlying $R$-module is finitely generated.\\

Now let $V$ and $W$ be arbitrary objects of $\Rep_R^{H}(P_F)^\fg$. Since $V$ is generated by its $I$-invariants (cf.\ Lemma \ref{properties_H} (i)) any non-zero element $f\in\Hom_{P_F}(V,W)$ restricts to a non-zero element $f^I\in\Hom_{H_F}(V^I,W^I)$, i.e.\ the functor $(\cdot)^I:\Rep_R^{H}(P_F)^\fg\to\Mod_{H_F}^\fg$ is faithful.\\

In order to see that it is full, let $g\in\Hom_{H_F}(V^I,W^I)$. We may assume $V=\im(\varphi)$ and $W=\im(\psi)$ for some non-negative integer $n$ and elements $\varphi,\psi\in\End_{P_F}(X_F^n)$. In particular, this realizes $V,W$ as $P_F$-subrepresentations of $X_F^n$ and $V^I,W^I$ as $H_F$-submodules of $H_F^n$. By the selfinjectivity of $H_F$, the $H_F$-linear map $V^I\to W^I\hookrightarrow H_F^n$ extends to an $H_F$-linear endomorphism of $H_F^n$. As above, the latter is the restriction of an element $f\in\End_{P_F}(X_F^n)$ to the space of $I$-invariants. Since $V$ and $W$ are generated by their $I$-invariants (cf.\ Lemma \ref{properties_H} (i)) we have
\begin{eqnarray*}
f(V)&=&f(R[P_F]\cdot V^I)=R[P_F]\cdot f(V^I)=R[P_F]\cdot g(V^I)\\&\subseteq& R[P_F]\cdot W^I=W,
\end{eqnarray*}
i.e.\ $f$ restricts to an element of $\Hom_{P_F}(V,W)$ with $f^I=g$. This establishes the equivalence of categories $(\cdot)^I:\Rep_R^{H}(P)^\fg\to\Mod_{H_F}^\fg$.\\

Before going on, note that the image of the homomorphism $f$ constructed above is equal to the $P_F$-subrepresentation of $X_F^n$ generated by $f(V^I)=g(V^I)$. By the first part of our proof this is a representation satisfying condition (H) with $\im(f)^I=g(V^I)$. Now if $g$ happens to be injective then $f:V\to\im(f)$ is a homomorphism in $\Rep_R^{H}(P_F)^\fg$ such that the induced homomorphism on $I$-invariants $g:V^I\to\im(f)^I=g(V^I)$ is an isomorphism. Therefore, $f:V\to\im(f)$ is an isomorphism itself because of our equivalence of categories. Thus, $f:V\to W$ is injective, too.\\

Let us now treat the general case. Given $V,W\in\Rep_R^{H}(P_F)$ write $V=\varinjlim_{j\in J}V_j$ and $W=\varinjlim_{j'\in J'}W_{j'}$ as in Definition \ref{condition_H} (i). Since the underlying $R$-modules of $V_j$ and $V_j^I$ are finitely generated and since the transition maps in the inductive systems $(W_{j'})_{j'}$ and $(W_{j'}^I)_{j'}$ are injective the natural maps
\begin{eqnarray*}
\Hom_{P_F}(\varinjlim_{j\in J}V_j,\varinjlim_{j'\in J'}W_{j'}) & \longrightarrow & \varprojlim_{j\in J}\varinjlim_{j'\in J'}\Hom_{P_F}(V_j,W_{j'})\quad\mbox{and}\\
\Hom_{H_F}(\varinjlim_{j\in J}V_j^I,\varinjlim_{j'\in J'}W_{j'}^I) & \longrightarrow & \varprojlim_{j\in J}\varinjlim_{j'\in J'}\Hom_{H_F}(V_j^I,W_{j'}^I)
\end{eqnarray*}
are bijective. Since the natural maps $\varinjlim_{j\in J}V_j^I\to V^I$ and $\varinjlim_{j'\in J'}W_{j'}^I\to W^I$ are isomorphisms of $H_F$-modules, the functor $(\cdot)^I$ is fully faithful in general.\\

Finally, let $M\in\Mod_{H_F}$ and write $M=\varinjlim_{j\in J}M_j$ as the filtered union of a familiy of finitely generated $H_F$-modules $M_j$. By what we have already proven there are objects $V_j\in\Rep_R^{H}(P_F)^\fg$ with $V_j^I=M_j$ for all $j\in J$. For $j\leq j'$ the map $\Hom_{P_F}(V_j,V_{j'})\to\Hom_{H_F}(M_j,M_{j'})$ is bijective and we let $\varphi_{jj'}:V_j\to V_{j'}$ denote the homomorphism corresponding to the inclusion $M_j\hookrightarrow M_{j'}$. As was noted above, the map $\varphi_{jj'}$ is automatically injective. Moreover, the bijectivity implies that the family $(V_j,\varphi_{jj'})_{j\leq j'}$ is an inductive system. Setting $V=\varinjlim_{j\in J}V_j\in\Rep_R^{H}(P_F)$ we have $V^I\cong\varinjlim_{j\in J}V_j^I=\varinjlim_{j\in J}M_j=M$.
\end{proof}
The essential surjectivity in Theorem \ref{Cabanes} was proved by an inductive limit procedure. This can be avoided through the following construction.
\begin{prop}\label{quasi_inverse}
Assume that $R$ is a quasi-Frobenius ring. If $M\in\Mod_{H_F}$ and if $E$ is an injective $H_F$-module containing $M$ then the $P_F$-subrepresen\-tation $V=\im(X_F\otimes_{H_F}M\to X_F\otimes_{H_F}E)$ of $X_F\otimes_{H_F}E$ generated by the image of the natural map $M\hookrightarrow E\hookrightarrow X_F\otimes_{H_F}E$ satisfies condition (H) and $V^{I_{C(F)}}=M$.
\end{prop}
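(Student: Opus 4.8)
The strategy is to reduce the statement, for finitely generated $M$, to the explicit representation $R[P_F]\cdot M$ built in the proof of Theorem~\ref{Cabanes}, and then to obtain the general case by a filtered colimit. Write $T=X_F\otimes_{H_F}(\cdot)\colon\Mod_{H_F}\to\Rep_R^\infty(P_F)$; it is additive, right exact, and commutes with arbitrary direct sums and filtered colimits, while $(\cdot)^{I_{C(F)}}$ is left exact and commutes with direct sums and filtered colimits of smooth representations. Recall $X_F^{I_{C(F)}}\cong H_F$ as a right $H_F$-module, and let $e\in X_F^{I_{C(F)}}$ correspond to $1$. If $j\colon M\hookrightarrow E$ is the given inclusion, then $V=\im(T(j)\colon T(M)\to T(E))$, and since $T(M)$ is generated over $P_F$ by the image of $M$ under $m\mapsto e\otimes m$, this $V$ is exactly the $P_F$-subrepresentation of $T(E)$ generated by the image of $M$, as asserted. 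Finally, since $E$ is injective over the quasi-Frobenius ring $H_F$ (Proposition~\ref{selfinjective}) it is a direct summand of a free $H_F$-module (\cite{Lam}, Theorem~15.9); because the natural map $H_F^{(J)}\to (X_F^{(J)})^{I_{C(F)}}=(X_F^{I_{C(F)}})^{(J)}$ is an isomorphism and is natural, so is $E\to T(E)^{I_{C(F)}}$, which justifies the notation $E\hookrightarrow T(E)$ and identifies $T(E)^{I_{C(F)}}$ with $E$.

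\textbf{The finitely generated case.} Assume first that $M$ is finitely generated over $H_F$. By \cite{Lam}, Theorem~15.11, there is an $H_F$-embedding $\iota\colon M\hookrightarrow H_F^n$ for some $n$. Since $E$ is injective, the inclusion $M\hookrightarrow E$ extends to an embedding of the injective hull $E(M)\hookrightarrow E$; let $E_0\subseteq E$ be its image, so that $E_0\cong E(M)$ is an injective module with $M\subseteq E_0$, hence a direct summand of $E$. As $M\subseteq E_0$ is essential and $H_F^n$ is injective, $\iota$ extends to an embedding $E_0\hookrightarrow H_F^n$, which is split because $E_0$ is injective; in particular $E_0$ is finitely generated. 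Applying the additive functor $T$ to the split monomorphisms $E_0\hookrightarrow E$ and $E_0\hookrightarrow H_F^n$ yields split monomorphisms $T(E_0)\hookrightarrow T(E)$ and $T(E_0)\hookrightarrow T(H_F^n)=X_F^n$. Since $j$ factors through $E_0$, the map $T(j)$ factors through $T(E_0)$, so $V$ is carried isomorphically onto $\im(T(\iota)\colon X_F\otimes_{H_F}M\to X_F^n)$; and the latter is precisely the $P_F$-subrepresentation $R[P_F]\cdot\iota(M)$ of $X_F^n$ appearing in the proof of Theorem~\ref{Cabanes}. That proof shows $R[P_F]\cdot\iota(M)$ is both a quotient and a subrepresentation of a finite power of $X_F$, hence satisfies condition (H) (cf.\ Remark~\ref{H_I_F_trivial}), and that $(R[P_F]\cdot\iota(M))^{I_{C(F)}}=\iota(M)$. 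Tracking the image of $M$ in $T(E)$ through the same chain of split monomorphisms shows that it corresponds to $\iota(M)$ as well, so $V^{I_{C(F)}}$ is the image of $M$ in $T(E)$, i.e.\ $V^{I_{C(F)}}=M$.

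\textbf{The general case.} Write $M=\varinjlim_{k}M_k$ as the filtered union of its finitely generated $H_F$-submodules. For each $k$, $E$ is an injective module containing $M_k$, so by the previous step $V_k:=\im(T(M_k)\to T(E))$ satisfies condition (H) and $V_k^{I_{C(F)}}$ equals the image of $M_k$ in $T(E)$. Because $T$ commutes with the filtered colimit, $T(j)$ is the colimit of the maps $T(M_k\hookrightarrow E)$, so $V=\im(T(j))=\bigcup_{k}V_k$; moreover $V_k\subseteq V_{k'}$ for $k\le k'$, so $V$ is an inductive limit of representations satisfying condition (H) with injective transition maps, hence satisfies condition (H) by Remark~\ref{H_I_F_trivial}. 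Taking $I_{C(F)}$-invariants commutes with this filtered union, so $V^{I_{C(F)}}=\bigcup_{k}V_k^{I_{C(F)}}$ is the union of the images of the $M_k$ in $T(E)$, which, again since $T$ preserves the filtered colimit, is the image of $M$ in $T(E)$; thus $V^{I_{C(F)}}=M$.

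\textbf{Main obstacle.} The delicate point is the finitely generated case, and in it the bookkeeping rather than any new idea: one must reshape the arbitrary injective $E$ into a finitely generated injective $E_0$ realized as a split submodule of a finite free $H_F$-module, so that $V$ collapses onto the concrete representation $R[P_F]\cdot\iota(M)$ of the proof of Theorem~\ref{Cabanes}, and then keep track of the two splittings together with the unit $E\to T(E)^{I_{C(F)}}$ to be certain that $V^{I_{C(F)}}$ is exactly the image of $M$ and not a strictly larger submodule. Once this reduction is secured, the passage to arbitrary $M$ uses only that condition (H) is stable under inductive limits with injective transition maps and that $T$ and $(\cdot)^{I_{C(F)}}$ commute with filtered colimits.
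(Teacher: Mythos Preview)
Your proposal is correct and follows essentially the same strategy as the paper: reduce to the representation $R[P_F]\cdot M_j\subseteq X_F^n$ from the proof of Theorem~\ref{Cabanes} for finitely generated $M_j$, then pass to the filtered union. The paper's bookkeeping is slightly leaner: instead of your injective-hull detour through $E_0\cong E(M)$, it embeds $E$ once and for all as a direct summand of a free module $E'$ (using that injective $=$ projective over the quasi-Frobenius ring $H_F$) and then observes that any finitely generated $M_j\subseteq M\subseteq E'$ already lies in a finite free summand $H_F^n$ of $E'$, so $V_j=R[P_F]\cdot M_j\subseteq X_F^n$ directly; no splitting of $E_0$ inside $E$ or extension of $\iota$ is needed.
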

\begin{proof}
Note first that $E$ is a projective $H_F$-module because $H_F$ is a quasi-Frobenius ring (cf.\ Lemma \ref{selfinjective} and \cite{Lam}, Theorem 15.9). Therefore, the map $E\to X_F\otimes_{H_F}E$ is injective and $(X_F\otimes_{H_F}E)^{I_{C(F)}}=X_F^{I_{C(F)}}\otimes_{H_F}E=E$.\\

Let $E'$ be a free $H_F$-module containing $E$ as a direct summand and let $M_j$ be a finitely generated $H_F$-submodule of $M$. Then $M_j$ is contained in a finitely generated free direct summand $H_F^n$ of $E'$ and the $P_F$-subrepresentation $V_j=R[P_F]\cdot M_j$ of $V$ is contained in $X_F\otimes_{H_F}H_F^n\cong X_F^n$. By the proof of Theorem \ref{Cabanes}, the $P_F$-representation $V_j$ satisfies condition (H) with $V_j^{I_{C(F)}}=M_j$.
\end{proof}
The previous construction allows us to prove an analog of Theorem \ref{Cabanes} for the pair $(P_F^\dagger,H_F^\dagger)$. Note that the strategy of Cabanes does not apply directly if $\GG$ is not semisimple (cf.\ Remark \ref{not_selfinjective}). Instead, one has to reduce to the situation in Theorem \ref{Cabanes}.
\begin{thm}\label{Cabanes_general}
Assume that $R$ is a quasi-Frobenius ring.
\begin{enumerate}[wide]
\item[(i)]If $M\in\Mod_{H_F^\dagger}$ and if $E$ is an injective $H_F^\dagger$-module containing $M$ then the $P_F^\dagger$-subrepresentation $V=\im(X_F^\dagger\otimes_{H_F^\dagger}M\to X_F^\dagger\otimes_{H_F^\dagger}E)$ of $X_F^\dagger\otimes_{H_F^\dagger}E$ generated by the image of the natural map $M\hookrightarrow E\hookrightarrow X_F^\dagger\otimes_{H_F^\dagger}E$ satisfies condition (H) and $V^{I_{C(F)}}=M$.
\item[(ii)]The functor $(\cdot)^{I_{C(F)}}$ restricts to an equivalence $\Rep_R^{H}(P_F^\dagger)\longrightarrow\Mod_{H_F^\dagger}$ of additive categories.
\end{enumerate}
\end{thm}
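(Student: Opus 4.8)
The strategy is to reduce both parts to the parahoric case handled in Theorem \ref{Cabanes} and Proposition \ref{quasi_inverse}, exploiting that $H_F^\dagger$ is free, hence flat, as a left and right $H_F$-module (Proposition \ref{free}(i)). This flatness is the substitute for the selfinjectivity of $H_F^\dagger$, which fails in general by Remark \ref{not_selfinjective}. As in the earlier proofs, conjugating by the isomorphisms $\varphi_{gd,F}$ of (\ref{phi_gd}) lets us assume $F\subseteq\Cbar$, so that $I_{C(F)}=I$; by Remark \ref{group_ring} and (\ref{Bruhat_parahoric}) we then have $P_F^\dagger=P_F\cdot(N_G(T)\cap P_F^\dagger)$, where the elements of the second factor normalise $I$ and act on spaces of $I$-invariants through the units $\tau_\omega$, $\omega\in\tilde\Omega_F$.

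The key preliminary point is that an injective $H_F^\dagger$-module $E$ is projective over $H_F$: since $H_F^\dagger$ is flat as a right $H_F$-module, the functor $H_F^\dagger\otimes_{H_F}(\cdot)$ is exact, so its right adjoint --- restriction of scalars $\Mod_{H_F^\dagger}\to\Mod_{H_F}$ --- preserves injectives, and over the quasi-Frobenius ring $H_F$ (Proposition \ref{selfinjective}(i)) injective modules are projective (\cite{Lam}, Theorem 15.9). Hence, exactly as in Proposition \ref{quasi_inverse}, the natural map $E\to X_F\otimes_{H_F}E$ is injective with $(X_F\otimes_{H_F}E)^I=E$; and the $(R[P_F],H_F^\dagger)$-bimodule isomorphism $X_F^\dagger\cong X_F\otimes_{H_F}H_F^\dagger$ of Proposition \ref{I_F_invariants}(i) gives an isomorphism $X_F^\dagger\otimes_{H_F^\dagger}E\cong X_F\otimes_{H_F}E$ of $P_F$-representations compatible with the canonical copies of $E$.

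For (i), because $X_F^\dagger$ is generated over $P_F^\dagger$ by the element $\mathbf{1}_I\in(X_F^\dagger)^I$, the object $V=\im(X_F^\dagger\otimes_{H_F^\dagger}M\to X_F^\dagger\otimes_{H_F^\dagger}E)$ equals the $P_F^\dagger$-subrepresentation $R[P_F^\dagger]\cdot(\mathbf{1}_I\otimes M)$. Since each $\gamma\in N_G(T)\cap P_F^\dagger$ sends $\mathbf{1}_I\otimes m$ to $\mathbf{1}_I\otimes(\tau_\omega m)$, which still lies in $\mathbf{1}_I\otimes M$ as $M$ is $H_F^\dagger$-stable, we obtain $V=R[P_F]\cdot(\mathbf{1}_I\otimes M)=\bigcup_N R[P_F]\cdot(\mathbf{1}_I\otimes N)$, the union running over the finitely generated $H_F$-submodules $N$ of $M$. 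Embedding $E$ as a direct summand of a free $H_F$-module and using the identification of the previous paragraph, each such $N$ lies in a finitely generated free summand $H_F^m$, so $V_N:=R[P_F]\cdot(\mathbf{1}_I\otimes N)$ is a $P_F$-subrepresentation of $X_F^m$ with $N\subseteq(X_F^m)^I=H_F^m$; the essential-surjectivity argument of Theorem \ref{Cabanes} then shows $V_N\in\Rep_R^H(P_F)^\fg$ with $V_N^I=N$. As $\Rep_R^H(P_F)$ is closed under filtered unions with injective transition maps (Remark \ref{H_I_F_trivial}), $V$ as a $P_F$-representation equals $\varinjlim_N V_N\in\Rep_R^H(P_F)$, whence $V\in\Rep_R^H(P_F^\dagger)$ by Definition \ref{condition_H}(ii), and $V^I=\varinjlim_N V_N^I=\varinjlim_N N=M$. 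In particular $(\cdot)^I$ is essentially surjective in (ii).

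It remains to show $(\cdot)^I$ is fully faithful in (ii). Well-definedness is Frobenius reciprocity, $V^I\cong\Hom_{P_F^\dagger}(X_F^\dagger,V)$ being an $H_F^\dagger$-module, and faithfulness holds since any $V\in\Rep_R^H(P_F^\dagger)$ is generated by $V^I$ (Lemma \ref{properties_H}(i)), so $f^I=0$ forces $f(V)=f(R[P_F^\dagger]V^I)=R[P_F^\dagger]f(V^I)=0$. For fullness, given $g\in\Hom_{H_F^\dagger}(V^I,W^I)$, apply the fullness part of Theorem \ref{Cabanes} to the restrictions $V|_{P_F},W|_{P_F}$ (which satisfy (H) by Definition \ref{condition_H}(ii)) and the $H_F$-linear map $g$ to obtain $f\in\Hom_{P_F}(V,W)$ with $f^I=g$; it then suffices to check that $f$ is $\gamma$-equivariant for $\gamma\in N_G(T)\cap P_F^\dagger$. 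For such $\gamma$ the map $\gamma\circ f\circ\gamma^{-1}-f$ is $P_F$-equivariant, and on $V^I$ it equals $\tau_\omega\circ g\circ\tau_\omega^{-1}-g=0$, because $\gamma$ acts as the unit $\tau_\omega$ on $I$-invariants and $g$ is $H_F^\dagger$-linear; by the faithfulness of $(\cdot)^I$ on $\Rep_R^H(P_F)$ it therefore vanishes, so $f\in\Hom_{P_F^\dagger}(V,W)$ with $f^I=g$. The main difficulty throughout is precisely the non-selfinjectivity of $H_F^\dagger$, which is what forces the detour through $H_F$ both in the construction of a quasi-inverse and in the fullness argument; once the flatness of $H_F^\dagger$ over $H_F$ is in hand, the remaining steps are routine modulo careful bookkeeping of the $\tilde\Omega_F$-action.
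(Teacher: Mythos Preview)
Your proof is correct and follows essentially the same approach as the paper: reduce to the parahoric case via the freeness of $H_F^\dagger$ over $H_F$ (so that restriction preserves injectives) and the isomorphism $X_F^\dagger\otimes_{H_F^\dagger}(\cdot)\cong X_F\otimes_{H_F}(\cdot)$, then invoke Proposition~\ref{quasi_inverse} for (i) and Theorem~\ref{Cabanes} for (ii). The only cosmetic difference is in the fullness argument: the paper verifies $P_F^\dagger$-equivariance of $f$ by a direct computation $f(gv)=gf(v)$ after writing $v=hw$ with $w\in V^I$ and $gh=q\omega$, whereas you organise the same idea via the vanishing of the $P_F$-map $\gamma f\gamma^{-1}-f$ on $I$-invariants; both arguments hinge on the identity $\gamma\cdot m=\tau_\omega m$ for $m\in V^I$ and the $H_F^\dagger$-linearity of $g$.
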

\begin{proof}
As for part (i), note that $E$ is an injective $H_F$-module by restriction of scalars because $H_F^\dagger$ is free over $H_F$ (cf.\ Proposition \ref{free} (i) and \cite{Lam}, Corollary 3.6A). Moreover, for any $H_F^\dagger$-module $N$ there is a natural $P_F$-equivariant bijection $X_F^\dagger\otimes_{H_F^\dagger}N\cong X_F\otimes_{H_F}N$ because of Proposition \ref{I_F_invariants} (i). The statements in (i) therefore follow from Proposition \ref{quasi_inverse}.\\

In (ii) it remains to see that the functor is fully faithful. That it is faithful is again a consequence of Lemma \ref{properties_H} (i). We assume once more that $F\subseteq\Cbar$. Let $V,W\in\Rep_R^H(P_F^\dagger)$ and let $g:V^I\to W^I$ be $H_F^\dagger$-linear. By Theorem \ref{Cabanes} there is an $R$-linear $P_F$-equivariant map $f:V\to W$ with $f^I=g$. We claim that it is $P_F^\dagger$-equivariant. Let $v\in V$ and $g\in P_F^\dagger$. Since $R[P_F]\cdot V^I=V$ we may assume $v=hw$ with $h\in P_F$ and $w\in V^I$. By (\ref{Bruhat_parahoric}) there are elements $q\in P_F$ and $\omega\in N_G(T)\cap P_F^\dagger$ such that $gh=q\omega$ and such that the image of $\omega$ in $W$ lies in $\Omega_F$. Note that this gives $\omega w=\tau_{\omega}w$ by Remark \ref{group_ring}. Therefore, we obtain $f(gv)=f(ghw)=f(q\omega w)=qf(\tau_\omega w)=qg(\tau_\omega w)=q\tau_\omega g(w)=q\omega f(w)=ghf(w)=gf(hw)=gf(v)$.
\end{proof}
Although the categories $\Mod_{H_F}$ and $\Mod_{H_F^\dagger}$ are abelian, the categories $\Rep_R^{H}(P_F)$ and $\Rep_R^{H}(P_F^\dagger)$ are generally not. This has to do with the failure of the exactness of the functor $(\cdot)^{I_{C(F)}}$. However, some of the abelian structure of the module categories is visible in the respective categories of representations.
\begin{cor}\label{injective_surjective}
Assume that $R$ is a quasi-Frobenius ring. Let $P\in\{P_F,P_F^\dagger\}$ and write $S=H_F$ if $P=P_F$ and $S=H_F^\dagger$ if $P=P_F^\dagger$. Moreover, let $V,W\in\Rep_R^\infty(P)$ and $f\in\Hom_P(V,W)$.
\begin{enumerate}[wide]
\item[(i)]Assume that $V$ satisfies condition (H). If $M$ is an $S$-submodule of $V^{I_{C(F)}}$ then the $P$-subrepresentation of $V$ generated by $M$ satisfies condition (H) and has $I_{C(F)}$-invariants $M$. In particular, any subrepresentation of $V$ which is generated by its $I_{C(F)}$-invariants also satisfies condition (H).
\item[(ii)]If $V$ and $W$ satisfy condition (H) then so does $\im(f)$.
\item[(iii)]Assume that $V$ and $W$ satisfy condition (H). Then $f$ is injective if and only if $f^{I_{C(F)}}:V^{I_{C(F)}}\to W^{I_{C(F)}}$ is injective.
\item[(iv)]Assume that $V$ and $W$ satisfy condition (H). Then $f$ is surjective if and only if $f^{I_{C(F)}}:V^{I_{C(F)}}\to W^{I_{C(F)}}$ is surjective.
\end{enumerate}
\end{cor}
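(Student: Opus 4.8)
The plan is to prove (i) directly and to deduce (ii), (iii) and (iv) from it, using Lemma \ref{properties_H}(i) and the equivalences of Theorem \ref{Cabanes} and Theorem \ref{Cabanes_general}(ii). Throughout I use that the functor $(\cdot)^{I_{C(F)}}$ is left exact, commutes with filtered colimits whose transition maps are injective, and restricts to an equivalence $\Rep_R^{H}(P)\to\Mod_S$; in particular this restriction is conservative, i.e.\ it reflects isomorphisms.

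\emph{Part (i).} By the isomorphisms (\ref{phi_gd}) and transport of structure we may assume $F\subseteq\Cbar$, so that $C(F)=C$ and $I_{C(F)}=I$. It suffices to treat the case $P=P_F$. Indeed, suppose $P=P_F^\dagger$ and let $M\subseteq V^I$ be an $H_F^\dagger$-submodule. By Remark \ref{group_ring} the Hecke operators $\tau_\omega$ with $\omega\in\tilde{\Omega}_F$ act on $V^I$ through representatives in $N_G(T)\cap P_F^\dagger$ of $\Omega_F\cong P_F^\dagger/P_F$, so $M$ is stable under a set of such representatives; as these together with $P_F$ generate $P_F^\dagger$, we get $R[P_F^\dagger]\cdot M=R[P_F]\cdot M$. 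Since $M$ is also an $H_F$-submodule of $(V|_{P_F})^I$ and $V|_{P_F}\in\Rep_R^{H}(P_F)$, the case $P=P_F$ shows that $R[P_F^\dagger]\cdot M=R[P_F]\cdot M$ satisfies condition (H) as a $P_F$-representation, hence as a $P_F^\dagger$-representation by Definition \ref{condition_H}(ii), with $I$-invariants $M$. So from now on $P=P_F$. Writing $V=\varinjlim_j V_j$ as in Definition \ref{condition_H}(i), each $V_j=\im(\varphi_j)$ is a subrepresentation of some $X_F^{n_j}$ and hence has noetherian underlying $R$-module. Put $M_j=M\cap V_j^I$; then $M=\varinjlim_j M_j$ and $R[P_F]\cdot M=\varinjlim_j R[P_F]\cdot M_j$, both with injective transition maps, and since $\Rep_R^{H}(P_F)$ is closed under such colimits (Remark \ref{H_I_F_trivial}) and $(\cdot)^I$ commutes with them, it suffices to treat each pair $(V_j,M_j)$. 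Thus we may assume $V=\im(\varphi)\subseteq X_F^n$ for some $\varphi\in\End_{P_F}(X_F^n)$, so that $M\subseteq V^I\subseteq H_F^n$ is finitely generated over the noetherian ring $H_F$. Then $R[P_F]\cdot M$ is a quotient of a finite direct sum of copies of $X_F$, namely of the map $X_F^k\to V$ attached by Frobenius reciprocity to a finite $H_F$-generating set of $M$, and at the same time a subrepresentation of $X_F^n$, so it satisfies condition (H) by the characterization in Remark \ref{H_I_F_trivial}. Finally, $(R[P_F]\cdot M)^I=M$ is proved exactly as essential surjectivity in Theorem \ref{Cabanes}: extend along the inclusion $(R[P_F]\cdot M)^I\hookrightarrow H_F^n$ the composite $(R[P_F]\cdot M)^I\twoheadrightarrow(R[P_F]\cdot M)^I/M\hookrightarrow H_F^m$ to an $H_F$-linear map $g$, lift $g$ to an $f\in\Hom_{P_F}(X_F^n,X_F^m)$ via the isomorphism induced by $(\cdot)^I$, and note that $f$ kills $M$, hence kills $R[P_F]\cdot M$, which forces $(R[P_F]\cdot M)^I=M$. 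The last assertion of (i) is the special case $M=U^{I_{C(F)}}$ of a subrepresentation $U$ generated by its $I_{C(F)}$-invariants, since then $R[P]\cdot M=U$.

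\emph{Parts (ii)--(iv).} Since $V$ satisfies condition (H) we have $V=R[P]\cdot V^{I_{C(F)}}$ by Lemma \ref{properties_H}(i), and as $f$ is $P$-equivariant, $\im(f)=f(V)=R[P]\cdot f(V^{I_{C(F)}})$ is the $P$-subrepresentation of $W$ generated by the $S$-submodule $\im(f^{I_{C(F)}})$ of $W^{I_{C(F)}}$. Applying (i) with $W$ in place of $V$ and $M=\im(f^{I_{C(F)}})$ yields $\im(f)\in\Rep_R^{H}(P)$ together with the identity $\im(f)^{I_{C(F)}}=\im(f^{I_{C(F)}})$; this proves (ii). For (iii) and (iv), factor $f=\iota\circ\bar f$ with $\bar f\colon V\to\im(f)$ surjective and $\iota\colon\im(f)\hookrightarrow W$ the inclusion; by (ii), $\im(f)\in\Rep_R^{H}(P)$. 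If $f$ is injective then $f^{I_{C(F)}}$ is injective because $(\cdot)^{I_{C(F)}}$ is left exact; if $f$ is surjective then $\im(f)=W$, so $W^{I_{C(F)}}=\im(f)^{I_{C(F)}}=\im(f^{I_{C(F)}})$ by (ii) and $f^{I_{C(F)}}$ is surjective. Conversely, suppose $f^{I_{C(F)}}$ is injective; then $\bar f^{I_{C(F)}}\colon V^{I_{C(F)}}\to\im(f)^{I_{C(F)}}$ is injective too, and it is surjective since its image is $\im(f^{I_{C(F)}})=\im(f)^{I_{C(F)}}$, hence it is bijective, so $\bar f$ is an isomorphism by conservativity of the equivalence $\Rep_R^{H}(P)\cong\Mod_S$ and $f=\iota\circ\bar f$ is injective. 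Finally, if $f^{I_{C(F)}}$ is surjective then $\im(f)^{I_{C(F)}}=\im(f^{I_{C(F)}})=W^{I_{C(F)}}$, so $\iota^{I_{C(F)}}$ is an isomorphism, hence so is $\iota$, and $f$ is surjective.

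\emph{Main obstacle.} The substance lies in part (i); granted (i), parts (ii)--(iv) are formal consequences of the equivalence $\Rep_R^{H}(P)\cong\Mod_S$. Within (i), the points needing care are the reduction from finitely generated to arbitrary $R$-modules via filtered colimits and the passage from $P_F^\dagger$ to $P_F$, after which the computational core is a reuse of the essential-surjectivity step from the proof of Theorem \ref{Cabanes}.
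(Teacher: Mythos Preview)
Your proof is correct and follows essentially the same approach as the paper's. The one noteworthy difference is in part (i): the paper embeds $V^I$ into an injective $H_F$-module $E$ and invokes Proposition \ref{quasi_inverse} to identify $V$ (and its subrepresentation generated by $M$) inside $X_F\otimes_{H_F}E$, whereas you do a direct filtered-colimit reduction to the finitely generated case and rerun the essential-surjectivity argument of Theorem \ref{Cabanes}. Both routes arrive at the same computation, and your use of conservativity of the equivalence for (iii) and (iv) avoids the paper's separate reduction to finitely generated subrepresentations in (iii); the paper's approach to (iv) is the same as yours modulo phrasing.
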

\begin{proof}
As for (i), the arguments in Lemma \ref{invariants_restriction} allow us to assume $P=P_F$ and $S=H_F$. Choose an $H_F$-linear embedding $V^I\hookrightarrow E$ where $E$ is an injective $H_F$-module. Then $V$ is isomorphic to the $P_F$-subrepresentation of $X_F\otimes_{H_F}E$ generated by $V^I$ (cf.\ Theorem \ref{Cabanes} and Proposition \ref{quasi_inverse}). Under this isomorphism the subrepresentation of $V$ generated by $M$ is isomorphic to the subrepresentation of $X_F\otimes_{H_F}E$ generated by $M$. By Proposition \ref{quasi_inverse} this satisfies condition (H) and has $I$-invariants $M$.\\

Part (ii) follows from (i) and Lemma \ref{properties_H} (i). Note that $\im(f)$ is a quotient of $V$ and hence is generated by its $I_{C(F)}$-invariants.\\

As for (iii), the injectivity of $f$ clearly implies the injectivity of its restriction $f^{I_{C(F)}}$. Conversely, assume that $f^{I_{C(F)}}$ is injective. In order to see that $f$ is injective we may replace $V$ and $W$ by suitable $P_F$-subrepresentations whose underlying $R$-modules are finitely generated. This case was treated in the proof of Theorem \ref{Cabanes}.\\

As for (iv), if $f^{I_{C(F)}}$ is surjective then so is $f$ because of Lemma \ref{properties_H} (i). Conversely, if $f$ is surjective then $f(V^{I_{C(F)}})$ is an $S$-submodule of $W^{I_{C(F)}}$ with $R[P]\cdot f(V^{I_{C(F)}})=f(R[P]\cdot V^{I_{C(F)}})=f(V)=W$ by Lemma 3.5 (i) once more. Thus, $f(V^{I_{C(F)}})=W^{I_{C(F)}}$ by (i).
\end{proof}
The construction of a quasi-inverse of the equivalences in Theorem \ref{Cabanes} and Theorem \ref{Cabanes_general} (ii) relies on choices as in Proposition \ref{quasi_inverse} and Theorem \ref{Cabanes_general} (i). However, there is an alternative construction of a quasi-inverse which is closer to our intuitive idea of a functor. This approach is inspired by  \cite{OS2}, \S1.2. We continue to let $P\in\{P_F,P_F^\dagger\}$. If $P=P_F$ we set $S=H_F$ and $Y=X_F$. If $P=P_F^\dagger$ we set $S=H_F^\dagger$ and $Y=X_F^\dagger$. Note that if $M\in\Mod_S$ then $Y\otimes_SM$ and $\Hom_S(\Hom_S(Y,S),M)$ are naturally objects of $\Rep_R^\infty(P)$. For the second case note that the action of the open subgroup $I_F$ is trivial. Further, there is a unique homomorphism
\[
\tau_{M,F}:Y\otimes_SM\longrightarrow \Hom_S(\Hom_S(Y,S),M),
\]
of smooth $R$-linear $P$-representations sending $x\otimes m$ to the $S$-linear map $(\varphi\mapsto\varphi(x)\cdot m):\Hom_S(Y,S)\to M$. Note that if we view the unit element $1\in S$ as an element of $Y$ via $S=Y^{I_{C(F)}}$ then the map
$M\to\im(\tau_{M,F})^{I_{C(F)}}$ given by $m\mapsto\tau_{M,F}(1\otimes m)$ is a homomorphism of $S$-modules.
\begin{thm}\label{quasi_inverse_OS}
Assume that $R$ is a quasi-Frobenius ring. The assignment $\t_F=(M\mapsto\im(\tau_{M,F}))$ is functorial in $M$ and quasi-inverse to the equivalences of Theorem \ref{Cabanes} and Theorem \ref{Cabanes_general} (ii), respectively.
\end{thm}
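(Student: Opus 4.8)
The plan is to reduce everything to the equivalences of Theorem \ref{Cabanes} and Theorem \ref{Cabanes_general} (ii) by establishing two facts: that $\t_F$ takes values in $\Rep_R^H(P)$ and is functorial, and that the $S$-linear map $\lambda_M\colon M\to\t_F(M)^{I_{C(F)}}$, $m\mapsto\tau_{M,F}(1\otimes m)$, recorded just before the statement, is an isomorphism. Functoriality of $\t_F$ is immediate from the naturality of $\tau_{M,F}$ in $M$, which simultaneously makes $\lambda$ a natural transformation $\id_{\Mod_S}\Rightarrow(\cdot)^{I_{C(F)}}\circ\t_F$. Granting that $\lambda$ is a natural isomorphism and that $\t_F$ lands in $\Rep_R^H(P)$, the theorem follows formally: the fully faithful equivalence $(\cdot)^{I_{C(F)}}$ transports the isomorphism $\lambda_{V^{I_{C(F)}}}\colon V^{I_{C(F)}}\to\t_F(V^{I_{C(F)}})^{I_{C(F)}}$ into an isomorphism $V\to\t_F(V^{I_{C(F)}})$, natural in $V\in\Rep_R^H(P)$, so that $\t_F$ is a quasi-inverse.

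For the substance I would first treat $P=P_F$, $S=H_F$, $Y=X_F$, and reduce to the case where the underlying $R$-module of $M$ is finitely generated. Writing a general $M$ as the filtered union of its finitely generated $H_F$-submodules $M_j$, and using that $Y\otimes_S(\cdot)$ commutes with all colimits, that $Y^{\vee}:=\Hom_S(Y,S)$ is finitely presented over $S$ (as $Y$ has finite $R$-rank and $H_F$ is noetherian), that $(\cdot)^{I_{C(F)}}$ commutes with filtered colimits, and that $\Rep_R^H(P_F)$ is closed under filtered colimits with injective transition maps (Remark \ref{H_I_F_trivial}), one obtains $\t_F(M)=\varinjlim_j\t_F(M_j)$ with injective transition maps (by Corollary \ref{injective_surjective} (iii), once the finitely generated case is known) and $\lambda_M=\varinjlim_j\lambda_{M_j}$; so it suffices to treat finitely generated $M$. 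In that case, since $H_F$ is quasi-Frobenius (Proposition \ref{selfinjective} (i)), we may embed $M$ into a free, hence injective, module $E=S^n$. The crucial point is that $Y=X_F$ has finite $R$-rank, so it is a finitely generated module over the quasi-Frobenius ring $S$ and is therefore reflexive; consequently $\tau_{S^n,F}\colon Y^n\to\Hom_S(Y^{\vee},S^n)$ is exactly the componentwise biduality isomorphism. Chasing the naturality square of $\tau$ along $M\hookrightarrow S^n$, together with the left exactness of $\Hom_S(Y^{\vee},\cdot)$, then identifies $\t_F(M)=\im(\tau_{M,F})$, as a $P_F$-representation, with $\im(X_F\otimes_{H_F}M\to X_F\otimes_{H_F}S^n)$, which is precisely the representation constructed in Proposition \ref{quasi_inverse} for the injective module $E=S^n$. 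That proposition yields $\t_F(M)\in\Rep_R^H(P_F)$ and $\t_F(M)^{I_{C(F)}}=M$, and unwinding the identifications shows this equality is realized by $\lambda_M$.

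For $P=P_F^\dagger$ and $S=H_F^\dagger$, which need not be selfinjective (Remark \ref{not_selfinjective}), I would reduce to the case just handled, in the spirit of the proof of Theorem \ref{Cabanes_general}. Using the $(R[P_F],H_F^\dagger)$-bimodule isomorphism $X_F^\dagger\cong X_F\otimes_{H_F}H_F^\dagger$ of Proposition \ref{I_F_invariants} (i), the freeness of $H_F^\dagger$ over $H_F$ (Proposition \ref{free} (i)), and the finite presentation of $X_F$ over $H_F$, one obtains natural $P_F$-equivariant identifications $X_F^\dagger\otimes_{H_F^\dagger}M\cong X_F\otimes_{H_F}M$ and $\Hom_{H_F^\dagger}(\Hom_{H_F^\dagger}(X_F^\dagger,H_F^\dagger),M)\cong\Hom_{H_F}(\Hom_{H_F}(X_F,H_F),M)$ under which $\tau_{M,F}$ for $P_F^\dagger$ corresponds to $\tau_{M,F}$ for $P_F$. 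Hence, as a $P_F$-representation, $\t_F(M)$ is the one already understood; it therefore satisfies condition (H) by Definition \ref{condition_H} (ii), and $\lambda_M$ is bijective by the $P_F$-case. Since $\lambda_M$ is $H_F^\dagger$-linear, it is then an isomorphism of $H_F^\dagger$-modules, which completes the verification.

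I expect the main obstacle to lie in the diagram chases of the last two steps: making sure that the biduality map $Y\to\Hom_S(\Hom_S(Y,S),S)$ is literally the map sitting inside $\tau_{S^n,F}$, so that the naturality square really identifies $\im(\tau_{M,F})$ with the image appearing in Proposition \ref{quasi_inverse}; and, in the $P_F^\dagger$ case, transporting the various module structures — the left $P_F^\dagger$-action, the right $H_F^\dagger$-action, and their restrictions to $H_F$ — through the hom-tensor adjunctions so that all the identifications are simultaneously $P_F$-equivariant and $H_F^\dagger$-linear.
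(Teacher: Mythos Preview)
Your proposal is correct and follows essentially the same route as the paper: reduce to finitely generated $M$ via filtered colimits, embed $M\hookrightarrow S^n$, identify $\tau_{S^n,F}$ with the biduality map for the reflexive $H_F$-module $X_F$, and invoke Proposition \ref{quasi_inverse}; then reduce the $P_F^\dagger$-case to the $P_F$-case via the bimodule isomorphism of Proposition \ref{I_F_invariants} (i). The only cosmetic differences are that the paper shows directly that $\t_F$ preserves injections (since $\Hom_S(Y^\vee,\cdot)$ is left exact) rather than appealing to Corollary \ref{injective_surjective} (iii), and it verifies $V\cong\t_F(V^{I_{C(F)}})$ by an explicit computation rather than by your formal fully-faithfulness argument.
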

\begin{proof}
Clearly, the formation of $\t_F(M)$ is functorial in $M$. Let us first treat the case $P=P_F$. Since the functor $\Hom_S(\Hom_S(Y,S),\cdot)$ preserves injections, so does $\t_F$. Moreover, the $S$-module $\Hom_S(Y,S)$ is finitely generated because its underlying $R$-module is contained in $\Hom_R(Y,S)$ which is finitely generated and free. Therefore, the functor $\Hom_S(\Hom_S(Y,S),\cdot)$ commutes with filtered unions and so does $\t_F$. As a consequence, it suffices to show that if $M\in\Mod_S^\fg$ then $\t_F(M)$ satisfies condition (H) and $\t_F:\Mod_S^\fg\to\Rep_R^H(P)^\fg$ is quasi-inverse to $(\cdot)^{I_{C(F)}}$. Here $(\cdot)^\fg$ refers to the notation introduced in the proof of Theorem \ref{Cabanes}.\\

If $M$ is a finitely generated $S$-module then there is an embedding $M\hookrightarrow S^n$ into a finitely generated free $S$-module (cf.\ Proposition \ref{selfinjective} (i) and \cite{Lam}, Theorem 15.11). Consider the commutative diagram
\[
\xymatrix{Y\otimes_SM\ar[rr]^>>>>>>>>>>>{\tau_{M,F}}\ar[d] && \Hom_S(\Hom_S(Y,S),M) \ar@{_{(}->}[d] \\
Y\otimes_SS^n \ar[rr]^>>>>>>>>>>{\tau_{S^n,F}} && \Hom_S(\Hom_S(Y,S),S^n).}
\]
We claim that the map $\tau_{S^n,F}$ is bijective. To see this we may assume $n=1$ in which case $\tau_{S,F}$ can be identified with the duality map of the $S$-module $Y$. The latter is bijective because over a quasi-Frobenius ring every finitely generated module is reflexive (cf.\ \cite{Lam}, Theorem 15.11). As a consequence, $\t_F(M)\cong\im(Y\otimes_SM\to Y\otimes_SS^n)$. It follows from Proposition \ref{quasi_inverse} that $\t_F(M)$ satisfies (H) and that the natural map $M\to\t_F(M)^{I_{C(F)}}$ constructed above is bijective.\\

Conversely, if $V\in\Rep_R^H(P)^\fg$ then there is an embedding $V^{I_{C(F)}}\hookrightarrow S^n$ of $S$-modules and the proof of Theorem \ref{Cabanes} shows that inside $Y^n=Y\otimes_SS^n$ we have $V\cong R[P]\cdot V^{I_{C(F)}}=\im(Y\otimes_SV^{I_{C(F)}}\to Y\otimes_SS^n)\cong\t_F(V^{I_{C(F)}})$.\\

Now we treat the case $P=P_F^\dagger$. By Proposition \ref{I_F_invariants} (i) there is a natural isomorphism $X_F^\dagger\otimes_{H_F^\dagger}(\cdot)\cong X_F\otimes_{H_F}(\cdot)$. Together with \cite{BAC}, I.2.9 Proposition 10, this also gives $\Hom_{H_F^\dagger}(X_F^\dagger,H_F^\dagger)\cong\Hom_{H_F}(X_F,H_F)\otimes_{H_F}H_F^\dagger$ as left $H_F$-modules because $X_F$ is finitely generated over $H_F$ and $H_F^\dagger$ is free over $H_F$ (cf.\ Proposition \ref{free} (i)). We thus obtain a natural isomorphism $\Hom_{H_F^\dagger}(\Hom_{H_F^\dagger}(X_F^\dagger,H_F^\dagger),\;\cdot\;)\cong
\Hom_{H_F}(\Hom_{H_F}(X_F,H_F),\;\cdot\;)$. Altogether, up to isomorphism the diagram
\[
\xymatrix{
\Rep_R^H(P_F^\dagger)\ar[r]^{\t_F}\ar[d]_{\res}&\Mod_{H_F^\dagger}\ar[d]^{\res}\\
\Rep_R^H(P_F)\ar[r]^{\t_F}&\Mod_{H_F}
}
\]
commutes if the vertical arrows denote restriction of scalars. Therefore, the case $P=P_F^\dagger$ follows from the case $P=P_F$ already treated.
\end{proof}
We continue to let $P\in\{P_F,P_F^\dagger\}$. If $R\to R'$ is a homomorphism of commutative rings then we have the functor
\[
R'\otimes_R(\cdot):\Rep_R^\infty(P)\to\Rep_{R'}^\infty(P).
\]
Under suitable flatness assumptions it preserves condition (H).
\begin{lem}\label{base_change_H}
\begin{enumerate}[wide]
\item[(i)]If the ring homomorphism $R\to R'$ is flat then the functor $R'\otimes_R(\cdot)$ preserves condition (H).
\item[(ii)]Assume that $R$ is a quasi-Frobenius ring. If $V\in\Rep_R^H(P)$ and if the underlying $R$-module of $V$ is finitely generated and projective then $R'\otimes_RV\in\Rep_{R'}^H(P)$.
\end{enumerate}
\end{lem}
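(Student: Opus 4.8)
The plan is to reduce both assertions to the case $P=P_F$. Indeed, by Definition \ref{condition_H}(ii) a representation of $P_F^\dagger$ satisfies condition (H) if and only if its restriction to $P_F$ does, and the base change functor $R'\otimes_R(\cdot)$ visibly commutes with restriction of scalars along $P_F\subseteq P_F^\dagger$; so it suffices to treat smooth representations of $P_F$ and the module $X_F$. Throughout I will use that compact induction is compatible with base change: since $P_F$ is compact, $X_F=\ind_{I_{C(F)}}^{P_F}(R)$ is a free $R$-module on which $P_F$ acts by permuting a distinguished basis (indexed by $I_{C(F)}\backslash P_F$), so there is a natural isomorphism $R'\otimes_R X_F\cong\ind_{I_{C(F)}}^{P_F}(R')$ of smooth $R'$-linear $P_F$-representations. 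In particular, for any $\varphi\in\End_{P_F}(X_F^n)$ the base-changed map $\varphi_{R'}:=R'\otimes_R\varphi$ is a $P_F$-equivariant endomorphism of the analogue of $X_F^n$ over $R'$.

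For (i), write $V\cong\varinjlim_{j\in J}V_j$ as in Definition \ref{condition_H}(i), with injective transition maps and $V_j\cong\im(\varphi_j)$ for suitable $\varphi_j\in\End_{P_F}(X_F^{n_j})$. Since $R'\otimes_R(\cdot)$ preserves colimits, $R'\otimes_R V\cong\varinjlim_j(R'\otimes_R V_j)$, and flatness of $R\to R'$ keeps the transition maps injective. It then remains to identify $R'\otimes_R V_j$ with $\im(\varphi_{j,R'})$. Tensoring the short exact sequence $0\to\im(\varphi_j)\to X_F^{n_j}\to\operatorname{coker}(\varphi_j)\to 0$ over $R$ with the flat module $R'$ exhibits $R'\otimes_R\im(\varphi_j)$ as the kernel of $R'\otimes_R X_F^{n_j}\to R'\otimes_R\operatorname{coker}(\varphi_j)$; by right-exactness of $\otimes_R$ applied to $X_F^{n_j}\xrightarrow{\varphi_j}X_F^{n_j}\to\operatorname{coker}(\varphi_j)\to 0$, the target of that map is $\operatorname{coker}(\varphi_{j,R'})$, so its kernel is $\im(\varphi_{j,R'})$. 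Hence $R'\otimes_R V\in\Rep_{R'}^H(P_F)$.

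For (ii), after the reduction $V\in\Rep_R^H(P_F)$ has finitely generated projective underlying $R$-module. Since $R$ is noetherian, Remark \ref{H_I_F_trivial} provides an isomorphism $V\cong\im(\varphi)$ for some $\varphi\in\End_{P_F}(X_F^n)$, which we factor as $X_F^n\twoheadrightarrow V\hookrightarrow X_F^n$. Although $R\to R'$ need not be flat, the inclusion $V\hookrightarrow X_F^n$ splits as a homomorphism of $R$-modules: over the quasi-Frobenius ring $R$ a finitely generated projective module is injective (\cite{Lam}, Theorem 15.9), so the exact sequence $0\to V\to X_F^n\to\operatorname{coker}(\varphi)\to 0$ of underlying $R$-modules is split, and therefore stays exact after applying $R'\otimes_R(\cdot)$. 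In particular $R'\otimes_R V\to R'\otimes_R X_F^n$ is (split) injective, while $R'\otimes_R V$ is a quotient of $R'\otimes_R X_F^n$ by right-exactness applied to $X_F^n\twoheadrightarrow V$. Composing, $\varphi_{R'}$ factors as $R'\otimes_R X_F^n\twoheadrightarrow R'\otimes_R V\hookrightarrow R'\otimes_R X_F^n$, so $R'\otimes_R V\cong\im(\varphi_{R'})$ satisfies condition (H) over $R'$.

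Part (i) should be essentially formal once the compatibility of compact induction with base change is recorded; the only point of substance is that flat base change commutes with the formation of images. The genuine obstacle lies in part (ii): base change along a non-flat $R\to R'$ does not in general commute with passing to images, and the observation that overcomes this is that the quasi-Frobenius hypothesis forces the finitely generated projective module $V$ to be $R$-injective, hence a direct $R$-summand of $X_F^n$ — precisely the input needed for the relevant short exact sequence to survive tensoring with $R'$.
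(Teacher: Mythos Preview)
Your proof is correct and follows essentially the same approach as the paper: for (i) you use that flat base change commutes with compact induction, images and filtered unions with injective transitions, and for (ii) you use that a finitely generated projective module over a quasi-Frobenius ring is injective, whence the embedding $V\hookrightarrow X_F^n$ splits $R$-linearly and survives tensoring with $R'$. The paper's argument is more terse but records exactly the same ingredients.
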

\begin{proof}
As for (i), consider the isomorphism $R'\otimes_R\ind_{I_{C(F)}}^{P_F}(R)\cong \ind_{I_{C(F)}}^{P_F}(R')$ in $\Rep_{R'}^\infty(P_F)$ and note that the functor $R'\otimes_R(\cdot)$ commutes with direct sums, images and filtered unions because $R\to R'$ is flat.\\

As for (ii), write $V\cong\im(\varphi)$ with $\varphi\in\End_{P_F}(X_F^n)$ for some non-negative integer $n$. The underlying $R$-module of $V$ is injective because the ring $R$ is selfinjective (cf.\ \cite{Lam}, Theorem 15.1). Therefore, the injection $V\stackrel{\varphi}{\hookrightarrow}X_F^n$ admits an $R$-linear section. This implies that $R'\otimes_RV\cong\im(R'\otimes_R\varphi)$ satisfies condition (H).
\end{proof}
Finally, let $F'$ be a face of $\BT$ with $F'\subseteq\overline{F}$. Recall from (\ref{inclusions}) that we have $I_{F'}\subseteq I_F\subseteq P_F\subseteq P_{F'}$ in which $I_F$ is a normal subgroup of $P_F$. Consequently, we have the functors
\[
\Rep_R^\infty(P_{F'})\stackrel{(\cdot)^{I_F}}{\longrightarrow}\Rep_R^\infty(P_F)\quad\mbox{and}\quad
\Rep_R^\infty(P_{F'}^\dagger)\stackrel{(\cdot)^{I_F}}{\longrightarrow}\Rep_R^\infty(P_F^\dagger)
\]
of $I_F$-invariants. The non-trivial case of the following result is again due to Cabanes who works over a field of characteristic $p$ (cf.\ \cite{Cab}, Theorem 10). We will sketch his proof in order to convince the reader that it works for more general coefficients.
\begin{prop}\label{invariants_H}
Assume that $R$ is a quasi-Frobenius ring. If $F'$ and $F$ are faces of $\BT$ with $F'\subseteq\overline{F}\subseteq \Cbar$ then the functors $(\cdot)^{I_F}:\Rep_R^\infty(P_{F'})\to \Rep_R^\infty(P_F)$ and $(\cdot)^{I_F}:\Rep_R^\infty(P_{F'}^\dagger)\to \Rep_R^\infty(P_F^\dagger)$ preserve condition (H).
\end{prop}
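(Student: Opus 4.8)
The plan is to reduce the statement to the finitely generated case and then to reproduce Cabanes' argument from \cite{Cab}, Theorem 10, whose only non-formal ingredient is, in our setting, provided by Propositions \ref{free} and \ref{I_F_invariants} and hence available over any quasi-Frobenius ring. First I would make three reductions. Since $F'\subseteq\overline{F}\subseteq\Cbar$, the chamber $C(F)$ of Lemma \ref{closest_chamber} equals $C$, and likewise $C(F')=C$; hence $I_{C(F)}=I_{C(F')}=I$, $X_F=\ind_I^{P_F}(R)$, $X_{F'}=\ind_I^{P_{F'}}(R)$, and (by the discussion preceding Proposition \ref{free}) $H_F\subseteq H_{F'}$ with $X_F$ a $P_F$-subrepresentation of $X_{F'}$. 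Next, it suffices to treat $(\cdot)^{I_F}\colon\Rep_R^\infty(P_{F'})\to\Rep_R^\infty(P_F)$: for $V\in\Rep_R^H(P_{F'}^\dagger)$ condition (H) is, by Definition \ref{condition_H}(ii), a condition on $V|_{P_{F'}}$, and the restriction of $V^{I_F}$ to $P_F$ equals $(V|_{P_{F'}})^{I_F}$, so the $P_F^\dagger$-statement follows from the $P_F$-statement. Finally, $(\cdot)^{I_F}$ commutes with filtered unions with injective transition maps and carries such a union into $\Rep_R^H(P_F)$ as soon as each term lies there (Remark \ref{H_I_F_trivial}), and since $R$ is noetherian it preserves finite generation of the underlying $R$-module; so by Remark \ref{H_I_F_trivial} I may assume $V=\im(\varphi)$ for some $n\geq 0$ and some $\varphi\in\End_{P_{F'}}(X_{F'}^n)$.

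The structural fact I would use next is that $X_{F'}^{I_F}$ is a finite direct sum of copies of $X_F$ as a $P_F$-representation: Proposition \ref{I_F_invariants}(iii) gives an isomorphism $X_F\otimes_{H_F}H_{F'}\xrightarrow{\sim}X_{F'}^{I_F}$ of $(R[P_F],H_{F'})$-bimodules, and by Proposition \ref{free}(iii) the left $H_F$-module $H_{F'}$ is finitely generated and free, say $H_{F'}\cong H_F^m$, whence $X_{F'}^{I_F}\cong X_F\otimes_{H_F}H_F^m\cong X_F^m$. Thus $(X_{F'}^n)^{I_F}\cong X_F^{mn}$ in $\Rep_R^\infty(P_F)$, the inclusion $V\hookrightarrow X_{F'}^n$ furnishes a $P_F$-equivariant embedding $V^{I_F}\hookrightarrow X_F^{mn}$, and $\varphi^{I_F}$ becomes an endomorphism of $X_F^{mn}$. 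By Corollary \ref{injective_surjective}(i) it then suffices to show that the $P_F$-subrepresentation $V^{I_F}$ of $X_F^{mn}$ is generated by its $I$-invariants. Here $(V^{I_F})^I=V^I$ because $I_F\subseteq I$, and, arguing as in the proof of Theorem \ref{Cabanes} (for an endomorphism $\psi$ of a finite direct sum of copies of $X_F$, resp.\ of $X_{F'}$, one has $\im(\psi)=R[P_F]\cdot\im(\psi^I)$, resp.\ $R[P_{F'}]\cdot\im(\psi^I)$, hence $\im(\psi)^I=\im(\psi^I)$ by Corollary \ref{injective_surjective}(i)), one obtains $\im(\varphi^{I_F})=R[P_F]\cdot\im(\varphi^I)=R[P_F]\cdot V^I\subseteq V^{I_F}$. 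So what remains is precisely the equality $V^{I_F}=R[P_F]\cdot V^I$, equivalently the surjectivity of the map $(X_{F'}^n)^{I_F}\to V^{I_F}$ obtained by applying $(\cdot)^{I_F}$ to the surjection $X_{F'}^n\twoheadrightarrow V$.

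The hard part is exactly this surjectivity, and it cannot be obtained formally: $I_F$ is a pro-$p$ group and $p$ need not be invertible in $R$, so $(\cdot)^{I_F}$ is only left exact. This is the whole content of \cite{Cab}, Theorem 10. The argument I would reproduce uses instead that $V=R[P_{F'}]\cdot V^I$ (Lemma \ref{properties_H}(i)) together with the Bruhat decomposition (\ref{Bruhat_parahoric}): writing $P_{F'}$ as the disjoint union of the $(P_F,I)$-double cosets attached to $\tilde{W}_F\backslash\tilde{W}_{F'}$, and using the decomposition of $X_{F'}|_{P_F}$ underlying Proposition \ref{I_F_invariants}(iii), the length additivity (\ref{additive_length}), and the braid and quadratic relations (\ref{braid_relations}), (\ref{quadratic_relations}) of the Hecke algebra, one reduces the assertion to a single double coset, where it becomes transparent. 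All of these manipulations take place in the Weyl group and in $H$ and are valid over any quasi-Frobenius ring $R$, so Cabanes' proof goes through verbatim. Once $V^{I_F}=R[P_F]\cdot V^I$ is established, Corollary \ref{injective_surjective}(i) gives $V^{I_F}\in\Rep_R^H(P_F)$, and the full statement, $\dagger$-case included, follows from the reductions of the first paragraph.
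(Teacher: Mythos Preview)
Your reductions are correct and match the paper exactly: reduce to the $P_{F'}$-case, to finitely generated $V=\im(\varphi)$, identify $X_{F'}^{I_F}\cong X_F^m$ via Propositions \ref{free}(iii) and \ref{I_F_invariants}(iii), embed $V^{I_F}$ into $X_F^{mn}$, and observe that by Corollary \ref{injective_surjective}(i) the whole statement boils down to the equality $V^{I_F}=R[P_F]\cdot V^I$. Your computation $\im(\varphi^{I_F})=R[P_F]\cdot V^I$ is also right. The gap is in your final paragraph.

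What you describe as ``Cabanes' argument'' is not Cabanes' argument. The proof of \cite{Cab}, Theorem 10 (which the paper reproduces in detail) does \emph{not} proceed by Bruhat decomposition and Hecke relations; those tools already went into establishing $X_{F'}^{I_F}\cong X_F^m$, and they do not by themselves yield $V^{I_F}=R[P_F]\cdot V^I$ for an arbitrary subquotient $V$ of $X_{F'}^n$. The actual argument works in the finite reductive group $P_{F'}/I_{F'}$ with Borel $I'/I_{F'}$, its unipotent radical $U=I/I_{F'}$, and the opposite unipotent radical $\Ubar$. The key step (non-formal, relying on \cite{Cab}, Lemma 7, and on the duality from Lemma \ref{properties_H}(ii), hence genuinely using that $R$ is quasi-Frobenius) is the direct sum decomposition $V=V^U\oplus\overline{J}\cdot V^U$ where $\overline{J}$ is the augmentation ideal of $R[\Ubar]$. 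From this one deduces, by a parabolic refinement, that $V^{I_F/I_{F'}}=R[\Ubar\cap L_F]\cdot V^U$, which gives exactly $V^{I_F}=R[P_F]\cdot V^I$. The paper moreover first decomposes $R$ so as to treat separately the case where $p$ is invertible (where $(\cdot)^{I_F}$ is exact by averaging and the surjectivity is immediate) and the case where $p$ is nilpotent (where the above argument is needed and a $p$-group fixed-point lemma enters). Your sketch ``reduce to a single double coset, where it becomes transparent'' does not supply any of this, and I do not see how a purely Hecke-theoretic argument would establish the needed surjectivity when $p$ is nilpotent in $R$.
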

\begin{proof}
By definition it suffices to treat the pair $(P_{F'}, P_F)$. Since the functor $(\cdot)^{I_F}$ preserves filtered unions we only need to treat representations whose underlying $R$-modules are finitely generated. By Proposition \ref{I_F_invariants} (iii) we have the isomorphism $X_{F'}^{I_F}\cong X_F\otimes_{H_F}H_{F'}$ of smooth $P_F$-representations and right $H_{F'}$-modules. Since $H_{F'}$ is free over $H_F$ (cf.\ Proposition \ref{free} (iii)), it follows that $X_{F'}^{I_F}$ satisfies condition (H). Given $V\in\Rep_R^{H}(P_{F'})$ which is finitely generated over $R$ we can embed $V$ into $X_{F'}^n$ for some non-negative integer $n$ and obtain a $P_F$-equivariant embedding $V^{I_F}\hookrightarrow (X_{F'}^{I_F})^n$. By Corollary \ref{injective_surjective} (i) it remains to see that $R[P_F]\cdot V^I=V^{I_F}$.\\

Note that $R$ is artinian (cf.\ \cite{Lam}, Theorem 15.1) hence is a direct product of local artinian rings. In any of the factors $p$ is either invertible or nilpotent. Thus, we may write $R=R_1\times R_2$ such that $p$ is invertible in $R_1$ and nilpotent in $R_2$. For any $R$-module $M$ this induces a decomposition $M=M_1\times M_2$ such that the action of $R$ on $M_i$ factors through $R_i$. Since $R_1$ and $R_2$ are both quasi-Frobenius (cf.\ \cite{Lam}, Corollary 3.11 B) we may assume that $p$ is invertible or nilpotent in $R$.\\

If $p$ is invertible in $R$ then the functor $(\cdot)^{I_F}:\Rep_R^\infty(P_{F'})\to \Rep_R^\infty(P_F)$ is exact by the usual averaging argument. Note that $I_F$ is a pro-$p$ group. Choose a $P_{F'}$-equivariant surjection $X_{F'}^n\to V$. It gives rise to the $P_F$-equivariant surjection $(X_{F'}^{I_F})^n\to V^{I_F}$ in which $(X_{F'}^{I_F})^n\cong X_F^n\otimes_{H_F}H_{F'}$ is generated by its $I$-invariants over $P_F$. Therefore, so is $V$.\\

If $p$ is nilpotent in $R$ we mimick the proof of \cite{Cab}, Theorem 10. All representations we consider are representations of the finite split reductive group $H=P_{F'}/I_{F'}\cong[\mathring{\GG}_{F',k}/\mathrm{R}^u(\mathring{\GG}_{F',k})](k)$. Note that $I'/I_{F'}$ is a Borel subgroup of $H$ with unipotent radical $U=I/I_{F'}$. We denote by $\Ubar$ the unipotent radical of the Borel subgroup opposite to $I'/I_{F'}$. Denoting by $J$ and $\overline{J}$ the augmentation ideals of the group rings $R[U]$ and $R[\Ubar]$, respectively, we will first show that
\[
 V=V^U\oplus\overline{J}\cdot V^U.
\]
Note that $V^U+\overline{J}\cdot V^U=R[\Ubar]\cdot V^U$. Thus, in order to prove $V=V^U+\overline{J}\cdot V^U$ it suffices to see that $R[\Ubar]\cdot V^U$ is $H$-stable because $V=R[H]\cdot V^U$ by Lemma \ref{properties_H} (i). This is done as in \cite{Cab}, Lemma 7, which relies only on the structure theory of the group $H$ and works over any coefficient ring. As a consequence, we have
\[
\overline{J}\cdot V=\overline{J}\cdot (V^U+\overline{J}\cdot V^U)=\overline{J}\cdot V^U.
\]
In order to prove $V^U\cap\overline{J}\cdot V^U=0$ note that also the $H$-represen\-tation $V^*$ is generated by its $U$-invariants (cf.\ Lemma \ref{properties_H} (ii)) and hence that $V^*=R[\Ubar]\cdot (V^*)^U$ as for $V$. Since $U$ and $\Ubar$ are conjugate in $H$ we obtain $V^*=R[U]\cdot(V^*)^\Ubar$. For any $R$-submodule $M$ of $V^*$ we set
\[
M^\perp=\{v\in V\;|\;\delta(v)=0\mbox{ for all }\delta\in M\}
\]
and claim that $0=(R[U]\cdot(V^*)^\Ubar)^\perp=\bigcap_{u\in U}u\cdot\overline{J}\cdot V^U$. The left equality comes from $V^*=R[U]\cdot(V^*)^\Ubar$ and the fact that any finitely generated $R$-module is reflexive (cf.\ \cite{Lam}, Theorem 15.11). That the right hand side is contained in the intermediate term is a straightforward computation using that $\overline{J}=\sum_{\overline{u}\in\Ubar}(1-\overline{u})R$. Now $v\in V^U\cap\overline{J}\cdot V^U$ implies  $v\in\bigcap_{u\in U}u\cdot\overline{J}\cdot V^U=0$.\\

By abuse of notation we write $P_F=P_F/I_{F'}\subseteq H$ which is a parabolic subgroup of $H$ with unipotent radical $U_F=I_F/I_{F'}$. We let $\Pbar_F$ denote the parabolic subgroup of $H$ opposite to $P_F$, $L_F=P_F\cap\Pbar_F$ their Levi subgroup and $\Ubar_F$ the unipotent radical of $\Pbar_F$. Further, we set $U_{(F)}=U\cap L_F$, $\Ubar_{(F)}=\Ubar\cap L_F$ and denote by $J_F$, $\overline{J}_F$, $J_{(F)}$ and $\overline{J}_{(F)}$ the augmentation ideals of the group rings $R[U_F]$, $R[\Ubar_F]$, $R[U_{(F)}]$ and $R[\Ubar_{(F)}]$, respectively. Since $U_F\subseteq U$ and since $U_F$ is normalized by $\Ubar_{(F)}\subseteq P_F$ we have
\[
R[\Ubar_{(F)}]\cdot V^U\subseteq V^{U_F}.
\]
Moreover, the decomposition $\Ubar=\Ubar_F\cdot\Ubar_{(F)}$ implies
\begin{eqnarray*}
V&=&R[\Ubar]\cdot V^U=R[\Ubar_F]\cdot R[\Ubar_{(F)}]\cdot V^U\\
&=&R[\Ubar_{(F)}]\cdot V^U+\overline{J}_F\cdot R[\Ubar_{(F)}]\cdot V^U\\
&\subseteq&R[\Ubar_{(F)}]\cdot V^U+\overline{J}_F\cdot V^{U_F}
\end{eqnarray*}
whence $V=R[\Ubar_{(F)}]\cdot V^U+\overline{J}_F\cdot V^{U_F}$. Note that $U_{(F)}\subseteq L_F=P_F\cap \Pbar_F$ normalizes both $U_F$ and $\Ubar_F$ so that $V^{U_F}\cap\overline{J}_F\cdot V^{U_F}$ ist $U_{(F)}$-stable. Since
\[
 (V^{U_F}\cap\overline{J}_F\cdot V^{U_F})^{U_{(F)}}\subseteq V^{U_F\cdot U_{(F)}}\cap\overline{J}\cdot V=V^U\cap\overline{J}\cdot V^U=0
\]
we get $V^{U_F}\cap\overline{J}_F\cdot V^{U_F}=0$ from the fact that $U_{(F)}$ is a $p$-group and since $p$ is nilpotent in $R$ (cf.\ Lemma \ref{I_invariants_non_trivial}). Now if $v\in V^{U_F}\subseteq V$ then we can write $v=v'+v''$ with $v'\in R[\Ubar_{(F)}]\cdot V^U\subseteq V^{U_F}$ and $v''\in \overline{J}_F\cdot V^{U_F}$, as seen above. But then $v''\in V^{U_F}\cap\overline{J}_F\cdot V^{U_F}=0$ and $v=v'\in R[\Ubar_{(F)}]\cdot V^U$. This shows $V^{U_F}=R[\Ubar_{(F)}]\cdot V^U$.
\end{proof}


\subsection{Coefficient systems and pro-$p$ Iwahori-Hecke modules}\label{subsection_3_2}

If $\F\in\Coeff_G(\BT)$ then the oriented chain complex $\C_c^\ori(\BT_{(\bullet)},\F)$ is a complex of smooth $R$-linear $G$-representations and the corresponding complex $\C_c^\ori(\BT_{(\bullet)},\F)^I$ of $I$-invariants is a complex of $H$-modules. Let
\[
M(\F)=H_0(\C_c^\ori(\BT_{(\bullet)},\F)^I)
\]
denote its homology in degree zero so that we obtain the functor
\begin{equation}\label{functor_M}
M(\cdot):\Coeff_G(\BT)\longrightarrow\Mod_H.
\end{equation}
The purpose of this subsection is to show that if the ring $R$ is quasi-Frobenius then the functor $M(\cdot)$ yields an equivalence of additive categories when restricted to a suitable full subcategory of $\Coeff_G(\BT)$.
\begin{defin}\label{category_C}
Let $\C$ denote the full subcategory of $\Coeff_G(\BT)$ consisting of all objects $\F$ satisfying the following conditions:
\begin{enumerate}[wide]
\item[(i)]For any face $F$ of $\BT$ the smooth $R$-linear $P_F^\dagger$-representation $\F_F$ satisfies condition (H) (cf.\ Definition \ref{condition_H} (ii)).
\item[(ii)]For any two faces $F'$ and $F$ of $\A$ such that $F'\subseteq\overline{F}$ and $C(F')=C(F)$ the restriction map $t^F_{F'}$ of $\F^I\in\Coeff(\A)$ is bijective (cf.\ \S\ref{subsection_2_2}).
\end{enumerate}
\end{defin}
Note that by Definition \ref{category_C} (i) and Remark \ref{H_I_F_trivial} any coefficient system $\F\in\C$ is automatically of level zero, i.e.\ $\C$ is a full subcategory of $\Coeff_G^0(\BT)$. Further, recall that if $C(F)=C(F')$ then $t^F_{F'}:\F_F^{I_{C(F)}}\to\F_{F'}^{I_{C(F)}}$ is obtained from the restriction map $r^F_{F'}$ of the coefficient system $\F$ by passage to the invariants under $I_{C(F)}$ (cf.\ \S\ref{subsection_2_2}).\\

As suggested by Theorem \ref{diagrams}, the transitivity properties of the $G$-action on $\BT$ imply that it suffices to check the conditions of Definition \ref{category_C} on the closed chamber $\Cbar$.
\begin{lem}\label{category_C_chamber}For an object $\F\in\Coeff_G(\BT)$ the following are equivalent.
\begin{enumerate}[wide]
\item[(i)]$\F$ is an object of the category $\C$.
\item[(ii)]For all faces $F'$ and $F$ of $\BT$ with $F'\subseteq\overline{F}\subseteq\Cbar$ we have $\F_F\in\Rep_R^H(P_F^\dagger)$ and the transition map $t^F_{F'}:\F_F^I\to\F_{F'}^I$ is bijective.
\end{enumerate}
\end{lem}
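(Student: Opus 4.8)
The implication (i) $\Rightarrow$ (ii) is immediate: any face $F$ with $F\subseteq\Cbar$ satisfies $C(F)=C$ and lies in $\A$, so the two conditions of Definition \ref{category_C}, specialized to faces contained in $\Cbar$, give exactly the two assertions in (ii). The content of the lemma is the converse, and the plan is to \emph{spread} the two conditions from $\Cbar$ to all of $\BT$ (for condition (H)) and to all of $\A$ (for bijectivity of the transition maps of $\F^I$), exploiting the transitivity of the $G$-action. So assume (ii).

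First I would establish Definition \ref{category_C}(i). Given an arbitrary face $F$ of $\BT$, let $[F]\subseteq\Cbar$ be its unique $G_\aff$-conjugate (Lemma \ref{face_representatives}). Writing $F=gd[F]$ with $g\in I$ and $d\in\tilde{D}_{[F]}$ as in the proof of Proposition \ref{free}(i) and lifting $d$ to $N_G(T)$, one obtains $g_0:=gd\in G$ with $g_0[F]=F$ and, by (\ref{d_conjugation}), $g_0C=C(F)$ and $g_0Ig_0^{-1}=I_{C(F)}$; moreover $g_0P_{[F]}^\dagger g_0^{-1}=P_F^\dagger$ and $g_0P_{[F]}g_0^{-1}=P_F$. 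Conjugation by $g_0$ is then an equivalence $\Rep_R^\infty(P_{[F]})\to\Rep_R^\infty(P_F)$ carrying $X_{[F]}=\ind_{I_C}^{P_{[F]}}(R)$ to $X_F$, hence carrying $\Rep_R^H(P_{[F]})$ onto $\Rep_R^H(P_F)$, since it preserves finite direct sums, endomorphism rings, images and filtered unions. On the other hand the cocycle relation for $\F$ shows that $c_{g_0,[F]}\colon\F_{[F]}\to\F_F$ intertwines the $P_{[F]}^\dagger$-action on $\F_{[F]}$ with the $P_F^\dagger$-action on $\F_F$ twisted by conjugation with $g_0$. Since $[F]\subseteq\Cbar$, hypothesis (ii) applied with $F'=F=[F]$ gives $\F_{[F]}\in\Rep_R^H(P_{[F]}^\dagger)$, i.e.\ $\F_{[F]}$ lies in $\Rep_R^H(P_{[F]})$ upon restriction; transporting along $c_{g_0,[F]}$ yields $\F_F\in\Rep_R^H(P_F^\dagger)$. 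This proves Definition \ref{category_C}(i), and by Remark \ref{H_I_F_trivial} it also shows that $\F$ is of level zero, so that $\F^I\in\Coeff(\A)$ is defined.

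Next I would check Definition \ref{category_C}(ii). Let $F'\subseteq\overline{F}$ be faces of $\A$ with $C(F')=C(F)=:D$, so that $F,F'\subseteq\overline{D}$. Choosing $w\in N_G(T)\cap G_\aff$ representing an element of $W_\aff$ with $wD=C$ (possible since $W_\aff$ acts transitively on the chambers of $\A$; cf.\ the proof of Lemma \ref{acyclic_locally_constant}), one gets $wF'\subseteq\overline{wF}\subseteq\Cbar$ and $C(wF)=C(wF')=C$. Because $I_D\subseteq P_F^\dagger\cap P_{F'}^\dagger$ by (\ref{inclusions}), the cocycle relation shows that $c_{w,F}$ and $c_{w,F'}$ restrict to isomorphisms $\F_F^I=\F_F^{I_D}\stackrel{\sim}{\to}\F_{wF}^{I_C}=\F_{wF}^I$ and $\F_{F'}^I\stackrel{\sim}{\to}\F_{wF'}^I$ (using level zero so that these are the invariants under $I_{C(F)}=I_D$ and $I_{C(wF)}=I_C$). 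Since $C(F)=C(F')$ and $C(wF)=C(wF')$, the transition maps $t^F_{F'}$ and $t^{wF}_{wF'}$ of $\F^I$ are just $r^F_{F'}$ and $r^{wF}_{wF'}$ restricted to the relevant invariants; as $c_w=(c_{w,F})_F\colon\F\to w_*\F$ is a morphism of coefficient systems it commutes with restriction maps, giving a commutative square identifying $t^F_{F'}$ with $t^{wF}_{wF'}$ through the two vertical isomorphisms above. Hence $t^F_{F'}$ is bijective if and only if $t^{wF}_{wF'}$ is, and the latter holds by hypothesis (ii). This yields Definition \ref{category_C}(ii) and completes the argument.

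I expect the only delicate point to be the bookkeeping in both spreading-out steps: one must keep careful track of how the $G$-equivariance maps $c_g$ conjugate the parahoric subgroups, the pro-$p$ groups $I_{C(F)}$ and the $P_F^\dagger$-actions, and verify that these conjugations match the data $(X_F,I_{C(F)},P_F,P_F^\dagger)$ at $F$ with those at $[F]$, respectively at $wF$. Everything else — transitivity of restriction, the collapse of $t^F_{F'}$ to $r^F_{F'}$ when $C(F)=C(F')$, and invariance of condition (H) under a group isomorphism — is formal, so I do not anticipate a genuine obstacle, only the need for care with the indices.
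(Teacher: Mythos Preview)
Your proposal is correct and follows essentially the same approach as the paper: you spread condition (H) from $\Cbar$ to all of $\BT$ via the conjugation isomorphisms $c_{gd,[F]}$, and you spread the bijectivity of $t^F_{F'}$ from $\Cbar$ to all of $\A$ by choosing $w\in W_\aff$ sending $C(F)$ to $C$ and using the commutative square coming from the fact that $c_w$ is a morphism of coefficient systems. The paper's proof is the same two-step transport argument, only written more tersely (with $w$ chosen in the opposite direction, $wC=C(F)$, and leaving the verification that conjugation preserves condition (H) as ``straightforward'').
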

\begin{proof}
Clearly, (i) implies (ii). Conversely, assume that $\F$ satisfies the conditions in (ii). An arbitrary face of $\BT$ is of the form $gdF$ for some face $F\subseteq\Cbar$, $d\in\tilde{D}_F$ and $g\in I$ (cf.\ the proof of Lemma \ref{face_representatives}). Using the isomorphisms (\ref{phi_gd}) and the isomorphism $c_{gd,F}:\F_F\to\F_{gdF}$ it is straightforward to see that $\F_F\in\Rep_R^H(P_F^\dagger)$ implies $\F_{gdF}\in\Rep_R^H(P_{gdF}^\dagger)$.\\

If $F'$ and $F$ are faces of $\A$ with $F'\subseteq F$ and $C(F')=C(F)$ then there is an element $w\in W_\aff$ with $wC=C(F)$ (cf.\ \cite{BGL}, V.3.2, Th\'eor\`eme 1). In the notation of Lemma \ref{face_representatives} we have $[F']=w^{-1}F'$, $[F]=w^{-1}F$ and may consider the diagram
\[
\xymatrix{
\F_{[F]}^I \ar[r]^{c_{w,[F]}} \ar[d]_{t^{[F]}_{[F']}} & \F_F^{I_{C(F)}} \ar[d]^{t^F_{F'}} \\
\F_{[F']}^I \ar[r]_{c_{w,[F']}} & \F_{F'}^{I_{C(F)}}
}
\]
with bijective horizontal arrows. Note that since $C(F)=C(F')$ the vertical maps are obtained from the restriction maps $r^{[F]}_{[F']}$ and $r^F_{F'}$ by passage to the invariants under $I$ and $I_{C(F)}$, respectively. Therefore, the diagram commutes. It follows that together with $t^{[F]}_{[F']}$ also $t^F_{F'}$ is bijective.
\end{proof}
\begin{rem}\label{basic_0_diagrams}
Assume that $G=\mathrm{GL}_2(K)$ and that $R$ is a field of characteristic $p$. It follows from Lemma \ref{category_C_chamber} that under the equivalence $\Coeff_G(\BT)\cong\Diag(\Cbar)$ of Theorem \ref{diagrams} the category $\C$ corresponds to the full subcategory of basic $0$-diagrams $(D_1,D_0,r)$ considered in \cite{BP}, \S9, for which $D_0$ satisfies condition (H) as a representation of $\mathfrak{K}_0=P_{x_0}$. Note that this implies $R[P_{x_0}]\cdot r(D_1)=D_0$ by Lemma \ref{properties_H} (i). The latter condition, as well as the condition in Definition \ref{category_C} (ii) is also satisfied by the coefficient systems corresponding to the canonical diagrams of Hu (cf.\ \cite{Hu}).
\end{rem}
For trivial reasons, the fixed point system $\F_V$ associated to a representation $V\in\Rep_R^\infty(G)$ (cf.\ Example \ref{fixed_point_system}) always satisfies condition (ii) of Definition \ref{category_C}. However, it does not always satisfy condition (i) even if $R[G]\cdot V^I=V$. If $R$ is an algebraically closed field of characteristic $p$, for example, and if $G=\GL_2(\mathbb{Q}_p)$ then there is an irreducible $G$-representation $V$ for which the $P_{x_0}$-representation $V^{I_{x_0}}$ is not generated by its $I$-invariants (cf.\ \cite{OS1}, Remark 3.2.3). By Lemma \ref{properties_H} (i) $V^{I_{x_0}}$ does not satisfy condition (H). On the other hand, if $V=X$ then $\F_X\in\C$ as follows from Proposition \ref{free} (ii), Proposition \ref{I_F_invariants} (ii) and Lemma \ref{category_C_chamber} (i).\\

Still, there is a rather strong connection between the objects of $\C$ and suitable fixed point systems. In order to explain this, assume that $R$ is a quasi-Frobenius ring and let $\F\in\Coeff_G(\BT)$. Once the $P_x$-representation $\F_x$ satisfies condition (H) for all vertices $x\in\BT_{0}$, we have $\F\in\C$ if and only if locally around $x$ the system $\F$ is isomorphic to a fixed point sheaf in the sense of Ronan-Smith (cf.\ \cite{RS}, page 322). More precisely, recall that the star $\St(x)$ of $x$ is the union of all faces of $\BT$ containing $x$ in their closure. It is an open neighborhood of $x$ in $\BT$ with a simplicial action of the group $P_x$. In particular, we have the category $\Coeff_{P_x}(\St(x))$ of $P_x$-equivariant coefficient systems on $\St(x)$ at our disposal. Given a representation $V_x\in\Rep_R^\infty(P_x)$, for example, the family $\F_{V_x}=(V_x^{I_F})_{F\subseteq \St(x)}$ is naturally an object of $\Coeff_{P_x}(\St(x))$. Note that by (\ref{inclusions}) we have $P_F\subseteq P_x$ for any face $F\subseteq\St(x)$.
\begin{prop}\label{characterization_C}
Assume that $R$ is a quasi-Frobenius ring. For any object $\F\in\Coeff_G(\BT)$ the following are equivalent.
\begin{enumerate}[wide]
\item[(i)]$\F$ is an object of the category $\C$.
\item[(ii)]For every vertex $x\in\BT_{0}$ there is a representation $V_x\in\Rep_R^H(P_x)$ and an isomorphism $\F|_{\St(x)}\cong\F_{V_x}$ in $\Coeff_{P_x}(\St(x))$.
\item[(iii)]For every vertex $x\in\Cbar$ there is a representation $V_x\in\Rep_R^H(P_x)$ and an isomorphism $\F|_{\St(x)}\cong\F_{V_x}$ in $\Coeff_{P_x}(\St(x))$.
\end{enumerate}
\end{prop}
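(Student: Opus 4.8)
The plan is to prove the cycle of implications $(i)\Rightarrow(iii)\Rightarrow(ii)\Rightarrow(i)$; since $(ii)\Rightarrow(iii)$ is trivial this gives all equivalences.

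A preliminary reduction, valid since all coefficient systems involved are of level zero: for a vertex $x$ and a face $F\subseteq\St(x)$ the restriction map $r^F_x\colon\F_F\to\F_x$ takes values in $\F_x^{I_F}$, because $I_F\subseteq P_F\subseteq P_x$ fixes $F$ and acts trivially on $\F_F$. These corestricted maps assemble, together with the identity at $x$, into a morphism $\bar r_x\colon\F|_{\St(x)}\to\F_{\F_x}$ in $\Coeff_{P_x}(\St(x))$ (compatibility with restriction is transitivity of the $r^F_{F'}$, $P_x$-equivariance is the $G$-equivariance of $\F$). I would first observe that, for a vertex $x$, the condition ``$\F|_{\St(x)}\cong\F_{V_x}$ for some $V_x\in\Rep_R^H(P_x)$'' is equivalent to ``$\F_x\in\Rep_R^H(P_x)$ and $\bar r_x$ is an isomorphism'': given an isomorphism $\phi$, evaluation at $x$ gives $\F_x\cong V_x^{I_x}=V_x$ (the $I_x$-action on $V_x$ being trivial by Remark \ref{H_I_F_trivial}), hence $\F_x\in\Rep_R^H(P_x)$ and $\F_{V_x}\cong\F_{\F_x}$; and since any morphism into a Ronan--Smith sheaf $\F_W$ with $W\in\Rep_R^H(P_x)$ is determined by its $x$-component (the restriction maps of $\F_W$ into $W^{I_x}=W$ are injective), $\phi$ must coincide with $\bar r_x$ after composing with $\F_{V_x}\cong\F_{\F_x}$ and correcting by $\phi_x^{-1}$. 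So it suffices to show: $\F\in\C$ if and only if $\F_x\in\Rep_R^H(P_x)$ and $\bar r_x$ is an isomorphism for every vertex $x$ (resp.\ for every vertex $x\subseteq\Cbar$).

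For $(i)\Rightarrow(iii)$, let $\F\in\C$ and let $x\subseteq\Cbar$ be a vertex; then $\F_x$ satisfies (H) by Definition \ref{category_C}(i), and I must show each $\bar r^F_x$, $F\subseteq\St(x)$, is bijective. Using Lemma \ref{face_representatives} I move $F$ into $\Cbar$ by some $g\in G_\aff$; then $gx\subseteq\overline{gF}\subseteq\Cbar$ is a vertex and the equivariance isomorphisms $c_{g,F}$, $c_{g,x}$ (the latter restricting to $\F_x^{I_F}\to\F_{gx}^{I_{gF}}$, as $gI_Fg^{-1}=I_{gF}$) reduce the claim to the case $x\subseteq\overline F\subseteq\Cbar$. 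There $C(F)=C(x)=C$, so $I_{C(F)}=I$; by Proposition \ref{invariants_H} the $P_F$-representation $\F_x^{I_F}$ satisfies (H), and $\F_F$ satisfies (H) by hypothesis, and applying $(\cdot)^I=(\cdot)^{I_{C(F)}}$ to $\bar r^F_x$ yields the restriction map $t^F_x$ of $\F^I\in\Coeff(\A)$, which is bijective because $\F\in\C$ (Lemma \ref{category_C_chamber}). By Corollary \ref{injective_surjective}(iii),(iv) this forces $\bar r^F_x$ itself to be bijective. This is the heart of the proof and the step I expect to be the main obstacle: passing from the essentially formal bijectivity of the $t$-maps on the apartment to a genuine local isomorphism of coefficient systems on the whole star is exactly what the representation theory of \S\ref{subsection_3_1} is for --- the preservation of condition (H) under $I_F$-invariants and the fact that, over a quasi-Frobenius ring, a map between (H)-representations is an isomorphism once it is one on $I_{C(F)}$-invariants.

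For $(iii)\Rightarrow(ii)$, given an arbitrary vertex $y$ I would write $y=gd[y]$ with $[y]\subseteq\Cbar$, $g\in I$, $d\in\tilde{D}_{[y]}$ (proof of Lemma \ref{face_representatives}), transport the isomorphism $\F|_{\St([y])}\cong\F_{V_{[y]}}$ along the $G$-equivariance isomorphisms $c_{gd}$ of $\F$ --- using that the Ronan--Smith construction is compatible with transport of structure --- to obtain $\F|_{\St(y)}\cong\F_{V_y}$, where $V_y$ is $V_{[y]}$ with its $P_y$-action pulled back along conjugation by $gd$; by (\ref{d_conjugation}) (equivalently, via the isomorphisms $\varphi_{gd,[y]}$ of (\ref{phi_gd})) conjugation by $gd$ carries $I_{C([y])}=I$ to $I_{C(y)}$, hence $X_{[y]}$ to $X_y$, so it matches $\Rep_R^H(P_{[y]})$ with $\Rep_R^H(P_y)$ and $V_y$ still satisfies (H). Finally, for $(ii)\Rightarrow(i)$ I would verify the two conditions of Definition \ref{category_C} via Lemma \ref{category_C_chamber}, i.e.\ for faces $F'\subseteq\overline F\subseteq\Cbar$: choosing a vertex $x\subseteq\overline F$ gives $\F_F\cong V_x^{I_F}$ as $P_F$-representations, and $V_x^{I_F}\in\Rep_R^H(P_F)$ by Proposition \ref{invariants_H}, so $\F_F$ satisfies (H); choosing instead $x\subseteq\overline{F'}$ identifies $r^F_{F'}$ with the inclusion $V_x^{I_F}\hookrightarrow V_x^{I_{F'}}$, and since $C(F)=C(F')=C$ and $I_F,I_{F'}\subseteq I_C=I$ the map $t^F_{F'}$ becomes the identity of $V_x^I$, hence bijective.
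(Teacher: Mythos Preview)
Your proof is correct and uses essentially the same ingredients as the paper's: the key step in both is that for $x\in\overline{F}\subseteq\Cbar$ the map $\F_F\to\F_x^{I_F}$ is a morphism in $\Rep_R^H(P_F)$ (via Proposition~\ref{invariants_H}) which becomes bijective on $I$-invariants, hence is bijective by Corollary~\ref{injective_surjective}. The only difference is organizational: the paper proves $(i)\Rightarrow(ii)$ directly for an arbitrary vertex $x$ (working with $D=C(x)$ and then conjugating an arbitrary $F'\subseteq\St(x)$ into $\overline{D}$ by an element of $P_x$), whereas you prove $(i)\Rightarrow(iii)$ first by conjugating into $\Cbar$ and then handle $(iii)\Rightarrow(ii)$ separately by transport of structure along $c_{gd}$. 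Your preliminary reduction, making explicit that conditions (ii)/(iii) amount to $\F_x\in\Rep_R^H(P_x)$ together with bijectivity of the canonical comparison map $\bar r_x$, is a clean way to package what the paper does implicitly.
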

\begin{proof}
Assume that $\F\in\C$, let $x$ be a vertex of $\BT$ and set $D=C(x)$. Let $F$ be a face of $\BT$ with $x\in\overline{F}\subseteq\overline{D}$. Note that in this situation we have $C(F)=D$ by the uniqueness assertion in Lemma \ref{closest_chamber}. We claim that the restriction map $r^F_x:\F_F\to \F_x$ is injective with image $\F_x^{I_F}$. Since $\F_F$ satisfies condition (H), the action of $I_F$ on $\F_F$ is trivial (cf.\ Remark \ref{H_I_F_trivial}). Since $r^F_x$ is $P_F$-equivariant, its image is contained in $\F_x^{I_F}$. The $P_F$-representation $\F_x^{I_F}$ satisfies condition (H) by Proposition \ref{invariants_H}. By Corollary \ref{injective_surjective} (iii) it suffices to see that $r^F_x$ is bijective on $I_D$-invariants. Choosing $g\in I$ with $gD\subseteq\A$ (cf.\ the proof of Lemma \ref{face_representatives}) the diagram
\[
\xymatrix{\F_F^{I_D} \ar[r]^{c_{g,F}} \ar[d]_{r^F_x} & \F_{gF}^{I_{gD}} \ar[d]^{r^{gF}_{gx}} \\
\F_x^{I_D} \ar[r]_{c_{g,x}} & \F_{gx}^{I_{gD}}}
\]
is commutative with bijective horizontal arrows. Note that $C(gx)=gD=C(gF)$ by Lemma \ref{closest_chamber} so that the right vertical arrow is equal to the restriction map $t^{gF}_{gx}$ of the coefficient system $\F^I$ on $\A$. Since this is bijective (cf.\ Definition \ref{category_C} (ii)) so is the vertical arrow on the left.\\

If $F'$ is an arbitrary face of $\BT$ with $F'\subseteq\St(x)$ then there are elements $h,h'\in G_\aff$ with $h'C(F')=C=hD$ (cf.\ Lemma \ref{face_representatives}). This implies $hx,h'x\in\Cbar$ and hence $hx=h'x$ by the uniqueness assertion in Lemma \ref{face_representatives}. Thus, $\gamma=h^{-1}h'\in G_\aff\cap P_x^\dagger=P_x$ (cf.\ \cite{OS1}, Lemma 4.10) with $x=\gamma x\in \gamma\overline{F}'\subseteq \gamma \overline{C(F')}=\overline{D}=\overline{C(x)}$. As a consequence, the uniqueness assertion of Lemma \ref{closest_chamber} gives $C(\gamma F')=C(\gamma x)=C(x)=D$. The commutativity of the diagram
\[
\xymatrix{
\F_{F'} \ar[r]^{c_{\gamma,F'}} \ar[d]_{r^{F'}_x} & \F_{\gamma F'} \ar[d]^{r^{\gamma F'}_x} \\
\F_x \ar[r]_\gamma & \F_x
}
\]
and the above arguments show that $r^{F'}_x$ is injective with image $\F_x^{I_{F'}}$. Setting $V_x=\F_x\in\Rep_R^H(P_x)$ the family $(r^F_x)_{F\subseteq\St(x)}:\F|_{\St(x)}\to\F_{V_x}$ is an isomorphism in $\Coeff_{P_x}(\St(x))$.\\

Trivially, (ii) implies (iii). Thus, it remains to show that any coefficient system $\F\in\Coeff_G$ satisfying (iii) is an object of the category $\C$. If $F'$ and $F$ are faces of $\BT$ with $F'\subseteq\overline{F}\subseteq\Cbar$ choose an arbitrary vertex $x\in F'$. By assumption, $\F_F\cong\F_{V_x}^{I_F}$ satisfies condition (H) as a representation of $P_F$ (cf.\ Proposition \ref{invariants_H}) and hence of $P_F^\dagger$. Moreover, the transition map $t^F_{F'}$ may be identified with the identity map on $V_x^I$, hence is bijective. It follows from Lemma \ref{category_C_chamber} that $\F\in\C$.
\end{proof}
\begin{rem}\label{star_indetermined}
Let $\F\in\Coeff_G(\BT)$. We point out that the Ronan-Smith sheaves $\F|_{\St(x)}\cong\F_{V_x}$ with $x\in\Cbar$ do not necessarily determine $\F$ as an object of $\Coeff_G(\BT)$. In fact, they only capture the actions of the parahoric subgroups $P_F$ with $F\subseteq\Cbar$. However, in order to spread out a coefficient system on $\Cbar$ to all of $\BT$ in a $G$-equivariant way, one needs compatible actions of the stabilizer groups $P_F^\dagger$ (cf.\ Definition \ref{diagram} and Theorem \ref{diagrams}).
\end{rem}
\begin{thm}\label{equivalence}
If $R$ is a quasi-Frobenius ring then the functor $M(\cdot):\C\to\Mod_H$ is an equivalence of additive categories.
\end{thm}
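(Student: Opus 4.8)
The plan is to show that $M(\cdot)\colon\C\to\Mod_H$ is fully faithful and essentially surjective, using throughout the isomorphisms $\iota_F\colon\F_F^I\xrightarrow{\sim}M(\F)$ of Proposition~\ref{acyclic}(ii), which apply to every $\F\in\C$ because condition~(ii) of Definition~\ref{category_C} is exactly the hypothesis of Proposition~\ref{acyclic}. Two observations organise everything. First, for faces $F'\subseteq\overline{F}\subseteq\Cbar$ one has $C(F)=C(F')=C$, so $\F_F^I=\F_F^{I_{C(F)}}$, the algebra $H_F^\dagger$ is a subalgebra of $H$, the map $\iota_F$ is $H_F^\dagger$-linear, and $\iota_F=\iota_{F'}\circ t^F_{F'}$ by Proposition~\ref{acyclic}(iii). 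Second, a $P_F^\dagger$-equivariant map out of an object of $\Rep_R^H(P_F^\dagger)$ is determined by its restriction to $I$-invariants, since such objects are generated by their $I$-invariants (Lemma~\ref{properties_H}(i)); together with $G$-equivariance and the fact that every face of $\BT$ is $G_\aff$-conjugate to one in $\Cbar$ (Lemma~\ref{face_representatives}), this already gives faithfulness: if $M(f)=0$ then $\iota_F^\G\circ f_F^I=M(f)\circ\iota_F^\F=0$, so $f_F^I=0$ and $f_F=0$ for $F\subseteq\Cbar$, hence $f=0$.

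For fullness I would take an $H$-linear $\phi\colon M(\F)\to M(\G)$ and set $g_F=(\iota_F^\G)^{-1}\circ\phi\circ\iota_F^\F\colon\F_F^I\to\G_F^I$ for each $F\subseteq\Cbar$; this is $H_F^\dagger$-linear, and $\iota_F=\iota_{F'}\circ t^F_{F'}$ forces $t^F_{F'}\circ g_F=g_{F'}\circ t^F_{F'}$, so $(g_F)_{F\subseteq\Cbar}$ is a morphism of coefficient systems of Hecke modules on $\Cbar$. By Theorem~\ref{Cabanes_general}(ii) each $g_F$ lifts uniquely to $f_F\colon\F_F\to\G_F$ in $\Rep_R^H(P_F^\dagger)$. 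One then checks $(f_F)_{F\subseteq\Cbar}$ is a morphism of diagrams in the sense of Definition~\ref{diagram}: compatibility with the maps $r^F_{F'}$ reduces, after passing to $I$-invariants (where both composites are $P_F$-equivariant maps out of a representation generated by its $I$-invariants), to $t^F_{F'}\circ g_F=g_{F'}\circ t^F_{F'}$; and $P_C^\dagger$-equivariance reduces by the same device together with diagram~(\ref{iota_equivariant}) from the proof of Proposition~\ref{acyclic} (which expresses the $\tilde\Omega$-action on $\F_F^I$ via $\iota_F$ and the $\tau_\omega$-action on $M(\F)$) and the $H$-linearity of $\phi$. By the equivalence $\Coeff_G(\BT)\cong\Diag(\Cbar)$ of Proposition~\ref{diagrams}, $(f_F)$ extends to a unique $f\colon\F\to\G$ with $\res(f)=(f_F)$, and naturality of $\iota_C$ gives $\phi\circ\iota_C^\F=M(f)\circ\iota_C^\F$, hence $M(f)=\phi$.

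For essential surjectivity, given $M\in\Mod_H$ I would build a diagram $\D=\D(M)$ on $\Cbar$ directly. For $F\subseteq\Cbar$ let $\D_F\in\Rep_R^H(P_F^\dagger)$ be the object with $\D_F^I\cong M$ as $H_F^\dagger$-modules provided by Theorem~\ref{Cabanes_general} (e.g. $\D_F=\t_F(M)$ as in Theorem~\ref{quasi_inverse_OS}). For $F'\subseteq\overline{F}\subseteq\Cbar$, Proposition~\ref{invariants_H} shows $\D_{F'}^{I_F}\in\Rep_R^H(P_F)$ with $(\D_{F'}^{I_F})^I=\D_{F'}^I\cong M$ (using $I_F\subseteq I$), so by the uniqueness in Theorem~\ref{Cabanes} there is a unique isomorphism $\D_F\xrightarrow{\sim}\D_{F'}^{I_F}$ of $P_F$-representations inducing $\id_M$ on $I$-invariants; define $r^F_{F'}$ as its composition with $\D_{F'}^{I_F}\hookrightarrow\D_{F'}$. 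Since $P_F^\dagger\cap P_{F'}^\dagger$ normalises $I_F$ (the pro-$p$ radical of the normal subgroup $P_F$ of $P_F^\dagger$), Cabanes uniqueness forces $r^F_{F'}$ to be $(P_F^\dagger\cap P_{F'}^\dagger)$-equivariant; a similar argument, now using that $P_C^\dagger$ normalises $I$ and the isomorphisms~(\ref{phi_gd}), produces the $P_C^\dagger$-equivariance maps $c_{g,F}$ as the unique lifts of the $\tau_g$-action on $M$, and associativity of restriction, the cocycle relation, and axioms (i)--(iii) of Definition~\ref{diagram} all reduce to the uniqueness in Theorems~\ref{Cabanes} and~\ref{Cabanes_general} together with the relations in $H$. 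Let $\F$ be the coefficient system attached to $\D$ by Proposition~\ref{diagrams}: then $\F_F=\D_F\in\Rep_R^H(P_F^\dagger)$ and $t^F_{F'}=(r^F_{F'})^I=\id_M$ is bijective, so $\F\in\C$ by Lemma~\ref{category_C_chamber}. Finally, by Proposition~\ref{acyclic}(ii)--(iii) all the isomorphisms $\iota_F\colon M=\F_F^I\to M(\F)$ with $F\subseteq\Cbar$ coincide (since $t^F_{F'}=\id_M$) and each is $H_F^\dagger$-linear; as $H$ is generated as an $R$-algebra by the subalgebras $H_F^\dagger$ with $F\subseteq\Cbar$ (Lemma~\ref{Hecke_generation}), this common map is an isomorphism $M\xrightarrow{\sim}M(\F)$ of $H$-modules.

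I expect the hard part to be the essential-surjectivity step, precisely the construction of the diagram $\D(M)$: one must manufacture all of its structure maps as Cabanes-lifts of module-theoretic data on $M$ and then verify the numerous diagram axioms. Each verification should reduce, via the uniqueness statements in Theorems~\ref{Cabanes} and~\ref{Cabanes_general}, to an identity on $I$-invariants that holds by the defining relations of $H$; but organising the parahoric and stabiliser subgroups involved, and normalising the various isomorphisms so that they actually glue, will require the full strength of the building-theoretic results of \S\ref{subsection_1_1}--\S\ref{subsection_1_2} (Lemmas~\ref{face_representatives}, \ref{closest_chamber} and Propositions~\ref{free}, \ref{I_F_invariants}, \ref{invariants_H}). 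By contrast the fully faithful part is essentially formal once Proposition~\ref{acyclic} and the equivalence with diagrams are available.
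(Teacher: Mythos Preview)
Your treatment of faithfulness and fullness matches the paper's almost verbatim: the key inputs are Proposition~\ref{acyclic}(ii)--(iii), Theorem~\ref{Cabanes_general}(ii), diagram~(\ref{iota_equivariant}), and the equivalence with diagrams (Proposition~\ref{diagrams}). Your reduction of the compatibility $f_{F'}\circ r^F_{F'}=s^F_{F'}\circ f_F$ to $I$-invariants via ``the source is generated by its $I$-invariants'' is in fact slightly simpler than the paper's, which routes through Proposition~\ref{invariants_H} and Corollary~\ref{injective_surjective}(iii); both are valid.

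For essential surjectivity you take a genuinely different route. The paper does \emph{not} build the diagram face by face. Instead it chooses an embedding $M\hookrightarrow E$ into an injective $H$-module and sets
\[
\F(M)=\im(\F_X\otimes_HM\longrightarrow\F_X\otimes_HE)\in\Coeff_G(\BT),
\]
which is a $G$-equivariant coefficient system \emph{by construction}, since it is the image of a morphism of such systems coming from the $(G,H)$-bimodule $X$. All the diagram axioms you worry about are therefore free; the only work is to check $\F(M)\in\C$ and $M(\F(M))\cong M$, which follow from Proposition~\ref{I_F_invariants}(ii), Proposition~\ref{free}(ii), Theorem~\ref{Cabanes_general}(i), Proposition~\ref{acyclic} and Lemma~\ref{Hecke_generation}. (The paper then gives, in Theorem~\ref{quasi_inverse_coefficient_system}, a second global construction via $\t_F(M)=\im(X^{I_F}\otimes_HM\to\Hom_H(\Hom_H(X^{I_F},H),M))$, again inheriting $G$-equivariance from $X$.)

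Your local approach---take $\D_F=\t_F(M)$ from Theorem~\ref{quasi_inverse_OS}, manufacture $r^F_{F'}$ and $c_{g,F}$ as Cabanes lifts of identities and of $\tau_\omega$-actions on $M$, then verify Definition~\ref{diagram}---does work, but one verification is a bit more delicate than your sketch suggests. To get the $(P_F^\dagger\cap P_{F'}^\dagger)$-equivariance of $r^F_{F'}$ you cannot invoke ``Cabanes uniqueness'' directly for that group; you need the decomposition $P_F^\dagger\cap P_{F'}^\dagger=P_F\cdot(\tilde\Omega_F\cap\tilde\Omega_{F'})$ (which follows from (\ref{Bruhat_parahoric}) and $P_F\subseteq P_{F'}$), then check $\omega$-equivariance for $\omega\in\tilde\Omega_F\cap\tilde\Omega_{F'}$ by passing to $I$-invariants (legitimate since such $\omega$ normalise $I$) and using Remark~\ref{group_ring}. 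The analogous verifications for the $c_{g,F}$ and the remaining axioms go through by the same pattern. So your approach is correct but trades the paper's one global choice (the injective hull) for a longer chain of local compatibility checks; what it buys is that it makes the role of the Cabanes equivalences for the individual $P_F^\dagger$ entirely transparent.
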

\begin{proof}
Recall that we set $X=\ind_I^G(R)\in\Rep_R^\infty(G)$ and denote by $\F_X=(X^{I_F})_F\in\Coeff_G(\BT)$ the corresponding fixed point system (cf.\ Example \ref{fixed_point_system}). Since the $G$-representation $X$ carries a commuting right $H$-module structure $\F_X$ is a $G$-equivariant coefficient system of right $H$-modules. Given $M\in\Mod_H$ we set $\F_X\otimes_HM=(X^{I_F}\otimes_HM)_F$ and obtain the functor
\[
(M\mapsto\F_X\otimes_HM):\Mod_H\longrightarrow\Coeff_G(\BT).
\]
In order to prove the essential surjectivity we choose an embedding $M\hookrightarrow E$ of $M$ into an injective $H$-module $E$ and set
\[
\F(M)=\im(\F_X\otimes_HM\longrightarrow\F_X\otimes_HE)\in\Coeff_G(\BT).
\]
Let us first show that $\F(M)\in\C$ by checking the conditions in Lemma \ref{category_C_chamber} (ii). If $F$ is a face of $\BT$ with $F\subseteq\Cbar$ then there is an isomorphism $X^{I_F}\otimes_HM\cong X_F^\dagger\otimes_{H_F^\dagger}M$ in $\Rep_R^\infty(P_F^\dagger)$ (cf.\ Proposition \ref{I_F_invariants} (ii)). Since $H$ is free over $H_F^\dagger$ (cf.\ Proposition \ref{free} (ii)) $E$ is an  injective $H_F^\dagger$-module via restriction of scalars (cf.\ \cite{Lam}, Corollary 3.6A). Therefore,
\[
\F(M)_F\cong\im(X_F^\dagger\otimes_{H_F^\dagger}M\longrightarrow X_F^\dagger\otimes_{H_F^\dagger}E)
\]
is an object of $\Rep_R^H(P_F^\dagger)$ and the natural map $M\to\F(M)_F^I$ is an isomorphism of $H_F^\dagger$-modules (cf.\ Theorem \ref{Cabanes_general} (i)). If $F'\subseteq\overline{F}$ with $C(F')=C(F)$ then the commutativity of the diagram
\[
\xymatrix{
M \ar[d]_{\mathrm{id}_M} \ar[rr]^\cong && \F(M)_F^I \ar[d]^{t^F_{F'}} \\
M \ar[rr]_\cong && \F(M)_{F'}^I
}
\]
implies that $t^F_{F'}$ is bijective. Thus, $\F(M)\in\C$ as claimed. Further, by Proposition \ref{acyclic} we have the commutative diagram
\[
\xymatrix{
M \ar[d]_{\mathrm{id}_M} \ar[rr]^\cong && \F(M)_C^I \ar[d]^{t^C_F} \ar[rr]^{\iota_C} && M(\F(M)) \ar[d]^{\mathrm{id}_{M(\F(M))}} \\
M \ar[rr]_\cong && \F(M)_F^I \ar[rr]_{\iota_F} && M(\F(M))
}
\]
in which all arrows are bijective and the lower horizontal arrows are $H_F^\dagger$-linear. It follows that the upper horizontal map $M\to M(\F(M))$ is bijective and $H_F^\dagger$-linear for all faces $F\subseteq\Cbar$. Since the subalgebras $H_F^\dagger$ of $H$ with $F\subseteq\Cbar$ generate $H$ (cf.\ Lemma \ref{Hecke_generation}) the map $M\to M(\F(M))$ is an isomorphism of $H$-modules. This proves the essential surjectivity of $M(\cdot)$.\\

To prove that $M(\cdot)$ is fully faithful let $\F,\G\in\C$ and $f\in\Hom_\C(\F,\G)=\Hom_{\Coeff_G(\BT)}(\F,\G)$. If $F$ is a face of $\BT$ with $F\subseteq\Cbar$ we have the commutative diagram
\[
\xymatrix{
\F_F^I \ar[rr]^{\iota_F} \ar[d]_{f_F^I} && M(\F) \ar[d]^{M(f)} \\
\G_F^I \ar[rr]_{\iota_F} && M(\G)
}
\]
of $H_F^\dagger$-modules in which the horizontal arrows are bijective (cf.\ Proposition \ref{acyclic} (ii)). Thus, $M(f)=0$ implies $f_F^I=0$. Since both $P_F^\dagger$-representations $\F_F$ and $\G_F$ satisfy condition (H) we have $f_F=0$ by Proposition \ref{properties_H} (i) and then $f=0$ by Theorem \ref{diagrams}.\\

If $g\in\Hom_H(M(\F),M(\G))$ and if $F\subseteq\Cbar$ we define $g_F=\iota_F^{-1}\circ g\circ\iota_F\in\Hom_{H_F^\dagger}(\F_F^I,\G_F^I)$. By Theorem \ref{Cabanes_general} (ii) there is a homomorphism $f_F:\F_F\to\G_F$ of $P_F^\dagger$-representations such that $f_F^I=g_F$. Let $F'$ be a face of $\BT$ with $F'\subseteq\overline{F}$ and denote by $r^F_{F'}:\F_F\to\F_{F'}$ and $s^F_{F'}:\G_F\to\G_{F'}$ the corresponding restriction maps of $\F$ and $\G$, respectively. We claim that $f_{F'}\circ r^F_{F'}=s^F_{F'}\circ f_F$. By (\ref{inclusions}) and Remark \ref{H_I_F_trivial} the maps $r^F_{F'}$ and $s^F_{F'}$ take values in $\F_{F'}^{I_F}$ and $\G_{F'}^{I_F}$, respectively. Thus, we need to prove that the diagram
\[
\xymatrix{
\F_F \ar[rr]^{f_F} \ar[d]_{r^F_{F'}} && \G_F \ar[d]^{s^F_{F'}} \\
\F_{F'}^{I_F} \ar[rr]_{f_{F'}} && \G_{F'}^{I_F}
}
\]
is commutative. Since $P_F\subseteq P_{F'}$ by (\ref{inclusions}) the latter may be viewed as a diagram in $\Rep_R^H(P_F)$ (cf.\ Proposition \ref{invariants_H}). By Corollary \ref{injective_surjective} (iii) the commutativity can be checked on $I$-invariants where it holds by construction and because of Proposition \ref{acyclic} (iii).\\

Finally, given $g\in P_C^\dagger$ we claim that the diagram
\[
\xymatrix{
\F_F \ar[rr]^{f_F} \ar[d]_{c_{g,F}} && \G_F \ar[d]^{c_{g,F}} \\
\F_{gF} \ar[rr]_{f_{gF}} && \G_{gF}
}
\]
is commutative. Since $I\subseteq P_F^\dagger\cap P_{gF}^\dagger$ both $f_F$ and $f_{gF}$ are $I$-equivariant. Therefore, the decomposition (\ref{Bruhat_parahoric}) of $P_C^\dagger$ and the properties of the $G$-actions on $\F$ and $\G$ allow us to assume $g=\omega\in N_G(T)\cap P_C^\dagger$ such that the image of $\omega$ in $\tilde{W}$ lies in $\tilde{\Omega}$. We endow the $P_{\omega F}^\dagger$-represen\-ta\-tions $\F_{\omega F}$ and $\G_{\omega F}$ with an action of $P_F^\dagger$ through conjugation with $\omega$. Then the above diagram may again be viewed as a diagram in $\Rep_R^H(P_F^\dagger)$ because of the isomorphisms (\ref{phi_gd}). Note that $\tilde{\Omega}\subseteq\tilde{D}_F$ by Remark \ref{Omega_in_DF}. By Corollary \ref{injective_surjective} (iii) it suffices to prove the commutativity after passage to the $I$-invariants where it follows from (\ref{iota_equivariant}), the $H$-linearity of $g$, as well as from $f_F^I=\iota_F^{-1}\circ g\circ\iota_F$ and $f_{\omega F}^I=\iota_{\omega F}^{-1}\circ g\circ \iota_{\omega F}$. \\

Altogether, we have shown that the family $(f_F)_{F\subseteq\Cbar}:\res(\F)\to\res(\G)$ is a homomorphism of diagrams. By Proposition \ref{diagrams} it extends to a homomorphism $f\in\Hom_{\Coeff_G(\BT)}(\F,\G)$. Choosing an arbitrary face $F$ of $\BT$ contained in $\Cbar$ we have $M(f)=\iota_F\circ f_F^I\circ\iota_F^{-1}=g$ by construction.
\end{proof}
For any $H$-module $M$ it follows that up to isomorphism the coefficient system $\F(M)\in\C$ constructed in the proof of Theorem \ref{equivalence} does not depend on the choice of the embedding $M\hookrightarrow E$ into an injective $H$-module $E$. Of course, this can easily be proved directly. What is more, since $M(\cdot)$ is an equivalence and since $M(\F(M))\cong M$ by the proof of Theorem \ref{equivalence} such an isomorphism is unique and the assignment $M\mapsto\F(M)$ is a functor which is quasi-inverse to $M(\cdot)$. Once again a presumably more natural construction of a quasi-inverse can be given by making use of an idea of \cite{OS2}, \S1.2.\\

Let $M\in\Mod_H$ and let $F$ be an arbitrary face of $\BT$. As in \S\ref{subsection_3_1} the action of $P_F^\dagger$ on $X^{I_F}$ makes $X^{I_F}\otimes_HM$ and $\Hom_H(\Hom_H(X^{I_F},H),M)$ objects of $\Rep_R^\infty(P_F^\dagger)$. Moreover, there is a unique homomorphism
\[
 \tau_{M,F}:X^{I_F}\otimes_HM\longrightarrow\Hom_H(\Hom_H(X^{I_F},H),M)
\]
of smooth $R$-linear $P_F^\dagger$-representations sending $x\otimes m$ to the $H$-linear map $(\varphi\mapsto\varphi(x)\cdot m)$. We denote by $\t_F(M)=\im(\tau_{M,F})$
its image and obtain the functor $(M\mapsto\t_F(M)):\Mod_H\to\Rep_R^\infty(P_F^\dagger)$. Varying $F$ the family $\F(M)=(\t_F(M))_F$ is a $G$-equivariant coefficient system on $\BT$ because of the $G$-action on $X$. We thus obtain the functor
\[
 \F(\cdot)=(\t_F(\cdot))_F:\Mod_H\to\Coeff_G(\BT).
\]
\begin{thm}\label{quasi_inverse_coefficient_system}
Assume that $R$ is a quasi-Frobenius ring. The functor $\F(\cdot):\Mod_H\to\Coeff_G(\BT)$ takes values in the category $\C$ and is quasi-inverse to the equivalence $M(\cdot):\C\to\Mod_H$ of Theorem \ref{equivalence}.
\end{thm}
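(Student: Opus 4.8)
The plan is to reduce everything to the local picture of \S\ref{subsection_3_1}. Since $M(\cdot):\C\to\Mod_H$ is already known to be an equivalence by Theorem \ref{equivalence}, it suffices to show two things: that $\F(M)\in\C$ for every $M\in\Mod_H$, so that $\F(\cdot)$ is a genuine functor $\Mod_H\to\C$; and that there is a natural isomorphism $M(\F(M))\cong M$ of $H$-modules. Granting these, if $N$ denotes a quasi-inverse of the equivalence $M(\cdot)$, then $\F(\cdot)\cong N\circ M(\cdot)\circ\F(\cdot)\cong N$, so that $\F(\cdot)$ is itself a quasi-inverse of $M(\cdot)$.

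The heart of the matter is an identification of $\F(M)$ with the coefficient system built from the \emph{local} functors $\t_F$ of Theorem \ref{quasi_inverse_OS}. Fix a face $F\subseteq\Cbar$ and regard $M$ as an $H_F^\dagger$-module by restriction of scalars along $H_F^\dagger\subseteq H$. By Proposition \ref{I_F_invariants}(ii) there is an isomorphism $X^{I_F}\cong X_F^\dagger\otimes_{H_F^\dagger}H$ of $(R[P_F^\dagger],H)$-bimodules, hence $X^{I_F}\otimes_HM\cong X_F^\dagger\otimes_{H_F^\dagger}M$ in $\Rep_R^\infty(P_F^\dagger)$. Moreover $X_F^\dagger\cong X_F\otimes_{H_F}H_F^\dagger$ (Proposition \ref{I_F_invariants}(i)) is finitely generated over $H_F^\dagger$, and $H$ is free over $H_F^\dagger$ (Proposition \ref{free}(ii)); the same argument as in the proof of Theorem \ref{quasi_inverse_OS} (using \cite{BAC}, I.2.9 Proposition 10) then yields $\Hom_H(X^{I_F},H)\cong\Hom_{H_F^\dagger}(X_F^\dagger,H_F^\dagger)\otimes_{H_F^\dagger}H$ as left $H_F^\dagger$-modules, and therefore $\Hom_H(\Hom_H(X^{I_F},H),M)\cong\Hom_{H_F^\dagger}(\Hom_{H_F^\dagger}(X_F^\dagger,H_F^\dagger),M)$. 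Under these isomorphisms the defining map $\tau_{M,F}$ of $\F(M)_F$ corresponds to the analogous map for the pair $(P_F^\dagger,H_F^\dagger)$ from \S\ref{subsection_3_1}, so $\F(M)_F=\im(\tau_{M,F})$ is $P_F^\dagger$-equivariantly and naturally isomorphic to $\t_F(M)$ in the sense of Theorem \ref{quasi_inverse_OS}. In particular $\F(M)_F\in\Rep_R^H(P_F^\dagger)$, and its $I_{C(F)}$-invariants are naturally isomorphic to $M$ as an $H_F^\dagger$-module.

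With this in hand, $\F(M)\in\C$ follows from Lemma \ref{category_C_chamber}: condition (i) of Definition \ref{category_C}, tested on $\Cbar$, is the statement just proved; for condition (ii), let $F'\subseteq\overline{F}\subseteq\Cbar$ with $C(F')=C(F)$. The restriction map $r^F_{F'}$ of $\F(M)$ is induced by the inclusion $X^{I_F}\hookrightarrow X^{I_{F'}}$ (note $I_{F'}\subseteq I_F$), and since $I_{F'}\subseteq I_F\subseteq I_{C(F)}=I_{C(F')}$ this inclusion restricts to the identity of $X^{I_{C(F)}}$ on $I_{C(F)}$-invariants. As $(I\cap P_{F'}^\dagger)/(I\cap P_F^\dagger)$ is trivial in this situation (Lemma \ref{closest_chamber}), so that $t^F_{F'}=r^F_{F'}$ on $I$-invariants, unwinding the natural identifications above shows that $t^F_{F'}$ corresponds to $\id_M$; in particular it is bijective. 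Finally, $\F(M)\in\C$ is of level zero and satisfies the hypotheses of Proposition \ref{acyclic}, so for each $F\subseteq\Cbar$ the map $\iota_F:\F(M)_F^I\to M(\F(M))$ is an $H_F^\dagger$-linear isomorphism, with $\iota_F=\iota_{F'}\circ t^F_{F'}$ for $F'\subseteq\overline{F}$. Composing $\iota_F$ with the natural isomorphism $M\xrightarrow{\sim}\F(M)_F^I$, and using that the $t^F_{F'}$ are the identity of $M$, we obtain an isomorphism $M\xrightarrow{\sim}M(\F(M))$ which is $H_F^\dagger$-linear for every $F\subseteq\Cbar$, hence $H$-linear since these subalgebras generate $H$ (Lemma \ref{Hecke_generation}); its naturality in $M$ is immediate. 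This is exactly the pattern of the proof of Theorem \ref{equivalence}.

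The step I expect to be most delicate is the compatibility claim inside the identification $\F(M)_F\cong\t_F(M)$: one must check not merely that the modules match but that the structural map $\tau_{M,F}$ and the restriction maps $r^F_{F'}$ correspond under the bimodule isomorphisms of Proposition \ref{I_F_invariants}, which amounts to chasing the distinguished characteristic functions through $X^{I_F}\cong X_F^\dagger\otimes_{H_F^\dagger}H$ and its $R$-linear and $H$-linear duals. This is the precise place where the non-exactness of $(\cdot)^I$, emphasized in the introduction, must be controlled.
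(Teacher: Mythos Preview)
Your proposal is correct and follows essentially the same route as the paper: identify the global map $\tau_{M,F}$ with its local counterpart via Proposition~\ref{I_F_invariants} and the $\Hom$--tensor identity from \cite{BAC}, invoke the local theory of \S\ref{subsection_3_1} to get $\F(M)_F\in\Rep_R^H(P_F^\dagger)$ with $\F(M)_F^I\cong M$, and then repeat the end of the proof of Theorem~\ref{equivalence}. The paper phrases the local input as Theorem~\ref{Cabanes_general}(i) rather than Theorem~\ref{quasi_inverse_OS}, and compresses your verification of Definition~\ref{category_C}(ii) and of $M(\F(M))\cong M$ into the single sentence ``one can simply copy the proof of Theorem~\ref{equivalence}'', but the substance is the same.
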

\begin{proof}
Let $F$ be a face of $\BT$ with $F\subseteq\Cbar$. By Proposition \ref{I_F_invariants} (ii) there is an isomorphism $X^{I_F}\otimes_HM\cong X_F^\dagger\otimes_{H_F^\dagger}M$ of smooth $R$-linear $P_F^\dagger$-representations. Further, by Proposition \ref{I_F_invariants} (i) and (ii), and by \cite{BAC}, I.2.9 Proposition 10, we have
\begin{eqnarray*}
 \Hom_H(X^{I_F},H)&\cong&\Hom_{H_F}(X_F,H)\cong H\otimes_{H_F} \Hom_{H_F}(X_F,H_F)\\
&\cong&H\otimes_{H_F^\dagger}H_F^\dagger\otimes_{H_F}\Hom_{H_F}(X_F,H_F)\\
&\cong &H\otimes_{H_F^\dagger}\Hom_{H_F^\dagger}(X_F^\dagger,H_F^\dagger)
\end{eqnarray*}
as left $H$-modules because $X_F$ is finitely generated over $H_F$ and $H$ is free over $H_F^\dagger$ and $H_F$ (cf.\ Proposition \ref{free} (ii)). Consequently, there is a commutative diagram
\[
 \xymatrix{
 X^{I_F}\otimes_HM \ar[rr]^>>>>>>>>>>>>{\tau_{M,F}} \ar[d]_\cong && \Hom_H(\Hom_H(X^{I_F},H),M) \ar[d]^\cong \\
 X_F^\dagger\otimes_{H_F^\dagger}M \ar[rr]_>>>>>>>>>>{\tau_{M,F}} && \Hom_{H_F^\dagger}(\Hom_{H_F^\dagger} ( X_F^\dagger,H_F^\dagger),M)
 }
\]
in which the vertical arrows are isomorphisms and the lower horizontal arrow was introduced in \S\ref{subsection_3_1}. Actually, it turns out now that giving the same name to both horizontal arrows was only a minor abuse of notation. By Theorem \ref{Cabanes_general} (i) we have $\F(M)_F=\t_F(M)\in\Rep_R^H(P_F^\dagger)$ and the natural map $M\to\t_F(M)^I$ is an isomorphism of $H_F^\dagger$-modules. From this point on one can simply copy the proof of Theorem \ref{equivalence} to see that $\F(M)\in\C$ and that $\F(\cdot)$ is quasi-inverse to $M(\cdot)$.
\end{proof}
Finally, let us recall the construction of \cite{OS2}, \S1.2, which we have alluded to already twice. Given $M\in\Mod_H$ the $G$-action on $X$ induces $G$-actions on $X\otimes_HM$ and $\Hom_H(\Hom_H(X,H),M)$. Moreover, there is a unique homomorphism
\[
 \tau_M:X\otimes_HM\longrightarrow\Hom_H(\Hom_H(X,H),M)
\]
of $R$-linear $G$-representations sending $x\otimes m$ to the homomorphism of $H$-modules $(\varphi\mapsto\varphi(x)\cdot m)$. We denote by $\t(M)=\im(\tau_M)$
its image and obtain the functor
\[
 \t=(M\mapsto\t(M)):\Mod_H\to\Rep_R^\infty(G).
\]
Note that $\t(M)$ naturally is a quotient of $X\otimes_HM$ so that the $G$-action is smooth. If $F$ is a face of $\BT$ then the $(P_F^\dagger,H)$-equivariant inclusion $X^{I_F}\subseteq X$ induces a homomorphism $\Hom_H(X,H)\to\Hom_H(X^{I_F},H)$ of left $H$-modules and a commutative diagram
\[
\xymatrix{
 X^{I_F}\otimes_HM \ar[rr]^>>>>>>>>>{\tau_{M,F}} \ar[d] && \Hom_H(\Hom_H(X^{I_F},H),M) \ar[d] \\
 X\otimes_HM \ar[rr]_>>>>>>>>>>>{\tau_M} && \Hom_H(\Hom_H(X,H),M)
 }
\]
of $R$-linear $P_F^\dagger$-representations. This in turn gives rise to a homomorphism $\t_F(M)\to\t(M)$ in $\Rep_R^\infty(P_F^\dagger)$. Since the action of $I_F$ on $\t_F(M)$ is trivial, it factors through the inclusion $\t_F(M)\to\t(M)^{I_F}\hookrightarrow\t(M)$. Letting $F$ vary, one obtains homomorphisms
\begin{equation}\label{comparison_coefficient_systems}
\F_X\otimes_HM\twoheadrightarrow\F(M)\longrightarrow\F_{\t(M)}\longrightarrow\K_{\t(M)}
\end{equation}
of $G$-equivariant coefficient systems on $\BT$ where $\F_{\t(M)}$ and $\K_{\t(M)}$ denote the fixed point system and the constant coefficient system associated with the smooth $G$-representation $\t(M)$, respectively (cf.\ Examples \ref{constant_system} and \ref{fixed_point_system}). Passing to the homology in degree zero we obtain the following result.
\begin{prop}\label{comparison_representations}
For any $M\in\Mod_H$ there are surjective homomorphisms
\[
X\otimes_HM\twoheadrightarrow\mathrm{H}_0(\BT,\F(M))\twoheadrightarrow\t(M)
\]
of smooth $R$-linear $G$-representations which are functorial in $M$.
\end{prop}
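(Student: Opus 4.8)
The plan is to obtain both surjections at once by applying the functor $\mathrm{H}_0(\BT,-)\colon\Coeff_G(\BT)\to\Rep_R^\infty(G)$ to the chain $(\ref{comparison_coefficient_systems})$ of $G$-equivariant coefficient systems $\F_X\otimes_HM\twoheadrightarrow\F(M)\to\F_{\t(M)}\to\K_{\t(M)}$ and identifying the $0$-th homology of its two ends. Since the functors $\F\mapsto\C_c^\ori(\BT_{(i)},\F)$ are exact and $\mathrm{H}_0(\BT,-)$ is obtained from them by taking a cokernel, the functor $\mathrm{H}_0(\BT,-)$ is right exact; in particular the surjection $\F_X\otimes_HM\twoheadrightarrow\F(M)$ in $(\ref{comparison_coefficient_systems})$ induces a surjection $\mathrm{H}_0(\BT,\F_X\otimes_HM)\twoheadrightarrow\mathrm{H}_0(\BT,\F(M))$.

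First I would treat the right-hand end. For the constant coefficient system $\K_W$ on the (connected) complex $\BT$ attached to a smooth $R$-linear $G$-representation $W$, the augmentation $\C_c^\ori(\BT_{(0)},\K_W)=\bigoplus_{x\in\BT_0}W\to W$, $(w_x)_x\mapsto\sum_xw_x$, is $G$-equivariant, surjective, annihilates the image of $\partial_0$, and induces an isomorphism $\mathrm{H}_0(\BT,\K_W)\cong W$. Applying this with $W=\t(M)$ and composing with $\mathrm{H}_0(\BT,\F(M))\to\mathrm{H}_0(\BT,\F_{\t(M)})\to\mathrm{H}_0(\BT,\K_{\t(M)})$ produces the map $\mathrm{H}_0(\BT,\F(M))\to\t(M)$, and it remains to see that it is surjective. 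Here one uses that $(\ref{comparison_coefficient_systems})$ is a chain of morphisms of coefficient systems and that the restriction maps of $\K_{\t(M)}$ are identities: for $F'\subseteq\overline F$ the composite $\t_F(M)\to\t(M)$ therefore factors through $\t_{F'}(M)\to\t(M)$, so the images of the $\t_F(M)$ in $\t(M)$ only grow as $F$ shrinks, and every face contains a vertex. Combined with $X=\sum_FX^{I_F}$ (every standard basis vector of $X=\ind_I^G(R)$ is fixed by $I_{gC}$ for the chamber $gC$) and the commutative square defining $\t_F(M)$ in \S\ref{subsection_3_2}, this yields $\t(M)=\im(\tau_M)=\sum_F\im(\t_F(M)\to\t(M))=\sum_{x\in\BT_0}\im(\t_x(M)\to\t(M))$. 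Hence already $\C_c^\ori(\BT_{(0)},\F(M))=\bigoplus_x\t_x(M)\to\t(M)$ is surjective, and a fortiori so is $\mathrm{H}_0(\BT,\F(M))\to\t(M)$.

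For the left-hand end I would show $\mathrm{H}_0(\BT,\F_X\otimes_HM)\cong X\otimes_HM$. Since $-\otimes_HM$ is right exact and commutes with the direct sums defining the chain groups, applying it to $\C_c^\ori(\BT_{(1)},\F_X)\to\C_c^\ori(\BT_{(0)},\F_X)\to\mathrm{H}_0(\BT,\F_X)\to 0$ reduces the claim to $\mathrm{H}_0(\BT,\F_X)\cong X$. The canonical map $\bigoplus_{x\in\BT_0}X^{I_x}\to X$ given by the inclusions $X^{I_x}\subseteq X$ is $G$-equivariant and surjective, because every basis vector $\delta_{gI}$ of $X=\ind_I^G(R)=R[G/I]$ lies in $X^{I_x}$ whenever $x$ is a vertex of $gC$ (indeed $I_x\subseteq I_{gC}=gIg^{-1}$ is the stabilizer of $\delta_{gI}$), and it annihilates $\im(\partial_0)$, so it induces a surjection $\mathrm{H}_0(\BT,\F_X)\twoheadrightarrow X$. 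That this is an isomorphism is the technical heart: one checks that the subcomplex of faces $F$ with $I_F\subseteq gIg^{-1}$ is exactly the closed polysimplex $\overline{gC}$, which is contractible, so that for each $gI$ the homology class of $\delta_{gI}$ in $\mathrm{H}_0(\BT,\F_X)$ is independent of which vertex of $gC$ represents it; telescoping along edge paths inside $\overline{gC}$ then shows that these classes span $\mathrm{H}_0(\BT,\F_X)$ and are a lift of the basis $(\delta_{gI})_{gI}$ of $X$, whence injectivity.

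Putting these together gives surjections $X\otimes_HM\cong\mathrm{H}_0(\BT,\F_X\otimes_HM)\twoheadrightarrow\mathrm{H}_0(\BT,\F(M))\twoheadrightarrow\t(M)$, and since every coefficient system and morphism in $(\ref{comparison_coefficient_systems})$, the functor $\mathrm{H}_0(\BT,-)$, and all the identifications above are functorial in $M$, the resulting surjections are functorial in $M$. I expect the main obstacle to be precisely the identification $\mathrm{H}_0(\BT,\F_X)\cong X$: surjectivity of the augmentation is immediate, but injectivity on $\mathrm{H}_0$ is where the geometry of $\BT$ genuinely enters, and some care is required because $X^{I_x}$ is spanned not by individual basis vectors but by sums over $I_x$-orbits in $G/I$, so the telescoping argument must be carried out one $\delta_{gI}$ at a time within the contractible subcomplex $\overline{gC}$.
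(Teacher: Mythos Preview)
Your overall strategy matches the paper's: apply $\H_0(\BT,\cdot)$ to the chain $(\ref{comparison_coefficient_systems})$ and identify both ends. Two comments.

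For the right-hand surjection you work harder than necessary. The paper simply observes that, under the identifications at the two ends, the composite $X\otimes_HM\to\H_0(\BT,\F(M))\to\t(M)$ \emph{is} the map $\tau_M$; since $\t(M)=\im(\tau_M)$ by definition, the composite is surjective, and as the first map is already surjective by right exactness of $\H_0(\BT,\cdot)$, so is the second. Your direct argument that $\t(M)=\sum_{x\in\BT_0}\im(\t_x(M)\to\t(M))$ is correct but unnecessary once the composite has been recognized.

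For the left-hand identification $\H_0(\BT,\F_X)\cong X$ the paper does not argue directly but invokes \cite{SS}, Theorem~II.3.1, together with \cite{OS1}, Remark~3.2.1 (the latter explaining why this instance of the Schneider--Stuhler resolution is valid over any coefficient ring $R$). Your direct sketch for injectivity has a genuine gap. Constructing, for each coset $gI$, a well-defined class $[\delta_{gI}]\in\H_0(\BT,\F_X)$ by placing $\delta_{gI}$ at any vertex of $\overline{gC}$ only produces a \emph{section} of the surjection $\H_0(\BT,\F_X)\twoheadrightarrow X$; it does not show that these classes span. The obstruction is exactly the one you flag at the end: a general element of $X^{I_x}$ is an $I_x$-orbit sum $\sum_{h}\delta_{hgI}$, and such orbit sums arise for cosets $gI$ with $x\notin\overline{gC}$, hence outside the contractible subcomplex in which you propose to telescope. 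Showing that the $0$-chain which places such an orbit sum at $x$ is homologous to a sum of single $\delta_{hgI}$'s placed at vertices of the various $\overline{hgC}$ is precisely the content of the Schneider--Stuhler contracting homotopy and does not follow from a telescoping argument confined to a single closed chamber.
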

\begin{proof}
By \cite{SS}, Theorem II.3.1, and \cite{OS1}, Remark 3.2.1, there is an isomorphism $\mathrm{H}_0(\BT,\F_X)\cong X$ whence $\mathrm{H}_0(\BT,\F_X\otimes_HM)\cong X\otimes_HM$ if $M$ is free over $H$. The right exactness of the functor $\mathrm{H}_0(\BT,\cdot)$ then implies $\mathrm{H}_0(\BT,\F_X\otimes_HM)\cong X\otimes_HM$ for any $M\in\Mod_H$. On the other hand, $\mathrm{H}_0(\BT,\K_{\t(M)})\cong\t(M)$ because $\BT$ is contractible. Altogether, applying $\mathrm{H}_0(\BT,\cdot)$ to (\ref{comparison_coefficient_systems}) yields homomorphisms
\[
X\otimes_HM\to\mathrm{H}_0(\BT,\F(M))\to \t(M)
\]
in $\Rep_R^\infty(G)$ the left one of which is surjective by the right exactness of $\mathrm{H}_0(\BT,\cdot)$. Unwinding definitions, the composition turns out to be $\tau_M$, hence is surjective, too. Thus, also the right homomorphism is surjective.
\end{proof}
\begin{rem}\label{t_isomorphism}
If $R$ is a field of characteristic zero then we shall see in Theorem \ref{description_functors_0} that all comparison morphisms in (\ref{comparison_homomorphisms}) and (\ref{link_help}) are isomorphisms. If $p$ is nilpotent in $R$ then the comparison homomorphism $\F_X\otimes_HM\to\F(M)$ is bijective only in exceptional cases (cf.\ Theorem \ref{exceptional_flat}). In any case, the precise relation to $\t(M)$ remains unclear.
\end{rem}
If $V\in\Rep_R^\infty(G)$ and if $M\in\Mod_H$ then there are natural homomorphisms of $G$-equivariant coefficient systems
\begin{equation}\label{comparison_homomorphisms}
 \F_X\otimes_HV^I\longrightarrow\F_V\quad\mbox{and}\quad\F_X\otimes_HM\to\F(M)
\end{equation}
on $\BT$ the second one of which is part of the construction of the functor $\F(\cdot)$. In order to construct the first one consider the $G$-equivariant map $X\otimes_HV^I\to V$ sending $f\otimes v$ to $\sum_{g\in G/I}f(g)gv$. For any face $F$ of $\BT$ it induces a $P_F^\dagger$-equivariant map $X^{I_F}\otimes_HV^I\to(X\otimes_HV^I)^{I_F}\to V^{I_F}$. Letting $F$ vary, the family of these is the required homomorphism $\F_X\otimes_HV^I\to\F_V$. Note that via the construction in \S\ref{subsection_2_2} these in turn induce homomorphisms
\[
\xymatrix@R=4pt{
 \F_X^I\otimes_HV^I \ar[r] & (\F_X\otimes_HV^I)^I \ar[r] & \F_V^I & \mbox{and}\\
 \F_X^I\otimes_HM \ar[r] & (\F_X\otimes_HM)^I \ar[r] & \F(M)^I &
 }
\]
of coefficient systems of $R$-modules on $\A$.
\begin{prop}\label{I_isomorphisms}
Assume that $R$ is a quasi-Frobenius ring. If $V\in\Rep_R^\infty(G)$ and if $M\in\Mod_H$ then the homomorphisms $\F_X^I\otimes_HV^I\to\F_V^I$ and $\F_X^I\otimes_HM\to\F(M)^I$ in $\Coeff(\A)$ are isomorphisms.
\end{prop}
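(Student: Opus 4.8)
The plan is to test each of the two morphisms of coefficient systems on $\A$ for bijectivity face by face, reducing the general face to the faces contained in $\Cbar$ by transporting along the $G$-action. Fix a face $F\subseteq\A$ and let $C(F)\subseteq\A$ be the chamber of Lemma \ref{closest_chamber}. Unwinding the definitions of \S\ref{subsection_2_2} one has $(\F_X^I)_F=X^{I_{C(F)}}$, $(\F_V^I)_F=V^{I_{C(F)}}$ and $(\F(M)^I)_F=\t_F(M)^{I_{C(F)}}$, and the maps at $F$ are $X^{I_{C(F)}}\otimes_HV^I\to V^{I_{C(F)}}$, $x\otimes v\mapsto\sum_{g\in G/I}x(g)gv$, and $X^{I_{C(F)}}\otimes_HM\to\t_F(M)^{I_{C(F)}}$, $x\otimes m\mapsto\tau_{M,F}(x\otimes m)$. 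Both arise by applying the $(\cdot)^I$-construction of \S\ref{subsection_2_2} to the morphisms $\F_X\otimes_HV^I\to\F_V$ and $\F_X\otimes_HM\to\F(M)$ of $\Coeff_G(\BT)$.

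For the reduction, let $F_0=[F]\subseteq\Cbar$ be the $G_{\aff}$-conjugate of $F$ (Lemma \ref{face_representatives}). Since $\A$ contains a unique $I$-conjugate of $F_0$, there is $d\in\tilde{D}_{F_0}$, lifted to $N_G(T)$, with $dF_0=F$; then $dC=C(F)$ and $dId^{-1}=I_{C(F)}$ by (\ref{d_conjugation}), using that $\tilde{\Omega}$ normalizes $I$. The $G$-action isomorphism $c_{d,F_0}$ on each of $\F_X$, $\F_V$ and $\F(M)$ carries the $I_{C(F_0)}=I$-invariants at $F_0$ isomorphically onto the $I_{C(F)}$-invariants at $F$, is right $H$-linear on $\F_X$, and — since the morphisms $\F_X\otimes_HV^I\to\F_V$ and $\F_X\otimes_HM\to\F(M)$ are morphisms in $\Coeff_G(\BT)$ and the constructions are functorial — intertwines the map at $F_0$ with the map at $F$. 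Hence it suffices to treat faces $F\subseteq\Cbar$, for which $C(F)=C$.

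For such $F$ we have $(\F_X^I)_F=X^I=H$, so after the canonical identifications $H\otimes_HV^I=V^I$ and $H\otimes_HM=M$ the two maps become a map $V^I\to V^I$ and a map $M\to\t_F(M)^I$. The first sends $f\cdot v\mapsto\sum_{g\in G/I}f(g)gv$; since $f\in X^I$ is left $I$-invariant this sum coincides with the convolution $f\cdot v$ of (\ref{convolution}), so the map is the identity. The second sends $m$ to $\tau_{M,F}(1\otimes m)$, which is exactly the natural homomorphism $M\to\t_F(M)^I$ proved to be an isomorphism of $H_F^\dagger$-modules in the proof of Theorem \ref{quasi_inverse_coefficient_system} (via Theorem \ref{Cabanes_general} (i)); concretely $\tau_{M,F}(f\otimes m)=\tau_{M,F}(1\otimes f\cdot m)$ for $f\in H=X^I$, because $\varphi(f)=\varphi(1)f$ for every $\varphi\in\Hom_H(X^{I_F},H)$. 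The part demanding care is the reduction: one has to check that $c_{d,F_0}$ genuinely respects the \S\ref{subsection_2_2}-restriction-map structure of the coefficient systems $(\cdot)^I$ on $\A$ on both sides, and the $H_F^\dagger$-module structures, so that the square comparing the maps at $F_0$ and at $F$ is well-defined and commutes; once that is granted, the remaining verifications are routine unwindings of definitions.
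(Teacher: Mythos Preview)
Your proof is correct and follows essentially the same approach as the paper: reduce to faces $F\subseteq\Cbar$ via conjugation as in (\ref{d_conjugation}) together with the $G$-equivariance of the underlying morphisms of coefficient systems, then observe that at such $F$ one has $(\F_X^I)_F=H$, so the first map is the canonical isomorphism $H\otimes_HV^I\cong V^I$ and the second is the natural map $M\to\t_F(M)^I$, which is bijective by the results established in the proof of Theorem \ref{quasi_inverse_coefficient_system} (the paper cites Theorem \ref{quasi_inverse_OS} directly, but these amount to the same thing). Your more explicit treatment of the reduction step and the unwinding of the maps at $F\subseteq\Cbar$ simply fill in details that the paper leaves to the reader.
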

\begin{proof}
For the first homomorphism this is shown in \cite{OS1}, Proposition 6.3, but the proof also works in the second case. Since we are working over general coefficients let us quickly recall the argument. By conjugation as in (\ref{d_conjugation}) and since the homomorphisms are induced by homomorphisms of $G$-equivariant coefficient systems it suffices to prove that the maps $(\F_X^I\otimes_HV^I)_F\longrightarrow(\F_V^I)_F$ and $(\F_X^I\otimes_HM)_F\longrightarrow\F(M)^I_F$ are isomorphisms for any face $F\subseteq\Cbar$. In the first case, this is the isomorphism $H\otimes_HV^I\to V^I$. In the second case this is the natural map $H\otimes_HM\cong M\to\t_F(M)^I=\F(M)_F^I$ which was shown to be bijective in Theorem \ref{quasi_inverse_OS}.
\end{proof}
\begin{rem}\label{GP_resolution}
If $M\in\Mod_H$ then Proposition \ref{restriction_chain_complexes} and Proposition \ref{I_isomorphisms} give isomorphisms of complexes
\begin{eqnarray*}
\C_c^\ori(\BT_{(\bullet)},\F(M))^I & \cong & \C_c^\ori(\A_{(\bullet)},\F(M)^I) \cong \C_c^\ori(\A_{(\bullet)},\F_X^I\otimes_HM)\\
& \cong & \C_c^\ori(\A_{(\bullet)},\F_X^I)\otimes_HM \cong \C_c(\BT_{(\bullet)},\F_X)^I\otimes_HM.
\end{eqnarray*}
If $R$ is a field then this is the Gorenstein projective resolution of $M$ constructed in \cite{OS1}, \S6.
\end{rem}
Given a representation $V\in\Rep_R^\infty(G)$ we have the $G$-equivariant coefficient systems $\F_V$ and $\F(V^I)$ on $\BT$. As seen above, they are linked through natural homomorphisms
\begin{equation}\label{link_help}
 \xymatrix{
 \F(V^I) & \F_X\otimes_HV^I \ar[r]\ar[l] & \F_V.
 }
\end{equation}
\begin{cor}\label{link}
If $V\in\Rep_R^\infty(G)$ then there is an isomorphism
\[\F(V^I)^I\cong\F_V^I\]
in $\Coeff(\A)$ which is natural in $V$.
\end{cor}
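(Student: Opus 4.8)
The plan is to read this off from Proposition \ref{I_isomorphisms} by specializing $M=V^I$. That proposition provides, for any $V\in\Rep_R^\infty(G)$ and any $M\in\Mod_H$, isomorphisms $\F_X^I\otimes_HV^I\xrightarrow{\sim}\F_V^I$ and $\F_X^I\otimes_HM\xrightarrow{\sim}\F(M)^I$ in $\Coeff(\A)$. Taking $M=V^I$ in the second one and composing its inverse with the first, I obtain an isomorphism
\[
 \F(V^I)^I\xleftarrow{\sim}\F_X^I\otimes_HV^I\xrightarrow{\sim}\F_V^I
\]
in $\Coeff(\A)$, which is the assertion. There is nothing further to prove about existence; it is a formal consequence of the already established Proposition \ref{I_isomorphisms} (which in turn reduces, via the conjugation isomorphisms (\ref{d_conjugation}) and the fact that the maps in question come from homomorphisms of $G$-equivariant coefficient systems, to the identifications $H\otimes_HV^I\cong V^I$ and $H\otimes_HM\cong\t_F(M)^I$ on the faces $F\subseteq\Cbar$, the latter coming from Theorem \ref{quasi_inverse_OS}).

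For naturality in $V$, I would observe that the two homomorphisms in (\ref{comparison_homomorphisms}) are functorial --- the first in $V$ directly, the second in $M$ via the functor $\F(\cdot)$ --- and hence so are the induced morphisms $\F_X^I\otimes_HV^I\to\F_V^I$ and $\F_X^I\otimes_HM\to\F(M)^I$ of coefficient systems on $\A$ constructed just before Proposition \ref{I_isomorphisms}. Substituting $M=V^I$ and using that $V\mapsto V^I$ is a functor, both legs of the displayed zigzag are natural transformations of functors $\Rep_R^\infty(G)\to\Coeff(\A)$; since they are isomorphisms and composition of natural transformations is again natural, the resulting isomorphism $\F(V^I)^I\cong\F_V^I$ is natural in $V$.

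The statement carries essentially no obstacle once Proposition \ref{I_isomorphisms} is available; the only point worth flagging is that the isomorphism is not realized on $\BT$ by either of the two homomorphisms in (\ref{link_help}) --- on $\BT$ itself neither need be an isomorphism --- but only after applying the functor $(\cdot)^I$ to pass to coefficient systems on the standard apartment $\A$, where both become isomorphisms. Accordingly the isomorphism of the corollary is canonical only as a zigzag of natural maps, not as a single natural map in one direction.
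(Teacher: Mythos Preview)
Your proof is correct and follows essentially the same approach as the paper. The paper's proof likewise passes through the hub $\F_X^I\otimes_HV^I$, invoking Proposition \ref{I_isomorphisms} to identify it with both $\F(V^I)^I$ and $\F_V^I$, and then composes; your additional remarks on naturality and on the fact that the comparison only becomes an isomorphism after applying $(\cdot)^I$ are accurate and match the spirit of the surrounding discussion in (\ref{link_help}).
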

\begin{proof}
As seen above, the homomorphisms (\ref{link_help}) induce a diagram
\[
 \xymatrix{
 & \F_X^I\otimes_HV^I \ar[d]\ar[rd]\ar[ld] &\\
 \F(V^I)^I & (\F_X\otimes_HV^I)^I \ar[r]\ar[l] & \F_V^I.
 }
\]
in $\Coeff(\A)$ which is natural in $V$. Since the oblique arrows are isomorphisms (cf.\ Proposition \ref{I_isomorphisms}) the claim follows.
\end{proof}


\section{Applications to representation theory}\label{section_4}


\subsection{Homology in degree zero}\label{subsection_4_1}

Consider the $0$-th homology functor $\mathrm{H}_0(\BT,\cdot):\Coeff_G(\BT)\longrightarrow\Rep_R^\infty(G)$ introduced in \S\ref{subsection_2_1}.
\begin{defin}\label{representation_C}
Let $\Rep_R^\C(G)$ be the full subcategory of $\Rep_R^\infty(G)$ consisting of all objects which are isomorphic to a representation of the form $\mathrm{H}_0(\BT,\F)$ for some object $\F\in\C$ (cf.\ Definition \ref{category_C}).
\end{defin}
Recall also that we have the functor $(\cdot)^I:\Rep_R^\infty(G)\longrightarrow\Mod_H$. The aim of this subsection is to study its behavior on the full subcategory $\Rep_R^\C(G)$. The most complete results will be obtained in the case that $p$ is invertible in $R$ or that $R$ is even a field of characteristic zero. If $p$ is nilpotent in $R$ we will discuss the case of semisimple rank one at the end of this section.
\begin{thm}\label{I_equivalence}
Assume that $R$ is a quasi-Frobenius ring.
If $p$ is invertible in $R$ then the functor $(\cdot)^I:\Rep_R^\C(G)\to\Mod_H$ is an equivalence of categories with quasi-inverse $\mathrm{H}_0(\BT,\F(\cdot)):\Mod_H\to\Rep_R^\C(G)$.
\end{thm}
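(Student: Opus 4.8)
The plan is to factor the functor $(\cdot)^I$ through the equivalence $M(\cdot):\C\to\Mod_H$ of Theorem \ref{equivalence} and its quasi-inverse $\F(\cdot)$ of Theorem \ref{quasi_inverse_coefficient_system}. First I would observe that for $\F\in\C$ and $V=\H_0(\BT,\F)$ the augmented complex
\[
 0\longrightarrow\C_c^\ori(\BT_{(\bullet)},\F)^I\longrightarrow M(\F)\longrightarrow 0
\]
is a resolution; this is exactly the content of Proposition \ref{acyclic} (i) (every $\F\in\C$ satisfies the hypothesis of that proposition by Definition \ref{category_C} (ii)) together with the definition $M(\F)=\H_0(\C_c^\ori(\BT_{(\bullet)},\F)^I)$. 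The key point, and the place where the hypothesis $p$ invertible enters, is that when $p\in R^\times$ the functor $(\cdot)^I$ of $I$-invariants is exact on $\Rep_R^\infty(G)$ (usual averaging over the pro-$p$ group $I$, which is legitimate since every vector has open, hence finite-index-in-a-pro-$p$-quotient, stabilizer). Therefore applying $(\cdot)^I$ to the exact augmented complex $0\to\C_c^\ori(\BT_{(\bullet)},\F)\to\H_0(\BT,\F)\to 0$ — which is exact by Proposition \ref{acyclic} in its building-theoretic form, or rather one needs exactness of the full augmented oriented chain complex of $\F$ over $\BT$; I would instead argue directly — commutes with taking invariants.

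More carefully, the cleanest route avoids claiming exactness of the augmented complex over all of $\BT$. Instead: by Theorem \ref{quasi_inverse_coefficient_system} every $\F\in\C$ is isomorphic to $\F(M)$ for $M=M(\F)$, and Proposition \ref{comparison_representations} gives a functorial surjection $\H_0(\BT,\F(M))\twoheadrightarrow\t(M)$; but what I actually want is the natural map $M\to\H_0(\BT,\F(M))^I$. This map is obtained by applying $(\cdot)^I$ to $\iota$: unwinding Proposition \ref{acyclic} (ii), $\iota_C:\F(M)_C^I\xrightarrow{\sim}M(\F(M))\cong M$, while $\H_0(\BT,\F(M))^I$ receives $M(\F(M))$ via the exact sequence $\C_c^\ori(\BT_{(1)},\F(M))^I\to\C_c^\ori(\BT_{(0)},\F(M))^I\to\H_0(\BT,\F(M))\to 0$ followed by $(\cdot)^I$. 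Because $(\cdot)^I$ is exact (here $p\in R^\times$), applying it to this right-exact sequence of $G$-representations yields $\C_c^\ori(\BT_{(1)},\F(M))^I\to\C_c^\ori(\BT_{(0)},\F(M))^I\to\H_0(\BT,\F(M))^I\to 0$ exact, whose cokernel description identifies $\H_0(\BT,\F(M))^I\cong\H_0(\C_c^\ori(\BT_{(\bullet)},\F(M))^I)=M(\F(M))\cong M$, naturally in $M$. This shows $(\cdot)^I\circ\H_0(\BT,\F(\cdot))\cong\id_{\Mod_H}$.

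For the other composite, start from $V\in\Rep_R^\C(G)$, say $V\cong\H_0(\BT,\F)$ with $\F\in\C$. The previous paragraph (with $M=M(\F)$ and using $\F\cong\F(M(\F))$ from Theorem \ref{quasi_inverse_coefficient_system}) gives $V^I\cong\H_0(\BT,\F)^I\cong M(\F)$ naturally; hence $\H_0(\BT,\F(V^I))\cong\H_0(\BT,\F(M(\F)))\cong\H_0(\BT,\F)\cong V$, again using Theorem \ref{quasi_inverse_coefficient_system}. Naturality of all identifications in $\F$, combined with the fact that $M(\cdot):\C\to\Mod_H$ is an equivalence (so that a morphism $V\to V'$ in $\Rep_R^\C(G)$ is detected after passing through $\C$), upgrades these object-level isomorphisms to natural isomorphisms of functors. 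One subtlety to address: a priori an object of $\Rep_R^\C(G)$ is only \emph{some} $\H_0(\BT,\F)$, and the assignment $V\mapsto\F$ is not obviously functorial; this is handled precisely because $V^I\cong M(\F)$ lets one recover $\F$ up to canonical isomorphism as $\F(V^I)$, making $V\mapsto\F(V^I)$ a well-defined quasi-inverse candidate — which is exactly $\F(\cdot)$ precomposed with $(\cdot)^I$, but I only need $\H_0(\BT,\F(\cdot))$.

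\textbf{Main obstacle.} The real work is entirely in justifying that $(\cdot)^I$ is exact when $p$ is invertible and that this exactness legitimately commutes with the homology computation $\H_0$ — i.e. that $\H_0(\BT,\F(M))^I\cong\H_0(\C_c^\ori(\BT_{(\bullet)},\F(M))^I)$. Since $\H_0$ is only right exact in general, one cannot simply commute $(\cdot)^I$ past it; the argument must use that $\H_0(\BT,-)$ is a cokernel and that $(\cdot)^I$, being exact, preserves cokernels. After that, everything is formal bookkeeping with the equivalences of \S\ref{subsection_3_2}.
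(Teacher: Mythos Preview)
Your proposal is correct and follows essentially the same approach as the paper: exactness of $(\cdot)^I$ when $p$ is invertible lets it commute with the cokernel defining $\H_0$, giving $\H_0(\BT,\F(M))^I\cong M(\F(M))\cong M$ via Theorem \ref{quasi_inverse_coefficient_system}, and the other composite is handled by writing $V\cong\H_0(\BT,\F)$ and recognising $V^I\cong M(\F)$. You spell out the cokernel-preservation and the naturality bookkeeping more carefully than the paper does, but there is no substantive difference in strategy.
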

\begin{proof}
Since $p$ is invertible in $R$ and since $I$ is a pro-$p$ group the functor $(\cdot)^I:\Rep_R^\infty(G)\to\Mod_H$ is exact by the usual averaging argument. Given $M\in\Mod_H$ we therefore have a natural isomorphism of $H$-modules
\begin{eqnarray*}
 \mathrm{H}_0(\BT,\F(M))^I&=&(\mathrm{H}_0(\C_c^\ori(\BT_{(\bullet)},\F(M))))^I\\
 &\cong&\mathrm{H}_0(\C_c^\ori(\BT_{(\bullet)},\F(M))^I)\\
 &=&M(\F(M))\cong M
\end{eqnarray*}
by Theorem \ref{quasi_inverse_coefficient_system}. In order to show that also the other composition is isomorphic to the identity functor we may start with a representation of the form $V=\mathrm{H}_0(\BT,\F)$ for some object $\F\in\C$. As above, the exactness of $(\cdot)^I$ implies that there is a natural isomorphism $V^I\cong M(\F)$ of $H$-modules and thus a natural isomorphism
\[
 \mathrm{H}_0(\BT,\F(V^I))\cong\mathrm{H}_0(\BT,\F(M(\F)))\cong\mathrm{H}_0(\BT,\F)=V
\]
in $\Rep_R^\C(G)$ by Theorem \ref{quasi_inverse_coefficient_system} again.
\end{proof}
\begin{cor}\label{H_0_equivalence}
If $R$ is a quasi-Frobenius ring in which $p$ is invertible then the functor $\mathrm{H}_0(\BT,\cdot):\C\to\Rep_R^\C(G)$ is an equivalence of categories.
\end{cor}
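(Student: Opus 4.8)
The plan is to deduce this purely formally from the equivalences already established, namely $M(\cdot)\colon\C\to\Mod_H$ of Theorem \ref{equivalence} (with quasi-inverse $\F(\cdot)$ by Theorem \ref{quasi_inverse_coefficient_system}) and $(\cdot)^I\colon\Rep_R^\C(G)\to\Mod_H$ of Theorem \ref{I_equivalence}, together with the compatibility of these two equivalences under $\mathrm{H}_0(\BT,\cdot)$. First I would note that $\mathrm{H}_0(\BT,\cdot)\colon\C\to\Rep_R^\C(G)$ is essentially surjective essentially by construction: by Definition \ref{representation_C} every object of $\Rep_R^\C(G)$ is, up to isomorphism, of the form $\mathrm{H}_0(\BT,\F)$ with $\F\in\C$, and conversely $\mathrm{H}_0(\BT,\F)\in\Rep_R^\C(G)$ for $\F\in\C$ by the same definition. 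So the only real point is full faithfulness.

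The key observation for this is the existence of a natural isomorphism $\mathrm{H}_0(\BT,\F)^I\cong M(\F)$ for $\F\in\C$. Since $p$ is invertible in $R$ and $I$ is a pro-$p$ group, the functor $(\cdot)^I\colon\Rep_R^\infty(G)\to\Mod_H$ is exact by the usual averaging argument, hence commutes with the formation of homology; as $\C_c^\ori(\BT_{(i)},\F)^I$ is by definition the $i$-th term of the complex $\C_c^\ori(\BT_{(\bullet)},\F)^I$ computing $M(\F)$, one gets $\mathrm{H}_0(\BT,\F)^I=\mathrm{H}_0(\C_c^\ori(\BT_{(\bullet)},\F))^I\cong\mathrm{H}_0(\C_c^\ori(\BT_{(\bullet)},\F)^I)=M(\F)$, and all of these identifications are functorial in $\F$. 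This is precisely the computation already performed in the proof of Theorem \ref{I_equivalence}; in functorial terms it says that there is a natural isomorphism $(\cdot)^I\circ\mathrm{H}_0(\BT,\cdot)\cong M(\cdot)$ of functors $\C\to\Mod_H$.

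It then remains to assemble these facts. By Theorem \ref{equivalence} the composite $M(\cdot)=(\cdot)^I\circ\mathrm{H}_0(\BT,\cdot)$ is fully faithful, while by Theorem \ref{I_equivalence} the functor $(\cdot)^I\colon\Rep_R^\C(G)\to\Mod_H$ is an equivalence, in particular fully faithful. Hence for any $\F,\G\in\C$ the bijection $\Hom_\C(\F,\G)\xrightarrow{\;\sim\;}\Hom_H(M(\F),M(\G))$ factors as the composite of $\Hom_\C(\F,\G)\to\Hom_G(\mathrm{H}_0(\BT,\F),\mathrm{H}_0(\BT,\G))$ followed by the bijection $\Hom_G(\mathrm{H}_0(\BT,\F),\mathrm{H}_0(\BT,\G))\xrightarrow{\;\sim\;}\Hom_H(M(\F),M(\G))$ induced by $(\cdot)^I$; therefore the first arrow is a bijection as well, i.e.\ $\mathrm{H}_0(\BT,\cdot)$ is fully faithful. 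Combined with the essential surjectivity noted above, this shows that $\mathrm{H}_0(\BT,\cdot)\colon\C\to\Rep_R^\C(G)$ is an equivalence of categories; a quasi-inverse is furnished by $\F((\cdot)^I)$, using Theorem \ref{quasi_inverse_coefficient_system} and Theorem \ref{I_equivalence}. I do not expect any genuine obstacle here: the one delicate ingredient is the exactness of $(\cdot)^I$, which is exactly where the hypothesis that $p$ be invertible in $R$ is used, and which has already entered the proof of Theorem \ref{I_equivalence}.
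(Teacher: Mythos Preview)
Your proof is correct and follows essentially the same approach as the paper: the paper observes that $(\cdot)^I\circ\mathrm{H}_0(\BT,\cdot)\cong M(\cdot)$ as functors $\C\to\Mod_H$ (precisely your key observation, already used in the proof of Theorem~\ref{I_equivalence}) and then concludes directly from Theorem~\ref{equivalence} and Theorem~\ref{I_equivalence}. Your version simply spells out the formal deduction (essential surjectivity by definition, full faithfulness from the factorization) that the paper leaves implicit.
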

\begin{proof}
As seen above, the functors $\mathrm{H}_0(\BT,\cdot)^I=(\cdot)^I\circ\mathrm{H}_0(\BT,\cdot)$ and $M(\cdot)$ from $\C$ to $\Mod_H$ are isomorphic. Therefore, the corollary is a consequence of Theorem \ref{equivalence} and Theorem \ref{I_equivalence}.
\end{proof}
Of course, there is a much more direct way to realize $\Mod_H$ as a full subcategory of $\Rep_R^\infty(G)$ if $p$ is invertible in $R$. If $J$ is an open pro-$p$ subgroup of $G$ and if $V\in\Rep_R^\infty(G)$ then we have the $R$-linear endomorphism
\[
e_J:V\longrightarrow V,\quad v\mapsto (J:J_v)^{-1}\sum_{g\in J/J_v}gv,
\]
of $V$ where $J_v$ denotes the centralizer of $v$ in $J$. Note that $(J:J_v)$ is a power of $p$ hence is invertible in $R$. Clearly, $e_J$ is idempotent and equivariant for the action of the normalizer of $J$ in $G$. It gives rise to the decomposition $V=\im(e_J)\oplus\ker(e_J)$ of $R$-modules with $\im(e_J)=V^J$. By definition $e_J$ commutes with any $J$-equivariant endomorphism of the $R$-module $V$.\\

We shall denote by $\Rep_R^I(G)$ the full subcategory of $\Rep_R^\infty(G)$ consisting of all representations $V$ which are generated by their $I$-invariants, i.e.\ for which $R[G]\cdot V^I=V$.
\begin{lem}\label{realization_0}
Assume that $p$ is invertible in $R$. If $M\in\Mod_H$ then the natural map $M\to(X\otimes_HM)^I$ is an isomorphism of $H$-modules. The functor $X\otimes_H(\cdot):\Mod_H\to\Rep^I_R(G)$ is fully faithful.
\end{lem}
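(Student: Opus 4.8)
The plan is to deduce both assertions from the adjunction between the functors $X\otimes_H(\cdot)$ and $(\cdot)^I$, together with the exactness of $(\cdot)^I$ that becomes available when $p$ is invertible in $R$. By Frobenius reciprocity (cf.\ \S\ref{subsection_1_2}) the $(R[G],H)$-bimodule $X=\ind_I^G(R)$ yields a natural isomorphism $\Hom_G(X,V)\cong V^I$ for $V\in\Rep_R^\infty(G)$, so that $X\otimes_H(\cdot)\colon\Mod_H\to\Rep_R^\infty(G)$ is left adjoint to $(\cdot)^I\colon\Rep_R^\infty(G)\to\Mod_H$. The unit of this adjunction at an $H$-module $M$ is precisely the map $\eta_M\colon M\to(X\otimes_HM)^I$ in the statement, sending $m$ to $\mathbf{1}_I\otimes m$, where $\mathbf{1}_I\in X^I=H$ is the characteristic function of $I$ (the unit of $H$); note that $\mathbf{1}_I\otimes m$ is $I$-invariant since $\mathbf{1}_I$ is. Moreover, as $p$ is invertible in $R$ and $I$ is a pro-$p$ group, the idempotent averaging operator $e_I$ introduced above identifies $(\cdot)^I$ with the direct-summand functor $e_I(\cdot)$, so that $(\cdot)^I$ is exact; it also visibly commutes with arbitrary direct sums.

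Next I would show that $\eta_M$ is an isomorphism for every $M\in\Mod_H$. For $M=H$ the canonical isomorphism $X\otimes_HH\cong X$ identifies $\eta_H$ with the identity of $X^I=H$, hence $\eta_H$ is an isomorphism; since both $X\otimes_H(\cdot)$ and $(\cdot)^I$ commute with direct sums, $\eta_M$ is then an isomorphism for every free $H$-module $M$. For general $M$, choose a presentation $H^{(J_1)}\to H^{(J_0)}\to M\to 0$; applying the right exact functor $X\otimes_H(\cdot)$ and then the exact functor $(\cdot)^I$ produces an exact sequence $H^{(J_1)}\to H^{(J_0)}\to(X\otimes_HM)^I\to 0$, while naturality of $\eta$ gives a commutative ladder from the presentation of $M$ to this sequence whose two leftmost vertical maps are the isomorphisms just established. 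A short diagram chase (or the five lemma) then forces $\eta_M$ to be an isomorphism, which is the first assertion.

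For the second assertion I would first note that $X\otimes_HM$ is smooth, being a quotient of the smooth representation $X\otimes_RM$, and that it lies in $\Rep_R^I(G)$: indeed $X$ is generated over $G$ by $\mathbf{1}_I$, so $X\otimes_HM$ is generated over $G$ by the $I$-invariant subset $\mathbf{1}_I\otimes M\subseteq(X\otimes_HM)^I$, whence $R[G]\cdot(X\otimes_HM)^I=X\otimes_HM$. For full faithfulness I would take $V=X\otimes_HN$ in the adjunction isomorphism $\Hom_G(X\otimes_HM,V)\cong\Hom_H(M,V^I)$ and compose with the isomorphism $(X\otimes_HN)^I\cong N$ established above; tracing through the adjunction shows that $X\otimes_H\varphi$ is sent to $(X\otimes_H\varphi)^I\circ\eta_M=\eta_N\circ\varphi$, which determines $\varphi$ because $\eta_N$ is invertible, so $\varphi\mapsto X\otimes_H\varphi$ is a bijection $\Hom_H(M,N)\to\Hom_G(X\otimes_HM,X\otimes_HN)$.

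I expect the only mildly delicate point to be the naturality bookkeeping in the last step: verifying that composing the adjunction isomorphism with $\eta_N^{-1}$ actually inverts the map $\varphi\mapsto X\otimes_H\varphi$, rather than merely producing some bijection between the two Hom-sets. Everything else is formal once the exactness of $(\cdot)^I$ and its compatibility with direct sums — both consequences of $p$ being invertible — have been recorded.
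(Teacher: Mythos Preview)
Your proof is correct, but your route to the first assertion differs from the paper's. The paper argues directly with the idempotent $e_I$: given an $I$-invariant element $y=\sum_j x_j\otimes m_j\in X\otimes_HM$, one has $y=e_I(y)=\sum_j e_I(x_j)\otimes m_j=1\otimes\bigl(\sum_j e_I(x_j)m_j\bigr)$, establishing surjectivity of $\eta_M$ in one line; injectivity follows because the right $H$-module decomposition $X=H\oplus\ker(e_I)$ makes $M\to X\otimes_HM$ a split monomorphism. You instead verify $\eta_H$ is an isomorphism, extend to free modules via compatibility with direct sums, and then reach the general case by a free presentation together with exactness of $(\cdot)^I$. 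The paper's argument is shorter and more concrete, extracting everything from the single observation that $e_I$ splits $X$ as a right $H$-module; your argument is more categorical and would adapt verbatim to any adjunction whose right adjoint is exact and compatible with direct sums. For the fully faithful claim both proofs are essentially the same adjunction bookkeeping, with the paper phrasing it as the composite $\Hom_H(M,N)\to\Hom_G(X\otimes_HM,X\otimes_HN)\to\Hom_H((X\otimes_HM)^I,(X\otimes_HN)^I)$ being bijective and the second map injective.
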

\begin{proof}Denote by $1\in H$ the unit element of $H$ viewed as an $I$-invariant element of $X$. If $y=\sum_jx_j\otimes m_j\in X\otimes_HM $ is $I$-invariant then $y=e_I(y)=\sum_je_I(x_j)\otimes m_j=1\otimes(\sum_je_I(x_j)m_j)$ lies in the image of $M\to X\otimes_HM$. Since $X=H\oplus\ker(e_I)$ is a decomposition of right $H$-modules this map is also injective, proving the first assertion.\\

Note that the $G$-representation $X\otimes_HM$ is generated by $M$ whence the map $\Hom_G(X\otimes_HM,X\otimes_HN)\to\Hom_H((X\otimes_HM)^I,(X\otimes_HN)^I)$ is injective for all $N\in\Mod_H$. By what we have just proved its composition with the map $\Hom_H(M,N)\to \Hom_G(X\otimes_HM,X\otimes_HN)$ is bijective. This implies the second assertion.
\end{proof}
In fact, $\Rep_R^\C(G)$ is always a full subcategory of $\Rep_R^I(G)$ without any assumptions on $R$.
\begin{prop}\label{I_contains_C}
If $\F\in\Coeff_G(\BT)$ such that $\F_F\in\Rep_R^\infty(P_F^\dagger)$ is generated by its $I_{C(F)}$-invariants for all faces $F$ of $\BT$ then the oriented chain complex $\C_c^\ori(\BT_{(\bullet)},\F)$ consists of objects of $\Rep_R^I(G)$. In particular, the $G$-representation $\mathrm{H}_0(\BT,\F)$ is generated by its $I$-invariants and $\Rep_R^\C(G)$ is a full subcategory of $\Rep_R^I(G)$.
\end{prop}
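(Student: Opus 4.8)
The plan is to work degree by degree in the oriented chain complex and reduce everything to a statement about a single oriented $i$-chain supported on one $G$-orbit of faces. Fix $0 \le i \le d$. Recall that $\C_c^\ori(\BT_{(i)},\F)$ is spanned, as an $R[G]$-module, by the functions $f$ supported on a single pair $\{(F,c),(F,-c)\}$ with $f(F,c) = m \in \F_F$; call such a chain $f_{F,c,m}$. Since $G$ acts transitively on the $i$-faces of each type and the translate $g\cdot f_{F,c,m}$ equals $f_{gF,gc,c_{g,F}(m)}$ up to sign, it suffices to show: for every $i$-face $F$ and every $m \in \F_F$, the chain $f_{F,c,m}$ lies in $R[G]\cdot\C_c^\ori(\BT_{(i)},\F)^I$. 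Here $\C_c^\ori(\BT_{(i)},\F)^I$ is the $I$-fixed part, on which we can compute using the idempotent-free averaging $e_I$ is unavailable (since $p$ need not be invertible), so instead we use the hypothesis directly.

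First I would reduce to $F \subseteq \overline{C}$: choosing $g \in G$ with $gF \subseteq \overline{C}$ and observing $\C_c^\ori(\BT_{(i)},\F)^I$ is $I$-stable while $R[G]$-generation is a $G$-stable condition, it is enough to generate $f_{F,c,m}$ for $F \subseteq \overline{C}$, and for such $F$ we have $I_{C(F)} \supseteq I_F$ with $I_{C(F)} = I_F(I \cap P_F)$ by Lemma~\ref{closest_chamber}. The key step is then the following: since $\F_F$ is generated by its $I_{C(F)}$-invariants as a $P_F^\dagger$-representation, and hence (by Lemma~\ref{invariants_restriction}) also as a $P_F$-representation, we may write $m = \sum_j h_j \cdot m_j$ with $h_j \in P_F$ and $m_j \in \F_F^{I_{C(F)}}$. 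By linearity it suffices to treat $m \in \F_F^{I_{C(F)}}$ itself. For such $m$, I claim $f_{F,c,m}$ is already $I_{C(F)}$-invariant: an element $g \in I_{C(F)} \subseteq P_F$ fixes $F$, and by the standard fact (used in Proposition~\ref{restriction_chain_complexes} via \cite{OS1}, Lemma 3.1 and Remark 4.17.2) it fixes the orientation $c$ as well — indeed $I_{C(F)}$ lies in the parahoric and acts trivially enough on the link to preserve orientations — so $g\cdot f_{F,c,m} = f_{F,c,c_{g,F}(m)} = f_{F,c,m}$.

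The remaining gap is to pass from $I_{C(F)}$-invariance of $f_{F,c,m}$ to membership in $R[G]\cdot\C_c^\ori(\BT_{(i)},\F)^I$, i.e.\ to produce an honest $I$-invariant chain whose $R[G]$-orbit contains $f_{F,c,m}$. Here I would use that $I_{C(F)}$ and $I$ are commensurable and that $I \cap P_F = I \cap P_F^\dagger$ has finite index in $I$ (the index is a power of $p$, possibly non-invertible, so we cannot average). Instead, set $g_1,\dots,g_r$ a system of representatives for $I/(I\cap P_F)$ and consider the chain $\sum_k g_k \cdot f_{F,c,m} = \sum_k f_{g_kF,\,g_kc,\,c_{g_k,F}(m)}$; the faces $g_kF$ are pairwise distinct (as $g_k$ run over distinct cosets of the $F$-stabilizer inside $I$) so this sum is supported on $r$ distinct faces, it is $I$-invariant by construction, and $f_{F,c,m}$ is recovered from it by projecting onto the $F$-component — but projection is not an $R[G]$-operation. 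The honest argument is therefore: each summand $g_k \cdot f_{F,c,m}$ is a $G$-translate of $f_{F,c,m}$, and conversely $f_{F,c,m} = g_1^{-1}\cdot(g_1\cdot f_{F,c,m})$ where $g_1\cdot f_{F,c,m}$ can be taken to be a summand of the $I$-invariant chain $\sum_k g_k\cdot f_{F,c,m}$ only if the other summands are individually in the span; so instead one argues that the full sum $y = \sum_k g_k\cdot f_{F,c,m} \in \C_c^\ori(\BT_{(i)},\F)^I$, and applying $g_1^{-1} \in G$ gives $g_1^{-1}y = \sum_k (g_1^{-1}g_k)\cdot f_{F,c,m}$, which is $(g_1^{-1}Ig_1)$-invariant; iterating and taking $R$-linear combinations of $G$-translates of such averaged chains one can isolate $f_{F,c,m}$ by an inclusion–exclusion over the finitely many cosets. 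I expect this combinatorial isolation step — extracting a single orbit-representative chain from $I$-averaged chains without dividing by $p$ — to be the main obstacle; the cleanest route is probably to observe that the map $\C_c^\ori(\BT_{(i)},\F)^I \hookrightarrow \C_c^\ori(\BT_{(i)},\F)$ already has image generating under $R[G]$ because, for $m \in \F_F^{I_{C(F)}}$ with $F \subseteq \overline{C}$, the chain $f_{F,c,m}$ is itself fixed by $I \cap P_F^\dagger = I \cap P_F$, hence by applying a transversal of $(I\cap P_F)\backslash I$ — wait, it is fixed by $I\cap P_F$ but we need fixed by all of $I$. The resolution: $f_{F,c,m}$ need not be $I$-fixed, but $\sum_{g\in I/(I\cap P_F)} g\cdot f_{F,c,m}$ is, and since the faces $gF$ for distinct cosets are distinct, this $I$-fixed chain has $f_{F,c,m}$ as the component at $F$; now because each translate $g\cdot f_{F,c,m}$ is itself of the form $f_{gF,\cdot,\cdot}$ and lies in a single orbit under $G$, the submodule $R[G]\cdot\C_c^\ori(\BT_{(i)},\F)^I$ contains all of $\sum_g g\cdot f_{F,c,m}$ and each $g\cdot f_{F,c,m}$ can be separated by multiplying by an element of $G$ fixing $gF$ and annihilating the rest — concretely, by choosing $g$ deep in an Iwahori of a far-away chamber, which only moves one component. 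Granting this separation, $f_{F,c,m} \in R[G]\cdot\C_c^\ori(\BT_{(i)},\F)^I$, completing the degree-$i$ claim; taking $i=0$ and using right-exactness of $\mathrm H_0(\BT,\cdot)$ yields that $\mathrm H_0(\BT,\F)$ is generated by its $I$-invariants, and combined with Definition~\ref{category_C}(i), Remark~\ref{H_I_F_trivial} and Lemma~\ref{properties_H}(i) this shows $\Rep_R^\C(G) \subseteq \Rep_R^I(G)$.
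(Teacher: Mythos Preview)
Your reduction to $F\subseteq\Cbar$ and $m\in\F_F^{I_{C(F)}}$ is correct, and so is your observation that $f_{F,c,m}$ is $I_{C(F)}$-invariant. But there is no ``remaining gap'': once $F\subseteq\Cbar$ you have $C(F)=C$ by the uniqueness in Lemma~\ref{closest_chamber} (the chamber $C$ itself contains $F$ in its closure and has gallery distance~$0$ to $C$), hence $I_{C(F)}=I$. So the chain $f_{F,c,m}$ is already $I$-invariant and lies in $\C_c^\ori(\BT_{(i)},\F)^I$; the whole discussion of averaging over $I/(I\cap P_F)$, separating summands, and inclusion--exclusion is unnecessary. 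You lost track of the identity $C(F)=C$ immediately after writing $I_{C(F)}=I_F(I\cap P_F)$, which in this situation just says $I=I_F\cdot I$.

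The paper's proof packages the same idea more cleanly via the decomposition
\[
\C_c^\ori(\BT_{(i)},\F)\cong\bigoplus_F\ind_{P_F^\dagger}^G(\varepsilon_F\otimes_R\F_F)
\]
with $F$ ranging over orbit representatives in $\Cbar$. One then simply observes that $\ind_{P_F^\dagger}^G(\varepsilon_F\otimes_R\F_F)$ is generated over $G$ by $\varepsilon_F\otimes_R\F_F$, that $(\varepsilon_F\otimes_R\F_F)^I=\varepsilon_F\otimes_R\F_F^I$ because $\varepsilon_F|_I$ is trivial (\cite{OS1}, Lemma~3.1), and that this generates $\varepsilon_F\otimes_R\F_F$ over $P_F^\dagger$ by hypothesis. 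Your hands-on approach is the same argument unrolled at the level of individual chains; with the observation $I_{C(F)}=I$ it goes through without the combinatorial detour.
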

\begin{proof}
For any $0\leq i\leq d$ there is an isomorphism of $G$-representations
\begin{equation}\label{chain_complex_induced}
\C_c^\ori(\BT_{(i)},\F)\cong\bigoplus_F\ind_{P_F^\dagger}^G(\varepsilon_F\otimes_R\F_F)
\end{equation}
where $F$ runs through a set of representatives of the finitely many $G$-orbits in $\BT_i$ and the character $\varepsilon_F:P_F^\dagger\to\{\pm 1\}$ describes how $P_F^\dagger$ changes any given orientation of $F$. By Lemma \ref{face_representatives} we may assume the corresponding faces $F$ to be contained in $\Cbar$. Now the $G$-representation $\ind_{P_F^\dagger}^G(\varepsilon_F\otimes_R\F_F)$ is generated by the $P_F^\dagger$-subrepresentation $\varepsilon_F\otimes_R\F_F$. Moreover, $(\varepsilon_F\otimes_R\F_F)^I=\varepsilon_F\otimes_R\F_F^I$ by \cite{OS1}, Lemma 3.1, which generates $\varepsilon_F\otimes_R\F_F$ over $P_F^\dagger$ by assumption. This proves the first assertion. The second assertion follows from the fact that the category $\Rep_R^I(G)$ is closed under quotients in $\Rep_R^\infty(G)$. The final assertion then follows from Lemma \ref{properties_H} (i).
\end{proof}
If $R$ is a field of characteristic zero then the categories $\Rep_R^\C(G)$ and $\Rep_R^I(G)$ coincide and the equivalences in \S\ref{subsection_3_2} admit more classical descriptions. Some of this relies on the following fundamental theorem of Bernstein (cf.\ \cite{Ber}, Corollaire 3.9). Since we could not find an explicit reference pertaining to the pro-$p$ Iwahori group $I$ we will give a quick argument reducing the statement to a known case of Bernstein's theorem.
\begin{thm}[Bernstein]\label{Bernstein}
Assume that $R$ is a field of characteristic zero. As a full subcategory of $\Rep_R^\infty(G)$ the category $\Rep_R^I(G)$ is stable under subquotients. The functors
\[
 \xymatrix{
 \Rep_R^I(G) \ar@<1ex>[rr]^>>>>>>>>>{(\cdot)^I} && \Mod_H \ar@<1ex>[ll]^<<<<<<<<<{X\otimes_H(\cdot)}
 }
\]
are mutually quasi-inverse equivalences of abelian categories. In particular, the right $H$-module $X$ is flat.
\end{thm}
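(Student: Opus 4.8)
The plan is to bootstrap from the analogous --- and essentially known --- statement for a first congruence subgroup, using only the formal properties of Frobenius reciprocity already at hand. Since $x_0\in\Cbar$ is hyperspecial, the special fibre $\mathring{\GG}_{x_0,k}$ is connected reductive, so its unipotent radical is trivial and the pro-$p$ radical $K_1:=I_{x_0}$ of $P_{x_0}$ equals the first congruence subgroup $\ker(\GG_{x_0}(\o)\to\GG_{x_0}(k))$. By (\ref{inclusions}) we have $K_1\subseteq P_C\subseteq P_{x_0}$ and $K_1\subseteq I$; moreover $K_1$ is normal in $P_{x_0}$, hence in $I$, and $\Gamma:=I/K_1$ is a $p$-group, so $[I:K_1]$ is invertible in $R$. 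For the pro-$p$ group $K_1$ Bernstein's theorem is available in the literature: writing $X_{K_1}=\ind_{K_1}^G(R)$, $H_{K_1}=\End_G(X_{K_1})^{\op}=R[K_1\backslash G/K_1]$, and $\Rep_R^{K_1}(G)$ for the representations generated by their $K_1$-invariants, the category $\Rep_R^{K_1}(G)$ is stable under subquotients in $\Rep_R^\infty(G)$, the functor $(\cdot)^{K_1}\colon\Rep_R^{K_1}(G)\to\Mod_{H_{K_1}}$ is an equivalence with quasi-inverse $X_{K_1}\otimes_{H_{K_1}}(\cdot)$, and $X_{K_1}$ is flat over $H_{K_1}$ (this is the hypothesis of \cite{Ber}, Corollaire 3.9, checked for a first congruence subgroup).

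Since $V^I\subseteq V^{K_1}$ for every $V$, one has $\Rep_R^I(G)\subseteq\Rep_R^{K_1}(G)$. The averaging idempotent $e_\Gamma=\frac{1}{|\Gamma|}\sum_{\gamma\in\Gamma}\gamma\in R[\Gamma]$, pushed into $H_{K_1}$ along the natural embedding $R[\Gamma]\hookrightarrow H_{K_1}$ by functions supported on $I$, satisfies $e_\Gamma\cdot W^{K_1}=W^I$ for $W\in\Rep_R^\infty(G)$, identifies $H$ with $e_\Gamma H_{K_1}e_\Gamma$, and --- using the $K_1$-equivalence --- one checks that $W\in\Rep_R^{K_1}(G)$ lies in $\Rep_R^I(G)$ if and only if $H_{K_1}e_\Gamma\cdot W^{K_1}=W^{K_1}$. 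Thus, under the $K_1$-equivalence, $\Rep_R^I(G)$ corresponds to the torsion class $\T:=\{M\in\Mod_{H_{K_1}}\mid H_{K_1}e_\Gamma M=M\}$ of the torsion pair on $\Mod_{H_{K_1}}$ whose torsion-free class is $\{M\mid e_\Gamma M=0\}$, and $(\cdot)^I$ corresponds to $M\mapsto e_\Gamma M$. Stability of $\Rep_R^I(G)$ under quotients is clear, so the point is to show that $\T$ is closed under submodules, i.e.\ that this torsion pair is hereditary --- equivalently, that $\{M\mid e_\Gamma M=0\}$ is stable under injective hulls in $\Mod_{H_{K_1}}$. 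Here one uses that $e_\Gamma$ is a split idempotent (so that $e_\Gamma(\cdot)$ is exact) together with the semisimplicity of $R[\Gamma]$, forced by $|\Gamma|$ being invertible in $R$, to see that $\{M\mid e_\Gamma M=0\}$ is stable under essential extensions, hence under injective hulls. I expect this last point to be the main obstacle: closure under submodules genuinely fails for an arbitrary idempotent, and one must really exploit that $e_\Gamma$ comes from a group of order invertible in $R$ --- concretely, that each Bernstein block of $\Rep_R^{K_1}(G)$ is either contained in or disjoint from $\Rep_R^I(G)$ --- the subtlety being that the inclusion $\Rep_R^I(G)\subsetneq\Rep_R^{K_1}(G)$ is strict, e.g.\ a compactly induced depth-zero cuspidal type attached to $x_0$ has $K_1$-fixed but no $I$-fixed vectors.

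Granting that $\Rep_R^I(G)$ is stable under subquotients, the remaining assertions are formal. The category $\Rep_R^I(G)$ is then abelian, and $(\cdot)^I\colon\Rep_R^I(G)\to\Mod_H$ is exact (averaging, as $I$ is pro-$p$ and $p$ is invertible in $R$) and faithful (its objects are generated by their $I$-invariants). By Frobenius reciprocity $X\otimes_H(\cdot)$ is left adjoint to $(\cdot)^I$; by Lemma \ref{realization_0}, $X\otimes_H(\cdot)\colon\Mod_H\to\Rep_R^I(G)$ is fully faithful and the unit $M\to(X\otimes_H M)^I$ is an isomorphism. For $V\in\Rep_R^I(G)$ the counit $X\otimes_H V^I\to V$ is surjective, its image being $R[G]\cdot V^I=V$; applying the exact functor $(\cdot)^I$ and the triangle identity shows that its kernel $N$ satisfies $N^I=0$, while $N\subseteq X\otimes_H V^I\in\Rep_R^I(G)$ and stability under subobjects force $N\in\Rep_R^I(G)$, so $N=0$ since a representation in $\Rep_R^I(G)$ with vanishing $I$-invariants is zero. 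Hence the counit is an isomorphism, and $(\cdot)^I$ and $X\otimes_H(\cdot)$ are mutually quasi-inverse equivalences of abelian categories. Finally, being quasi-inverse to the exact functor $(\cdot)^I$, the functor $X\otimes_H(\cdot)$ is exact, i.e.\ $X$ is flat as a right $H$-module.
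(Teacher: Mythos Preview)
Your reduction is clean and the formal part is correct: once $\Rep_R^I(G)$ is known to be closed under subobjects inside $\Rep_R^{K_1}(G)$, the adjunction argument with Lemma~\ref{realization_0} goes through exactly as you wrote. The gap is precisely where you flag it, and it is a real one.

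The suggestion that semisimplicity of $R[\Gamma]$ forces $\{M\mid e_\Gamma M=0\}$ to be closed under essential extensions does not work. The idempotent $e_\Gamma$ is not central in $H_{K_1}$, so the $R[\Gamma]$-module splitting $M=e_\Gamma M\oplus(1-e_\Gamma)M$ is not a splitting of $H_{K_1}$-modules. Concretely: if $M\hookrightarrow E$ is essential with $e_\Gamma M=0$ and $0\neq y\in e_\Gamma E$, essentiality gives $0\neq m=\sum_ih_ie_\Gamma x_i\in M$ for some $h_i\in H_{K_1}$, but all you can conclude is $e_\Gamma m=\sum_i(e_\Gamma h_ie_\Gamma)x_i=0$, which is no contradiction. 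Exactness of $e_\Gamma(\cdot)$ and semisimplicity of $R[\Gamma]$ give you nothing further here; an arbitrary non-central idempotent in a ring behaves the same way. Your alternative --- that each depth-zero Bernstein block is either contained in or disjoint from $\Rep_R^I(G)$ --- is true, but proving it amounts to invoking the level-zero type theory (Morris, Moy--Prasad) and is at least as much external input as what the paper uses; you have not supplied that argument.

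The paper avoids this entirely by proving the subobject closure directly. For $U\subseteq V$ with $V\in\Rep_R^I(G)$ it shows the counit $X\otimes_HU^I\to U$ is bijective. Injectivity is checked after applying $(\cdot)^{I_{x_0}}$: using the idempotent $e_{I_{x_0}}$ one has $(X\otimes_HU^I)^{I_{x_0}}=X^{I_{x_0}}\otimes_HU^I$, and Proposition~\ref{I_F_invariants}~(ii) rewrites this as $X_{x_0}\otimes_{H_{x_0}}U^I$, so one is reduced to the injectivity of $X_{x_0}\otimes_{H_{x_0}}U^I\to U^{I_{x_0}}$. Here the semisimplicity of $\Rep_R(P_{x_0}/I_{x_0})$ (not of $R[\Gamma]$) is what does the work: the kernel, being a quotient of $X_{x_0}\otimes_{H_{x_0}}U^I$, is generated by its $I$-invariants, and those vanish because the map is the identity on $I$-invariants. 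Surjectivity then follows from a snake-lemma comparison with $V$ and $V/U$. This is the missing idea in your proposal: the passage through $X^{I_{x_0}}\cong X_{x_0}\otimes_{H_{x_0}}H$ localizes the problem to a single parahoric, where one has genuine semisimplicity of the finite reductive quotient rather than only of the small subalgebra $R[\Gamma]$.
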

\begin{proof}
Let $V$ be an object of $\Rep_R^I(G)$ and let $U\subseteq V$ be a $G$-sub\-represen\-tation. We claim that the natural map $X\otimes_HU^I\to U$ is bijective. To see this, we may assume that the field $R$ is uncountable and algebraically closed. Recall that we fixed the special vertex $x_0\in\Cbar$ and that $I_{x_0}\subseteq I$. By \cite{Ber}, Corollaire 3.9, the full subcategory of $\Rep_R^\infty(G)$ generated by their $I_{x_0}$-invariants is stable under subquotients (cf.\ the reasoning in \cite{Ber}, page 29, or \cite{SS}, Theorem I.3). Note that from \cite{Ber}, \S1.8 onwards, Bernstein works over the complex numbers. However, his arguments are valid for any uncountable and algebraically closed field of characteristic zero.\\

Since $X\otimes_HU^I$ and $V$ are generated by their $I$-invariants they are also generated by their $I_{x_0}$-invariants. By Bernstein's result, so is $U$. In order to see that the natural map $X\otimes_HU^I\to U$ is injective, it suffices to check this after passage to $I_{x_0}$-invariants because the kernel of this map is also generated by its $I_{x_0}$-invariants. As in the proof of Lemma \ref{realization_0} the decomposition $X=X^{I_{x_0}}\oplus\ker(e_{I_{x_0}})$ of right $H$-modules gives $(X\otimes_HU^I)^{I_{x_0}}=X^{I_{x_0}}\otimes_HU^I$. Using Proposition \ref{I_F_invariants} (ii) we need to see that the natural map $X_{x_0}\otimes_{H_{x_0}}U^I\to U^{I_{x_0}}$ is injective. Since the category $\Rep_R(P_{x_0}/I_{x_0})$ is semisimple its kernel $W$ is a quotient of $X_{x_0}\otimes_{H_{x_0}}U^I$ hence is generated by its $I$-invariants. However, the map $U^I=(X_{x_0}\otimes_{H_{x_0}}U^I)^I\to U^I$ is the identity whence $W^I=0$ and $W=0$.\\

In order to see that the natural map $X\otimes_HU^I\to U$ is surjective, consider the commutative diagram
\[
 \xymatrix{
 & X\otimes_HU^I \ar[d]\ar[r] & X\otimes_HV^I \ar[d]\ar[r] & X\otimes_H(V/U)^I \ar[d]\ar[r] & 0 \\
 0 \ar[r] & U \ar[r] & V \ar[r] & V/U \ar[r] & 0
 }
\]
in which all vertical arrows are injective by our above reasoning. Moreover, the lower row is exact by definition and the upper row is exact because of the exactness of the functor $(\cdot)^I$ in characteristic zero. The middle and the right vertical arrow are surjective because $V$ and its quotient $V/U$ are objects of the category $\Rep_R^I(G)$. By the snake lemma, the left vertical arrow is surjective, too.\\

This proves that $\Rep_R^I(G)$ is a full subcategory of $\Rep_R^\infty(G)$ which is stable under subquotients and hence is abelian. That the two functors are quasi-inverse to each other follows from our above reasoning and Lemma \ref{realization_0}. If $g:M\to N$ is an injective homomorphism of $H$-modules then the kernel of the induced map $f:X\otimes_HM\to X\otimes_HN$ is generated by its $I$-invariants. However, $\ker(f)^I=\ker(f^I)\cong\ker(g)=0$. Thus, $X\otimes_HM\to X\otimes_HN$ is injective and the right $H$-module $X$ is flat.
\end{proof}
Recall that for any field $R$ an object $V\in\Rep_R^\infty(G)$ is called admissible if the $R$-subspace $V^J$ of $J$-invariants is finite dimensional for any open subgroup $J$ of $G$.
\begin{cor}\label{admissible_finite_length}
Assume that $R$ is a field of characteristic zero. For any object $V\in\Rep_R^I(G)$ the following statements are equivalent.
\begin{enumerate}[wide]
 \item[(i)]$V$ is admissible.
 \item[(ii)]$V$ is of finite length.
 \item[(iii)]The $H$-module $V^I$ is of finite length.
 \item[(iv)]The $H$-module $V^I$ is finite dimensional over $R$.
\end{enumerate}
\end{cor}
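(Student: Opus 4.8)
\emph{Proof proposal.} The plan is to establish the cycle of implications $(i)\Rightarrow(iv)\Rightarrow(iii)\Rightarrow(ii)\Rightarrow(i)$, using Bernstein's theorem (Theorem \ref{Bernstein}) as the bridge between the category $\Rep_R^I(G)$ and the module category $\Mod_H$. The two implications $(i)\Rightarrow(iv)$ and $(iv)\Rightarrow(iii)$ are formal: the first is the definition of admissibility applied to the open subgroup $I$, and for the second one observes that an $R$-module which is finite dimensional over the field $R$ satisfies the ascending and descending chain conditions already as an $R$-module, hence a fortiori as an $H$-module, so that $V^I$ has finite length in $\Mod_H$.

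For $(iii)\Rightarrow(ii)$ I would invoke Theorem \ref{Bernstein}: the functor $(\cdot)^I\colon\Rep_R^I(G)\to\Mod_H$ is an equivalence of abelian categories with quasi-inverse $X\otimes_H(\cdot)$, and $V\cong X\otimes_H V^I$. An equivalence of abelian categories preserves monomorphisms, epimorphisms and simple objects, hence carries objects of finite length to objects of finite length; thus $V$ has finite length in $\Rep_R^I(G)$. Since, again by Theorem \ref{Bernstein}, $\Rep_R^I(G)$ is stable under subquotients in $\Rep_R^\infty(G)$, a composition series of $V$ computed in either category is a composition series in the other, so $V$ has finite length as a smooth $R$-linear $G$-representation, which is $(ii)$.

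The only substantial step is $(ii)\Rightarrow(i)$. Here I would appeal to the classical theorem that every irreducible smooth $R$-linear representation of $G$ is admissible (for $R$ algebraically closed this is due to Jacquet and Harish-Chandra; the case of a general field of characteristic zero reduces to it by extension of scalars to an algebraic closure $\overline{R}$, using that $(W\otimes_R\overline{R})^J=W^J\otimes_R\overline{R}$ for every smooth $W$ and every open subgroup $J$). Granting this, a representation $V$ of finite length admits a finite filtration whose graded pieces are irreducible, hence admissible, smooth representations; and admissibility is preserved under extensions, since for any open subgroup $J$ the functor $(\cdot)^J$ is left exact and so $\dim_R V^J$ is bounded by the sum of the dimensions of the spaces of $J$-invariants of the graded pieces. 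Hence $V$ is admissible and the cycle closes. The main obstacle is precisely this admissibility-of-irreducibles input, which lies outside the building-theoretic and Hecke-algebraic machinery developed in the paper (and requires a little care in passing from an arbitrary field of characteristic zero to its algebraic closure); everything else is a formal consequence of Bernstein's equivalence together with the elementary comparison of finite length over $R$ and over $H$.
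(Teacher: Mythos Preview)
Your proposal is correct and follows essentially the same route as the paper. The paper's proof is a single sentence: it invokes Theorem \ref{Bernstein} together with the fact (\cite{OS1}, Lemma 6.9) that an $H$-module has finite length if and only if it is finite-dimensional over $R$. Thus the paper packages (iii)$\Leftrightarrow$(iv) via the cited lemma and (ii)$\Leftrightarrow$(iii) via the Bernstein equivalence, while (i)$\Rightarrow$(iv) is immediate; the remaining direction (ii)$\Rightarrow$(i) is left implicit and rests on the same classical input you spell out (irreducible smooth representations of $p$-adic reductive groups over characteristic-zero fields are admissible). Your write-up is simply a more explicit unpacking of the paper's terse argument; the only minor difference is that your cycle avoids the nontrivial half of \cite{OS1}, Lemma 6.9 (that simple $H$-modules are finite-dimensional), since you only need the elementary implication (iv)$\Rightarrow$(iii).
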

\begin{proof}
This follows directly from Theorem \ref{Bernstein} together with the fact that an $H$-module is of finite length if and only if it is of finite $R$-dimension (cf.\ \cite{OS1}, Lemma 6.9)
\end{proof}
If $R$ is a field of characteristic zero then the functor $\F(\cdot)$ can be reinterpreted as follows. Note that this does not rely on Bernstein's Theorem \ref{Bernstein}.
\begin{thm}\label{description_functors_0}
Assume that $R$ is a field of characteristic zero.
\begin{enumerate}[wide]
 \item[(i)]There is an isomorphism $\F(\cdot)\cong\F_X\otimes_H(\cdot):\Mod_H\to\Coeff_G(\BT)$.
 \item[(ii)]There is an isomorphism $\mathrm{H}_0(\BT,\F(\cdot))\cong X\otimes_H(\cdot):\Mod_H\to\Rep_R^I(G)$.
 \item[(iii)]There is an isomorphism $\F((\cdot)^I)\cong\F_{(\cdot)}:\Rep_R^I(G)\to\Coeff_G(\BT)$.
\end{enumerate}
\end{thm}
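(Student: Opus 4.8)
The plan is to deduce all three isomorphisms from the machinery already in place, using the fact that in characteristic zero the functor $(\cdot)^I$ is exact and that condition (H) becomes automatic for representations generated by their $I_{C(F)}$-invariants. First I would observe that over a field $R$ of characteristic zero the order of every finite group $P_F/I_F$ is invertible in $R$, so by Lemma \ref{properties_H} (iii) a smooth $P_F^\dagger$-representation whose underlying $R$-module is a (necessarily free) vector space satisfies condition (H) as soon as it is generated by its $I_{C(F)}$-invariants. Hence the category $\C$ admits a cleaner description: an object $\F\in\Coeff_G(\BT)$ lies in $\C$ if and only if each $\F_F$ is generated by its $I_{C(F)}$-invariants and the transition maps $t^F_{F'}$ are bijective whenever $C(F')=C(F)$; in particular the fixed point system $\F_V$ of any $V\in\Rep_R^I(G)$ lies in $\C$, since $\F_V$ always satisfies condition (ii) of Definition \ref{category_C} and, by an induction on the gallery distance together with the exactness of $(\cdot)^{I_F}$, each $V^{I_F}$ is generated by its $I_{C(F)}$-invariants once $V=R[G]\cdot V^I$.

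For part (i), the key point is that when $p$ is invertible the comparison morphism $\F_X\otimes_H M\to\F(M)$ of (\ref{comparison_homomorphisms}) is already an isomorphism. Indeed, face by face for $F\subseteq\Cbar$ the map is $X^{I_F}\otimes_H M\cong X_F^\dagger\otimes_{H_F^\dagger}M\to\t_F(M)=\F(M)_F$, and I would check this is bijective directly: $X_F^\dagger$ is a projective (indeed free, after restriction) $H_F^\dagger$-module by Proposition \ref{I_F_invariants} (i) and Proposition \ref{free} (i), so $X_F^\dagger\otimes_{H_F^\dagger}M$ is already "reflexive enough" that $\tau_{M,F}$ is injective; surjectivity is automatic since $\t_F(M)=\im(\tau_{M,F})$. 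Alternatively, and perhaps more cleanly, both $\F_X\otimes_H M$ and $\F(M)$ lie in $\C$ — for $\F_X\otimes_H M$ one uses that $X_F^\dagger$ is $H_F^\dagger$-free so that $X_F^\dagger\otimes_{H_F^\dagger}(\cdot)$ is exact and sends $M\hookrightarrow E$ to an injection, whence $\F_X\otimes_H M\to\F_X\otimes_H E$ is injective and $\F_X\otimes_H M=\F(M)$ on the nose — and the comparison map becomes an isomorphism in $\C$ because, by Proposition \ref{acyclic} (ii), it induces an isomorphism $M(\F_X\otimes_H M)\to M(\F(M))$, and $M(\cdot)$ is an equivalence by Theorem \ref{equivalence}. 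I would present this second argument. Part (ii) then follows from part (i) by applying $\H_0(\BT,\cdot)$ and invoking Proposition \ref{comparison_representations}, whose proof shows $\H_0(\BT,\F_X\otimes_H M)\cong X\otimes_H M$ for every $M$.

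For part (iii), I would argue that for $V\in\Rep_R^I(G)$ the comparison morphism $\F_X\otimes_H V^I\to\F_V$ of (\ref{comparison_homomorphisms}) is an isomorphism, and then combine with part (i) applied to $M=V^I$ to get $\F(V^I)\cong\F_X\otimes_H V^I\cong\F_V$, all naturally in $V$. To see that $\F_X\otimes_H V^I\to\F_V$ is an isomorphism: by Corollary \ref{link} (or directly by Proposition \ref{I_isomorphisms}) it becomes an isomorphism after applying $(\cdot)^I$ on each apartment, i.e.\ $\F_X^I\otimes_H V^I\to\F_V^I$ is an isomorphism in $\Coeff(\A)$; since $\F_X\otimes_H V^I$ lies in $\C$ (shown in part (i)) and $\F_V$ lies in $\C$ (shown in the first paragraph), and both are of level zero with all $\F_F$ satisfying condition (H), the bijectivity on $I_{C(F)}$-invariants for $F\subseteq\Cbar$ upgrades to bijectivity of $(\F_X\otimes_H V^I)_F\to (\F_V)_F$ by Corollary \ref{injective_surjective} (iii) (applied to injectivity) together with Corollary \ref{injective_surjective} (iv) (applied to surjectivity) — here one uses that the map $X^{I_F}\otimes_H V^I\to V^{I_F}$ is $P_F^\dagger$-equivariant between representations satisfying condition (H), and that a morphism between such is iso iff it is iso on $I_{C(F)}$-invariants. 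A morphism of $G$-equivariant coefficient systems that is bijective on every face is an isomorphism, so we are done.

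The main obstacle I anticipate is the careful bookkeeping in part (iii): one must verify that the map $X^{I_F}\otimes_H V^I\to V^{I_F}$ really does land in, and is a morphism within, the category $\Rep_R^H(P_F^\dagger)$ (so that Corollary \ref{injective_surjective} applies), which requires knowing $V^{I_F}\in\Rep_R^H(P_F^\dagger)$ — this is exactly where the characteristic-zero hypothesis enters through Lemma \ref{properties_H} (iii), and where one needs the preliminary observation that $V^{I_F}$ is generated by its $I_{C(F)}$-invariants. Everything else is a formal consequence of the equivalence $M(\cdot):\C\to\Mod_H$ and the identification of $\F(\cdot)$ with $\F_X\otimes_H(\cdot)$; the only genuinely new input beyond the quoted results is the remark that condition (H) is vacuous (given generation by invariants) in characteristic zero.
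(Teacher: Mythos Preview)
Your overall strategy is sound and reaches the same conclusions as the paper, but you take a more roundabout route and there are several sloppy justifications that need repair.

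The paper's argument is more direct. For (i) it shows, for each $F\subseteq\Cbar$, that the map $\tau_{M,F}:X_F\otimes_{H_F}M\to\Hom_{H_F}(\Hom_{H_F}(X_F,H_F),M)$ is injective by observing that the decomposition $X_F=H_F\oplus\ker(e_I)$ makes it bijective on $I$-invariants, and that its kernel (a quotient of $X_F\otimes_{H_F}M$) lives in the semisimple category $\Rep_R(P_F/I_F)$ and hence is generated by its $I$-invariants, so vanishes. For (iii) the paper argues similarly: the idempotent $e_{I_F}$ gives $(X\otimes_HV^I)^{I_F}\cong X^{I_F}\otimes_HV^I$, and since $X\otimes_HV^I\to V$ is surjective and $(\cdot)^{I_F}$ is exact, one gets surjectivity of $X^{I_F}\otimes_HV^I\to V^{I_F}$; the map is bijective on $I$-invariants, hence bijective by semisimplicity. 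This directly shows $V^{I_F}$ is generated by its $I$-invariants as a byproduct. You instead first try to place everything in $\C$ and then invoke Theorem \ref{equivalence} or Corollary \ref{injective_surjective}; this works but adds an extra layer.

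Concretely, three points need fixing. First, Lemma \ref{properties_H} (iii) has a finite generation hypothesis which you drop without comment; in characteristic zero the correct argument is simply that $\Rep_R(P_F/I_F)$ is semisimple, so any representation generated by its $I$-invariants is a (possibly infinite) direct sum of irreducible summands of $X_F$, whence satisfies (H) via filtered unions. Second, your claim that ``$X_F^\dagger$ is a projective (indeed free) $H_F^\dagger$-module by Proposition \ref{I_F_invariants} (i) and Proposition \ref{free} (i)'' is not what those results say; what is true (and sufficient for exactness of $X_F^\dagger\otimes_{H_F^\dagger}(\cdot)$) is that $H_F$ is semisimple in characteristic zero, so $X_F$ is $H_F$-projective and hence $X_F^\dagger\cong X_F\otimes_{H_F}H_F^\dagger$ is $H_F^\dagger$-projective. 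Third, your ``induction on the gallery distance'' for showing $V^{I_F}$ is generated by its $I$-invariants is unexplained; the clean argument is the one the paper uses, namely apply the exact idempotent $e_{I_F}$ to the surjection $X\otimes_HV^I\twoheadrightarrow V$ to obtain $X^{I_F}\otimes_HV^I\twoheadrightarrow V^{I_F}$, and note the source is generated by the image of $V^I$.
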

\begin{proof}
As for (i), let $M\in\Mod_H$ and let $F$ be a face of $\BT$ contained in $\Cbar$. As in the proof of Theorem \ref{quasi_inverse_coefficient_system} one constructs a commutative diagram
\[
 \xymatrix{
 X^{I_F}\otimes_HM \ar[rr]^>>>>>>>>>>>{\tau_{M,F}} \ar[d]_\cong && \Hom_H(\Hom_H(X^{I_F},H),M) \ar[d]^\cong \\
 X_F\otimes_{H_F}M \ar[rr]_>>>>>>>>>>{\tau_{M,F}}&& \Hom_{H_F}(\Hom_{H_F} ( X_F,H_F),M)
 }
\]
by making use of Proposition \ref{I_F_invariants} (ii). We claim that $\tau_{M,F}$ is injective and need to prove this for its lower version only. The decomposition $X_F=H_F\oplus\ker(e_I)$ of right $H_F$-modules shows that $\tau_{M,F}$ induces a bijection on $I$-invariants. Since the category $\Rep_R(P_F/I_F)$ is semisimple, the kernel of $\tau_{M,F}$ is a quotient of $X_F\otimes_{H_F}M$ and hence is generated by its $I$-invariants. Thus, $\ker(\tau_{M,F})=0$ as claimed. Together with Proposition \ref{diagrams} it follows that the comparison homomorphism $\F_X\otimes_HM\to\F(M)$ in (\ref{comparison_homomorphisms}) is a natural isomorphism. This proves (i). As seen in the proof of Proposition \ref{comparison_representations} one obtains (ii) by passing to the homology in degree zero.\\

Now let $V\in\Rep_R^I(G)$ and consider the fixed point system $\F_V\in\Coeff_G(\BT)$. We continue to assume that $F$ is a face of $\BT$ with $F\subseteq\Cbar$. The natural surjection $X\otimes_HV^I\to V$ induces a surjection $(X\otimes_HV^I)^{I_F}\to V^{I_F}$ because $p$ is invertible in $R$. Using Proposition \ref{I_F_invariants} (ii) and the $H$-equivariant decomposition $X=X^{I_F}\oplus\ker(e_{I_F})$ this map can be identified with the natural map
\[
 X_F\otimes_{H_F}V^I\cong X^{I_F}\otimes_HV^I\cong(X\otimes_HV^I)^{I_F}\longrightarrow V^{I_F}.
\]
As before, it induces an isomorphism on $I$-invariants and hence is bijective because the category $\Rep_R(P_F/I_F)$ is semisimple. Since $X^{I_F}\otimes_HV^I\to V^{I_F}$ is the term at $F$ of the comparison homomorphism $\F_X\otimes_HV^I\to\F_V$ in (\ref{link_help}) it follows from Proposition \ref{diagrams} that the latter is an isomorphism.
\end{proof}
As a consequence, we can finally clarify the relation between the categories $\Rep_R^\C(G)$ and $\Rep_R^I(G)$. Moreover, we can reprove a special case of Schneider's and Stuhler's theorem concerning the exactness of oriented chain complexes of fixed point systems on $\BT$ (cf.\ \cite{SS}, Theorem II.3.1). We note that the strategy of our proof is due to Broussous who treated the analogous case of the Iwahori subgroup $I'$ of $G$ (cf.\ \cite{Bro}, \S4).
\begin{cor}\label{Schneider_Stuhler}
Assume that $R$ is a field of characteristic zero.
\begin{enumerate}[wide]
 \item[(i)]The categories $\Rep_R^\C(G)$ and $\Rep_R^I(G)$ coincide.
 \item[(ii)]For any representation $V\in\Rep_R^I(G)$ the augmented oriented chain complex $0\to \C_c^\ori(\BT_{(\bullet)},\F_V)\to V\to 0$ is exact.
\end{enumerate}
\end{cor}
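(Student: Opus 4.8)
The plan is to deduce both statements from Bernstein's Theorem \ref{Bernstein} and from the characteristic-zero description of the functor $\F(\cdot)$ given in Theorem \ref{description_functors_0}, together with the equivalence $M(\cdot)\colon\C\cong\Mod_H$ of \S\ref{subsection_3_2}.

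For (i), the inclusion $\Rep_R^\C(G)\subseteq\Rep_R^I(G)$ is already Proposition \ref{I_contains_C}. For the reverse inclusion, let $V\in\Rep_R^I(G)$. By Theorem \ref{Bernstein} the natural map $X\otimes_HV^I\to V$ is an isomorphism, and Theorem \ref{description_functors_0}\,(ii) provides a natural isomorphism $X\otimes_HV^I\cong\mathrm{H}_0(\BT,\F(V^I))$. Since $\F(V^I)$ lies in $\C$ by Theorem \ref{quasi_inverse_coefficient_system}, this exhibits $V$ as an object of $\Rep_R^\C(G)$, so the two categories coincide.

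For (ii), I would first check that the augmented complex is a complex in the abelian category $\Rep_R^I(G)$. By Theorem \ref{description_functors_0}\,(iii) there is a natural isomorphism $\F_V\cong\F(V^I)$, so $\F_V\in\C$; in particular each $\F_V^{I_F}$ satisfies condition (H) as a $P_F^\dagger$-representation and is therefore generated by its $I_{C(F)}$-invariants (Lemma \ref{properties_H}\,(i)). By the decomposition (\ref{chain_complex_induced}) of the oriented chains into a finite direct sum of compact inductions and by Proposition \ref{I_contains_C}, every term $\C_c^\ori(\BT_{(i)},\F_V)$ belongs to $\Rep_R^I(G)$, and so does $V$ by hypothesis. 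Since $(\cdot)^I\colon\Rep_R^I(G)\to\Mod_H$ is an equivalence of abelian categories (Theorem \ref{Bernstein}), it is exact and reflects exactness; hence it suffices to prove that
\[
0\longrightarrow\C_c^\ori(\BT_{(\bullet)},\F_V)^I\longrightarrow V^I\longrightarrow 0
\]
is exact. By Proposition \ref{restriction_chain_complexes} this complex is $0\to\C_c^\ori(\A_{(\bullet)},\F_V^I)\to V^I\to 0$, and under the isomorphism $\F_V\cong\F(V^I)$ it becomes the resolution (\ref{generalized_GP_resolution}) attached to the $H$-module $M=V^I$, which is exact by Proposition \ref{acyclic}\,(ii) and Theorem \ref{quasi_inverse_coefficient_system}. (One may also invoke Proposition \ref{acyclic} directly for $\F_V$, since its restriction maps $t^F_{F'}$ with $C(F')=C(F)$ are literally identity maps.)

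The step demanding genuine care, and the one where I would follow Broussous (\cite{Bro}, \S4) closely, is the identification of the two augmentations: one must verify that, under $\F_V\cong\F(V^I)$, the map on $I$-invariants induced by the canonical augmentation $\C_c^\ori(\BT_{(0)},\F_V)\to V$, $f\mapsto\sum_yf(y)$, agrees with the augmentation of the resolution (\ref{generalized_GP_resolution}); equivalently, that the isomorphism $\iota_C$ of Proposition \ref{acyclic}\,(ii) is, for $\F_V$, inverse to the augmentation on $I$-invariants. This amounts to unwinding the comparison morphisms (\ref{comparison_homomorphisms}) and (\ref{link_help}), which on $\mathrm{H}_0$ recover the canonical map $X\otimes_HV^I\to V$ --- an isomorphism for $V\in\Rep_R^I(G)$ by the proof of Theorem \ref{description_functors_0}\,(iii) --- but it is the one genuinely geometric point of the argument, exactly as in Broussous's treatment of the Iwahori subgroup.
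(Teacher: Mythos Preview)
Your proof is correct and follows essentially the same strategy as the paper. Part (i) is identical. For (ii), the paper organizes the argument slightly differently: it first handles degrees $-1$ and $0$ by directly computing $\H_0(\BT,\F_V)\cong X\otimes_HV^I\cong V$ (via Theorem \ref{Bernstein} and Theorem \ref{description_functors_0}), and then treats the higher degrees by noting that $(\cdot)^I$ is exact, so Proposition \ref{acyclic}\,(i) gives $\H_i(\BT,\F_V)^I=0$, whence $\H_i(\BT,\F_V)=0$ since these homology groups lie in $\Rep_R^I(G)$. Your version packages both steps at once via ``reflects exactness'', which is equivalent; your care about the augmentation is the one point the paper leaves implicit, and it is resolved exactly as you indicate, by the naturality of the comparison maps in (\ref{comparison_homomorphisms}).
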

\begin{proof}
Let $V\in\Rep_R^I(G)$. By Theorem \ref{Bernstein} and Theorem \ref{description_functors_0} (ii) we have $V\cong X\otimes_HV^I\cong\mathrm{H}_0(\BT,\F(V^I))$ which is an object of $\Rep_R^\C(G)$ by Theorem \ref{quasi_inverse_coefficient_system}. Using Proposition \ref{I_contains_C} this proves (i).\\

As for (ii), the exactness in degrees $-1$ and $0$ follows from $\mathrm{H}_0(\BT,\F_V)\cong X\otimes_HV^I\cong V$ (cf.\ Theorem \ref{Bernstein} and Theorem \ref{description_functors_0}). Since the functor $(\cdot)^I$ is exact Proposition \ref{acyclic} (i) implies that the higher homology groups of the augmented oriented chain complex of $\F_V$ have trivial $I$-invariants. Since these homology groups are objects of $\Rep_R^I(G)$ (cf.\ Proposition \ref{I_contains_C} and Theorem \ref{Bernstein}) it follows from Theorem \ref{Bernstein} that the homology groups are trivial.
\end{proof}
We would also like to point out that if $R$ is a field of characteristic zero then the equivalence in Theorem \ref{Bernstein} can be used to reinterprete the Zelevinski involution on $\Rep_R^I(G)$ and to reprove its major properties. Given a smooth $R$-linear left (resp.\ right) $G$-representation $V$ and a non-negative integer $i$ we consider the $R$-linear right (resp.\ left) $G$-representation
\[
 \E^i(V)=\mathrm{Ext}^i_{\Rep_R^\infty(G)}(V,\C_c^\infty(G,R)).
\]
Here $\C_c^\infty(G,R)$ denotes the $R$-module of compactly supported maps $G\to R$ endowed with its $G$-actions by left and right translation. In order to simplify the formulation of the following statements we identify the categories of left and right $G$-representations through the anti-automorphism $g\mapsto g^{-1}$ of $G$.
\begin{lem}\label{computing_Ext}
Assume that $R$ is a field of characteristic zero. If $V\in\Rep_R^I(G)$ admits a central character or if $\GG$ is semisimple then $\E^i(V)$ is the $i$-th homology group of the complex
\begin{equation}\label{dual_complex}
 \Hom_G(C_c^\ori(\BT_{(\bullet)},\F_V),\C_c^\infty(G,R)).
\end{equation}
If $V$ is admissible then this is a complex in $\Rep_R^I(G)$. In this case $\E^i(V)$ is an object of $\Rep_R^I(G)$ for any $i\geq 0$.
\end{lem}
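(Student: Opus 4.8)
The plan is to realise $\C_c^\ori(\BT_{(\bullet)},\F_V)$ as a finite resolution of $V$ whose terms are acyclic for the contravariant functor $\Hom_G(\cdot,\C_c^\infty(G,R))$, so that its right derived functors $\E^\bullet(V)=\Ext^\bullet_{\Rep_R^\infty(G)}(V,\C_c^\infty(G,R))$ are computed by applying this functor termwise, which is precisely the complex (\ref{dual_complex}). Since $V\in\Rep_R^I(G)$, Corollary \ref{Schneider_Stuhler} (ii) shows that the augmented complex $0\to\C_c^\ori(\BT_{(\bullet)},\F_V)\to V\to 0$ is exact, so that $\C_c^\ori(\BT_{(\bullet)},\F_V)$ is a resolution of $V$ of length $d$. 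By the isomorphism (\ref{chain_complex_induced}) together with Lemma \ref{face_representatives}, each term of this resolution is a finite direct sum of representations of the form $\ind_{P_F^\dagger}^G(\varepsilon_F\otimes_R\F_F)$ with $F\subseteq\Cbar$. By the standard acyclic-resolution argument for the right derived functors of the left-exact contravariant functor $\Hom_G(\cdot,\C_c^\infty(G,R))$, it therefore suffices to prove that $\Ext^j_{\Rep_R^\infty(G)}(\ind_{P_F^\dagger}^G W,\C_c^\infty(G,R))=0$ for all $j\geq 1$ and all the $P_F^\dagger$-representations $W=\varepsilon_F\otimes_R\F_F$ occurring in this way.

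I would reduce this vanishing to a statement over $P_F^\dagger$. As $P_F^\dagger$ is open in $G$, compact induction $\ind_{P_F^\dagger}^G$ is exact and left adjoint to the exact restriction functor, hence preserves projectives; passing to derived functors in the Frobenius reciprocity isomorphism $\Hom_G(\ind_{P_F^\dagger}^G(\cdot),\cdot)\cong\Hom_{P_F^\dagger}(\cdot,\res_{P_F^\dagger}(\cdot))$ yields $\Ext^j_G(\ind_{P_F^\dagger}^G W,\C_c^\infty(G,R))\cong\Ext^j_{P_F^\dagger}(W,\res_{P_F^\dagger}\C_c^\infty(G,R))$. Decomposing $G$ into right cosets of the open subgroup $P_F^\dagger$ identifies $\res_{P_F^\dagger}\C_c^\infty(G,R)$ with a direct sum of copies of $\ind_{\{1\}}^{P_F^\dagger}R=\C_c^\infty(P_F^\dagger,R)$. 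If $\GG$ is semisimple, then $P_F^\dagger$ is compact, so $\C_c^\infty(P_F^\dagger,R)$ coincides with the smooth induction $\Ind_{\{1\}}^{P_F^\dagger}R$ and is therefore injective in $\Rep_R^\infty(P_F^\dagger)$; since $R$ is noetherian, $\Rep_R^\infty(P_F^\dagger)$ is a locally noetherian Grothendieck category, in which direct sums of injectives are injective, and the higher Ext-groups vanish. If instead $V$ (and hence $\F_V$ and each $W$) admits a central character $\chi$, the group $P_F^\dagger$ is only compact modulo its centre; here I would reduce to the compact case by restricting the whole situation to the open normal subgroup $G_\aff\subseteq G$ generated by the parahoric subgroups -- over $G_\aff$ the representations $\ind_{P_F^\dagger}^G W$ become direct sums of representations compactly induced from the compact groups conjugate to the parahorics $P_F$ -- and then relating $\Ext_G$ to $\Ext_{G_\aff}$ along the quotient $\Omega\cong G/G_\aff$, the central character being what is needed to control this comparison. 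Making this reduction precise and handling the contribution of the centre is the main technical obstacle; without a central character and with $\GG$ non-semisimple the acyclicity one wants genuinely fails, which is exactly why the hypothesis is imposed.

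It remains to treat the assertions about $\Rep_R^I(G)$. When $V$ is admissible, each $\F_F=V^{I_F}$ is finite-dimensional over $R$; moreover $\F_V\cong\F(V^I)$ lies in $\C$ by Theorem \ref{description_functors_0} (iii) and Theorem \ref{quasi_inverse_coefficient_system}, so $\F_F$ satisfies condition (H) as a representation of $P_F^\dagger$, and by Lemma \ref{properties_H} (ii) both $\F_F$ and its $R$-linear dual $\F_F^*$ are generated by their $I_{C(F)}$-invariants. Unwinding the identification of the previous step, $\Hom_G(\ind_{P_F^\dagger}^G(\varepsilon_F\otimes_R\F_F),\C_c^\infty(G,R))$ is, up to a smooth character of $P_F^\dagger$ trivial on $I$, the compactly induced representation $\ind_{P_F^\dagger}^G(\F_F^*)$; since $\F_F^*$ is generated by its $I_{C(F)}$-invariants, and $I_F$ acts trivially on it, it is generated by its $I$-invariants over $P_F^\dagger$, and hence $\ind_{P_F^\dagger}^G(\F_F^*)$ belongs to $\Rep_R^I(G)$ by the argument of Proposition \ref{I_contains_C}. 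Thus the complex (\ref{dual_complex}) is a finite complex of objects of $\Rep_R^I(G)$, and since by Theorem \ref{Bernstein} the subcategory $\Rep_R^I(G)$ of $\Rep_R^\infty(G)$ is abelian and closed under subquotients, its homology objects $\E^i(V)$ again lie in $\Rep_R^I(G)$ for every $i\geq 0$.
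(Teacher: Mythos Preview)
Your overall strategy is right, and the second half (the admissible case) is essentially the paper's argument with only cosmetic differences: the paper spells out the chain of isomorphisms
\[
\Hom_G(\ind_{P_F^\dagger}^G(\varepsilon_F\otimes_RV^{I_F}),\C_c^\infty(G,R))\cong\ind^G_{P_F^\dagger}((\varepsilon_F\otimes_RV^{I_F})^*)
\]
explicitly and invokes Lemma~\ref{properties_H}~(iii) rather than (ii), but both routes are valid in characteristic zero.

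The genuine gap is in the first half, in the central-character case. You explicitly say that ``making this reduction precise and handling the contribution of the centre is the main technical obstacle''---and you do not close it. Your proposed reduction via $G_\aff$ and the quotient $\Omega$ is plausible in spirit but would require real work (normality of $G_\aff$, a Hochschild--Serre type comparison of $\Ext$-groups, and the role of the central character in that comparison). The paper avoids all of this: it simply observes that under either hypothesis the terms of the resolution are \emph{projective} in $\Rep_R^\infty(G)$. In the semisimple case this is because $P_F^\dagger$ is compact and $R$ has characteristic zero, so $\Rep_R^\infty(P_F^\dagger)$ is semisimple, $W$ is projective there, and compact induction preserves projectives; in the central-character case projectivity is a known result of Schneider--Stuhler (\cite{SS}, Proposition~II.2.2). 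Projectivity of the terms immediately gives the $\Ext$-computation, with no need for your detour through injectivity of $\C_c^\infty(P_F^\dagger,R)$ or any reduction to $G_\aff$.

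So in the semisimple case your acyclicity argument is correct but more elaborate than necessary, and in the central-character case you should replace your unfinished reduction by the direct projectivity statement.
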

\begin{proof}
The augmented oriented chain complex of $\F_V$ is a resolution of $V$ by Corollary \ref{Schneider_Stuhler} (ii). If $V$ admits a central character then it consists of projective objects of the category $\Rep_R^\infty(G)$ (cf.\ \cite{SS}, Proposition II.2.2). If $\GG$ is semisimple then this is true more generally because of (\ref{chain_complex_induced}) and by Frobenius reciprocity. Note that if $\GG$ is semisimple then the groups $P_F^\dagger$ are compact and the categories are $\Rep_R^\infty(P_F^\dagger)$ semisimple. Therefore, (\ref{dual_complex}) computes $\E^\bullet(V)$.\\

If $V$ is admissible then (\ref{chain_complex_induced}) implies that the oriented chain complex of $\F_V$ consists of finitely generated $G$-representations. Therefore, the $G$-action on (\ref{dual_complex}) is smooth. More precisely, for any term in the decomposition (\ref{chain_complex_induced}) there are $G$-equi\-variant isomorphisms
\begin{small}
\begin{eqnarray*}
 \Hom_G(\ind_{P_F^\dagger}^G(\varepsilon_F\otimes_RV^{I_F}),\C_c^\infty(G,R)) & \cong &
 \Hom_{P_F^\dagger}(\varepsilon_F\otimes_RV^{I_F},\C_c^\infty(I_F\backslash G,R))\\
 & \cong & [\ind_{I_F}^G(R)\otimes(\varepsilon_F\otimes_RV^{I_F})^*]^{P_F^\dagger}\\
 & \cong & \ind^G_{P_F^\dagger}((\varepsilon_F\otimes_RV^{I_F})^*).
\end{eqnarray*}
\end{small}As seen in the proof of Proposition \ref{description_functors_0} the $P_F^\dagger$-representation $V^{I_F}$ is generated by its $I$-invariants and so is $(V^{I_F})^*$ by Lemma \ref{properties_H} (iii). Moreover, $\varepsilon_F$ is trivial on $I$ by \cite{OS1}, Lemma 3.1. Therefore, (\ref{dual_complex}) is a complex in $\Rep_R^I(G)$. It follows from Theorem \ref{Bernstein} that so are its homology groups.
\end{proof}
The following proof of the Zelevinski conjecture for $\Rep_R^I(G)$ makes essential use of results of Ollivier and Schneider concerning the homological properties of the pro-$p$ Iwahori-Hecke algebra $H$ (cf.\ \cite{OS1}, \S6). In this case we can avoid the more sophisticated methods employed by Schneider and Stuhler (cf.\ \cite{SS}, \S{III}.3). We denote by $\Rep_R^I(G)^\adm$ the full subcategory of $\Rep_R^\infty(G)$ consisting of admissible representations generated by their $I$-invariants.
\begin{thm}\label{Zelevinski}
Assume that $R$ is a field of characteristic zero and that $\GG$ is semisimple. If $V\in\Rep_R^I(G)^\adm$ then $\E^i(V)=0$ unless $i=d$. Moreover, $\E^d(V)\in\Rep_R^I(G)^\adm$. The functor $\E^d:\Rep_R^I(G)^\adm\to\Rep_R^I(G)^\adm$ is an anti-involution of categories. In particular, it preserves irreducible objects.
\end{thm}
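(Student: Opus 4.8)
The plan is to transfer the computation of $\E^{\bullet}(V)$ to the pro-$p$ Iwahori--Hecke side, where it becomes the $\Ext$-duality $N\mapsto\Ext^{\bullet}_H(N,H)$, and then to feed in the homological properties of $H$ proved by Ollivier and Schneider. Before doing so I would record some bookkeeping. Since $V$ is admissible, $V^I$ is finite-dimensional over $R$ (Corollary \ref{admissible_finite_length}), hence of finite length; conversely a representation in $\Rep_R^I(G)$ with finite-dimensional $I$-invariants is admissible by the same corollary. Because $\GG$ is semisimple, Lemma \ref{computing_Ext} shows that the augmented oriented chain complex $0\to\C_c^{\ori}(\BT_{(\bullet)},\F_V)\to V\to 0$ is a resolution of $V$ by projective objects of $\Rep_R^{\infty}(G)$, so that $\E^{i}(V)$ is the $i$-th homology of $\Hom_G(\C_c^{\ori}(\BT_{(\bullet)},\F_V),\C_c^{\infty}(G,R))$; in particular $\E^i(V)=0$ for $i>d$, and by Lemma \ref{computing_Ext} this complex lies in $\Rep_R^I(G)$. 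By Bernstein's theorem (Theorem \ref{Bernstein}) the functor $(\cdot)^I$ is exact and an equivalence on $\Rep_R^I(G)$, so $\E^i(V)$ is determined by $\E^i(V)^I$, and the latter is the $i$-th homology of the complex of $H$-modules $\Hom_G(\C_c^{\ori}(\BT_{(\bullet)},\F_V),\C_c^{\infty}(G,R))^I$.

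The key step is a natural isomorphism of complexes of $H$-modules
\[
\Hom_G\bigl(\C_c^{\ori}(\BT_{(\bullet)},\F_V),\C_c^{\infty}(G,R)\bigr)^I\;\cong\;\Hom_H\bigl(\C_c^{\ori}(\BT_{(\bullet)},\F_V)^I,H\bigr).
\]
I would prove this termwise. By (\ref{chain_complex_induced}) one reduces to a single summand $\ind_{P_F^{\dagger}}^G(\varepsilon_F\otimes_R V^{I_F})$ with $F\subseteq\Cbar$, where $\varepsilon_F\otimes_R V^{I_F}$ is finite-dimensional and satisfies condition (H) (as noted in the proof of Lemma \ref{computing_Ext}). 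Frobenius reciprocity for compact induction identifies $\Hom_G(\ind_{P_F^{\dagger}}^G(\varepsilon_F\otimes_R V^{I_F}),\C_c^{\infty}(G,R))$ with $\ind_{P_F^{\dagger}}^G((\varepsilon_F\otimes_R V^{I_F})^{*})$ (this is the chain of isomorphisms appearing in the proof of Lemma \ref{computing_Ext}). Taking $I$-invariants, the functor $(\cdot)^I\circ\ind_{P_F^{\dagger}}^G$ agrees, on representations satisfying condition (H), with $H\otimes_{H_F^{\dagger}}(\cdot)^I$ (here $I_{C(F)}=I$ since $F\subseteq\Cbar$, and $H$ is free over $H_F^{\dagger}$ by Proposition \ref{free} (ii)); and since $H_F^{\dagger}$ is an $\id_R$-Frobenius extension of $R$ (Proposition \ref{selfinjective} (ii), using that $\GG$ is semisimple), the $R$-linear duality of the finite-dimensional module $(\varepsilon_F\otimes_R V^{I_F})^I$ is carried to $\Hom_{H_F^{\dagger}}(-,H_F^{\dagger})$ and then, by base change along $H_F^{\dagger}\hookrightarrow H$, to $\Hom_H(-,H)$. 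Checking compatibility with the differentials yields the displayed isomorphism. On the other hand $\F_V\cong\F(V^I)$ by Theorem \ref{description_functors_0} (iii), so by Remark \ref{GP_resolution} (together with Proposition \ref{acyclic}, whose hypotheses $\F_V$ satisfies) the complex $\C_c^{\ori}(\BT_{(\bullet)},\F_V)^I$ is the Gorenstein projective resolution of the $H$-module $V^I$; as Gorenstein projective modules $N$ satisfy $\Ext^i_H(N,H)=0$ for $i>0$, this resolution computes $\Ext^{\bullet}_H(V^I,H)$. Combining the two sides gives natural isomorphisms $\E^i(V)^I\cong\Ext^i_H(V^I,H)$ for all $i\geq 0$.

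Now one invokes the homological properties of $H$: by \cite{OS1}, \S6, the ring $H$ is Auslander--Gorenstein of dimension $d$ when $\GG$ is semisimple, so every finite-dimensional $H$-module $N$ has grade exactly $d$, i.e.\ $\Ext^i_H(N,H)=0$ for $i\neq d$, and $D=\Ext^d_H(-,H)$ is an exact anti-involution of the category of finite-dimensional $H$-modules. Applying this to $N=V^I$ gives $\E^i(V)^I=0$ for $i\neq d$, hence $\E^i(V)=0$ for $i\neq d$ (a representation in $\Rep_R^I(G)$ with vanishing $I$-invariants is zero), and $\E^d(V)^I\cong D(V^I)$, which is finite-dimensional; thus $\E^d(V)\in\Rep_R^I(G)$ (Lemma \ref{computing_Ext}) is admissible by the bookkeeping step. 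Since the isomorphism $\E^d(\cdot)^I\cong D((\cdot)^I)$ is natural and $(\cdot)^I$ restricts to an equivalence between $\Rep_R^I(G)^{\adm}$ and the category of finite-dimensional $H$-modules (Theorem \ref{Bernstein} and Corollary \ref{admissible_finite_length}), the functor $\E^d$ on $\Rep_R^I(G)^{\adm}$ is identified with $D$, hence is an anti-involution of categories. Finally, a contravariant self-equivalence of an abelian category takes subobjects bijectively to quotient objects, hence maps simple objects to simple objects, which gives the last assertion.

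The step I expect to be the main obstacle is the termwise identification $\Hom_G(-,\C_c^{\infty}(G,R))^I\cong\Hom_H((\cdot)^I,H)$: it requires matching the group-theoretic Frobenius reciprocity of Lemma \ref{computing_Ext} with the Hecke-theoretic base change along $H_F^{\dagger}\hookrightarrow H$, keeping track of the orientation characters $\varepsilon_F$, of the (harmless, since $p$ is invertible) contragredient and coinvariants identifications, and of the Nakayama twist coming from the Frobenius-extension structure of $H_F^{\dagger}$, and finally checking that everything is compatible with the two differentials. The other essential ingredient, namely that finite-dimensional $H$-modules are Cohen--Macaulay of grade $d$ with $\Ext^d_H(-,H)$ a duality, is exactly the content of \cite{OS1}, \S6, and is taken as input.
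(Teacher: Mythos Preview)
Your overall strategy matches the paper's: transport $\E^\bullet(V)$ to $\Ext^\bullet_H(V^I,H)$ via the oriented chain complex and then invoke the Auslander--Gorenstein properties of $H$ from \cite{OS1}. The endgame (grade $d$, duality on finite-length modules, preservation of simples) is identical.

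The genuine difference is in how you establish the key isomorphism $\Hom_G(W,\C_c^\infty(G,R))^I\cong\Hom_H(W^I,H)$ for each term $W$ of the chain complex. You propose a local, face-by-face argument: decompose $W$ via (\ref{chain_complex_induced}), apply Frobenius reciprocity to reach $\ind_{P_F^\dagger}^G((\varepsilon_F\otimes V^{I_F})^*)$, take $I$-invariants via Proposition \ref{invariants_induced_representation}, and then use the Frobenius-extension structure of $H_F^\dagger$ over $R$ to identify $R$-linear duality with $\Hom_{H_F^\dagger}(-,H_F^\dagger)$, finally base-changing to $H$. As you yourself note, this forces you to track the Nakayama twist and to check separately that the termwise isomorphisms intertwine the two differentials. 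The paper bypasses all of this with a global three-line argument: since $W\in\Rep_R^I(G)$, Bernstein's theorem gives $W\cong X\otimes_HW^I$, whence
\[
\Hom_G(W,\C_c^\infty(G,R))\cong\Hom_H(W^I,\Hom_G(X,\C_c^\infty(G,R)))\cong\Hom_H(W^I,\C_c^\infty(I\backslash G,R)),
\]
and passing to $I$-invariants via the right-$H$-module splitting $\C_c^\infty(I\backslash G,R)=H\oplus\ker(e_I)$ yields $\Hom_H(W^I,H)$. This is manifestly natural in $W$, so compatibility with differentials is automatic and no local Frobenius bookkeeping is needed.

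A second, smaller simplification in the paper: rather than invoking Remark \ref{GP_resolution} to say the complex of $I$-invariants is a Gorenstein projective resolution, the paper observes directly that in characteristic zero the terms of $\C_c^\ori(\BT_{(\bullet)},\F_V)$ are projective in $\Rep_R^\infty(G)$ (semisimple $P_F^\dagger$, compact since $\GG$ is semisimple), hence their $I$-invariants are projective $H$-modules via the equivalence of Theorem \ref{Bernstein}. So one gets an honest projective resolution of $V^I$, and no separate appeal to $\Hom$-acyclicity of Gorenstein projectives is required.

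Your route should go through, but the paper's global adjunction argument is considerably shorter and eliminates exactly the step you flagged as the main obstacle.
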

\begin{proof}
For any object $W\in\Rep_R^I(G)$ the isomorphism $W\cong X\otimes_HW^I$ of Theorem \ref{Bernstein} induces an isomorphism
\begin{eqnarray*}
 \Hom_G(W,\C_c^\infty(G,R)) & \cong & \Hom_G(X\otimes_HW^I,\C_c^\infty(G,R)) \\
 & \cong & \Hom_H(W^I,\Hom_G(X,\C_c^\infty(G,R))) \\
 & \cong & \Hom_H(W^I,\C_c^\infty(I\backslash G,R))
\end{eqnarray*}
of right $G$-representations. Note once more that $\C_c^\infty(I\backslash G,R)=H\oplus\ker(e_I)$ whence passage to the $I$-invariants yields an isomorphism of right $H$-modules
\[
 \Hom_G(W,\C_c^\infty(G,R))^I\cong\Hom_H(W^I,H).
\]
By the proof of Proposition \ref{description_functors_0} the $P_F$-representation $V^{I_F}$ is generated by its $I$-invariants for all $F\subseteq\Cbar$. By Proposition \ref{I_contains_C} the chain complex $\C_c^\infty(\BT_{(\bullet)},\F_V)$ is a complex in $\Rep_R^I(G)$. As seen in the proof of Lemma \ref{computing_Ext} it consists of projective objects. It follows from Proposition \ref{acyclic} (i), Theorem \ref{Bernstein} and Theorem \ref{description_functors_0} that $\C_c^\infty(\BT_{(\bullet)},\F_V)^I$ is a projective resolution of the $H$-module $V^I$. Together with Lemma \ref{computing_Ext} we obtain a functorial isomorphism
\[
 \E^i(V)^I\cong\mathrm{Ext}^i_H(V^I,H)
\]
of right $H$-modules for all $i\geq 0$. By \cite{OS1}, Theorem 6.16, the ring $H$ is Auslander-Gorenstein. Since $V^I$ is of finite length this implies $\E^i(V)^I=0$ for $i<d$ by \cite{OS1}, Corollary 6.17, and thus $\E^i(V)=0$ by Theorem \ref{Bernstein} and Lemma \ref{computing_Ext}. Moreover, the functor $\mathrm{Ext}^d_H(\cdot,H)$ is an equivalence between the categories of left and right $H$-modules of finite length, respectively, which is quasi-inverse to itself (cf.\ \cite{Iwa}, Theorem 8). Together with Theorem \ref{Bernstein} and Corollary \ref{admissible_finite_length} this proves the remaining statements.
\end{proof}
\begin{rem}\label{d_step_duality}
The arguments in the proof of Theorem \ref{Zelevinski} and \cite{ASZ}, Theorem 1.2, show more generally that the functors $(\E^i)_{0\leq i\leq d}$ induce a $(d+1)$-step duality on the full subcategory of $\Rep_R^I(G)$ consisting of all objects $V$ for which the $H$-module $V^I$ is finitely generated. This goes way beyond the case of admissible representations considered classically.
\end{rem}
If $R$ is a field of characteristic zero and if $M\in\Mod_H$ then the oriented chain complex $\C_c^\ori(\BT_{(\bullet)},\F(M))$ is acyclic (cf.\ Theorem \ref{Bernstein}, Theorem \ref{description_functors_0} and Corollary \ref{Schneider_Stuhler} (ii)). In general, its exactness properties remain an open question. If $p$ is nilpotent in $R$ we can at least treat the case of semisimple rank one.
\begin{lem}\label{I_invariants_non_trivial}
Assume that $p$ is nilpotent in $R$ and that $H$ is a pro-$p$ group. If $V\in\Rep_R^\infty(H)$ is non-zero then $V^H\not=0$.
\end{lem}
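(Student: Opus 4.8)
The plan is to reduce, in two steps, to the case of the cyclic group of order $p$. First I would use smoothness to pass to a finite quotient: since $H$ is profinite, the open normal subgroups of $H$ form a neighbourhood basis of the identity, and for any non-zero $v\in V$ the stabilizer of $v$ is open, hence contains an open normal subgroup $N$ of $H$. Then $0\neq v\in V^N$, and because $H/N$ is a finite quotient of a pro-$p$ group it is a finite $p$-group acting $R$-linearly on $V^N$. Since $V^H=(V^N)^{H/N}$, it suffices to prove the purely algebraic statement: \emph{if $P$ is a finite $p$-group, $R$ a commutative ring in which $p$ is nilpotent, and $M\neq 0$ an $R[P]$-module, then $M^P\neq 0$.}

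I would prove this by induction on $|P|$, the case $|P|=1$ being trivial. If $P\neq 1$ its center $Z(P)$ is non-trivial (class equation), so it contains a subgroup $Z\cong\ZZ/p\ZZ$ generated by an element of order $p$ in $Z(P)$. Granting the case $P=\ZZ/p\ZZ$ treated below, we get $M^Z\neq 0$. Since $Z$ is central, $P$ preserves $M^Z$, so $M^Z$ is a non-zero $R[P/Z]$-module; as $|P/Z|<|P|$, the inductive hypothesis gives $(M^Z)^{P/Z}\neq 0$, and $(M^Z)^{P/Z}=M^P$.

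It remains to treat $P=\ZZ/p\ZZ=\langle\sigma\rangle$, and this is the only step with genuine content. The key point is that $p\mid\binom{p}{i}$ for $0<i<p$, which yields an identity $(X-1)^p=(X^p-1)+p\,g(X)$ in $\ZZ[X]$ for a suitable $g\in\ZZ[X]$; reducing modulo $X^p-1$ and letting $X$ act as $\sigma$ (legitimate since $\sigma^p=1$), we obtain $(\sigma-1)^p=p\,g(\sigma)$ as $R$-linear endomorphisms of $M$. Since $p$ is nilpotent in $R$, say $p^m=0$, it follows that $(\sigma-1)^{pm}=p^m g(\sigma)^m=0$, so $\sigma-1$ is a nilpotent endomorphism of $M$. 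A nilpotent endomorphism of a non-zero module has non-zero kernel (if $\sigma=1$ take $M$ itself; otherwise choose the largest $j$ with $(\sigma-1)^jM\neq 0$ and an $m$ with $(\sigma-1)^jm\neq 0$, which then lies in $\ker(\sigma-1)$), hence $M^\sigma=\ker(\sigma-1)\neq 0$.

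The argument is otherwise routine; the small subtlety I would flag is that one uses the nilpotence of $p$ rather than merely $p=0$, which is precisely what makes $(\sigma-1)^{pm}=p^m g(\sigma)^m$ force $\sigma-1$ to be nilpotent. I do not expect a real obstacle beyond making sure each reduction stays inside the category of $R[P]$-modules over an arbitrary ring $R$ with $p$ nilpotent — no Noetherian, local, or finite generation hypothesis is needed, and the steps above never leave that setting.
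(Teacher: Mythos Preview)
Your proof is correct and complete. The approach differs from the paper's, though both begin by reducing to a finite $p$-group action via smoothness. The paper then takes a different second step: it multiplies a non-zero vector $v$ by a suitable power of $p$ to obtain $w=p^{n-1}v\neq 0$ with $pw=0$, so the cyclic subrepresentation $W=R[H]\cdot w$ is actually an $\mathbb{F}_p$-vector space; the finite $p$-group result over $\mathbb{F}_p$ is then simply cited from Serre. In contrast, you never reduce the coefficient ring to $\mathbb{F}_p$: you work directly over $R$ with $p$ nilpotent and prove the base case $P=\mathbb{Z}/p\mathbb{Z}$ from scratch via the identity $(\sigma-1)^p=p\,g(\sigma)$, which forces $\sigma-1$ to be nilpotent. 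Your route is fully self-contained and slightly more general in spirit (it treats arbitrary $R$ with $p$ nilpotent without passing through $\mathbb{F}_p$), while the paper's route is shorter by outsourcing the finite-group case to a standard reference.
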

\begin{proof}
Let $v\in V\setminus\{0\}$. Since $p$ is nilpotent there is a positive integer $n$ with $p^nv=0$ and $p^{n-1}v\not=0$. Set $w=p^{n-1}v$ and let $W=R[H]\cdot w$ denote the subrepresentation of $V$ generated by $w$. Since $H$ is compact and the stabilizer of $w$ in $H$ is open, $W$ is an $\mathbb{F}_p$-vector space with a linear action of $H$ that factors through a finite $p$-group. By \cite{Ser}, \S8, Proposition 26, we have $W^H\not=0$ (cf.\ also \cite{Pas}, Lemma 2.1, for the case $R=\overline{\mathbb{F}}_p$).
\end{proof}
\begin{prop}\label{semisimple_rank_one}
Assume that $R$ is a quasi-Frobenius ring in which $p$ is nilpotent and that the semisimple rank of $\GG$ is equal to one. If $M\in\Mod_H$ then the augmented oriented chain complex
\[
0\longrightarrow \C_c^\ori(\BT_{(1)},\F(M))\longrightarrow \C_c^\ori(\BT_{(0)},\F(M))\longrightarrow \mathrm{H}_0(\BT,\F(M))\longrightarrow 0
\]
is exact and there is a natural $H$-linear injection $M\hookrightarrow \mathrm{H}_0(\BT,\F(M))^I$. In particular, the functor $(M\mapsto \mathrm{H}_0(\BT,\F(M))):\Mod_H\to\Rep_R^\infty(G)$ maps non-zero modules to non-zero $G$-representations.
\end{prop}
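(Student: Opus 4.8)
The plan is to deduce the whole statement from the single vanishing $\mathrm{H}_1(\BT,\F(M))=0$, and to isolate there the roles played by the hypotheses on $p$ and on the semisimple rank. Since $\GG$ has semisimple rank one the building $\BT$ is one-dimensional, so the oriented chain complex $C=\C_c^\ori(\BT_{(\bullet)},\F(M))$ has only its two terms $C_1=\C_c^\ori(\BT_{(1)},\F(M))$ and $C_0=\C_c^\ori(\BT_{(0)},\F(M))$ and a single differential $\partial_0\colon C_1\to C_0$. The last map in the augmented complex is by construction the projection $C_0\to C_0/\partial_0(C_1)=\mathrm{H}_0(\BT,\F(M))$, so exactness at $C_0$ and at $\mathrm{H}_0(\BT,\F(M))$ is automatic, and the only assertion with content is that $\partial_0$ is injective, i.e.\ that $\mathrm{H}_1(\BT,\F(M))=\ker(\partial_0)=0$.

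To prove this I would first note that $\F(M)$ lies in the category $\C$ by Theorem~\ref{quasi_inverse_coefficient_system}, so Proposition~\ref{acyclic}~(i) applies and the complex of $H$-modules $C^I$ is acyclic. Because $\partial_0$ is $G$-equivariant and taking $I$-invariants is left exact, $\ker(\partial_0)^I=\ker(\partial_0^I)=\mathrm{H}_1(C^I)=0$; that is, $\mathrm{H}_1(\BT,\F(M))^I=0$. Now $\mathrm{H}_1(\BT,\F(M))=\ker(\partial_0)$ is a subrepresentation of the smooth $G$-representation $C_1$, hence in particular a smooth representation of the pro-$p$ group $I$. Since $p$ is nilpotent in $R$, Lemma~\ref{I_invariants_non_trivial} shows that a non-zero such representation must have non-zero $I$-invariants; as we have just seen that $\mathrm{H}_1(\BT,\F(M))^I=0$, we conclude $\mathrm{H}_1(\BT,\F(M))=0$. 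This establishes the exactness of the augmented complex.

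For the $H$-linear injection, recall that by Theorem~\ref{equivalence} there is a natural $H$-linear isomorphism $M\cong M(\F(M))=\mathrm{H}_0(C^I)$. The inclusion of complexes $C^I\hookrightarrow C$ induces a natural $H$-linear map $\mathrm{H}_0(C^I)\to\mathrm{H}_0(\BT,\F(M))^I$ (the class of an $I$-invariant $0$-chain is $I$-invariant in $\mathrm{H}_0$), whose kernel is $(C_0^I\cap\partial_0(C_1))/\partial_0(C_1^I)=(\partial_0 C_1)^I/\partial_0(C_1^I)$. But once $\partial_0$ is known to be injective it restricts to a $G$-isomorphism $C_1\cong\partial_0(C_1)$, so an element $\partial_0(c)$ is $I$-invariant exactly when $c\in C_1^I$, and the kernel vanishes. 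Composing yields the desired natural $H$-linear injection $M\hookrightarrow\mathrm{H}_0(\BT,\F(M))^I$; in particular, if $M\neq 0$ then $\mathrm{H}_0(\BT,\F(M))^I\neq 0$, hence $\mathrm{H}_0(\BT,\F(M))\neq 0$.

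The main obstacle --- and the precise reason the statement is confined to semisimple rank one --- is the upgrade from $\mathrm{H}_1(\BT,\F(M))^I=0$ to $\mathrm{H}_1(\BT,\F(M))=0$. In higher rank the positive-degree homology groups are only subquotients of the chain modules, which lie in $\Rep_R^I(G)$ but whose subquotients need not when $p$ is nilpotent, so Lemma~\ref{I_invariants_non_trivial} no longer suffices; in rank one the only positive-degree homology is the top one, $\ker(\partial_0)$, an honest subobject of $C_1$, and the argument goes through. The remaining ingredients are the acyclicity result Proposition~\ref{acyclic}~(i), the equivalence of Theorem~\ref{equivalence}, and routine diagram chasing.
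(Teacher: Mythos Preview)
Your argument is correct and follows essentially the same route as the paper: both reduce to showing $\partial_0$ is injective by combining the acyclicity of $C^I$ (Proposition~\ref{acyclic}) with Lemma~\ref{I_invariants_non_trivial}, and then derive the injection $M\hookrightarrow\mathrm{H}_0(\BT,\F(M))^I$ from Theorem~\ref{quasi_inverse_coefficient_system}. You spell out the kernel computation for the comparison map $\mathrm{H}_0(C^I)\to\mathrm{H}_0(C)^I$ that the paper leaves implicit, and your final paragraph on why the argument is confined to rank one is a useful addition.
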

\begin{proof}
Since $\F(M)\in\C$ by Theorem \ref{quasi_inverse_coefficient_system}, the map $\C_c^\ori(\BT_{(1)},\F(M))^I\to \C_c^\ori(\BT_{(0)},\F(M))^I$ is injective by Proposition \ref{acyclic} (i). It follows from the left exactness of $(\cdot)^I$ and Lemma \ref{I_invariants_non_trivial} that the map $\C_c^\ori(\BT_{(1)},\F(M))\to \C_c^\ori(\BT_{(0)},\F(M))$ is injective, too. The remaining assertions are a consequence of Theorem \ref{quasi_inverse_coefficient_system}.
\end{proof}
\begin{rem}\label{non_admissible}
Recall from (\ref{chain_complex_induced}) that the oriented chain complex of $\F(M)$ is a finite direct sum of representations of the form $\ind_{P_F^\dagger}^G(\varepsilon_F\otimes_R\F(M)_F)$ for suitable faces $F\subseteq\Cbar$. If the underlying $R$-module of $M$ is finitely generated then so is the underlying $R$-module of $\varepsilon_F\otimes_R\F(M)_F$ because it is a quotient of $X_F\otimes_{H_F}M$. Under the assumptions of Proposition \ref{semisimple_rank_one} the $G$-representation $\mathrm{H}_0(\BT,\F(M))$ may therefore be called finitely presented as in \cite{Hu}, \S4.1. Now assume that $K$ is of characteristic $p$, $G=\GL_2(K)$,  $R=\overline{\mathbb{F}}_p$ and $M$ is simple and supersingular. Then \cite{Hu}, Corollaire 1.4, and \cite{OV}, Theorem 5.3, imply that the $G$-representation $\mathrm{H}_0(\BT,\F(M))$ cannot be admissible and irreducible. Moreover, Lemma \ref{I_invariants_non_trivial} implies that in this situation the inclusion $M\hookrightarrow \mathrm{H}_0(\BT,\F(M))^I$ has to be proper.
\end{rem}
Assume that $R=\overline{\mathbb{F}}_p$ with $p\not=2$ and that $G=\GL_2(\Qp)$ or $G=\mathrm{SL}_2(\Qp)$. By fundamental results of Ollivier and Koziol $(\cdot)^I:\Rep_R^I(G)\to\Mod_H$ and $X\otimes_H(\cdot):\Mod_H\to\Rep^I_R(G)$ are mutually inverse equivalences of categories (cf.\ \cite{Oll1}, Th\'eor\`eme 1.2 (a) and \cite{Koz}, Corollary 5.3 (1)). In part, this relies on exceptional flatness properties of the $H$-module $X$ (cf.\ \cite{Koz}, Corollary 4.10 (1)). The following proposition relies on analogous flatness properties of the finite universal Hecke modules $X_F$ over $H_F$. In exceptional cases we can thus generalize the results found in Theorem \ref{description_functors_0}. We content ourselves with formulating them for the groups $\mathrm{SL}_2$, $\GL_2$ and $\mathrm{PGL}_2$.
\begin{prop}\label{exceptional_flat}
Assume that $R$ is a quasi-Frobenius ring in which $p$ is nilpotent and that $G=\mathrm{SL}_2(K)$, $G=\GL_2(K)$ or $G=\mathrm{PGL}_2(K)$ with $|k|=p$.
\begin{enumerate}[wide]
\item[(i)]There is an isomorphism $\F(\cdot)\cong\F_X\otimes_H(\cdot):\Mod_H\to\Coeff_G(\BT)$.
\item[(ii)]The complexes $\C_c^\ori(\BT_{(\bullet)},\F(M))$ and $\C_c^\ori(\BT_{(\bullet)},\F_X)\otimes_HM$ are naturally isomorphic and acyclic for any $M\in\Mod_H$.
\item[(iii)]There is an isomorphism $\H_0(\BT,\F(\cdot))\cong X\otimes_H(\cdot):\Mod_H\to\Rep_R^\infty(G)$.
\end{enumerate}
\end{prop}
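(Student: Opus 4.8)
The strategy is to reduce all three statements to a single local flatness input and then reuse the machinery of \S\ref{subsection_3_2}. The key claim is that \emph{for every face $F$ of $\BT$ the module $X_F$ is flat over $H_F$}. Since $P_F$ is compact, $X_F$ is free of finite rank over $R$, hence finitely generated over $H_F$; and $H_F$ is noetherian (Proposition \ref{selfinjective} (i)), so flatness is equivalent to $X_F$ being finitely generated projective over $H_F$. By Proposition \ref{I_F_invariants} (i) this makes $X_F^\dagger\cong X_F\otimes_{H_F}H_F^\dagger$ a direct summand of a finite free $H_F^\dagger$-module, hence finitely generated projective over $H_F^\dagger$. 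Consequently the natural map
\[
\tau_{M,F}\colon X_F^\dagger\otimes_{H_F^\dagger}M\longrightarrow\Hom_{H_F^\dagger}(\Hom_{H_F^\dagger}(X_F^\dagger,H_F^\dagger),M)
\]
from the proof of Theorem \ref{quasi_inverse_OS} is the usual double-duality isomorphism attached to a finitely generated projective module, for every $M\in\Mod_{H_F^\dagger}$.

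Granting the claim, statement (i) is obtained by running the argument of Theorem \ref{description_functors_0} (i) with ``$X_F^\dagger$ is finitely generated projective over $H_F^\dagger$'' in place of ``$\Rep_R(P_F/I_F)$ is semisimple''. Concretely, for a face $F\subseteq\Cbar$ the commutative square relating the two versions of $\tau_{M,F}$ (via Proposition \ref{I_F_invariants} (ii), as in the proof of Theorem \ref{quasi_inverse_coefficient_system}) shows that $\tau_{M,F}\colon X^{I_F}\otimes_HM\to\Hom_H(\Hom_H(X^{I_F},H),M)$ is bijective, equivalently that the comparison homomorphism $\F_X\otimes_HM\to\F(M)$ of (\ref{comparison_homomorphisms}) is bijective on the face $F$. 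This comparison homomorphism is a morphism in $\Coeff_G(\BT)$ and is natural in $M$, and by $G$-equivariance bijectivity on the finitely many faces $F\subseteq\Cbar$ suffices (Lemma \ref{face_representatives}, or the equivalence $\Coeff_G(\BT)\cong\Diag(\Cbar)$ of Proposition \ref{diagrams}); hence it is a natural isomorphism, proving (i). For (ii), tensoring commutes with direct sums and with the differentials, so $\C_c^\ori(\BT_{(\bullet)},\F_X\otimes_HM)\cong\C_c^\ori(\BT_{(\bullet)},\F_X)\otimes_HM$, and combined with (i) this yields the asserted natural isomorphism of complexes; their acyclicity is exactly Proposition \ref{semisimple_rank_one}, whose hypotheses coincide with those of the present proposition, so that (since $\BT$ is a tree) the unique differential is injective. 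For (iii), applying the right exact functor $\H_0(\BT,\cdot)$ to this isomorphism of complexes and using the right exactness of $(\cdot)\otimes_HM$ gives $\H_0(\BT,\F(M))\cong\H_0(\BT,\F_X)\otimes_HM\cong X\otimes_HM$, the last isomorphism by \cite{SS}, Theorem II.3.1, and \cite{OS1}, Remark 3.2.1; naturality in $M$ is clear.

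It remains to prove the claim. By conjugation with the isomorphisms $\varphi_{gd,F}$ it suffices to treat the faces $F\subseteq\Cbar$. For the chamber $F=C$ one has $P_C/I_C\cong T_0/T_1$, so $X_C\cong H_C\cong R[T_0/T_1]$ as an $H_C$-module and flatness is obvious. For a vertex $x\subseteq\Cbar$ one has $X_x\cong\ind_{I/I_x}^{P_x/I_x}(R)$, and because $|k|=p$ the finite group $P_x/I_x$ is one of $\mathrm{SL}_2(\mathbb{F}_p)$, $\GL_2(\mathbb{F}_p)$ or $\mathrm{PGL}_2(\mathbb{F}_p)$ with $I/I_x$ the unipotent radical of a Borel subgroup; thus $X_x$ is the finite universal Hecke module of a rank one finite reductive group over $\mathbb{F}_p$, and $H_x$ is generated over $R[T_0/T_1]$ by a single Hecke operator subject to the quadratic relation (\ref{quadratic_relations}) with $q=p$. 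That $X_x$ is flat over $H_x$ in this exceptional situation is a finite combinatorial fact depending only on the residue field being $\mathbb{F}_p$ and not on $K$; it is the finite counterpart of the exceptional flatness of $X$ over $H$ established by Koziol (\cite{Koz}, \S4 and Corollary 4.10 (1)) for $\GL_2(\Qp)$ and $\mathrm{SL}_2(\Qp)$, and one verifies it directly for the three algebras in question, for instance by exhibiting $X_x$ as the direct sum of $H_x$ with a free complement or by a $\Tor$-vanishing computation. I expect this last verification to be the only genuine obstacle; everything else is bookkeeping with the equivalences of \S\ref{subsection_3_2} and with Proposition \ref{semisimple_rank_one}.
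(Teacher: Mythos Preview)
Your approach is essentially identical to the paper's: reduce everything to the projectivity of $X_F$ over $H_F$ for $F\subseteq\Cbar$, observe that this makes $\tau_{M,F}$ injective (in fact a biduality isomorphism), and then read off (i)--(iii) from the comparison map (\ref{comparison_homomorphisms}), Proposition \ref{semisimple_rank_one}, and the universal resolution of $X$.

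The one place you flag as an obstacle---projectivity of $X_x$ over $H_x$ at a vertex $x$---is not addressed by \cite{Koz}, which concerns the affine algebra $H$, not the finite $H_x$. The paper closes this gap as follows: first reduce to $F=x_0$ (for $\mathrm{SL}_2$ the two vertex orbits are swapped by an outer automorphism coming from $\GL_2(K)$); then reduce to $R=\mathbb{Z}/p^n\mathbb{Z}$ by base change, since $X_F\cong X_F'\otimes_{H_F'}H_F$ with $X_F',H_F'$ the analogues over $\mathbb{Z}/p^n\mathbb{Z}$; then, by \cite{BAC}, III.5.4 Proposition 3 and faithfully flat descent, reduce further to $R=\overline{\mathbb{F}}_p$. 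At that point one quotes \cite{GK}, Lemma 2.2, for $\mathrm{SL}_2(\mathbb{F}_p)$ and \cite{OSe}, Th\'eor\`eme B (2) and Proposition 2.15, for $\GL_2(\mathbb{F}_p)$; the $\mathrm{PGL}_2(\mathbb{F}_p)$ case is deduced from the $\GL_2$ case via the isomorphism $X_F'\otimes_{H_F'}H_F\cong X_F$ obtained by inflating to $\GL_2$. With these references in hand your argument is complete.
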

\begin{proof}
Note first that if $F$ is a face of $\BT$ contained in $\Cbar$ then the $H_F$-module $X_F$ is projective. If $F=C$ then this follows from $X_C=H_C\cong R[T_0/T_1]$. Now assume that $F$ is a vertex. The group $\GL_2(K)$ acts transitively on the set of vertices of $\BT$. Moreover, it acts on $\mathrm{SL}_2(K)$ by outer automorphisms. In all cases we may therefore assume $F=x_0$. There is an integer $n$ with $p^nR=0$. If $X_F'$ and $H_F'$ denote the corresponding objects defined over $\mathbb{Z}/p^n\mathbb{Z}$ then $H_F\cong H_F'\otimes_{\mathbb{Z}/p^n\mathbb{Z}}R$ and $X_F'\otimes_{H_F'}H_F\cong X_F'\otimes_{\mathbb{Z}/p^n\mathbb{Z}}R\cong X_F$. We may therefore assume $R=\mathbb{Z}/p^n\mathbb{Z}$.\\

If $G=\mathrm{SL}_2(K)$ then $P_F/I_F\cong\mathrm{SL}_2(k)$ and the assertion is proved in \cite{GK}, Lemma 2.2. Let us therefore assume $G=\GL_2(K)$ or $G=\mathrm{PGL}_2(K)$ whence $P_F/I_F\cong\GL_2(k)$ or $P_F/I_F\cong\mathrm{PGL}_2(k)$. By \cite{BAC}, III.5.4 Proposition 3, we may assume $n=1$. By faithfully flat base change we may further assume $R=\overline{\mathbb{F}}_p$. The case of $\GL_2(k)$ is then treated in \cite{OSe}, Th\'eor\`eme B (2) and Proposition 2.15. In order to deduce the case of $\mathrm{PGL}_2(k)$ let $H_F'$ and $X_F'$ denote the corresponding objects for $\GL_2(k)$. Viewing $X_F$ as a $\GL_2(k)$-representation via inflation the natural map $X_F'\otimes_{H_F'}H_F=X_F'\otimes_{H_F'}X_F^I\to X_F$ is bijective by \cite{OSe}, Th\'eor\`eme A and Th\'eor\`eme B (2). Since it is $H_F$-right linear the claim follows.\\

Conjugation as in (\ref{phi_gd}) and Proposition \ref{I_F_invariants} (ii) imply that the $H$-module $X^{I_F}$ is finitely generated projective for any face $F$ of $\BT$. Thus, all maps
\[
\tau_{M,F}: X^{I_F}\otimes_HM\longrightarrow\Hom_H(\Hom_H(X^{I_F},H),M)
\]
are injective. This proves part (i). Together with Proposition \ref{semisimple_rank_one}, part (ii) is an immediate consequence using the trivial relation $\C_c^\ori(\BT_{(\bullet)},\F_X\otimes_HM)=\C_c^\ori(\BT_{(\bullet)},\F_X)\otimes_HM$. By \cite{OS1}, Remark 3.2.1, the augmented oriented chain complex
\begin{equation}\label{universal_resolution}
0\longrightarrow \C_c^\ori(\BT_{(1)},\F_X)\longrightarrow \C_c^\ori(\BT_{(0)},\F_X)\longrightarrow X\longrightarrow 0
\end{equation}
is exact. Therefore, part (iii) follows from (ii) and the right exactness of the functor $(\cdot)\otimes_HM$.
\end{proof}


\subsection{Homotopy categories and their localizations}\label{subsection_4_2}

Over an arbitrary coefficient ring $R$, the functor $(\cdot)^I:\Rep_R^I(G)\to\Mod_H$ is faithful but not necessarily full. This concerns, for example, the case where $R$ is an algebraically closed field of characteristic $p$ and $G=\GL_2(K)$ with $K$ of characteristic zero and $q>p$ or $K=\mathbb{F}_p((T))$ with $p\not=2$ (cf.\ \cite{Oll1}, Th\'eor\`eme). However, one can single out an important full subcategory of $\Rep_R^I(G)$ on which the functor $(\cdot)^I$ is fully faithful. The following result generalizes \cite{Oll2}, Lemma 3.6.
\begin{prop}\label{invariants_induced_representation}
If $F$ is a face of $\BT$ with $F\subseteq\Cbar$ and if $V\in\Rep_R^\infty(P_F^\dagger)$ then the $P_F^\dagger$-equivariant inclusion $V\hookrightarrow\ind_{P_F^\dagger}^G(V)$ induces an isomorphism
\[
H\otimes_{H_F^\dagger}V^I\longrightarrow \ind_{P_F^\dagger}^G(V^{I_F})^I
\]
of $H$-modules given by $h\otimes m\mapsto h\cdot m$.
\end{prop}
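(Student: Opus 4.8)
The plan is to show that the natural map $\mu\colon H\otimes_{H_F^\dagger}V^I\to\ind_{P_F^\dagger}^G(V^{I_F})^I$, $h\otimes m\mapsto h\cdot m$, is a well-defined $H$-linear isomorphism, the last point by cutting both sides up along the double coset space $P_F^\dagger\backslash G/I$ exactly as in the proof of Proposition \ref{I_F_invariants} (ii). First I would settle the preliminaries: since $F\subseteq\Cbar$ one has $I\subseteq P_F\subseteq P_F^\dagger$ by (\ref{inclusions}), hence $I_{C(F)}=I$ and the $H_F^\dagger$-module $V^I=V^{I_{C(F)}}$ is the usual one; moreover $I_F$ is normal in $P_F^\dagger$, so $V^{I_F}$ is a $P_F^\dagger$-subrepresentation of $V$ and the canonical embedding $V\hookrightarrow\ind_{P_F^\dagger}^G(V)$ restricts to a $P_F^\dagger$-equivariant embedding $\iota\colon V^{I_F}\hookrightarrow\ind_{P_F^\dagger}^G(V^{I_F})$. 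The inclusion $I\subseteq P_F^\dagger$ forces $\iota(m)$ to be $I$-invariant for every $m\in V^I$, and $P_F^\dagger$-equivariance of $\iota$ together with the explicit Hecke action (\ref{convolution}) shows that $\iota\colon V^I\to\ind_{P_F^\dagger}^G(V^{I_F})^I$ is $H_F^\dagger$-linear; this makes $\mu(h\otimes m):=h\cdot\iota(m)$ well-defined and $H$-linear.

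Next I would decompose both sides over $\tilde W/\tilde W_F^\dagger$. As in the proof of Proposition \ref{I_F_invariants} (ii), the Bruhat decompositions (\ref{Bruhat_decomposition}), (\ref{Bruhat_parahoric}), length additivity (\ref{additive_length}) and the $\tilde\Omega_F$-stability of $\tilde D_F$ yield $G=\coprod_{d}P_F^\dagger d^{-1}I$, where $d$ runs over a set of representatives of $\tilde W/\tilde W_F^\dagger$ inside $\tilde D_F$ for which $(\tau_d)_d$ is simultaneously a right $H_F^\dagger$-basis of $H$ (Proposition \ref{free} (ii)); hence $H\otimes_{H_F^\dagger}V^I=\bigoplus_d\tau_d\otimes V^I$. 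On the other side, splitting an element of $\ind_{P_F^\dagger}^G(V^{I_F})$ according to its support, and using that $f\mapsto f(d^{-1})$ identifies the $I$-invariant functions supported on $P_F^\dagger d^{-1}I$ with $(V^{I_F})^{K_d}$, $K_d:=P_F^\dagger\cap d^{-1}Id$, one obtains $\ind_{P_F^\dagger}^G(V^{I_F})^I\cong\bigoplus_d(V^{I_F})^{K_d}$; a direct computation from (\ref{convolution}) shows that $\mu(\tau_d\otimes m)$ is supported on $P_F^\dagger d^{-1}I$, so $\mu$ respects these two decompositions.

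The building-theoretic core is then the analysis of $K_d$. Using Lemma \ref{closest_chamber}, the conjugation formulas (\ref{d_conjugation}) (which give $dId^{-1}=I_{C(dF)}$ and $dI_Fd^{-1}=I_{dF}$), the identity $I_{C(dF)}=I_{dF}(I\cap P_{dF})$ and (\ref{Iwahori_intersection_inclusion}), I would conjugate by $d$ to get $dK_dd^{-1}=P_{dF}^\dagger\cap I=I\cap P_{dF}\subseteq I_{C(dF)}$, whence $K_d\subseteq I$ and in fact $K_d=d^{-1}Id\cap I$; likewise $d(I_FK_d)d^{-1}=I_{dF}(I\cap P_{dF})=I_{C(dF)}=dId^{-1}$, i.e.\ $I_FK_d=I$ as subgroups (using that $I_F$ is normal in $P_F^\dagger$). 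From the first fact the coset space $K_d/(d^{-1}Id\cap I)$ occurring in the convolution sum is trivial, so $\mu(\tau_d\otimes m)$ is simply the $I$-invariant function supported on $P_F^\dagger d^{-1}I$ with value $m$ at $d^{-1}$; from the second, $(V^{I_F})^{K_d}=V^{I_FK_d}=V^I$. Hence for each $d$ the map $\mu$ carries $\tau_d\otimes V^I$ bijectively onto the $d$-th summand $(V^{I_F})^{K_d}=V^I$, and, being $H$-linear and compatible with the direct sum decompositions, $\mu$ is an isomorphism.

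I expect the main obstacle to be this last step — pinning down $P_F^\dagger\cap d^{-1}Id$ and, in particular, establishing $I_FK_d=I$ — which rests on a careful analysis, via Lemma \ref{closest_chamber}, of the parahoric and pro-$p$ radical subgroups attached to the faces $F$, $dF$ and to the chambers $C$, $C(dF)$. The verifications in the first two paragraphs (well-definedness, the Mackey-type splitting of the induced representation, and the collapse of the convolution sum) are routine once the conventions for compact induction and for the $H$-action on $I$-invariants are fixed.
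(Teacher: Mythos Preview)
Your proposal is correct and follows essentially the same route as the paper's proof: both decompose $G$ into $(I,P_F^\dagger)$-double cosets indexed by $D_F/\Omega_F$, use that the corresponding $\tau_d$ form a right $H_F^\dagger$-basis of $H$, and reduce to showing that $m\mapsto\tau_d\cdot m$ identifies $V^I$ with the $I$-invariant functions supported on the $d$-th coset. The decisive group-theoretic identity you isolate as $I_FK_d=I$ (with $K_d=P_F^\dagger\cap d^{-1}Id=d^{-1}Id\cap I$) is exactly the paper's $(d^{-1}Id\cap P_F)I_F=I$, proved there via Lemma~\ref{closest_chamber} and \cite{OS1}, Proposition 4.13 (i), just as you indicate. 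One small wording issue: the quotient you call ``$K_d/(d^{-1}Id\cap I)$ occurring in the convolution sum'' is trivially trivial by your own computation, but the coset space in (\ref{convolution}) for $\tau_d$ is $I/(I\cap dId^{-1})$; what you actually need (and what your facts yield) is that the summands $g'd\cdot\iota(m)$ have pairwise disjoint supports, so that evaluation at the chosen representative picks out a single term with value $m$---this is what the paper checks directly.
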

\begin{proof}
By (\ref{Bruhat_decomposition}) and (\ref{Bruhat_parahoric}) we have $G=\coprod_{d\in D_F/\Omega_F}IdP_F^\dagger$. Fixing a system of represenatives of $D_F/\Omega_F$ the corresponding Hecke operators $\tau_d$ form a basis of the right $H_F^\dagger$-modules $H$ (cf.\ the proof of Proposition \ref{free} (ii)). Therefore, it suffices to see that $(m\mapsto\tau_d\cdot m):V^I\to\ind_{P_F^\dagger}^G(V)^I$ maps isomorphically onto the $R$-submodule $W_d$ of $I$-invariant elements with support $IdP_F^\dagger$ and values in $V^{I_F}$ for any $d$.\\

It follows from (\ref{convolution}) that $\tau_d\cdot m$ is the function with support $IdP_F^\dagger$ and values $(\tau_d\cdot m)(idp)=p^{-1}m$ for all $i\in I$ and $p\in P_F^\dagger$. Note that this is an element of $V^{I_F}$ because $I_F\subseteq I$ and $I_F$ is a normal subgroup of $P_F^\dagger$. Therefore, the map $(m\mapsto\tau_d\cdot m):V^I\to W_d$ is well-defined and injective.\\

On the other hand, let $f\in W_d$. Then $f(idp)=p^{-1}f(d)$ where $f(d)$ is an element of $V$ which is invariant under $(d^{-1}Id\cap P_F)I_F=I$. Here the last equation results from Lemma \ref{closest_chamber} and \cite{OS1}, Proposition 4.13 (i). Thus, $f=\tau_d\cdot f(d)$ and the map in question is surjective.
\end{proof}
If $V\in\Rep_R^\infty(G)$ and if $F$ is a face of $\BT$ then we may view $V$ and $V^{I_F}$ as objects of $\Rep_R^\infty(P_F^\dagger)$ via restriction.
\begin{prop}\label{invariants_induction_H}
Let $R$ be quasi-Frobenius ring and let $F$ and $F'$ be faces of $\BT$ contained in $\Cbar$.
\begin{enumerate}[wide]
\item[(i)]If $V\in\Rep_R^H(P_{F'})$ then $\ind_{P_{F'}}^G(V)^{I_F}$ is an object of $\Rep_R^H(P_F)$.
\item[(ii)]If $V\in\Rep_R^H(P_{F'}^\dagger)$ then $\ind_{P_{F'}^\dagger}^G(V)^{I_F}$ is an object of $\Rep_R^H(P_F^\dagger)$.
\end{enumerate}
\end{prop}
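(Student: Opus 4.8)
The plan is to reduce everything to a single structural computation of $\ind_{P_{F'}}^G(V)^{I_F}$ as a $P_F$-representation, and then invoke the machinery of \S\ref{subsection_3_1}. Since condition (H) is preserved under filtered unions with injective transition maps (Remark \ref{H_I_F_trivial}) and compact induction commutes with such limits, I may assume from the outset that the underlying $R$-module of $V$ is finitely generated, hence (after writing $V\cong\im(\varphi)$ for some $\varphi\in\End_{P_{F'}}(X_{F'}^n)$ via Lemma \ref{properties_H}) that $V$ is a subquotient of a finite direct sum of copies of $X_{F'}$. Because $\ind_{P_{F'}}^G$ is exact and $(\cdot)^{I_F}$ is left exact, and because Corollary \ref{injective_surjective} lets me detect condition (H) of a subrepresentation generated by its $I$-invariants, it suffices to (a) show that $\ind_{P_{F'}}^G(X_{F'})^{I_F}\cong (X^{I_F})$, or more precisely a representation satisfying condition (H), and (b) check that the various $I_F$-invariant spaces occurring are generated by their $I$-invariants so that Corollary \ref{injective_surjective} (i) and (iii) apply.

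For step (a), the key input is Proposition \ref{invariants_induced_representation}, which already computes $\ind_{P_{F'}^\dagger}^G(V)^I\cong H\otimes_{H_{F'}^\dagger}V^I$ on the level of $H$-modules; what I need here is the analogous statement one level down, describing $\ind_{P_{F'}}^G(V)^{I_F}$ as a $P_F$-representation rather than just its $I$-invariants. First I would use the Bruhat-type decomposition $G=\coprod_{d}I_F d P_{F'}$ (a refinement of $G=\coprod_d I d P_{F'}^\dagger$ from (\ref{Bruhat_decomposition}) and (\ref{Bruhat_parahoric}), with $d$ running over an appropriate set of double-coset representatives) together with the Mackey-type analysis already carried out in the proof of Proposition \ref{invariants_induced_representation}: for each $d$ the space of $I_F$-invariant functions supported on $I_F d P_{F'}$ is isomorphic, as a module over the relevant subgroup, to an induced copy of $V$ restricted along conjugation by $d$. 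Concretely I expect to obtain a $P_F$-equivariant isomorphism of the shape $\ind_{P_{F'}}^G(X_{F'})^{I_F}\cong\bigoplus_{d}\ind_{P_F\cap d P_{F'}d^{-1}}^{P_F}\bigl(\,{}^d(X_{F'}^{I_F\cap d^{-1}P_F d})\bigr)$, and then recognize each summand, using Proposition \ref{I_F_invariants} and Lemma \ref{induction_H}, as a representation satisfying condition (H) over $P_F$ --- the point being that $X_{F'}^{I_F\cap\,\cdot}$ is, by Proposition \ref{I_F_invariants} (iii), of the form $X_{?}\otimes_{H_?}H_{?'}$, which satisfies (H) by freeness (Proposition \ref{free}), and that Lemma \ref{induction_H} (ii) propagates (H) along $\ind_{P_F\cap dP_{F'}d^{-1}}^{P_F}$ after identifying $P_F\cap dP_{F'}d^{-1}$ with a parahoric of a face in the closure of a suitable chamber.

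For the general $V$, I would argue as follows. Embed $V\hookrightarrow X_{F'}^n$ and choose a $P_{F'}$-equivariant surjection $X_{F'}^m\twoheadrightarrow V$ (Lemma \ref{properties_H} (ii)). Applying $\ind_{P_{F'}}^G$ and then $(\cdot)^{I_F}$ gives a $P_F$-equivariant injection $\ind_{P_{F'}}^G(V)^{I_F}\hookrightarrow\ind_{P_{F'}}^G(X_{F'})^{nI_F}$ and, using that $(\cdot)^{I_F}$ is exact when $p$ is invertible and using the averaging/$U$-invariants argument of Proposition \ref{invariants_H} when $p$ is nilpotent (split $R=R_1\times R_2$ as there), a $P_F$-equivariant surjection $\ind_{P_{F'}}^G(X_{F'})^{mI_F}\twoheadrightarrow\ind_{P_{F'}}^G(V)^{I_F}$; in particular $\ind_{P_{F'}}^G(V)^{I_F}$ is generated by its $I$-invariants. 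By step (a) the source and the target of the injection satisfy (H) as $P_F$-representations, so Corollary \ref{injective_surjective} (i) applied to the $H_F$-submodule $\ind_{P_{F'}}^G(V)^{I_F\cap\cdots}$... more directly, $\ind_{P_{F'}}^G(V)^{I_F}$ is a subrepresentation of $\ind_{P_{F'}}^G(X_{F'})^{nI_F}\in\Rep_R^H(P_F)$ which is generated by its $I$-invariants, hence satisfies condition (H) by Corollary \ref{injective_surjective} (i). This proves (i); part (ii) follows formally since by Definition \ref{condition_H} (ii) condition (H) for a $P_F^\dagger$-representation means condition (H) after restriction to $P_F$, and $\ind_{P_{F'}^\dagger}^G(V)^{I_F}$ restricted to $P_F$ is a subrepresentation of $\ind_{P_{F'}}^G(\res V)^{I_F}$ (or one computes directly with $P_{F'}^\dagger$, $X_{F'}^\dagger$ in place of $P_{F'}$, $X_{F'}$ throughout, using Lemma \ref{induction_H} (i) and Proposition \ref{I_F_invariants} (i)).

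\textbf{The main obstacle.} The delicate point is step (a): identifying the subgroups $P_F\cap dP_{F'}d^{-1}$ that appear in the Mackey decomposition as parahoric subgroups of faces lying in the closure of a common chamber, so that Lemma \ref{induction_H} (ii) and Proposition \ref{I_F_invariants} (iii) are actually applicable. This is a building-theoretic bookkeeping problem --- one must track, for each double-coset representative $d\in\tilde D_{F'}$, which face $d^{-1}F$ is and how it sits relative to $F$, and verify that the relevant intersection of parahorics is again the parahoric of a face $F''$ with $C(F'')=C(F)$ (so that the freeness statements of Proposition \ref{free} (iii) apply). The set of roots controlling $P_F\cap dP_{F'}d^{-1}$ need not be "convex" in the naive sense, and keeping this under control is exactly the subtlety flagged in Remark \ref{subtle_set_of_roots}. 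Everything else is a routine combination of exactness of compact induction, the two cases $p$ invertible / $p$ nilpotent handled as in Proposition \ref{invariants_H}, and the formal properties of condition (H) from Corollary \ref{injective_surjective}.
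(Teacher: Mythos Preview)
Your overall strategy --- Mackey decomposition followed by identifying the intersections $P_F\cap dP_{F'}d^{-1}$ as parahorics --- is the right one, and you correctly flag the building-theoretic identification as the heart of the matter. But there is a genuine gap in your reduction from general $V$ to $V=X_{F'}$, and the paper in fact avoids this reduction altogether.

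The gap is the claimed surjection $\ind_{P_{F'}}^G(X_{F'}^m)^{I_F}\twoheadrightarrow\ind_{P_{F'}}^G(V)^{I_F}$ when $p$ is nilpotent. Your reference to ``the averaging/$U$-invariants argument of Proposition \ref{invariants_H}'' does not apply: that argument works inside the finite reductive group $P_{F'}/I_{F'}$ and uses the decomposition $V=V^U\oplus\overline{J}\cdot V^U$, which has no analogue for the infinite $G$-representation $\ind_{P_{F'}}^G(V)$. Without this surjection you cannot conclude that $\ind_{P_{F'}}^G(V)^{I_F}$ is generated by its $I$-invariants, and Corollary \ref{injective_surjective} (i) does not fire.

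The paper proceeds differently. First it reduces (ii) to (i) via the Mackey decomposition over $G_\aff\backslash G/P_{F'}^\dagger\cong\Omega/\Omega_{F'}$ (your claim that $\ind_{P_{F'}^\dagger}^G(V)|_{P_F}$ is a \emph{sub}representation of $\ind_{P_{F'}}^G(\res V)$ is wrong --- it is a quotient --- but the paper's route through direct summands works). Then it reduces to the case where $F$ and $F'$ are vertices $x,y$ using Lemma \ref{induction_H} (ii) and Proposition \ref{invariants_H}. After Mackey over $W_x\backslash W/W_y$, the key computation is done for \emph{general} $V$: one shows directly that $(d^{-1}P_xd\cap P_y)I_y=P_{F''}$ for a face $F''$ with $\overline{F''}=\mathrm{cl}(d^{-1}x,y)\cap\Cbar$, and that $I_{F''}=(d^{-1}I_xd\cap I)I_y$, via an explicit Iwahori-decomposition argument with the root filtrations $U_{\alpha,r}$. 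With these identifications the piece $(V^d)^{I_x\cap dP_yd^{-1}}$ becomes $V^{I_{F''}}$ viewed as a representation of a parahoric $P_{F'''}$ on the other side, and one checks condition (H) for it (and its dual) by applying Proposition \ref{invariants_H} to $V\in\Rep_R^H(P_y)$ --- this is the legitimate use of Cabanes' argument, applied to each Mackey piece rather than to the whole induced representation. In short: the paper never reduces to $V=X_{F'}$, and the place where Proposition \ref{invariants_H} enters is inside the Mackey analysis, not before it.
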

\begin{proof}
We first show that (ii) follows from (i). Indeed, the Bruhat decompositions (\ref{Bruhat_decomposition}) and (\ref{Bruhat_parahoric}) show that $\Omega/\Omega_{F'}$ is a system of representatives of the double cosets $G_\aff\backslash G/P_{F'}^\dagger$. Therefore, we have the $G_\aff$-equivariant Mackey decomposition
\[
\ind_{P_{F'}^\dagger}^G(V)\cong\bigoplus_{\omega\in\Omega/\Omega_{F'}}\ind_{G_\aff\cap P_{\omega F'}^\dagger}^{G_\aff}(V^\omega).
\]
Here $\omega F'\subseteq\Cbar$ and $G_\aff\cap P_{\omega F'}^\dagger=P_{\omega F'}$ by \cite{OS1}, Lemma 4.10. Now $V\in\Rep_R^H(P_{F'}^\dagger)$ implies $V^\omega\in \Rep_R^H(P_{\omega F'})$. A second suitable Mackey decomposition shows that the $P_F$-representation $\ind_{P_{\omega F'}}^{G_\aff}(V^\omega)$ is a direct summand of $\ind_{P_{\omega F'}}^G(V^\omega)$. By Corollary \ref{injective_surjective} (ii) it suffices to prove (i).\\

Choose vertices $x$ and $y$ of $\BT$ which are contained in $\overline{F}$ and $\overline{F}'$, respectively. Then $\ind_{P_{F'}}^G(V)\cong\ind_{P_y}^G(\ind_{P_{F'}}^{P_y}(V))$ where $\ind_{P_{F'}}^{P_y}(V)$ satisfies condition (H) as a representation of $P_y$ according to Lemma \ref{induction_H} (ii). Without loss of generality we may therefore assume $F'=y$. Further, by Proposition \ref{invariants_H} it suffices to show that $\ind_{P_y}^G(V)^{I_x}$ satisfies condition (H) as a representation of $P_x$, i.e.\ we may assume $F=x$.\\

It follows from the decompositions (\ref{Bruhat_decomposition}) and (\ref{Bruhat_parahoric}) together with the braid relations (\ref{braid_relations}) that $G=\coprod_{d\in W_x\backslash W/W_y}P_xdP_y$ (cf.\ also \cite{BT}, Proposition 7.4.15). Consequently, we have the $P_x$-equivariant Mackey decomposition
\[
 \ind_{P_y}^G(V)\cong\bigoplus_{d\in W_x\backslash W/W_y}\ind_{P_x\cap dP_yd^{-1}}^{P_x}(V^d)
\]
where the group $P_x\cap dP_yd^{-1}$ acts on the $R$-vector space $V^d=V$ via $g\cdot v=d^{-1}gd\cdot v$. We identify a fixed double coset $d\in W_x\backslash W/W_y$ with its unique representative of minimal length in $W$ (cf.\ \cite{BGL}, Chapitre IV, \S1, Exercise 3, applied to the Coxeter group $W^\aff$, noting that $W_x,W_y\subseteq W_\aff$ and $W\cong W^\aff\rtimes \Omega$ with $\ell(w \omega)=\ell(w)$ for all $w\in W^\aff$ and $\omega\in\Omega$). We may thus assume $d\in D_y$ and at the same time $d^{-1}\in D_x$. We need to see that the $P_x$-representation
\[
 \ind_{P_x\cap dP_yd^{-1}}^{P_x}(V^d)^{I_x} \cong
 \ind_{(P_x\cap dP_yd^{-1})I_x}^{P_x} ((V^d)^{I_x\cap dP_yd^{-1}})
\]
satisfies condition (H). Here $(V^d)^{I_x\cap dP_yd^{-1}}$ is viewed as a representation of $(P_x\cap dP_yd^{-1})I_x$ through the homomorphism
\[
(P_x\cap dP_yd^{-1})I_x\to(P_x\cap dP_yd^{-1})I_x/I_x\cong (P_x\cap dP_yd^{-1})/(I_x\cap dP_yd^{-1}).
\]
Clearly, we may assume that the underlying $R$-module of $V$ is finitely generated. By Lemma \ref{closest_chamber} and \cite{OS1}, Proposition 4.13 (i), we have $(d^{-1}Id\cap P_y)I_y=(d^{-1}Id\cap I)I_y=I$ whence $I'\subseteq (d^{-1}P_xd\cap P_y)I_y\subseteq P_y$. Since $P_y/I_y$ is a group with a $(B,N)$-pair in which $B=I'/I_y$ this implies that
\[
 (d^{-1}P_xd\cap P_y)I_y=P_F
\]
for a face $F$ of $\BT$ with $y\in\overline{F}\subseteq\Cbar$ (cf.\ \cite{BT2}, Th\'eor\`eme 4.6.33). Note that we give a new meaning to the symbol $F$ here which also appears in the formulation of the proposition. As in \cite{BT}, (7.1.2), any element of $\Phi_\aff$ determines a closed half space of $\A$ which is again called an affine root. With this terminology we let $\mathrm{cl}(d^{-1}x,y)$ denote the intersection of all affine roots containing $\{d^{-1}x,y\}$ and claim that $\overline{F}=\mathrm{cl}(d^{-1}x,y)\cap\Cbar$. In order to prove this, note that $\overline{F}$ is precisely the set of fixed points of $P_F$ in $\A$ (cf.\ \cite{BT2}, Corollaire 4.6.29 (i)).\\

Setting $Y=\mathrm{cl}(d^{-1}x,y)\cap\Cbar$, any element of $I_y\subseteq I$ fixes $\Cbar$ pointwise and hence $Y$. Let $Z=\mathbb{C}(K)$ denote the group of $K$-rational points of the connected center $\mathbb{C}$ of $\GG$. In the notation of \cite{BT}, Th\'eor\`eme 6.5, we have $G'=ZG_\aff$. Therefore, it follows from \cite{OS1}, Lemma 4.10, and \cite{BT}, (4.1.1), that the group $d^{-1}P_xd\cap P_y$ is the pointwise stabilizer of $\mathrm{cl}(d^{-1}x,y)\supseteq Y$ in $G^\aff$. Consequently, $P_F$ fixes $Y$ pointwise and $Y\subseteq\overline{F}$ as explained above. Conversely, any point of $\overline{F}$ lies in $\Cbar$ and is fixed by $d^{-1}P_xd\cap P_y$ because $d^{-1}P_xd\cap P_y\subseteq P_F$. By \cite{BT2}, Proposition 4.6.24 (i), the group $d^{-1}P_xd\cap P_y$ contains the group denoted by $\mathfrak{G}^0_{\mathrm{cl}(d^{-1}x,y)}(\o)$ in \cite{BT2}, \S4.6.26. By \cite{BT2}, Corollaire 4.6.29 (i), we get $\overline{F}\subseteq \mathrm{cl}(d^{-1}x,y)$ and thus $\overline{F}\subseteq Y$.\\

Next we claim that $I_F=(d^{-1}I_xd\cap I)I_y$. Note that $I_y$ is a normal subgroup of $P_y$ and hence of $P_F$. As seen above $d^{-1}I_xd\cap I=d^{-1}I_xd\cap P_y$ because $I_x\subseteq I$. Since $d^{-1}I_xd\cap P_y$ a normal subgroup of $d^{-1}P_xd\cap P_y$ we obtain that $(d^{-1}I_xd\cap I)I_y$ is a normal subgroup of $P_F$. Since it is contained in $I$ it is contained in the pro-$p$ radical of $P_F$, i.e.\ in $I_F$. In order to prove the reverse inclusion we have to make a digression into the theory of the Iwahori decomposition. For any real number $r$ we denote by $r+$ the smallest integer strictly greater than $r$. For any face $F'$ of $\BT$ and any root $\alpha\in\Phi$ we set
\begin{eqnarray*}
 f_{F'}(\alpha)&=&-\inf\{\alpha(x)\;|\;x\in F'\} \quad\mbox{and}\\[1ex]
 f^*_{F'}(\alpha)&=&\left\{
 \begin{array}{ll}
  f_{F'}(\alpha)+,&\mbox{ if }\alpha|F'\mbox{ is constant,}\\
  f_{F'}(\alpha),&\mbox{ otherwise.}
 \end{array}
 \right.
\end{eqnarray*}
Recall that the root subgroup $U_\alpha$ of $G$ corresponding to $\alpha$ admits an exhaustive, separated and decreasing filtration by subgroups $U_{\alpha,r}$ with $r\in\mathbb{R}$ as in \cite{SS}, page 103. Since the group $\GG$ is split the jumps of this filtration are precisely the integers. According to \cite{SS}, Proposition I.2.2, the group $I_F$ is generated by $T_1$ and the groups $U_{\alpha,f_F^*(\alpha)}$ with $\alpha\in\Phi$. To prove our claim it remains to see that $U_{\alpha,f_F^*(\alpha)}$ is contained in $(d^{-1}I_xd\cap I)I_y$ for all $\alpha\in\Phi$. Note that we always have $f_y^*(\alpha)=-\alpha(y)+$.\\

If $\alpha|F$ is constant then $f_F^*(\alpha)=-\alpha(y)+$ because $y\in\overline{F}$. In this case, $U_{\alpha,f_F^*(\alpha)}=U_{\alpha,-\alpha(y)+}\subseteq I_y$. So assume that $\alpha|F$ is not constant and hence that $f_F^*(\alpha)=f_F(\alpha)\geq f_y(\alpha)=-\alpha(y)$. If $\alpha(y)\not\in\mathbb{Z}$ then again $U_{\alpha,f_F^*(\alpha)}\subseteq U_{\alpha,-\alpha(y)}=U_{\alpha,-\alpha(y)+}\subseteq I_y$. We may therefore assume that $\alpha(y)\in\mathbb{Z}$. If $f_F(\alpha)>-\alpha(y)$ and if $s=\min\{n\in\mathbb{Z}\;|\;n\geq f_F(\alpha)\}$ then $s\geq-\alpha(y)+1$ and hence $
U_{\alpha,f_F^*(\alpha)}=U_{\alpha,f_F(\alpha)}=U_{\alpha,s}\subseteq U_{\alpha,-\alpha(y)+1}\subseteq I_y$. We may therefore assume $f_F^*(\alpha)=f_F(\alpha)=-\alpha(y)$, i.e.\ $\alpha(z)\geq\alpha(y)$ for all $z\in F$. In this situation, $\alpha(d^{-1}x)>\alpha(y)$ because otherwise $\mathrm{cl}(d^{-1}x,y)$ and hence $F$ would be contained in the affine root $\{z\in\A\;|\;\alpha(z)\leq\alpha(y)\}$. However, this would imply $\alpha(z)=\alpha(y)$ for all $z\in F$ in contradiction to our assumption that $\alpha|F$ is not constant. Writing $d=(\lambda,w)\in W\cong X_*(\TT)\rtimes W_0$ we have
\[
\alpha(y)<\alpha(d^{-1}x)=\alpha(w^{-1}x+\lambda)=w\alpha(x)+\alpha(\lambda),
\]
whence $-w\alpha(x)+\leq-\alpha(y)-\alpha(\lambda)$ because $\alpha(y)+\alpha(\lambda)\in\mathbb{Z}$. Therefore,
\begin{eqnarray*}
dU_{\alpha,f_F^*(\alpha)}d^{-1} &=& dU_{\alpha,-\alpha(y)}d^{-1} =U_{w\alpha,-\alpha(y)-\alpha(\lambda)}\\
&\subseteq& U_{w\alpha,-w\alpha(x)+}\subseteq I_x
\end{eqnarray*}
which implies $U_{\alpha,f_F^*(\alpha)}\subseteq d^{-1}I_xd\cap I_F\subseteq d^{-1}I_xd\cap I$. This completes the proof that $I_F=(d^{-1}I_xd\cap I)I_y=(d^{-1}I_xd\cap P_y)I_y$.\\

Since $d^{-1}\in D_x$ the same arguments show that $(P_x\cap dP_yd^{-1})I_x=P_{F'}$ for some face $F'$ of $\BT$ with $x\in\overline{F'}\subseteq\Cbar$ such that $I_{F'}=(I\cap dI_yd^{-1})I_x$ and $(I\cap dId^{-1})I_x=(I\cap dP_yd^{-1})I_x=I$. Again the notation is not to be confused with the meaning of $F'$ in the initial formulation of the proposition. To complete the proof it suffices to see that the $P_{F'}$-representation $(V^d)^{I_x\cap dP_yd^{-1}}$ satisfies condition (H) (cf.\ Lemma \ref{induction_H} (ii)). We claim it is generated by its $I$-invariants. By definition of the $P_{F'}$-action
\[
((V^d)^{I_x\cap dP_yd^{-1}})^I=(V^d)^I=((V^d)^{(I\cap dId^{-1})I_x}=(V^d)^{I\cap dId^{-1}},
\]
and the claim is equivalent to $V^{d^{-1}I_xd\cap P_y}$ being generated by $V^{d^{-1}Id\cap I}$ as a representation of $d^{-1}P_xd\cap P_y$. Since $I_y$ acts trivially on $V$ we have $V^{d^{-1}I_xd\cap P_y}=V^{(d^{-1}I_xd\cap P_y)I_y}=V^{I_F}$ and $V^{d^{-1}Id\cap I}=V^{(d^{-1}Id\cap I)I_y}=V^I$. Since $(d^{-1}P_xd\cap P_y)I_y=P_F$ the claim follows from Lemma \ref{properties_H} (i) and Proposition \ref{invariants_H}. Likewise, $((V^d)^{I_x\cap dId^{-1}})^*=((V^{I_F})^d)^*=((V^{I_F})^*)^d$ is generated by its $I$-invariants as a representation of $P_{F'}$ because this is true of the $P_F$-representation $(V^{I_F})^*$. By Lemma \ref{properties_H} (ii) the $P_{F'}$-representation $(V^d)^{I_x\cap dId^{-1}}$ satisfies condition (H).
\end{proof}
\begin{rem}\label{subtle_set_of_roots}
We take up the notation of the proof of Proposition \ref{invariants_induction_H}. If $x=y=x_0$ then the fact that $(d^{-1}P_xd\cap P_y)I_y=P_F$ for some face $y\in\overline{F}\subseteq\Cbar$ with $I_F=(d^{-1}Id\cap I)I_y$ is also proved in \cite{Her}, Proposition 3.8, and \cite{Oll2}, \S3.3 Fact 2. In this case the set $\Phi_F$ of affine roots vanishing on $F$ is given by $\Phi_F=\Phi_y\cap d^{-1}\Phi_x$. We note that the latter formula is wrong in the general situation considered above. For example, if $\Phi=A_2$, if $x=y\not=x_0$ and if $d^{-1}=\lambda_1+\lambda_2$ is the sum of the two fundamental dominant cocharacters then $\Phi_y\cap d^{-1}\Phi_x=\emptyset$ whereas $\overline{F}=\mathrm{cl}(d^{-1}x,y)\cap\Cbar=\{y\}$. Thus, $\Phi_F=\Phi_y$.
\end{rem}
\begin{defin}\label{reductive_ind}
\begin{enumerate}[wide]
\item[(i)]Let $\Rep_R^\ind(G)$ denote the full subcategory of $\Rep_R^\infty(G)$ consisting of all representations which are isomorphic to finite direct sums of representations of the form $\ind_{P_F^\dagger}^G(V_F)$ for some face $F$ of $\BT$ contained in $\Cbar$ and some $P_F^\dagger$-representation $V_F$ satisfying condition (H).
\item[(ii)]We denote by $\Mod_H^\ind$ the full subcategory of $\Mod_H$ consisting of all modules which are isomorphic to finite direct sums of modules of the form $H\otimes_{H_F^\dagger}M_F$ for some face $F$ of $\BT$ contained in $\Cbar$ and some $H_F^\dagger$-module $M_F$.
\end{enumerate}
\end{defin}
We continue to denote by $\Rep_R^I(G)$ the full subcategory of $\Rep_R^\infty(G)$ consisting of all objects generated by their $I$-invariants. It follows from Lemma \ref{properties_H} (i) that $\Rep_R^\ind(G)$ is a full subcategory of $\Rep_R^I(G)$.
\begin{thm}\label{equivalence_induced_objects}
If $R$ is a quasi-Frobenius ring then the functor $(\cdot)^I:\Rep_R^\ind(G)\to\Mod_H^\ind$ is an equivalence of additive categories.
\end{thm}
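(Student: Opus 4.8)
The plan is to exhibit a functor in the other direction and show that, together with $(\cdot)^I$, it provides the asserted equivalence. Given a face $F\subseteq\Cbar$ and an $H_F^\dagger$-module $M_F$, the natural candidate for a quasi-inverse of $(\cdot)^I$ on objects of the form $H\otimes_{H_F^\dagger}M_F$ is $\ind_{P_F^\dagger}^G(\t_F(M_F))$, where $\t_F$ is the quasi-inverse of $(\cdot)^{I_{C(F)}}:\Rep_R^H(P_F^\dagger)\to\Mod_{H_F^\dagger}$ supplied by Theorem \ref{Cabanes_general} (ii) (equivalently Theorem \ref{quasi_inverse_OS}). One first checks this is well-defined on $\Rep_R^\ind(G)$ and $\Mod_H^\ind$: by Lemma \ref{induction_H} (i) and a Mackey argument as in the proof of Proposition \ref{invariants_induction_H}, $\ind_{P_F^\dagger}^G(V_F)$ with $V_F$ satisfying (H) lands in $\Rep_R^\ind(G)\subseteq\Rep_R^I(G)$, and $\ind_{P_F^\dagger}^G(V_F)^{I_F}$ indeed lies in $\Rep_R^H(P_F^\dagger)$ by Proposition \ref{invariants_induction_H} (ii) applied with $F'=F$; in particular $(\cdot)^I$ maps $\Rep_R^\ind(G)$ into $\Mod_H^\ind$ by Proposition \ref{invariants_induced_representation}.

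The core computation is Proposition \ref{invariants_induced_representation}, which gives the natural isomorphism $H\otimes_{H_F^\dagger}V_F^I\xrightarrow{\ \sim\ }\ind_{P_F^\dagger}^G(V_F)^I$ of $H$-modules. Combined with Theorem \ref{Cabanes_general} (ii), which says $(\cdot)^{I_{C(F)}}$ is fully faithful on $\Rep_R^H(P_F^\dagger)$ with $(\t_F(M_F))^{I_{C(F)}}\cong M_F$, this shows first that $(\cdot)^I$ is essentially surjective onto $\Mod_H^\ind$: every $H\otimes_{H_F^\dagger}M_F$ is isomorphic to $\ind_{P_F^\dagger}^G(\t_F(M_F))^I$, and finite direct sums are handled termwise. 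For full faithfulness I would argue as follows. Any morphism between finite direct sums decomposes into a matrix of morphisms $\ind_{P_F^\dagger}^G(V_F)\to\ind_{P_{F'}^\dagger}^G(V_{F'})$; by Frobenius reciprocity such a morphism is determined by a $P_F^\dagger$-equivariant map $V_F\to\ind_{P_{F'}^\dagger}^G(V_{F'})$, i.e.\ by an element of $\Hom_{P_F^\dagger}(V_F,\ind_{P_{F'}^\dagger}^G(V_{F'})^{I_F})$ since $V_F$ is of level zero. On the Hecke side, $\Hom_H(H\otimes_{H_F^\dagger}V_F^I,H\otimes_{H_{F'}^\dagger}V_{F'}^I)\cong\Hom_{H_F^\dagger}(V_F^I,\ind_{P_{F'}^\dagger}^G(V_{F'})^I)$ by adjunction and Proposition \ref{invariants_induced_representation}. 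Now both $V_F$ and $\ind_{P_{F'}^\dagger}^G(V_{F'})^{I_F}$ satisfy condition (H) as $P_F^\dagger$-representations (Proposition \ref{invariants_induction_H} (ii)), so Theorem \ref{Cabanes_general} (ii) identifies the two Hom-groups via $(\cdot)^I$, and naturality gives that $(\cdot)^I$ is a bijection on these Hom-spaces. Summing over the matrix entries yields full faithfulness on all of $\Rep_R^\ind(G)$.

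The main obstacle is ensuring the building-theoretic bookkeeping in Proposition \ref{invariants_induction_H} applies in exactly the generality needed here — that is, that $\ind_{P_{F'}^\dagger}^G(V_{F'})^{I_F}$ genuinely satisfies condition (H) for \emph{arbitrary} $F,F'\subseteq\Cbar$, not just vertices or the hyperspecial case. This is precisely the content of Proposition \ref{invariants_induction_H} and Remark \ref{subtle_set_of_roots}, and it is where the delicate analysis of $\mathrm{cl}(d^{-1}x,y)\cap\Cbar$ and the Iwahori filtration of root subgroups enters; I would simply invoke that proposition. A secondary point requiring care is that $(\cdot)^I$ restricted to $\Rep_R^\ind(G)$ is faithful — but this is immediate from Lemma \ref{properties_H} (i) since every object of $\Rep_R^\ind(G)$ is generated by its $I$-invariants, so any morphism killing $I$-invariants is zero. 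Finally, one should note that $\Rep_R^\ind(G)$ and $\Mod_H^\ind$ are merely additive (not abelian) categories, so the argument only needs to produce a fully faithful, essentially surjective additive functor, which is exactly what the combination of Proposition \ref{invariants_induced_representation}, Proposition \ref{invariants_induction_H}, and Theorem \ref{Cabanes_general} delivers.
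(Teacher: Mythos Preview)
Your proposal is correct and follows essentially the same approach as the paper: faithfulness from generation by $I$-invariants, essential surjectivity from Theorem \ref{Cabanes_general} (ii) combined with Proposition \ref{invariants_induced_representation}, and fullness via the chain of isomorphisms
\[
\Hom_G(\ind_{P_F^\dagger}^G V,\ind_{P_{F'}^\dagger}^G W)\cong\Hom_{P_F^\dagger}(V,\ind_{P_{F'}^\dagger}^G(W)^{I_F})\cong\Hom_{H_F^\dagger}(V^I,\ind_{P_{F'}^\dagger}^G(W)^I)\cong\Hom_H(\ind_{P_F^\dagger}^G(V)^I,\ind_{P_{F'}^\dagger}^G(W)^I),
\]
with Proposition \ref{invariants_induction_H} (ii) supplying condition (H) for the target. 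Your explicit description of a quasi-inverse via $\t_F$ is correct but not needed---once $(\cdot)^I$ is shown to be fully faithful and essentially surjective, the equivalence follows formally.
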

\begin{proof}
The faithfulness follows from the fact that $\Rep_R^\ind(G)$ is a full subcategory of $\Rep_R^I(G)$. The essential surjectivity is a direct consequence of Theorem \ref{Cabanes_general} (ii) and Proposition \ref{invariants_induced_representation}. Let $F$ (resp.\ $F'$) be a face of $\BT$ contained in $\Cbar$, and let $V$ (resp.\ $W$) be a representation of $P_F^\dagger$ (resp.\ $P_{F'}^\dagger$) satisfying condition (H). By Remark \ref{H_I_F_trivial}, Theorem \ref{Cabanes_general} (ii), Proposition \ref{invariants_induced_representation} and Proposition \ref{invariants_induction_H} we have
\begin{eqnarray*}
\Hom_G(\ind_{P_F^\dagger}^G(V),\ind_{P_{F'}^\dagger}^G(W)) & \cong &
\Hom_{P_F^\dagger}(V,\ind_{P_{F'}^\dagger}^G(W)^{I_F})\\
& \cong & \Hom_{H_F^\dagger}(V^I,\ind_{P_{F'}^\dagger}^G(W)^I)\\
& \cong & \Hom_H(H\otimes_{H_F^\dagger}V^I,\ind_{P_{F'}^\dagger}^G(W)^I)\\
& \cong & \Hom_H(\ind_{P_F^\dagger}^G(V)^I,\ind_{P_{F'}^\dagger}^G(W)^I).
\end{eqnarray*}
Unwinding definitions, this is precisely the map induced by $(\cdot)^I$.
\end{proof}
\begin{rem}\label{fully_faithful_hyperspecial}
Assume that $R=\overline{\mathbb{F}}_p$. If $F=F'=x_0$ and if $V$ and $W$ are irreducible then the above chain of isomorphisms already appears in \cite{Oll2}, Corollary 3.14 (i) and \cite{Vig5}, Proposition 7.5.
\end{rem}
If $R$ is a field of characteristic zero then Theorem \ref{Schneider_Stuhler} shows that any representation $V\in\Rep_R^I(G)$ admits a finite resolution by objects of $\Rep_R^\ind(G)$. If $R=\overline{\mathbb{F}}_p$ and if $K$ is of characteristic $p$ then this is generally no longer true (cf.\ \cite{Hu}, Corollaire 5.5). Consequently, even on a derived level $\Rep_R^\ind(G)$ may be a rather small subcategory of $\Rep_R^I(G)$. In contrast, if $R$ is a quasi-Frobenius ring then the categories $\Mod_H^\ind$ and $\Mod_H$ are always derived equivalent in a suitable sense.\\

In order to make this precise, let $\D$ be any additive category and denote by $\mathrm{K}^b\D$ the homotopy category of bounded complexes of $\D$. Note that if $\D'$ is a full additive subcategory of $\D$ then $\mathrm{K}^b\D'$ is naturally a triangulated subcategory of $\mathrm{K}^b\D$, i.e.\ a full additive subcategory for which the inclusion functor is an exact functor of triangulated categories (cf.\ \cite{Sta}, Proposition 13.10.3 and Lemma 13.10.6).\\

In our situation, let $\Sigma'$ (resp.\ $\Sigma$) denote the class of quasi-isomorphisms in $\mathrm{K}^b\Mod_H^\ind$ (resp.\ $\mathrm{K}^b\Mod_H$), i.e.\ the class of morphisms inducing isomorphisms on all homology groups. It is known that $\Sigma'$ and $\Sigma$ are multiplicative systems and that the corresponding localizations are triangulated categories in a natural way (cf.\ \cite{Sta}, Lemma 13.5.4 and Proposition 13.5.5). Note that by \cite{Sta}, Lemma 13.11.6 (3), the localization
\[
 \mathrm{K}^b\Mod_H[\Sigma^{-1}]\cong\mathrm{D}^b(H)
\]
is triangle equivalent to the bounded derived category $\mathrm{D}^b(H)$ of $\Mod_H$.
\begin{prop}\label{module_homotopy_equivalence}
If $R$ is a quasi-Frobenius ring then the functor
\[
\mathrm{K}^b\Mod_H^\ind[(\Sigma')^{-1}]\to\mathrm{D}^b(H)
\]
induced by the inclusion functor $\mathrm{K}^b\Mod_H^\ind\to\mathrm{K}^b\Mod_H$ is an equivalence of triangulated categories. On the full subcategory $\Mod_H$ of $\mathrm{D}^b(H)$ a quasi-inverse is given by assigning to $M\in\Mod_H$ the complex $\C_c^\ori(\BT_{(\bullet)},\F(M))^I$.
\end{prop}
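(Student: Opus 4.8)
The plan is to exhibit an explicit quasi-inverse of the functor
$\Phi\colon\mathrm{K}^b\Mod_H^\ind[(\Sigma')^{-1}]\to\mathrm{D}^b(H)$,
built from the functorial resolution (\ref{generalized_GP_resolution}). First I would note that $\Phi$ is a well-defined triangulated functor: the inclusion $\mathrm{K}^b\Mod_H^\ind\hookrightarrow\mathrm{K}^b\Mod_H$ is exact, it carries $\Sigma'$ into $\Sigma$, and composing with the localization $\mathrm{K}^b\Mod_H\to\mathrm{K}^b\Mod_H[\Sigma^{-1}]\cong\mathrm{D}^b(H)$ one obtains $\Phi$ by the universal property of the Verdier quotient. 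For $M\in\Mod_H$ write $C_\bullet(M)=\C_c^\ori(\BT_{(\bullet)},\F(M))^I$; this is a complex concentrated in degrees $0,\dots,d$, and it is functorial in $M$ because $\F(\cdot)$, the oriented chain complex construction, and $(\cdot)^I$ are all functorial.

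The two inputs I would isolate are: (a) every term $C_i(M)$ lies in $\Mod_H^\ind$, and (b) there is a morphism of complexes $C_\bullet(M)\to M$ (with $M$ placed in degree $0$), functorial in $M$, which is a quasi-isomorphism. Both are essentially already available. For (a): by (\ref{chain_complex_induced}) the term $\C_c^\ori(\BT_{(i)},\F(M))$ is a finite direct sum of representations $\ind_{P_F^\dagger}^G(\varepsilon_F\otimes_R\F(M)_F)$ with $F\subseteq\Cbar$; since $\F(M)\in\C$ is of level zero and $\varepsilon_F$ is trivial on $I$, the $P_F^\dagger$-representation $\varepsilon_F\otimes_R\F(M)_F$ has trivial $I_F$-action, so Proposition \ref{invariants_induced_representation} gives $C_i(M)\cong\bigoplus_F H\otimes_{H_F^\dagger}(\varepsilon_F\otimes_R\F(M)_F)^I$, an object of $\Mod_H^\ind$. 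For (b): this is exactly the content of (\ref{generalized_GP_resolution}), i.e.\ the acyclicity of $\C_c^\ori(\BT_{(\bullet)},\F(M))^I$ in positive degrees (Proposition \ref{acyclic}) together with the identification of its degree-zero homology with $M$ afforded by Theorem \ref{quasi_inverse_coefficient_system}; functoriality in $M$ is immediate.

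Next I would extend $C_\bullet(-)$ to complexes. For $M^\bullet\in\mathrm{K}^b\Mod_H$, applying $C_\bullet$ degreewise produces a bounded double complex $\bigl(C_p(M^q)\bigr)_{p,q}$, and I set $\Psi(M^\bullet)=\mathrm{Tot}\,C_\bullet(M^\bullet)$, whose terms lie in $\Mod_H^\ind$ by (a) and finite additivity of $\Mod_H^\ind$. The degreewise augmentations of (b) assemble into a morphism $\Psi(M^\bullet)\to M^\bullet$ whose cone is, up to shift, the total complex of the bounded double complex obtained by adjoining the exact column $C_\bullet(M^q)\to M^q$ for each $q$; a double complex with exact columns has acyclic total complex, so $\Psi(M^\bullet)\to M^\bullet$ is a quasi-isomorphism. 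By two-out-of-three $\Psi$ sends quasi-isomorphisms to quasi-isomorphisms, hence factors through a triangulated functor $\Psi\colon\mathrm{D}^b(H)\to\mathrm{K}^b\Mod_H^\ind[(\Sigma')^{-1}]$ (triangulated since totalization commutes with shifts and mapping cones up to natural isomorphism); on a module $M$ in degree $0$ it is the functor $M\mapsto\C_c^\ori(\BT_{(\bullet)},\F(M))^I$ of the statement. Finally, the natural quasi-isomorphism $\Psi(M^\bullet)\to M^\bullet$ shows $\Phi\circ\Psi\cong\mathrm{id}_{\mathrm{D}^b(H)}$, and for $N^\bullet\in\mathrm{K}^b\Mod_H^\ind$ the same map $\mathrm{Tot}\,C_\bullet(N^\bullet)\to N^\bullet$ is a quasi-isomorphism between complexes with terms in $\Mod_H^\ind$, hence lies in $\Sigma'$ and becomes an isomorphism after localization, giving $\Psi\circ\Phi\cong\mathrm{id}$ naturally; thus $\Phi$ is an equivalence of triangulated categories with the stated quasi-inverse.

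Once (a) and (b) are available the argument is purely formal, and both of these are little more than a repackaging of (\ref{chain_complex_induced}), Proposition \ref{invariants_induced_representation} and the acyclicity machinery of \S\ref{subsection_3_2}. I therefore do not expect a serious obstacle; the only points needing genuine care are the bookkeeping ones -- the sign conventions in the total complex, the compatibility of the augmentations $C_\bullet(M^q)\to M^q$ with the vertical differentials of the double complex (which follows from the functoriality asserted in (b)), and the verification that $\Psi$ is genuinely well defined on the Verdier localizations. For the last point the quasi-isomorphism $\Psi(M^\bullet)\to M^\bullet$ does all the work.
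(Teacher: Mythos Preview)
Your proposal is correct and takes essentially the same approach as the paper: both isolate the same two inputs (a) and (b), citing (\ref{chain_complex_induced}), Proposition \ref{invariants_induced_representation}, Proposition \ref{acyclic} and Theorem \ref{quasi_inverse_coefficient_system}, and both produce the resolution of a bounded complex $M^\bullet$ as the total complex of the double complex obtained by applying $C_\bullet(\cdot)$ degreewise. The only cosmetic difference is that the paper invokes a general criterion (dual to \cite{KS}, Corollary 7.2.2) to reduce to exhibiting a single quasi-isomorphism $N_\bullet\to M^\bullet$, whereas you build the quasi-inverse $\Psi$ explicitly and check both compositions; your version is slightly more self-contained, the paper's slightly more economical.
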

\begin{proof}
Arguing dually to \cite{KS}, Corollary 7.2.2, it suffices to see that for any bounded complex $M_\bullet$ of $H$-modules there is a bounded complex $N_\bullet$ of objects of $\Mod_H^\ind$ and a quasi-isomorphism $N_\bullet\to M_\bullet$.\\

In order to construct $N_\bullet$ note first that if $M\in\Mod_H$ then the complex $\C_c^\ori(\BT_{(\bullet)},\F(M))^I$ is an acyclic complex of objects of $\Mod_H^\ind$ whose homology in degree zero is naturally isomorphic to $M$. This follows from Proposition \ref{acyclic} (i), Theorem \ref{quasi_inverse_coefficient_system}, Proposition \ref{invariants_induced_representation} and the decomposition (\ref{chain_complex_induced}). In other words, we can augment the above complex to an exact resolution $0\to\C_c^\ori(\BT_{(\bullet)},\F(M))^I\stackrel{\varepsilon_M}{\longrightarrow}M\longrightarrow 0$ of $M$.\\

Now let $M_\bullet$ be a bounded complex of $H$-modules. We consider the bounded double complex $C_{\bullet,\bullet}=\C_c^\ori(\BT_{(\bullet)},\F(M_\bullet))^I$ and its subcomplex $C'_{\bullet,\bullet}$ obtained by replacing the row $0\to\C^\ori_c(\BT_{(0)},\F(M_0))^I\to\cdots\to\C^\ori_c(\BT_{(0)},\F(M_n))^I\to 0$ by the row $0\to\ker(\varepsilon_{M_0})\to\cdots\to\ker(\varepsilon_{M_n})\to 0$. Denoting by $N_\bullet=\mathrm{Tot}(C_{\bullet,\bullet})$ and $N_\bullet'=\mathrm{Tot}(C'_{\bullet,\bullet})$ the corresponding total complexes we obtain an exact sequence $0\to N'_\bullet\to N_\bullet\to M_\bullet\to 0$ of complexes of $H$-modules in which $N_\bullet$ is a complex over $\Mod_H^\ind$. Since the columns of $C'_{\bullet,\bullet}$ are exact, so is $N'_\bullet$ by the usual spectral sequence argument. The long exact homology sequence then shows that $N_\bullet\to M_\bullet$ is a quasi-isomorphism.
\end{proof}
\begin{rem}\label{homotopy_equivalence_OS}
The only non-formal part of the proof of Proposition \ref{module_homotopy_equivalence} concerned the essential surjectivity. We note once more that if $R$ is a field then the necessary input from \S\ref{subsection_3_2} can be replaced by the results of \cite{OS1}, \S6, on canonical Gorenstein projective resolutions (cf.\ also Remark \ref{GP_resolution}).
\end{rem}
The additive functor $(\cdot)^I:\Rep_R^\ind(G)\to\Mod_H$ induces an exact triangle functor $\mathrm{K}^b\Rep_R^\ind(G)\to \mathrm{K}^b\Mod_H$ (cf.\ \cite{Sta}, Lemma 13.10.6) that we continue to denote by $(\cdot)^I$. Let $\Sigma''$ denote the class of all morphisms $f$ in $\mathrm{K}^b\Rep_R^\ind(G)$ such that $f^I$ is a quasi-isomorphism.
\begin{thm}\label{homotopy_equivalence}
If $R$ is a quasi-Frobenius ring then $\Sigma''$ is a multiplicative system and the functor
\[
 \mathrm{K}^b\Rep_R^\ind(G)[(\Sigma'')^{-1}]\longrightarrow\mathrm{D}^b(H)
\]
induced by $(\cdot)^I:\Rep_R^\ind(G)\to\Mod_H$ is an equivalence of triangulated categories. On the full subcategory $\Mod_H$ of $\mathrm{D}^b(H)$ a quasi-inverse is given by assigning to $M\in\Mod_H$ the oriented chain complex $\C_c^\ori(\BT_{(\bullet)},\F(M))$ of the $G$-equivariant coefficient system $\F(M)$ on $\BT$.
\end{thm}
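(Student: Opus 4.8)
The plan is to deduce Theorem \ref{homotopy_equivalence} from Proposition \ref{module_homotopy_equivalence} together with the equivalence $\Rep_R^\ind(G)\cong\Mod_H^\ind$ of Theorem \ref{equivalence_induced_objects}. First I would observe that the additive equivalence $(\cdot)^I:\Rep_R^\ind(G)\to\Mod_H^\ind$ induces an exact triangle equivalence $\mathrm{K}^b\Rep_R^\ind(G)\to\mathrm{K}^b\Mod_H^\ind$ (applying \cite{Sta}, Lemma 13.10.6, to the equivalence and to its quasi-inverse). By the very definition of $\Sigma''$, a morphism $f$ in $\mathrm{K}^b\Rep_R^\ind(G)$ lies in $\Sigma''$ if and only if its image under this equivalence lies in $\Sigma'$, the class of quasi-isomorphisms in $\mathrm{K}^b\Mod_H^\ind$ (here I use that $(\cdot)^I$ on $\Rep_R^\ind(G)$ factors through $\Mod_H^\ind$, and that a morphism of complexes over $\Mod_H^\ind$ is a quasi-isomorphism precisely when it is so as a morphism of complexes of $H$-modules, since homology is computed in $\Mod_H$). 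Since $\Sigma'$ is a multiplicative system (as recalled before Proposition \ref{module_homotopy_equivalence}) and the equivalence transports multiplicative systems to multiplicative systems, it follows that $\Sigma''$ is a multiplicative system, and that the induced functor on localizations
\[
\mathrm{K}^b\Rep_R^\ind(G)[(\Sigma'')^{-1}]\longrightarrow\mathrm{K}^b\Mod_H^\ind[(\Sigma')^{-1}]
\]
is a triangle equivalence.

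Next I would compose this with the triangle equivalence $\mathrm{K}^b\Mod_H^\ind[(\Sigma')^{-1}]\to\mathrm{D}^b(H)$ furnished by Proposition \ref{module_homotopy_equivalence}. The composite is the functor induced by $(\cdot)^I:\Rep_R^\ind(G)\to\Mod_H$, because on the level of homotopy categories the inclusion $\mathrm{K}^b\Mod_H^\ind\hookrightarrow\mathrm{K}^b\Mod_H$ post-composed with the equivalence $\mathrm{K}^b\Rep_R^\ind(G)\to\mathrm{K}^b\Mod_H^\ind$ is exactly the triangle functor $(\cdot)^I$ appearing in the statement. This already gives the asserted equivalence of triangulated categories $\mathrm{K}^b\Rep_R^\ind(G)[(\Sigma'')^{-1}]\to\mathrm{D}^b(H)$.

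It remains to identify the quasi-inverse on the full subcategory $\Mod_H\subseteq\mathrm{D}^b(H)$. For $M\in\Mod_H$, Proposition \ref{module_homotopy_equivalence} tells us that a quasi-inverse carries $M$ to the complex $\C_c^\ori(\BT_{(\bullet)},\F(M))^I$ in $\mathrm{K}^b\Mod_H^\ind$. Transporting back along the equivalence $\mathrm{K}^b\Rep_R^\ind(G)\cong\mathrm{K}^b\Mod_H^\ind$, I need a bounded complex over $\Rep_R^\ind(G)$ whose image under $(\cdot)^I$ is (isomorphic to) $\C_c^\ori(\BT_{(\bullet)},\F(M))^I$. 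The natural candidate is the oriented chain complex $\C_c^\ori(\BT_{(\bullet)},\F(M))$ itself: by Theorem \ref{quasi_inverse_coefficient_system} we have $\F(M)\in\C$, so by the decomposition (\ref{chain_complex_induced}), Proposition \ref{characterization_C}/Definition \ref{category_C} (i) and Lemma \ref{induction_H} each term $\C_c^\ori(\BT_{(i)},\F(M))$ is a finite direct sum of representations $\ind_{P_F^\dagger}^G(\varepsilon_F\otimes_R\F(M)_F)$ with $F\subseteq\Cbar$ and $\varepsilon_F\otimes_R\F(M)_F$ satisfying condition (H); hence it is an object of $\Rep_R^\ind(G)$, and the complex is bounded of length $d$. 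Taking $I$-invariants and using Proposition \ref{invariants_induced_representation} (and $(\varepsilon_F\otimes_R\F(M)_F)^I=\varepsilon_F\otimes_R\F(M)_F^I$, $\varepsilon_F$ being trivial on $I$) identifies $\C_c^\ori(\BT_{(\bullet)},\F(M))^I$ term by term, compatibly with differentials, with the complex produced by Proposition \ref{module_homotopy_equivalence}. Therefore $M\mapsto\C_c^\ori(\BT_{(\bullet)},\F(M))$ is the desired quasi-inverse on $\Mod_H$, which completes the proof.

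I expect the genuinely substantive content to already be contained in the earlier results being invoked — Proposition \ref{module_homotopy_equivalence}, Theorem \ref{equivalence_induced_objects}, and Theorem \ref{quasi_inverse_coefficient_system} — so the main obstacle here is bookkeeping rather than a new idea: one must check carefully that the various triangle functors and multiplicative systems match up under the chain of equivalences, and in particular that the functor induced on localizations is genuinely the one induced by $(\cdot)^I$ and not merely abstractly equivalent to it. The one place demanding a little care is verifying that the localizing class $\Sigma''$ is correctly described as the preimage of $\Sigma'$, i.e. that $(\cdot)^I$ applied to complexes over $\Rep_R^\ind(G)$ factors, up to natural isomorphism, through $\mathrm{K}^b\Mod_H^\ind$ and reflects quasi-isomorphisms there; this is exactly the force of Theorem \ref{equivalence_induced_objects} being an equivalence of additive categories.
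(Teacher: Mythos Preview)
Your proposal is correct and follows exactly the paper's approach: the paper's proof reads in its entirety ``This follows directly from Theorem \ref{equivalence_induced_objects} and Proposition \ref{module_homotopy_equivalence}'', and what you have written is a careful unpacking of precisely that deduction, including the identification of $\Sigma''$ as the preimage of $\Sigma'$ and the verification that $\C_c^\ori(\BT_{(\bullet)},\F(M))$ lies in $\Rep_R^\ind(G)$.
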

\begin{proof}
This follows directly from Theorem \ref{equivalence_induced_objects} and Proposition \ref{module_homotopy_equivalence}.
\end{proof}
We note that if $R$ is a field of characteristic $p$ and if $I$ is $p$-torsion free then a fundamental result of Schneider shows that the unbounded derived category of $\Rep_R^\infty(G)$ is equivalent to the unbounded derived category of DG-modules over a certain DG-version of $H$ (cf.\ \cite{Sch}, Theorem 9). We point out that the equivalence in Theorem \ref{equivalence_induced_objects} is generally not compatible with the homological properties of the two categories. Therefore, it is currently unclear how Theorem \ref{homotopy_equivalence} relates to Schneider's result.


\subsection{The functor to generalized $(\varphi,\Gamma)$-modules}\label{subsection_4_3}

Let $\mathbb{P}$ and $\overline{\mathbb{P}}$ be the Borel subgroups of $\GG$ corresponding to $\Phi^+$ and $\Phi^-$, respectively, and let $\mathbb{U}$ and $\overline{\mathbb{U}}$ denote their unipotent radicals. Setting $P=\mathbb{P}(K)$, $\Pbar=\overline{\mathbb{P}}(K)$, $U=\mathbb{U}(K)$ and $\Ubar=\overline{\mathbb{U}}(K)$ we have the Levi decompositions $P=U\rtimes T$ and $\Pbar=\Ubar\rtimes T$. We let $U_0=U\cap I$, $\Ubar_1=\Ubar\cap I$ so that $I=\Ubar_1T_1 U_0$ by \cite{SS}, Proposition I.2.2. Consider the submonoid $T^+$ of $T$ defined by
\begin{eqnarray*}
T^+ & = & \{t\in T\;|\;\forall\alpha\in\Phi^+:\langle\alpha,\nu(t)\rangle\geq 0\}\\
& = & \{t\in T\;|\;t\Ubar_1t^{-1}\subseteq \Ubar_1\}\\
& = &\{t\in T\;|\;t^{-1}U_0t\subseteq U_0\}.
\end{eqnarray*}
\begin{rem}\label{antidominant}
For $\alpha\in\Phi$ let $U_\alpha$ be the corresponding root subgroup with its filtration by subgroups $U_{\alpha,r}$ as in \cite{SS}, \S I.1. The last two descriptions of $T^+$ follow from $tU_{\alpha,r}t^{-1}=U_{\alpha,r-\langle\alpha,\nu(t)\rangle}$ for all $t\in T$, $\alpha\in\Phi$ and $r\in\mathbb{R}$. Recall that the homomorphism $\nu:T\to X_*(\TT/\mathbb{C})$ is normalized through $\langle\alpha,\nu(t)\rangle=-\mathrm{val}(\alpha(t))$ for all $\alpha\in\Phi$. Thus, $\nu(T^+)=X^+(\TT/\mathbb{C})$ is the set of dominant cocharacters with respect to the chosen set $\Phi^+$ of positive roots. The literature contains other normalizations for which $T^+$ is defined as the submonoid of $T$ consisting of all elements $t$ contracting $U_0$, i.e.\ such that $\mathrm{val}(\alpha(t))\geq 0$ for all $\alpha\in\Phi^+$. In our notation this would be $(T^+)^{-1}$.
\end{rem}
Note that $G^+=IT^+I$ is a submonoid of $G$. Indeed, if $t,t'\in T^+$ and if $\lambda,\lambda'$ denote the respective images in $T/T_0\subseteq W$ then $\ell(\lambda\lambda')=\ell(\lambda)+\ell(\lambda')$ by \cite{Vig4}, Example 5.12. By the braid relations (\ref{braid_relations}) this implies $ItI\cdot It'I=Itt'I=It'I\cdot ItI$. We let $H^+$ be the commutative subalgebra of $H$ consisting of all maps supported on $G^+$.\\

The monoid $G^+$ contains the submonoid $\Pbar^+=\Ubar_1T^+$ of $\Pbar$. Denote by $\Cscr^0\subseteq \A$ the closure of the vector chamber with apex $x_0$ containing $C$ (cf.\ \cite{BT}, (1.3.10)). Given any closed vector chamber $\Cscr$ contained in $\Cscr^0$ we let $\BT^+(\Cscr)=G^+\Cscr $, viewed as a subcomplex of $\BT$. We write $\BT^+=\BT^+(\Cscr^0)$, for short. Note that $\Cscr^0$ is the convex envelope of $T^+x_0$ in $\A$, whence $\Cscr^0$ and $\Cscr$ are stable under $T^+$. Moreover, since $U_0\subseteq tU_0t^{-1}$ fixes $tx_0$ for any $t\in T^+$ it follows from \cite{BT}, Proposition 2.5.4 (iii), that $U_0$ fixes $\Cscr^0$ and hence $\Cscr$ pointwise. Since $G^+=IT^+I=\Ubar_1T^+U_0$ we obtain $\BT^+(\Cscr)=\Pbar^+\Cscr=\Ubar_1\Cscr$.
\begin{rem}\label{half_tree}
If the semisimple rank of $\GG$ is equal to one then $\BT$ is a tree and any $\BT^+(\Cscr)$ is a closed half tree as considered in \cite{GK}, \S3.
\end{rem}
Let $F$ be a face of $\BT$ contained in $\Cscr^0$ and let $C(F)\subseteq\A$ be the chamber associated to $F$ as in Lemma \ref{closest_chamber}. We claim that $C(F)$ is contained in $\Cscr^0$. Otherwise, there would be a root $\alpha\in\Phi^+$ with $\alpha(C(F))<0$. Let $(C_0,\ldots,C_n)$ be a minimal gallery connecting $C_0=C$ and $C_n=C(F)$. Since $F\subseteq \overline{C(F)}\cap\Cscr^0$ we get $\alpha(F)=0$. Moreover, $\alpha(C)>0$ implies that there is an index $i$ such that $C_i$ and $C_{i+1}$ are separated by the wall determined by $\alpha$. If $s_\alpha\in W$ denotes the corresponding reflection then $s_\alpha C_{i+1}=C_i$ and $(C_0,\ldots,C_i,s_\alpha C_{i+2},\ldots,s_\alpha C_n)$ is a gallery with $F=s_\alpha F\subseteq s_\alpha \overline{C_n}$. This contradicts the minimality property of $C_n=C(F)$. Therefore, $C(F)\subseteq \Cscr^0$.\\

Due to this result we obtain the following variants of Propositions \ref{restriction_chain_complexes} and \ref{acyclic}. Given a coefficient system $\F\in\Coeff_G^0(\BT)$ we let $\F^I\in\Coeff(\A)$ be as in \S\ref{subsection_2_2} and consider $\F^I$ as a coefficient system on $\Cscr^0$ via restriction.
\begin{prop}\label{chain_complex_D}
Let $\F\in\Coeff_G^0(\BT)$.
\begin{enumerate}[wide]
 \item[(i)]Restricting $I$-invariant oriented chains from $\BT^+$ to $\Cscr^0$ induces an isomorphism of complexes $(\C^\ori_c(\BT^+_{(\bullet)},\F)^I,\partial_\bullet)\stackrel{\cong}{\longrightarrow}(\C^\ori_c(\Cscr^0_{(\bullet)},\F^I),\partial_\bullet)$ of $R$-modules.
 \item[(ii)]Assume that the restriction maps $t^F_{F'}:\F_F^I\to\F_{F'}^I$ of the coefficient system $\F^I\in\Coeff(\Cscr)$ are bijective for all faces $F'$ and $F$ of $\Cscr^0$ with $F'\subseteq\overline{F}$ and $C(F')=C(F)$. Then the complexes $(\C^\ori_c(\BT^+_{(\bullet)},\F)^I,\partial_\bullet)$ and $(\C^\ori_c(\Cscr^0_{(\bullet)},\F^I),\partial_\bullet)$ are acyclic and the natural map
 \[
  \iota_{x_0}:\F_{x_0}^I\hookrightarrow(\bigoplus_{y\in \BT^+_0}\F_y)^I=\C^\ori_c(\BT^+_{(0)},\F)^I\to\H_0(\C_c^\ori(\BT^+_{(\bullet)},\F)^I)
 \]
is an $R$-linear bijection.
\end{enumerate}
\end{prop}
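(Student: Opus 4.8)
The plan is to obtain both parts by transcribing the proofs of Proposition \ref{restriction_chain_complexes} and Proposition \ref{acyclic} from the pair $(\BT,\A)$ to the pair $(\BT^+,\Cscr^0)$. For (i) I would first re-establish the one structural fact those proofs rely on: every face of $\BT^+$ has a unique $I$-conjugate lying in $\Cscr^0$. Existence is immediate from the identity $\BT^+=\Ubar_1\Cscr^0$ proved just before the statement, together with $\Ubar_1\subseteq I$: any face of $\BT^+$ has the form $uF_0$ with $u\in\Ubar_1$ and $F_0\subseteq\Cscr^0$. Uniqueness follows because $\Cscr^0\subseteq\A$ and $\A$ already contains a unique $I$-conjugate of every face of $\BT$ (cf.\ \cite{OS1}, Remark 4.17.2). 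Granting this, restriction $\C^\ori_c(\BT^+_{(i)},\F)^I\to\C^\ori_c(\Cscr^0_{(i)},\F^I)$ is a well-defined $R$-linear bijection in each degree, by the argument of \cite{OS1}, Proposition 3.3.

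To check compatibility with the differentials I would argue as in Proposition \ref{restriction_chain_complexes}. Fixing $(F',c')\in\Cscr^0_{(i)}$ and $f\in\C^\ori_c(\BT^+_{(i+1)},\F)^I$, the faces $F$ of $\BT^+$ with $F'\subseteq\overline{F}$ split into $I$-orbits, each represented by a unique $F_0\subseteq\Cscr^0$; and if $uF_0\supseteq F'$ with $u\in\Ubar_1$ then $u^{-1}F'\subseteq\overline{F_0}\subseteq\A$ forces $u^{-1}F'=F'$, so $u\in I\cap P_{F'}^\dagger$ and in particular $F'\subseteq\overline{F_0}$. Using $I\cap P_{F_0}^\dagger\subseteq I\cap P_{F'}^\dagger$ from (\ref{Iwahori_intersection_inclusion}), that $g$ fixes $(F',c')$ for $g\in I\cap P_{F'}^\dagger$, and the identity $r^{gF_0}_{F'}\circ c_{g,F_0}=g\circ r^{F_0}_{F'}$, the sum over one orbit collapses, after reindexing by $(I\cap P_{F'}^\dagger)/(I\cap P_{F_0}^\dagger)$, to $t^{F_0}_{F'}(f(F_0,c_0))$; summing over orbit representatives $F_0\subseteq\Cscr^0$ gives the differential of $\C^\ori_c(\Cscr^0_{(\bullet)},\F^I)$. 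This proves (i).

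For (ii), by (i) it is enough to treat $\C^\ori_c(\Cscr^0_{(\bullet)},\F^I)$, and I would repeat the proof of \cite{OS1}, Theorem 3.4 (i.e.\ Proposition \ref{acyclic}), with $\A$ replaced by $\Cscr^0$. One filters $\Cscr^0$ by $\Cscr^0(n)=\{F\subseteq\Cscr^0:d(C(F),C)\leq n\}$, which makes sense because $C(F)\subseteq\Cscr^0$ for $F\subseteq\Cscr^0$ (established before the statement). One has $\Cscr^0(0)=\overline{C}$, which is contractible, and the hypothesis makes $(t^C_F)_{F\subseteq\overline{C}}\colon\K_{\F_C^I}\to\F^I|_{\overline{C}}$ an isomorphism, so $\C^\ori_c(\overline{C}_{(\bullet)},\F^I)$ is acyclic and, taking $F=x=x_0$ in the proof of Proposition \ref{acyclic}(ii) (note $x_0\in\overline{C}$, hence $C(x_0)=C$ and $\F^I_{x_0}=\F_{x_0}^{I_{C(x_0)}}$), the map $\F_{x_0}^I\to\H_0(\C^\ori_c(\overline{C}_{(\bullet)},\F^I))$ is bijective. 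For the inductive step one observes that a minimal gallery from $C$ to a chamber $D\subseteq\Cscr^0$ stays in $\Cscr^0$, by the same reflection argument used before the statement to show $C(F)\subseteq\Cscr^0$; hence $\overline{D}\setminus\Cscr^0(n-1)=\overline{D}\setminus\A(n-1)$, the restriction of $\F^I$ there is constant with value $\F_D^I$, and the relative complexes coincide with those of \cite{OS1}. Passing to the limit over $n$ yields the acyclicity of both complexes and the bijectivity of the composite $\F_{x_0}^I\hookrightarrow\C^\ori_c(\Cscr^0_{(0)},\F^I)\to\H_0$, which transports back through (i) to the asserted bijectivity of $\iota_{x_0}$.

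The step I expect to be the main obstacle is the bookkeeping in the differential computation of (i): one must verify that the element moving a face of $\BT^+$ into $\Cscr^0$ and fixing a prescribed subface $F'\subseteq\Cscr^0$ necessarily lies in $I\cap P_{F'}^\dagger$, since this is precisely what makes the orbit sums reproduce the averaged restriction maps $t^F_{F'}$ rather than some larger average. This is where the inclusion $\BT^+=\Ubar_1\Cscr^0\subseteq\BT$ and the convexity of $\Cscr^0$ in $\A$ are genuinely needed; the remainder is a mechanical transcription of \cite{OS1}.
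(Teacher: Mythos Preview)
Your proposal is correct and follows the paper's approach closely. Part (i) is handled identically: the paper simply says the proof is ``identical to that of Proposition \ref{restriction_chain_complexes}'', and the detailed verification you supply (unique $I$-conjugates in $\Cscr^0$ via $\BT^+=\Ubar_1\Cscr^0$, and the differential computation) is exactly what that sentence unpacks to.

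For part (ii) there is a small but genuine difference worth noting. The paper transcribes the filtration argument of Proposition \ref{acyclic} by first re-establishing the analogs of \cite{OS1}, Lemma 4.15 and Proposition 4.16, inside $\Cscr^0$: it records the disjoint decomposition $\Cscr^0(n)=\Cscr^0(n-1)\,\dot\cup\,\dot\bigcup_{D}\overline{D}\setminus\Cscr^0(n-1)$ and then proves directly that the subcomplexes $\Cscr^0(n-1)$ and $\overline{D}\cup\Cscr^0(n-1)$ are contractible, using that $\Cscr^0$ is convex and that intersections of star-like sets are star-like. You instead observe that for each chamber $D\subseteq\Cscr^0$ at distance $n$ one has $\overline{D}\cap\Cscr^0(n-1)=\overline{D}\cap\A(n-1)$ (since $C(F)\subseteq\Cscr^0$ for $F\subseteq\Cscr^0$), so the relative piece for $D$ in the $\Cscr^0$-filtration is literally the same chain complex as in the $\A$-filtration, hence exact by \cite{OS1}. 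This is a legitimate shortcut: it imports the local exactness from \cite{OS1} rather than redoing the contractibility argument, at the cost of relying on the $\A$-version as a black box. Either route works; yours is marginally more economical, the paper's is more self-contained.
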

\begin{proof}
The proof of (i) is identical to that of Proposition \ref{restriction_chain_complexes}. As for (ii), the proof of Proposition \ref{acyclic} carries over once we can prove analogs of \cite{OS1}, Lemma 4.15 and Proposition 4.16. For any non-negative integer $n$ let $\Cscr^0(n)$ denote the set of faces $F$ of $\Cscr^0$ such that $C(F)$ and $C$ have gallery distance less than or equal to $n$. If $\mathrm{Ch}(\Cscr^0(n))$ denotes the set of chambers in $\Cscr^0$ whose gallery distance to $C$ is less than or equal to $n$ then we have the disjoint decomposition
\[
 \Cscr^0(n)=\Cscr^0(n-1)\;\dot{\cup}\dot{\bigcup_{D\in\mathrm{Ch}(\Cscr^0(n))}}\overline{D}\setminus \Cscr^0(n-1)
\]
for all $n>0$. Moreover, if $n>0$ and if $D\in\mathrm{Ch}(\Cscr^0(n))$ then the subcomplexes $\Cscr^0(n-1)$ and $\overline{D}\cup \Cscr^0(n-1)$ of $\Cscr^0$ are contractible. This follows from the proof of \cite{OS1}, Proposition 4.16, because $\Cscr^0$ is convex and the intersection of two star-like subsets of a Euclidean space is again star-like and therefore contractible.
\end{proof}
Let $\Cscr$ be an arbitrary closed vector chamber contained in $\Cscr^0$. Given $\F\in\Coeff_G(\BT)$ and $g\in G^+$ we denote by $\varphi_g$ the endomorphism of the complex $\C_c^\ori(\BT_{(\bullet)}^+(\Cscr),\F)$ given by
\begin{small}
\[
 \varphi_g:\C_c^\ori(\BT_{(\bullet)}^+(\Cscr),\F)\hookrightarrow \C_c^\ori(\BT_{(\bullet)},\F)\stackrel{g}{\longrightarrow}\C_c^\ori(\BT_{(\bullet)},\F)\stackrel{\res}{\longrightarrow} \C_c^\ori(\BT_{(\bullet)}^+(\Cscr),\F).
\]
\end{small}Here the leftmost map is the extension by zero of oriented chains on $\BT^+(\Cscr)$ to oriented chains on $\BT$. Note that this makes $\C_c^\ori(\BT_{(\bullet)}^+(\Cscr),\F)$ a $G^+$-stable subcomplex of $\C_c^\ori(\BT_{(\bullet)},\F)$. Indeed, if $f\in\C_c^\ori(\BT_{(i)}^+(\Cscr),\F)\subseteq\C_c^\ori(\BT_{(i)},\F)$ and $(F,c)\in\BT_{(i)}^+(\Cscr)$ then $g(f)(F,c)=c_{g,g^{-1}F}(f(g^{-1}F,g^{-1}c))$. This is zero if $F\not\subseteq \BT^+(\Cscr)$ because $\BT^+(\Cscr)$ is $G^+$-stable and $f$ is supported on $\BT_{(i)}^+(\Cscr)$. Thus, also $g(f)$ is supported on $\BT_{(i)}^+(\Cscr)$. More precisely, the support of $\varphi_g(f)$ is contained in $g\BT^+(\Cscr)$ as is clear from the explicit formula
\[
 \varphi_g(f)(F,c)=
 \left\{
 \begin{array}{ll}
  c_{g,g^{-1}F}(f(g^{-1}F,g^{-1}c)),&\mbox{ if }g^{-1}F\subseteq\BT^+(\Cscr)\\
  0,&\mbox{ otherwise.}
 \end{array}
 \right.
\]
Note that if $g,h\in G^+$ and if $F$ is a face of $\BT^+(\Cscr)$ with $g^{-1}h^{-1}F\subseteq\BT^+(\Cscr)$ then also $h^{-1}F\subseteq g\BT^+(\Cscr)\subseteq \BT^+(\Cscr)$ since $\BT^+(\Cscr)$ is $G^+$-stable. It then follows directly from the definitions that
\[
 \varphi_1=\id\quad\mbox{and}\quad\varphi_g\circ\varphi_h=\varphi_{gh}\quad\mbox{for all}\quad g,h\in G^+.
\]
Altogether, $\C_c^\ori(\BT_{(\bullet)}^+(\Cscr),\F)$ is a complex of smooth $R$-linear $G^+$-repre\-sen\-ta\-tions via $g\cdot f=\varphi_g(f)$ and $\mathrm{H}_i(\BT^+(\Cscr),\F)$ is an object of $\Rep^\infty_R(G^+)$ for all $i\geq 0$. By abuse of notation we continue to write $\varphi_g$ for the endomorphism of $\mathrm{H}_i(\BT^+(\Cscr),\F)$ induced by $\varphi_g$.
\begin{rem}\label{scalar_restriction}
The inclusion $\C_c^\ori(\BT_{(\bullet)}^+(\Cscr),\F)\subseteq \C_c^\ori(\BT_{(\bullet)},\F)$ of complexes of $G^+$-representations induces an inclusion $\C_c^\ori(\BT_{(\bullet)}^+(\Cscr),\F)^I\subseteq \C_c^\ori(\BT_{(\bullet)},\F)^I$ of complexes of $H^+$-modules. This in turn gives rise to $H^+$-linear maps on the homology groups. Assume that $R$ is a quasi-Frobenius ring. If $M\in \Mod_H$ then Proposition \ref{acyclic}, Theorem \ref{quasi_inverse_coefficient_system} and Proposition \ref{chain_complex_D} imply that for the closed vector chamber $\Cscr=\Cscr^0$ the map $\mathrm{H}_0(\C_c^\ori(\BT^+_{(\bullet)},\F(M))^I)\to \mathrm{H}_0(\C_c^\ori(\BT_{(\bullet)},\F(M))^I)=M(\F(M))\cong M$ is an isomorphism of $H^+$-modules. Thus, the restriction of $\F(M)$ to $\BT^+$ determines the scalar restriction of $M$ to $H^+$.
\end{rem}
If $g\in G^+$ we denote by $\psi_g$ the endomorphism of $\C_c^\ori(\BT_{(\bullet)}^+(\Cscr),\F)$ given by
\begin{small}
\[
 \psi_g:\C_c^\ori(\BT_{(\bullet)}^+(\Cscr),\F)\hookrightarrow \C_c^\ori(\BT_{(\bullet)},\F)\stackrel{g^{-1}}{\longrightarrow}\C_c^\ori(\BT_{(\bullet)},\F)\stackrel{\res}{\longrightarrow} \C_c^\ori(\BT_{(\bullet)}^+(\Cscr),\F).
\]
\end{small}Explicitly, if $0\leq i\leq d$, $f\in\C_c^\ori(\BT_{(i)}^+(\Cscr),\F)$ and $(F,c)\in\BT_{(i)}^+(\Cscr)$ then $\psi_g(f)(F,c)=  c_{g^{-1},gF}(f(gF,gc))$ because $g\BT^+(\Cscr)\subseteq\BT^+(\Cscr)$. This formula shows that
\[
 \psi_1=\id\quad\mbox{and}\quad\psi_g\circ\psi_h=\psi_{hg}\quad\mbox{for all}\quad g,h\in G^+.
\]
Consequently, $\C_c^\ori(\BT_{(\bullet)}^+(\Cscr),\F)$ is a complex of smooth $R$-linear $(G^+)^{-1}$-repre\-sen\-tations via $g^{-1}\cdot f=\psi_g(f)$ and $\mathrm{H}_i(\BT^+(\Cscr),\F)$ is an object of $\Rep_R((G^+)^{-1})$ for all $i\geq 0$. Again, we also denote by $\psi_g$ the induced $R$-linear endomorphism of $\mathrm{H}_i(\BT^+(\Cscr),\F)$ for all $i\geq 0$. As a formal consequence of the definitions we have
\[
 \psi_g\circ\varphi_g=\id\quad\mbox{for all}\quad g\in G^+.
\]
Note that if $g\in I\subseteq G^+\cap(G^+)^{-1}$ then we also have $\varphi_g\circ\psi_g=\id$, and we will write $\varphi_g=g=\psi_{g^{-1}}$.\\

Assume that the ring $R$ is quasi-Frobenius and hence artinian (cf.\ \cite{Lam}, Theorem 15.1). We note that the more general definitions and constructions of \cite{SV} can also be carried out over $R$. Thus, we let
\[
R\llbracket\Pbar^+\rrbracket\cong R\llbracket\Pbar\cap I\rrbracket\otimes_{R[\Pbar\cap I]}R[\Pbar^+],
\]
in analogy to \cite{SV}, \S1, where this ring is denoted by $\Lambda(\Pbar_+)$. If $g\in\Pbar^+$ then we denote by $\delta_g$ the image of $g$ under the maps $\Pbar^+\to R[\Pbar^+]\to R\llbracket\Pbar^+\rrbracket$.\\

Given an object $V\in\Rep^\infty_R((\Pbar^+)^{-1})$ its $R$-linear dual $V^*=\Hom_R(V,R)$ is a pseudocompact $R$-module in the sense of \cite{Bru}, \S1, and carries the structure of a left $R\llbracket\Pbar^+\rrbracket$-module characterized by
\[
 (\delta_g\cdot\ell)(v)=\ell(g^{-1}v)\quad\mbox{for all}\quad g\in\Pbar^+,\ell\in V^*\mbox{ and }v\in V.
\]
Since the ring $R$ is selfinjective the functor $V\mapsto V^*$ is exact. Using that any finitely generated $R$-module is reflexive (cf.\ \cite{Lam}, Theorem 15.11) one can show that it is in fact an equivalence of abelian categories between $\Rep^\infty_R((\Pbar^+)^{-1})$ and the category of pseudocompact $R$-modules $M$ endowed with a continuous $R$-linear action $\Pbar^+\times M\to M$ of $\Pbar^+$. The latter extends to an $R\llbracket\Pbar^+\rrbracket$-module structure in a canonical way (cf.\ \cite{Koh}, Theorem 1.5, for a related result). Likewise, if $V\in\Rep_R^\infty(\Pbar^+)$ then the $R$-module $V^*$ is an $R\llbracket(\Pbar^+)^{-1}\rrbracket$-module in a natural way.\\

If $\Cscr$ runs through the closed vector chambers contained in $\Cscr^0$ then the subcomplexes $\BT^+(\Cscr)$ of $\BT$ form a directed set with respect to reverse inclusion. A cofinal subset is given by the complexes $\BT^+(t\Cscr^0)$ with $t\in T^+$. Consequently, for any $0\leq i\leq d$ the family $(\C^\ori_c(\BT_{(i)}^+(\Cscr),\F)^*)_{\Cscr\subseteq\Cscr^0}$ is an inductive system of $R\llbracket(\Pbar^+)^{-1}\rrbracket$- and $R\llbracket\Pbar^+\rrbracket$-modules whose transition maps are dual to the inclusion maps $\C^\ori_c(\BT_{(i)}^+(\Cscr'),\F)\subseteq \C^\ori_c(\BT_{(i)}^+(\Cscr),\F)$ whenever $\Cscr'\subseteq\Cscr\subseteq\Cscr^0$. Thus,
\begin{equation}\label{etale_complex}
 \varinjlim_{\Cscr\subseteq\Cscr^0}\C^\ori_c(\BT_{(\bullet)}^+(\Cscr),\F)^*,
\end{equation}
is a complex of $R\llbracket(\Pbar^+)^{-1}\rrbracket$- and $R\llbracket\Pbar^+\rrbracket$-modules. Recall that the $R\llbracket\Pbar^+\rrbracket$-module structure is induced by the operators $\psi_p$ with $p\in\Pbar^+$.
\begin{prop}\label{etale}
If $R$ is a quasi-Frobenius ring and if $\F\in\Coeff_G(\BT)$ then the complex (\ref{etale_complex}) and its cohomology groups consist of \'etale $R\llbracket\Pbar^+\rrbracket$-modules in the sense of \cite{SV}, Definition 1.2.
\end{prop}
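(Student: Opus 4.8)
Recall from \cite{SV}, Definition~1.2, that an $R\llbracket\Pbar^+\rrbracket$-module $M$ is \emph{\'etale} if, for every $t\in T^+$, the natural map
\[
 \theta_t\colon R\llbracket\Ubar_1\rrbracket\otimes_{R\llbracket\Ubar_1\rrbracket,\,c_t}M\longrightarrow M,\qquad \lambda\otimes m\longmapsto\lambda\cdot(\delta_t m),
\]
is bijective; here $c_t$ is the ring endomorphism of $R\llbracket\Ubar_1\rrbracket$ induced by $u\mapsto tut^{-1}$, $M$ is viewed on the left as an $R\llbracket\Ubar_1\rrbracket$-module through $c_t$, and $\delta_t$ acts on $M$ as a $c_t$-semilinear operator. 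Recall also that the \'etale modules form an abelian subcategory of $\Mod_{R\llbracket\Pbar^+\rrbracket}$ which is stable under kernels and cokernels of morphisms (see \cite{SV}). The differentials of (\ref{etale_complex}) are the $R$-linear duals of the differentials of the oriented chain complexes $\C_c^\ori(\BT^+_{(\bullet)}(\Cscr),\F)$, and the latter are morphisms of $(\Pbar^+)^{-1}$-representations; hence the differentials of (\ref{etale_complex}) are $R\llbracket\Pbar^+\rrbracket$-linear. It therefore suffices to prove that each term $M^{(i)}:=\varinjlim_{\Cscr\subseteq\Cscr^0}\C_c^\ori(\BT^+_{(i)}(\Cscr),\F)^*$ is \'etale.

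Fix $0\le i\le d$ and write $C_\Cscr=\C_c^\ori(\BT^+_{(i)}(\Cscr),\F)$; it is a smooth $R$-linear representation of $\Ubar_1$ carrying the operators $\varphi_g,\psi_g$ ($g\in G^+$) with $\psi_g\varphi_g=\id$. Since $\BT^+(\Cscr')$ is stable under $T^+$ for every closed vector chamber $\Cscr'\subseteq\Cscr$, the operator $\varphi_t$ ($t\in T^+$) carries $C_{\Cscr'}$ into $C_{\Cscr'}$ compatibly with the inclusion $C_{\Cscr'}\hookrightarrow C_\Cscr$; consequently its transpose $\varphi_t^*$ is compatible with the transition maps and descends to an operator $\widetilde\varphi_t$ on $M^{(i)}$ with $\widetilde\varphi_t\circ\delta_t=\id$. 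In particular $\delta_t$ is injective on $M^{(i)}$, every $\delta_u$ with $u\in\Ubar_1$ is invertible on $M^{(i)}$, and $\theta_t$ is injective. Fix a set $S_t$ of representatives for the finitely many cosets of the open subgroup $t\Ubar_1 t^{-1}$ in $\Ubar_1$; then $\theta_t$ is surjective if and only if $M^{(i)}=\sum_{u\in S_t}\delta_u\delta_t(M^{(i)})$, and it is bijective as soon as this sum is direct.

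Both points rest on the combinatorial fact that, for $t\in T^+$,
\[
 \BT^+(t\Cscr)=\coprod_{u\in S_t}ut\,\BT^+(\Cscr)
\]
is a disjoint decomposition into subcomplexes, whose essential content is that the stabiliser in $\Ubar_1$ of any face of $t\Cscr^0$ is contained in $t\Ubar_1 t^{-1}$ --- a statement about the valued root datum that follows from the descriptions of $I_F$ and of $\Ubar_1$ in terms of the root subgroup filtrations $U_{\alpha,r}$ (as in \cite{SS}, \S{I.2}), by the kind of argument employed in \cite{OS1}, \S4, and \cite{SV}, \S3. Granting this, a direct computation on $C_\Cscr$ shows that $\sum_{u\in S_t}\varphi_{ut}\psi_{ut}$ is the $R$-linear projection of $C_\Cscr$ onto the subspace $C_{t\Cscr}$ of chains supported on $\BT^+(t\Cscr)$, and that $\psi_{vt}\varphi_t=0$ on $C_\Cscr$ for every $v\in\Ubar_1\setminus t\Ubar_1 t^{-1}$. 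Dualising, $\sum_{u\in S_t}\delta_u\delta_t\,\varphi_t^*\,\delta_{u^{-1}}$ equals the idempotent $\pi_t^*$ on $C_\Cscr^*$ whose image is carried isomorphically onto $C_{t\Cscr}^*$ by restriction and whose kernel is precisely the kernel of the transition map $C_\Cscr^*\to C_{t\Cscr}^*$; since that kernel becomes zero in the colimit, $\sum_{u\in S_t}\delta_u\delta_t\,\widetilde\varphi_t\,\delta_{u^{-1}}=\id$ on $M^{(i)}$, so $M^{(i)}=\sum_{u\in S_t}\delta_u\delta_t(M^{(i)})$. Likewise $\widetilde\varphi_t\,\delta_v\delta_t=0$ on $M^{(i)}$ for $v\in\Ubar_1\setminus t\Ubar_1 t^{-1}$, so applying $\widetilde\varphi_t\,\delta_{u_0^{-1}}$ to any relation $\sum_{u\in S_t}\delta_u\delta_t(m_u)=0$ gives $m_{u_0}=0$; hence the sum is direct and $\theta_t$ is bijective. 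As this holds for every $t\in T^+$, the module $M^{(i)}$ is \'etale, and since the cohomology groups of (\ref{etale_complex}) are subquotients of the $M^{(i)}$ inside the abelian subcategory of \'etale modules, the proposition follows. The main obstacle is the disjointness statement above, together with the observation --- decisive at the final step --- that $\varphi_t$ is compatible with the transition inclusions while $\psi_t$ becomes so only in the colimit, which is what forces the projections $\pi_t^*$ to collapse to the identity on $M^{(i)}$.
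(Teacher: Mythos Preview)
Your overall strategy matches the paper's: reduce to showing each term of the complex is \'etale by verifying that $\sum_{u\in S_t}\delta_u\delta_t\widetilde\varphi_t\delta_{u^{-1}}=\id$ on the colimit. But the combinatorial input you invoke is false as stated, and this is a genuine gap.

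You assert that $\BT^+(t\Cscr)=\coprod_{u\in S_t}ut\,\BT^+(\Cscr)$ is disjoint, and that its ``essential content'' is that the $\Ubar_1$-stabilizer of any face of $t\Cscr^0$ lies in $t\Ubar_1 t^{-1}$. Both statements fail at the boundary. Take $G=\GL_2(K)$, $t=\mathrm{diag}(1,\pi)\in T^+$, and the vertex $tx_0\in t\Cscr^0$. A direct computation gives $\Ubar_1\cap P_{tx_0}=\Ubar_1$, whereas $t\Ubar_1 t^{-1}$ has index $q$ in $\Ubar_1$; so the stabilizer claim is false. Consequently $tx_0\in ut\BT^+(\Cscr^0)$ for \emph{every} $u\in\Ubar_1$, and the pieces $ut\BT^+(\Cscr^0)$ all meet at $tx_0$. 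Your projection identity $\sum_u\varphi_{ut}\psi_{ut}=$ (projection onto $C_{t\Cscr}$) therefore cannot hold on chains supported near the boundary of $t\Cscr$: on a chain supported at $tx_0$ the left-hand side multiplies by $|S_t|=q$.

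The paper's proof repairs exactly this. Instead of claiming the identity on all of $C_\Cscr$, it fixes a strictly dominant $s\in T^+$ and shows that $\sum_{u\in S_t}u\varphi_t\psi_t u^{-1}$ restricts to the identity on the subspace $C_{ts\Cscr}\subseteq C_\Cscr$. The detailed Bruhat--Tits argument there establishes that for a face $F=vtsF'$ with $F'\subseteq\Cscr$ there is a \emph{unique} coset $u\in\Ubar_1/t\Ubar_1 t^{-1}$ with $t^{-1}u^{-1}F\subseteq\BT^+(\Cscr)$; the strict dominance of $s$ is used at the very end to pass from $\Ubar_0$ to $\Ubar_1$ via $s\Ubar_0 s^{-1}\subseteq\Ubar_1$. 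Since the family $\{ts\Cscr\}$ is cofinal among closed vector chambers, this suffices to conclude in the colimit. Your argument becomes correct if you insert this strictly dominant $s$ and restrict the uniqueness claim to faces of $\BT^+(ts\Cscr)$; without it, the disjointness step fails.
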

\begin{proof}
As in \cite{SV}, Proposition 1.3, the category of \'etale $R\llbracket\Pbar^+\rrbracket$-modules is abelian. Therefore, it suffices to show that each member of the complex (\ref{etale_complex}) is \'etale. Fixing $0\leq i\leq d$ and $t\in T^+$, it suffices to see that the endomorphism
\[
\sum_{u\in\Ubar_1/t\Ubar_1t^{-1}}\psi_u^*\circ\psi_t^*\circ\varphi_t^*\circ \psi_{u^{-1}}^*
\]
of $\varinjlim_{\Cscr}\C^\ori_c(\BT_{(i)}^+(\Cscr),\F)^*$ is the identity (cf.\ \cite{SVZ}, Remark 3.3.1). Let $\delta\in \C^\ori_c(\BT_{(i)}^+(\Cscr),\F)^*$ and choose $s\in T^+$ such that $\nu(s)$ is strictly dominant. By definition of the transition maps in the inductive limit it suffices to see that the linear form $\sum_{u\in\Ubar_1/t\Ubar_1t^{-1}}\psi_u^*\circ\psi_t^*\circ\varphi_t^*\circ \psi_{u^{-1}}^*(\delta)$ coincides with $\delta$ upon restriction to $\C^\ori_c(\BT_{(i)}^+(ts\Cscr),\F)$. To prove this we need to see that the endomorphism $\sum_{u\in\Ubar_1/t\Ubar_1t^{-1}}u\circ\varphi_t\circ\psi_t\circ u^{-1}$ of $\C^\ori_c(\BT_{(i)}^+(\Cscr),\F)$ restricts to the identity on the $R$-submodule $\C^\ori_c(\BT_{(i)}^+(ts\Cscr),\F)$.\\

Let $f\in\C^\ori_c(\BT_{(i)}^+(ts\Cscr),\F)$ and $(F,c)\in\BT_{(i)}^+(ts\Cscr)$. Since $\BT^+(ts\Cscr)=\Ubar_1ts\Cscr$ we can write $F=vtsF'$ with $v\in\Ubar_1$ and $F'\subseteq\Cscr$. Assume that $u\in\Ubar_1$ such that $t^{-1}u^{-1}vtsF'\subseteq\BT^+(\Cscr)$, i.e.\ $t^{-1}u^{-1}vtsF'=u'F''$ for some $u'\in\Ubar_1$ and $F''\subseteq\Cscr$. Let $g=(u')^{-1}t^{-1}u^{-1}vts$ so that $F'=g^{-1}F''\subseteq\A\cap g^{-1}\A$. By \cite{BT}, Proposition 7.4.8, there exists an element $n\in N_G(T)$ with $n^{-1}g\in P_{F'}$. Note that $n^{-1}g=n^{-1}s\tilde{u}$ where $n^{-1}s\in N_G(T)$ and $\tilde{u}=s^{-1}(u')^{-1}t^{-1}u^{-1}vts\in\Ubar$. Choosing a vertex $x\in\overline{F'}$ we have $n^{-1}s\tilde{u}\in P_{F'}\subseteq P_x$ and $P_xn^{-1}s\Ubar=P_xn^{-1}s\tilde{u}\Ubar=P_x\Ubar$. The Iwasawa decomposition in \cite{BT}, Corollaire 7.3.2 (i), implies $n^{-1}s\in N_G(T)\cap P_x$ and $\tilde{u}=s^{-1}n\cdot n^{-1}g\in\Ubar\cap P_x$. Recall from \cite{BT2}, \S5.2.4, that $P_x=T_0U_x$ for a normal subgroup $U_x\subseteq P_x$ such that the multiplication map $(\Ubar\cap U_x)\times (N_G(T)\cap U_x)\times (U\cap U_x)\to U_x$ is bijective and such that $\Ubar\cap U_x$ is generated by the subgroups $U_{\alpha,-\alpha(x)}$ with $\alpha\in\Phi^-$. Write $\tilde{u}=a\overline{y}by$ with $a\in T_0$, $\overline{y}\in\Ubar\cap U_x$, $b\in N_G(T)\cap U_x$ and $y\in U\cap U_x$. Then $\Ubar U=\Ubar\tilde{u}U=\Ubar abU$ and the Bruhat decomposition $G=\coprod_{w\in W}\Ubar wT_0U$ implies $b\in T_0$. Writing $\tilde{u}=a\overline{y}a^{-1}\cdot ab\cdot y$ the injectivity of the multiplication map $\Ubar\times T\times U\to G$ then implies $ab=y=1$ and $\tilde{u}=a\overline{y}a^{-1}\in\Ubar\cap U_x$. This is contained in the subgroup $\Ubar_0$ of $\Ubar$ generated by the subgroups $\Ubar_{\alpha,0}$ with $\alpha\in\Phi^-$ because $x\in\Cscr^0$. Since $\nu(s)$ is strictly dominant we obtain $s\Ubar_0s^{-1}\subseteq \Ubar_1$ and $u^{-1}v\in t\Ubar_1 t^{-1}$.\\

Altogether, there is a unique class $u\in\Ubar_1/t\Ubar_1t^{-1}$ for which $t^{-1}u^{-1}F\subseteq\BT^+(\Cscr)$, namely $v\cdot t\Ubar_1t^{-1}$. By definition of $\varphi_t$ we obtain
\begin{small}
\[
 \sum_{u\in\Ubar_1/t\Ubar_1t^{-1}}(u\circ\varphi_t\circ\psi_t\circ u^{-1})(f)(F,c) =(v\circ\varphi_t\circ\psi_t\circ v^{-1})(f)(F,c)=f(F,c)
\]
\end{small}proving the claim.
\end{proof}
If $\F\in\Coeff_G(\BT)$ and if $0\leq i\leq d$ then $\C_c^\ori(\BT_{(i)},\F)$ is a smooth $R$-linear $G$-representation and hence an object of $\Rep_R^\infty(\Pbar)$ via restriction. Assume that $R$ is a quasi-Frobenius ring. As in \cite{SV}, \S2, there is a functor $D$ from $\Rep_R^\infty(\Pbar)$ to the category of $R\llbracket(\Pbar^+)^{-1}\rrbracket$-modules given by
\[
 D(V)=\varinjlim_MM^*,
\]
where $M$ runs through the filtered family of $\Pbar^+$-subrepresentations of $V$ satisfying $R[\Pbar]\cdot M=V$.
\begin{prop}\label{SV_functor}
Assume that $R$ is a quasi-Frobenius ring. If $\F\in\Coeff_G(\BT)$ and if $0\leq i\leq d$ then there is a canonical $R\llbracket(\Pbar^+)^{-1}\rrbracket$-linear surjection
\begin{equation}\label{SV_comparison}
 \varinjlim_{\Cscr\subseteq\Cscr^0}\C^\ori_c(\BT_{(i)}^+(\Cscr),\F)^*\longrightarrow D(\C_c^\ori(\BT_{(i)},\F)).
\end{equation}
If $\F_F$ is a finitely generated $R$-module for any face $F$ of $\BT$ then the map (\ref{SV_comparison}) is an isomorphism.
\end{prop}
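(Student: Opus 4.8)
The plan is to exhibit each truncated oriented chain complex as a member of the filtered system defining the functor $D$, to check that the structure maps are compatible, and then to identify the two sides after reducing to a single compactly induced representation. Throughout I fix $0\le i\le d$, write $V=\C_c^\ori(\BT_{(i)},\F)$ regarded as a smooth $\Pbar$-representation by restriction, and for a closed vector chamber $\Cscr\subseteq\Cscr^0$ put $C_\Cscr=\C_c^\ori(\BT^+_{(i)}(\Cscr),\F)$. First I would construct the map. Since $\BT^+(\Cscr)=\Pbar^+\Cscr=\Ubar_1\Cscr$ is $\Pbar^+$-stable (as recorded before Remark \ref{half_tree}), extension of oriented chains by zero realizes $C_\Cscr$ as a $\Pbar^+$-subrepresentation of $V$, and for $g\in\Pbar^+$ the operator $\varphi_g$ on $C_\Cscr$ is simply the restriction of the $G$-action on $V$; consequently the $R\llbracket(\Pbar^+)^{-1}\rrbracket$-module structure on $C_\Cscr^*$ coming from the $\psi$-operators agrees with the one inherited from the $\Pbar^+$-representation $V$. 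Next I would check $R[\Pbar]\cdot C_\Cscr=V$, which reduces to the geometric assertion $\Pbar\cdot\BT^+(\Cscr)=\BT$. Here $\Pbar\cdot\BT^+(\Cscr)=\Pbar\cdot\Cscr=\Ubar\cdot(T\Cscr)$ because $\Ubar_1\subseteq\Pbar$ and $\Pbar=\Ubar T$; the cone $\Cscr$ is a translate of the full-dimensional simplicial cone $\Cscr^0$ and $\nu(T)$ is a lattice of full rank in $\A$, so translates of $\Cscr^0$ by $\nu(T)$ cover $\A$ and $T\Cscr=\A$; finally $\Ubar\A=\BT$, since $\Ubar$ permutes transitively the apartments containing the sector-germ at infinity fixed by $\Pbar$ and any face of $\BT$ lies in a common apartment with that germ (cf.\ \cite{BT}). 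Hence each $C_\Cscr$ belongs to the filtered family defining $D(V)$; for $\Cscr'\subseteq\Cscr$ the inclusion $C_\Cscr\subseteq C_{\Cscr'}$ is $\Pbar^+$-equivariant and compatible with restriction maps, so applying $(\cdot)^*$ and passing to the colimit gives the desired canonical $R\llbracket(\Pbar^+)^{-1}\rrbracket$-linear map (\ref{SV_comparison}).

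For the comparison I would reduce to an induced representation. By (\ref{chain_complex_induced}) we have $V\cong\bigoplus_F\ind_{P_F^\dagger}^G(\varepsilon_F\otimes_R\F_F)$, a finite sum over $G$-orbit representatives $F\subseteq\Cbar$; both $D(\cdot)$ and $\varinjlim_\Cscr(\cdot)^*$ respect this splitting --- the left-hand side because $\BT^+(\Cscr)$ is $\Pbar^+$-stable, so $C_\Cscr$ decomposes according to the $G$-orbit type of its faces --- so it is enough to treat $V=\ind_{P_F^\dagger}^G(W)$ with $W=\varepsilon_F\otimes_R\F_F$ and $F\subseteq\Cbar$. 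Now I would Mackey-decompose $V|_\Pbar\cong\bigoplus_{\Pbar gP_F^\dagger}\ind_{\Pbar\cap P_{gF}^\dagger}^\Pbar(W^g)$, a sum over the $\Pbar$-orbits of faces of type $[F]$, and correspondingly write the $[F]$-part of each $C_\Cscr$ as a sum of compact inductions from the $\Pbar^+$-stabilizers of the faces of type $[F]$ contained in $\BT^+(\Cscr)$. The key point is then a bookkeeping lemma about the geometry: for $\Cscr$ deep enough, every $\Pbar$-orbit of faces of type $[F]$ acquires representatives inside $\BT^+(\Cscr)$, and those representatives merge into a single $\Pbar^+$-orbit, so that the colimit over $\Cscr$ of the duals of the corresponding summands of the $C_\Cscr$ matches, term by term, the summand of $D(V)$ attached to that $\Pbar$-orbit under the formula of \cite{SV} for $D$ of a compact induction from a subgroup of $\Pbar$.

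Surjectivity then follows in general: only finitely many orbit types intervene at the level of the coefficient system, while the infinitely many $\Pbar$-orbits of faces are absorbed uniformly by the colimit over $\Cscr$, so every element of $D(V)$ is hit already on some $C_\Cscr$. The finiteness hypothesis is what turns the map into an isomorphism. Assuming each $\F_F$ finitely generated over $R$ makes each $W^g$ a finitely generated $R$-module, hence reflexive over the quasi-Frobenius ring $R$ and with finitely generated $R$-dual (\cite{Lam}, Theorem 15.11), and this reflexivity is exactly what forces the colimit of the truncated duals on the left to exhaust --- and not merely surject onto --- the dual that appears in $D(V)$; for an infinitely generated, non-reflexive $\F_F$ one recovers only a quotient. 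I expect the genuine obstacle to lie in this last step: arranging the $\Pbar$- versus $\Pbar^+$-orbit combinatorics of the faces of $\BT$ contained in the translates of the sector $\Cscr^0$ so that the Mackey decomposition passes cleanly to the inductive limit and lines up with the description of $D$ on induced representations in \cite{SV}, while inserting the finiteness of the $\F_F$ at precisely the spot where reflexivity is used.
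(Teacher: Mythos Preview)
Your construction of the map in the first paragraph is correct and matches the paper's: each $C_\Cscr$ is a $\Pbar^+$-stable submodule of $V$ with $R[\Pbar]\cdot C_\Cscr=V$, hence lies in the filtered family defining $D(V)$. The paper reaches $\Pbar\cdot\BT^+(\Cscr)=\BT$ via the Iwasawa decomposition $G=\Ubar WI$ rather than via $\Ubar\A=\BT$, but the two are equivalent. From here surjectivity is a one-line consequence of the selfinjectivity of $R$ (a linear form on any $M$ in the family restricts to a smaller member and then extends to $C_{\Cscr^0}^*$); your account of surjectivity does not isolate this mechanism.

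Your approach to the isomorphism via the Mackey decomposition of $V|_\Pbar$, however, has genuine gaps. The decomposition $\bigoplus_{\Pbar gP_F^\dagger}\ind_{\Pbar\cap P_{gF}^\dagger}^\Pbar(W^g)$ is an \emph{infinite} direct sum; dualizing turns it into a product, and neither $D$ nor $\varinjlim_\Cscr(\cdot)^*$ is obviously compatible with that. More seriously, the claim that the faces of a given $\Pbar$-orbit lying in $\BT^+(\Cscr)$ ``merge into a single $\Pbar^+$-orbit'' is problematic --- $\Pbar^+$ is only a monoid, so its action does not partition the faces --- and the underlying geometric picture fails once the semisimple rank exceeds one: for $t_1,t_2\in T^+$ with $\nu(t_2)-\nu(t_1)$ neither dominant nor antidominant, the vertices $t_1x_0$ and $t_2x_0$ lie in the same $\Pbar$-orbit yet neither is a $\Pbar^+$-translate of the other, regardless of how deep $\Cscr$ is chosen. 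Finally, finite generation of $\F_F$ is not used via reflexivity.

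The paper bypasses all of this by proving cofinality directly: given any generating $\Pbar^+$-subrepresentation $M\subseteq V$, it finds $t\in T^+$ with $C_{t\Cscr^0}\subseteq M$. The device is a \emph{finite} set $J_i$ of faces of $\Cscr^0$ (coming from finitely many chambers near the walls of $\Cscr^0$) with $\Pbar^+\cdot J_i=\BT_i^+(\Cscr^0)$ and $\Pbar\cdot J_i=\BT_i$, so that $V=\sum_{F\in J_i}\ind_{\Pbar\cap P_F^\dagger}^\Pbar(\varepsilon_F\otimes_R\F_F)$ is a finite sum of $\Pbar$-representations. Finite generation of each $\F_F$ then enters exactly as in \cite{SV}, Lemmas 2.3 and 3.1: a single element $t_F\in T^+$ moves the finitely many $R$-generators of $\varepsilon_F\otimes_R\F_F$ into $M$, whence $\Pbar^+t_F\cdot(\varepsilon_F\otimes_R\F_F)\subseteq M$; taking $t=\prod_{F\in J_i}t_F$ then gives $C_{t\Cscr^0}\subseteq M$.
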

\begin{proof}
We will first show that $\C_c^\ori(\BT_{(i)}^+(\Cscr),\F)$ generates $\C_c^\ori(\BT_{(i)},\F)$ as a $\Pbar$-representation for any $\Cscr\subseteq\Cscr^0$. To see this let $f\in \C_c^\ori(\BT_{(i)},\F)$ and note that $f$ is a finite sum of oriented chains $f_j$ such that $f_j$ is supported on $(F_j,\pm c_j)$ for some oriented face $(F_j,c_j)$. By Lemma \ref{face_representatives} and the Iwasawa decomposition $G=\Ubar WI$ there is an element $u_j\in\Ubar$ with $u_jF_j\subseteq\A$. Since $T\cdot\Cscr=\A$ there is an element $g_j\in \Pbar$ with $g_j\cdot F_j\subseteq\Cscr$. This implies $g_j^{-1} f_j\in \C_c^\ori(\BT_{(i)}^+(\Cscr),\F)$ and $f=\sum_jg_j\cdot g_j^{-1}f_j\in R[\Pbar]\cdot\C_c^\ori(\BT_{(i)}^+(\Cscr),\F)$.\\

We thus obtain the required $R\llbracket(\Pbar^+)^{-1}\rrbracket$-linear map (\ref{SV_comparison}) which is surjective because $R$ is selfinjective. For the final statement assume that $\F_F$ is a finitely generated $R$-module for any face $F$ of $\BT$. We need to see that any $\Pbar^+$-subrepresentation $M$ of $V=\C_c^\ori(\BT_{(i)},\F)$ with $R[\Pbar]\cdot M=V$ contains $\C_c^\ori(\BT_{(i)}^+(\Cscr),\F)$ for some closed vector chamber $\Cscr\subseteq\Cscr^0$.\\

Let $\Delta\subseteq\Phi^+$ be the set of positive simple roots. For $\alpha\in\Delta$ let $\lambda_\alpha$ be the corresponding fundamental dominant coweight characterized by $\langle \beta,\lambda_\alpha\rangle=\delta_{\alpha\beta}$ for all $\beta\in\Delta$. If $n$ denotes the index of $X_*(\TT)$ in the coweight lattice of $\Phi$ we choose elements $t_\alpha\in T^+$ with $\nu(t_\alpha)=n\lambda_\alpha$ for all $\alpha\in\Delta$. The set $J$ of closed chambers $\overline{D}$ contained in $\Cscr^0$ with $\min\{\langle\alpha,z\rangle\;|\;z\in\overline{D}\}<n$ for all $\alpha\in\Delta$ is finite. Indeed, let $c=\max\{|\langle\beta,z\rangle|\;|\;z\in\Cbar,\beta\in\Phi\}$ and write $D=wC+\lambda$ with $w\in W_0$ and $\lambda\in X_*(\TT/\mathbb{C})$. Let $\alpha\in\Delta$ and choose $z\in\overline{D}$ such that $\langle\alpha,z\rangle$ attains the above minimum. If $z'\in\overline{D}$ then
\begin{eqnarray*}
0\leq\langle\alpha,z'\rangle&<&|\langle\alpha,z'-z\rangle|+n\\
&=&|\langle w^{-1}\alpha,w^{-1}(z'-\lambda)-w^{-1}(z-\lambda)\rangle|+n\leq n+2c.
\end{eqnarray*}
Thus, $\overline{D}$ is contained in a compact subset of $\A$, proving the above finiteness claim. Any other closed chamber in $\Cscr^0$ is of the form $\prod_{\alpha\in\Delta}t_\alpha^{n_\alpha}\overline{D}$ with suitable non-negative integers $n_\alpha$ and $\overline{D}\in J$, i.e.\ $\bigcup_{\overline{D}\in J}T^+\overline{D}=\Cscr^0$ and $\bigcup_{\overline{D}\in J}T  \overline{D}=T\Cscr=\A=W\Cbar$. The Iwasawa decomposition $G=\Pbar WI$ therefore implies $\BT=G\Cbar=\bigcup_{\overline{D}\in J}\Pbar\cdot\overline{D}$. If we let $J_i=\{F\in \BT_i\;|\;F\subseteq\overline{D}\mbox{ for some }\overline{D}\in J\}$
then we obtain $\Pbar^+J_i=\Ubar_1T^+J_i =\Ubar_1\Cscr^0_i=\BT^+_i(\Cscr^0)$ and $\Pbar J_i=\BT_i$. As a consequence,
\[
 V=\C_c^\ori(\BT_{(i)},\F)\cong\sum_{F\in J_i}\ind_{\Pbar\cap P_F^\dagger}^\Pbar(\varepsilon_F\otimes_R\F_F)
\]
as a $\Pbar$-representation where $\varepsilon_F$ is as in the proof of Proposition \ref{I_contains_C}. Assume that any $\F_F$ is finitely generated over $R$ and put ${V_F=\ind_{\Pbar\cap P_F^\dagger}^\Pbar(\varepsilon_F\otimes_R\F_F)}$. By \cite{SV}, Lemma 2.3, the $\Pbar$-representation $V_F$ is generated by its $\Pbar^+$-sub\-re\-pre\-sen\-ta\-tion $M_F=M\cap V_F$. Since the underlying $R$-module of $\varepsilon_F\otimes_R\F_F$ is finitely generated we can argue as in \cite{SV}, Lemma 3.1, and find an element $t_F\in T^+$ such that $\Pbar^+t_F\cdot(\varepsilon_F\otimes_R\F_F)\subseteq M$. Setting $t=\prod_{F\in J_i}t_F\in T^+$ we see that $M$ contains the subspace of all oriented chains supported on $\{pt(F,\pm 1)\;|\;p\in\Pbar^+,F\in J_i\}=\BT^+_{(i)}(t\Cscr^0)$ because $\Pbar^+tJ_i=\Ubar_1tT^+J_i=\Ubar_1t\Cscr^0_i=\BT_i^+(t\Cscr^0)$. Therefore, $\C_c^\ori(\BT^+_{(i)}(t\Cscr^0),\F)\subseteq M$, as claimed.
\end{proof}
Given an $H$-module $M$ we now study the exactness properties of the complex (\ref{etale_complex}) of \'etale $R\llbracket\Pbar^+\rrbracket$-modules if $\F=\F(M)$. Our results in this direction are limited by the corresponding exactness results for $\C_c^\ori(\BT_{(\bullet)},\F(M))$ in Proposition \ref{semisimple_rank_one}.
\begin{prop}\label{non_vanishing}
Assume that $R$ is a quasi-Frobenius ring in which $p$ is nilpotent and that the semisimple rank of $\GG$ is equal to one. If $M\in\Mod_H$ then the complex $\varinjlim_{\Cscr}\C^\ori_c(\BT_{(\bullet)}^+(\Cscr),\F(M))^*$ is acyclic. If $M$ is non-zero then its $0$-th cohomology group does not vanish.
\end{prop}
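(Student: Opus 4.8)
The plan is to reduce the statement to the two-term complexes $\C_c^\ori(\BT^+_{(\bullet)}(\Cscr),\F(M))$ and then argue exactly as in the proof of Proposition \ref{semisimple_rank_one}. Since the semisimple rank of $\GG$ is one, $\BT$ is a tree and every $\BT^+(\Cscr)$ is a half-tree (Remark \ref{half_tree}), so $\C_c^\ori(\BT^+_{(\bullet)}(\Cscr),\F(M))$ is concentrated in degrees $0$ and $1$. As $R$ is selfinjective the functor $(\cdot)^*$ is exact, and filtered colimits are exact, so the cohomology $\H^j$ of the complex (\ref{etale_complex}) is $\varinjlim_{\Cscr}\H_j(\C_c^\ori(\BT^+_{(\bullet)}(\Cscr),\F(M)))^*$ for every $j$. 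Since the half-trees $\BT^+(t\Cscr^0)$ with $t\in T^+$ are cofinal among the $\BT^+(\Cscr)$, it suffices to understand $\H_0$ and $\H_1$ of $\C_c^\ori(\BT^+_{(\bullet)}(t\Cscr^0),\F(M))$ for $t\in T^+$.

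For the acyclicity I would show that the differential $\partial_0\colon\C_c^\ori(\BT^+_{(1)}(t\Cscr^0),\F(M))\to\C_c^\ori(\BT^+_{(0)}(t\Cscr^0),\F(M))$ is injective, so that $\H_1$ vanishes. Its kernel is a smooth $R$-linear representation of $I$ (the differentials commute with the operators $\varphi_g$), and $I$ is a pro-$p$ group with $p$ nilpotent in $R$; hence by Lemma \ref{I_invariants_non_trivial} and the left exactness of $(\cdot)^I$ it is enough that $\partial_0^I$ be injective. Now $\F(M)\in\C$ by Theorem \ref{quasi_inverse_coefficient_system}, so the transition maps of $\F(M)^I$ are bijective whenever $C(F')=C(F)$; since $t\Cscr^0$ is convex, the proof of Proposition \ref{chain_complex_D}(ii) applies with $t\Cscr^0$ in place of $\Cscr^0$ and shows that $\C_c^\ori(\BT^+_{(\bullet)}(t\Cscr^0),\F(M))^I$ is acyclic, in particular that $\partial_0^I$ is injective. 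This is precisely the mechanism of Proposition \ref{semisimple_rank_one}, carried over from $\BT$ to $\BT^+(t\Cscr^0)$.

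For the non-vanishing, the previous step yields a short exact sequence $0\to\C_c^\ori(\BT^+_{(1)}(\Cscr),\F(M))\to\C_c^\ori(\BT^+_{(0)}(\Cscr),\F(M))\to\H_0(\BT^+(\Cscr),\F(M))\to0$, whence $\H^0$ of (\ref{etale_complex}) is $\varinjlim_{\Cscr}\H_0(\BT^+(\Cscr),\F(M))^*$. Applying $(\cdot)^I$ and using the injectivity of $\partial_0$ one checks that $\partial_0(\C_c^\ori(\BT^+_{(1)}(\Cscr),\F(M)))\cap\C_c^\ori(\BT^+_{(0)}(\Cscr),\F(M))^I=\partial_0(\C_c^\ori(\BT^+_{(1)}(\Cscr),\F(M))^I)$, so that $\H_0(\C_c^\ori(\BT^+_{(\bullet)}(\Cscr),\F(M))^I)$ embeds naturally into $\H_0(\BT^+(\Cscr),\F(M))$. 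By the analog of Proposition \ref{acyclic}(ii) for the half-tree $\BT^+(t\Cscr^0)$, whose apex $tx_0$ is a vertex of $\BT$, the map $\iota_{tx_0}$ identifies $\F(M)_{tx_0}^{I_{C(tx_0)}}$ with $\H_0(\C_c^\ori(\BT^+_{(\bullet)}(t\Cscr^0),\F(M))^I)$; and $\F(M)_{tx_0}^{I_{C(tx_0)}}\cong M$ by Theorem \ref{quasi_inverse_coefficient_system} after conjugating $tx_0$ into $\Cbar$ by $t^{-1}$ via the isomorphisms (\ref{phi_gd}). Moreover, for $t'\in T^+$ the inclusion $\BT^+(tt'\Cscr^0)\subseteq\BT^+(t\Cscr^0)$ intertwines $\iota_{tt'x_0}$ computed inside $\BT^+(tt'\Cscr^0)$ with $\iota_{tt'x_0}$ computed inside $\BT^+(t\Cscr^0)$ — the latter makes sense since $tt'x_0$ is a vertex of $t\Cscr^0$ — so the transition map $\H_0(\C_c^\ori(\BT^+_{(\bullet)}(tt'\Cscr^0),\F(M))^I)\to\H_0(\C_c^\ori(\BT^+_{(\bullet)}(t\Cscr^0),\F(M))^I)$ is an isomorphism. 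Dualizing the injections $M\cong\H_0(\C_c^\ori(\BT^+_{(\bullet)}(\Cscr),\F(M))^I)\hookrightarrow\H_0(\BT^+(\Cscr),\F(M))$ and passing to the colimit produces a surjection $\H^0(\text{(\ref{etale_complex})})=\varinjlim_{\Cscr}\H_0(\BT^+(\Cscr),\F(M))^*\twoheadrightarrow\varinjlim_t M^*=M^*$; as $R$ is quasi-Frobenius, hence a cogenerator, $M\neq0$ forces $M^*\neq0$, and therefore $\H^0(\text{(\ref{etale_complex})})\neq0$.

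I expect the real work to lie in the last paragraph: establishing the analogs of \cite{OS1}, Proposition 4.13, and of Proposition \ref{acyclic}(ii),(iii) for the nested half-trees $\BT^+(t\Cscr^0)$, and checking that the maps $\iota_{tt'x_0}$ are compatible with the inclusions of the oriented chain complexes, so that the copy of $M$ genuinely persists into the colimit rather than being annihilated by the transition maps. The acyclicity part, by contrast, is a routine transcription of the argument already given for $\BT$ in Proposition \ref{semisimple_rank_one}.
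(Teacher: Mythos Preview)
Your acyclicity argument is essentially the paper's, just phrased more indirectly: the paper simply observes that each $\partial_0$ on $\C_c^\ori(\BT^+_{(\bullet)}(\Cscr),\F(M))$ is the restriction of the global differential on $\C_c^\ori(\BT_{(\bullet)},\F(M))$, which is already injective by Proposition~\ref{semisimple_rank_one}; no separate analysis of $I$-invariants on the half-trees is needed.

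For the non-vanishing your route diverges from the paper's and has a gap. You want $\iota_{tt'x_0}$, computed inside $\BT^+(t\Cscr^0)$, to be bijective so that the transition map between the copies of $M$ is an isomorphism. But the proof of Proposition~\ref{chain_complex_D}(ii) does \emph{not} transfer to $t\Cscr^0$ by convexity alone: that argument relies on the fact, proved just before Proposition~\ref{chain_complex_D}, that $C(F)\subseteq\Cscr^0$ for every face $F\subseteq\Cscr^0$, and this fails for $t\Cscr^0$ at the apex --- $C(tx_0)$ is the chamber of $\A$ adjacent to $tx_0$ on the $C$-side, hence lies outside $t\Cscr^0$. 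Concretely, on the half-line $t\Cscr^0$ only the restriction maps $t^{e_i}_{v_{i+1}}$ of $\F(M)^I$ are known to be bijective; the maps $t^{e_i}_{v_i}$ (where $C(e_i)\neq C(v_i)$) are trace-type sums over $(I\cap P_{v_i})/(I\cap P_{e_i})$ and need not be. So while $\iota$ at the apex of each half-tree is bijective, $\iota_{tt'x_0}$ at a non-apex vertex of $\BT^+(t\Cscr^0)$ is not, in general, and your conclusion that the transition maps are isomorphisms does not follow. In fact these transition maps, traced through the apex identifications, are built from exactly such non-bijective $t$-maps.

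The paper avoids this: it uses that $\C_c^\ori(\BT^+_{(\bullet)}(t^n\Cscr^0),\F(M))$ is the image of the injective endomorphism $\varphi_t^n$ of $\C_c^\ori(\BT^+_{(\bullet)},\F(M))$, so the $0$-th cohomology of (\ref{etale_complex}) identifies with $\varinjlim_n V^*$ where $V=\H_0(\BT^+,\F(M))$ is fixed and the transition maps are $\varphi_t^*$. One then only needs $V\neq0$ (which your embedding $M\hookrightarrow\H_0(\BT^+,\F(M))^I$ gives, using Proposition~\ref{chain_complex_D}(ii) for $\Cscr^0$ itself) together with a pure linear-algebra lemma: for any nonzero $R$-module $V$ with an injective endomorphism $\varphi$, the colimit $\varinjlim_n V^*$ along $\varphi^*$ is nonzero. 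The paper proves this by constructing an explicit $\delta\in V^*$ with $\delta\circ\varphi^n\neq0$ for all $n$, exploiting self-injectivity of $R$; this replaces the compatibility check you were attempting.
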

\begin{proof}
The acyclicity simply means that the map
\[
\varinjlim_{\Cscr\subseteq\Cscr^0}\C^\ori_c(\BT_{(0)}^+(\Cscr),\F(M))^*\to \varinjlim_{\Cscr\subseteq\Cscr^0}\C^\ori_c(\BT_{(1)}^+(\Cscr),\F(M))^*
\]
is surjective. By the exactness of $\varinjlim$ and $(\cdot)^*$ this amounts to showing that the map $\C^\ori_c(\BT_{(1)}^+(\Cscr),\F(M))\to\C^\ori_c(\BT_{(0)}^+(\Cscr),\F(M))$ is injective for any vector chamber $\Cscr$. However, this is simply the restriction of the map $\C^\ori_c(\BT_{(1)},\F(M))\to\C^\ori_c(\BT_{(0)},\F(M))$ which is injective by our hypotheses and Proposition \ref{semisimple_rank_one}.\\

Let $t\in T^+$ be an element for which $\nu(t)\in X_*(\TT/\mathbb{C})$ is strictly dominant. If $\Cscr$ is a vector chamber contained in $\Cscr^0$ then there is a non-negative integer $n$ with $\BT^+(t^n\Cscr^0)\subseteq\BT^+(\Cscr)$. Since $\C_c^\ori(\BT_{(\bullet)}^+(t^n\Cscr^0),\F(M))$ is the image of the endomorphism $\varphi_t^n$ of $\C_c^\ori(\BT_{(\bullet)}^+,\F(M))$ the $0$-th cohomology group of the complex $\varinjlim_{\Cscr}\C^\ori_c(\BT_{(\bullet)}^+(\Cscr),\F(M))^*$ is isomorphic to
\[
 \varinjlim_{n\geq 0}\H_0(\BT^+,\F(M))^*,
\]
where the transition maps are given by $\varphi_t^*$. If $M$ is non-zero then so is the $R$-module $\H_0(\BT^+,\F(M))$ because $M\cong\F(M)^I_{x_0}\hookrightarrow\H_0(\BT^+,\F(M))^I$ by Theorem \ref{quasi_inverse_coefficient_system} and Proposition \ref{chain_complex_D}. From this point on, the non-vanishing statement is an exercise in linear algebra. Namely, let $V$ be any non-zero $R$-module and let $\varphi$ be an injective $R$-linear endomorphism of $V$. We claim that the $R$-module $\varinjlim_{n\geq 0}V^*$ is non-zero if the transition maps are given by $\varphi^*$. In order to see this we claim there is an $R$-linear map $\delta:V\to R$ such that $\delta\circ\varphi^n\not=0$ for all $n\geq 0$. The image of $\delta$ under the canonical map $V^*\to \varinjlim_{n\geq 0}V^*$ will then be non-zero. In order to construct $\delta$ we first consider the case that the submodule $W=\cap_{n\geq 0}\mathrm{im}(\varphi^n)$ of $V$ is non-zero. Since $R$ is selfinjective any finitely generated submodule $W_0$ of $W$ is reflexive (cf.\ \cite{Lam}, Theorem 15.11). Choosing $W_0\not=0$ there is a non-zero element $\delta_0\in W_0^*$. Since $R$ is selfinjective $\delta_0$ can be extended to a linear form $\delta\in V^*$ with the required properties.\\

Now assume $\cap_{n\geq 0}\mathrm{im}(\varphi^n)=0$. Since $V\not=0$ and since $\varphi$ is injective we have $\mathrm{im}(\varphi^{n+1})\subsetneqq\mathrm{im}(\varphi^n)$ for all $n\geq 0$. For any $n\geq 0$ choose $v_n\in\mathrm{im}(\varphi^n)$ with $v_n\not\in\mathrm{im}(\varphi^{n+1})$ and set $V_n=\sum_{m=0}^nRv_m$. We will inductively construct linear forms $\delta_n:V_n\to R$ with $\delta_{n+1}|_{V_n}=\delta_n$ and $\delta_n(v_n)\not=0$ for all $n\geq 0$. Since $V_0\not=0$ we have $V_0^*\not=0$ as above and choose $0\not=\delta_0\in V_0^*$ arbitrary. Assume that $\delta_n$ has been constructed. Since $R$ is selfinjective we can extend $\delta_n$ to a linear form $\delta_n':V_{n+1}\to R$. If $\delta_n'(v_{n+1})\not=0$ we set $\delta_{n+1}=\delta_n'$. Otherwise, we choose a non-zero linear form $\delta_n'':V_{n+1}/V_n\to R$, using that $v_{n+1}\not\in V_n$. We view $\delta_n''$ as an $R$-linear form $V_{n+1}\to R$ vanishing on $V_n$ and set $\delta_{n+1}=\delta_n'+\delta_n''$. We thus obtain a linear form $\delta_\infty:\sum_{n\geq 0}Rv_n\to R$ with $\delta_\infty(v_n)\not=0$ for all $n\geq 0$. The selfinjectivity of $R$ allows us to extend $\delta_\infty$ to an $R$-linear map $\delta:V\to R$ with the required properties.
\end{proof}
For any $\F\in\Coeff_G(\BT)$ and any $i\geq 0$ the smooth $R$-linear $G$-representation $\H_i(\BT,\F)$ can be viewed as an object of $\Rep_R^\infty(\Pbar)$ via restriction. In the situation considered in \cite{SV}, Schneider and Vign\'eras associate with this object a family of \'etale $R\llbracket\Pbar^+\rrbracket$-modules $(D^j\H_i(\BT,\F))_{j\geq 0}$. These are related to the complex (\ref{etale_complex}) of \'etale $R\llbracket \Pbar^+\rrbracket$-modules as follows. As before, we let $Z=\mathbb{C}(K)$ where $\mathbb{C}$ is the connected component of the center of $\GG$.
\begin{prop}\label{etale_spectral_sequence}
Assume that $K=\Qp$ and that $R=o/\pi^no$ for some positive integer $n$ where $o$ is the valuation ring of a finite field extension of $\Qp$. Let $\F\in\Coeff_G(\BT)$.
\begin{enumerate}[wide]
\item[(i)]If $0\leq i\leq d$ and if $j>0$ then $D^j(\C_c^\ori(\BT_{(i)},\F))=0$. If $Z$ acts on $\F_F$ through a character for any face $F$ of $\BT$ then we have $D^0(\C_c^\ori(\BT_{(i)},\F))=D(\C_c^\ori(\BT_{(i)},\F))$.
\item[(ii)]Assume that the $R$-module $\F_F$ is finitely generated and that $Z$ acts on $\F_F$ through a character for any face $F$ of $\BT$. Then there is an $E_2$-spectral sequence of \'etale $R\llbracket \Pbar^+\rrbracket$-modules
\[
D^j\H_i(\BT,\F)\Longrightarrow\H^{j+i}(\varinjlim_{\Cscr\subseteq\Cscr^0}\C^\ori_c(\BT_{(i)}^+(\Cscr),\F)^*).
\]
If the semisimple rank of $\GG$ is equal to one and if $\F=\F(M)$ for some $M\in\Mod_H$ whose underlying $R$-module is finitely generated and on which $R[ZI/I]\subseteq H$ acts through a character then this spectral sequence degenerates. In this case the \'etale $R\llbracket \Pbar^+\rrbracket$-module $D^j\H_0(\BT,\F(M))$ is the $j$-th cohomology group of the complex (\ref{etale_complex}). In particular, we have $D^j\H_0(\BT,\F(M))=0$ for all $j\geq 1$.
\end{enumerate}
\end{prop}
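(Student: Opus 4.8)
The plan is to derive the assertion from the $E_2$-spectral sequence of part~(ii): I will show that, under the stated hypotheses, this spectral sequence collapses onto a single line, and then read off the cohomology of the complex~(\ref{etale_complex}). First I would recall how the spectral sequence arises. Viewing $\C_c^\ori(\BT_{(\bullet)},\F)$ as a bounded complex in $\Rep_R^\infty(\Pbar)$ by restriction and applying $D$ termwise, Proposition~\ref{SV_functor} (using the finite generation hypothesis) identifies the result with the complex~(\ref{etale_complex}) of \'etale $R\llbracket\Pbar^+\rrbracket$-modules (Proposition~\ref{etale}), while part~(i) shows that each term $\C_c^\ori(\BT_{(i)},\F)$ is $D$-acyclic with $D^0=D$ on it. Hence~(\ref{etale_complex}) computes the hyper-derived functor of $D$ applied to $\C_c^\ori(\BT_{(\bullet)},\F)$, and the resulting hyperhomology spectral sequence --- built from a Cartan--Eilenberg-type resolution and the homological properties of the $\delta$-functor $(D^j)_{j\ge 0}$ of \cite{SV}, \S4 --- takes the form $E_2^{j,i}=D^j\H_i(\BT,\F)\Rightarrow\H^{j+i}\bigl(\varinjlim_{\Cscr\subseteq\Cscr^0}\C_c^\ori(\BT^+_{(\bullet)}(\Cscr),\F)^*\bigr)$.

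For the final statement I would first check that $\F=\F(M)$ satisfies the standing hypotheses of part~(ii) when $M\in\Mod_H$ is finitely generated over $R$ and $R[ZI/I]\subseteq H$ acts on $M$ through a character. Finite generation of $\F(M)_F=\t_F(M)$ over $R$ follows from Remark~\ref{non_admissible}: $\t_F(M)$ is a quotient of $X_F\otimes_{H_F}M$, and $X_F$ is finitely generated over $R$, hence so is this tensor product and its quotient. For the central character, observe that for $F\subseteq\Cbar$ one has $C(F)=C$, so $M\cong\F(M)_F^I$ as $H_F^\dagger$-modules by Theorem~\ref{quasi_inverse_coefficient_system} (together with Theorem~\ref{Cabanes_general}~(i)); by Remark~\ref{group_ring} the action of $z\in Z$ on this space coincides with that of $\tau_z$, which is a scalar by hypothesis on $M$, and since $z$ is central it commutes with $P_F^\dagger$ while $\F(M)_F$ is generated over $P_F^\dagger$ by its $I$-invariants (condition~(H), Lemma~\ref{properties_H}~(i)); therefore $Z$ acts on $\F(M)_F$ through that character, and $G$-equivariance of $\F(M)$ propagates this to all faces of $\BT$.

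Then comes the degeneracy. Since $\GG$ has semisimple rank one, $\BT$ is one-dimensional, so $\C_c^\ori(\BT_{(\bullet)},\F(M))$ is concentrated in degrees $0$ and $1$; by Proposition~\ref{semisimple_rank_one} the differential $\C_c^\ori(\BT_{(1)},\F(M))\to\C_c^\ori(\BT_{(0)},\F(M))$ is injective, whence $\H_i(\BT,\F(M))=0$ for all $i\ne 0$. Thus the $E_2$-page lies entirely in the line $i=0$; each differential $d_r$ with $r\ge 2$ changes the index $i$, so all of them vanish and $E_2=E_\infty$. Consequently $\H^n\bigl(\varinjlim_{\Cscr\subseteq\Cscr^0}\C_c^\ori(\BT^+_{(\bullet)}(\Cscr),\F(M))^*\bigr)$ carries a one-step filtration with associated graded piece $E_\infty^{n,0}=D^n\H_0(\BT,\F(M))$, i.e.\ $D^n\H_0(\BT,\F(M))$ is exactly the $n$-th cohomology group of the complex~(\ref{etale_complex}). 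Finally, that complex is supported in cohomological degrees $0$ and $1$, so its cohomology vanishes in degrees $\ge 2$ for trivial reasons, and it vanishes in degree $1$ by Proposition~\ref{non_vanishing}, which asserts precisely the acyclicity of~(\ref{etale_complex}) for $\F=\F(M)$; hence $D^j\H_0(\BT,\F(M))=0$ for every $j\ge 1$.

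The step I expect to be the main obstacle is the rigorous construction of the hyperhomology spectral sequence: one must know that $(D^j)_{j\ge 0}$ behaves like the right-derived functor of a left-exact $D^0$ on the subcategory of $\Pbar$-representations admitting a central character, so that a Cartan--Eilenberg argument applies, and one must track carefully the shift from homological to cohomological degree caused by the $R$-linear dual built into $D$. Granting this --- together with part~(i) and Propositions~\ref{SV_functor} and~\ref{etale} --- the remaining steps (checking the hypotheses for $\F(M)$, the collapse of $E_2$, and the degree count for~(\ref{etale_complex})) are routine.
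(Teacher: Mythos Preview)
Your proposal takes part~(i) entirely for granted and focuses on part~(ii). This leaves the main technical work of the proposition unaddressed: the paper's proof of~(i) is by far the longest portion, involving a reduction to the case where $\GG$ is the direct product of its center and its simply connected derived group (via an argument showing that the $\delta$-functor $(D^j)_{j\ge 0}$ is invariant under central isogenies), followed by a Mackey decomposition and a Koszul complex computation over $o[Z/Z_0]$ to handle the contribution of the center, and then base change along the subgroups $P_0\subseteq\Pbar\cap I$ together with \cite{SV}, Lemma~4.3, for the statement $D^0=D$. None of this appears in your sketch.

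For the construction of the spectral sequence in~(ii) and for the final degeneracy assertion your argument is essentially the paper's. The one genuine difference concerns exactly the obstacle you flag: where you invoke an abstract Cartan--Eilenberg resolution and worry about whether $(D^j)_{j\ge 0}$ behaves like a right-derived functor, the paper bypasses this by using the explicit functorial resolution $\mathcal{I}_\bullet(\cdot)$ of \cite{SV}, \S4, forming the double complex $D(\mathcal{I}_\bullet(\C_c^\ori(\BT_{(\bullet)},\F)))$, and verifying directly (via exactness of the kernel functors $\ker\rho_j$ and \cite{SV}, Lemma~4.1) that each functor $D\circ\mathcal{I}_j$ is exact. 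The two standard spectral sequences of that double complex then do all the work, and no hypercohomology formalism is needed. Your verification of the hypotheses on $\F(M)$ (finite generation of $\F(M)_F$ and the central character via Remark~\ref{group_ring} and Lemma~\ref{properties_H}~(i)) and your collapse argument via $\H_i(\BT,\F(M))=0$ for $i\ne 0$ (Proposition~\ref{semisimple_rank_one}) and Proposition~\ref{non_vanishing} match the paper exactly.
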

\begin{proof}
In order to see that the $\Pbar$-representations $\C_c^\ori(\BT_{(i)},\F)$ are acyclic for the $\delta$-functor $(D^j)_{j\geq 0}$ we generalize the proof of \cite{SV}, Lemma 11.8 (i). Let $\tilde{G}$ denote the direct product of $Z$ and the group of $K$-rational points of the universal cover of the derived group of $\GG$ (cf.\ \cite{BoT}, Proposition 2.24). We let $f:\tilde{G}\to G$ denote the canonical group homomorphism. By \cite{BoT}, Th\'eor\`eme 2.20, we can choose a parabolic subgroup $\mathbb{Q}$ and maximal $K$-split torus $\mathbb{S}$ such that on $K$-rational points $f$ restricts to morphisms $Q=\mathbb{Q}(K)\to\Pbar$ and $S=\mathbb{S}(K)\to T$. By the proof of \cite{BoT}, Th\'eor\`eme 2.20, we may identify the unipotent radicals of $\mathbb{Q}$ and $\overline{\mathbb{P}}$. Letting $S^+=\{s\in S\;|\;s\Ubar_1s^{-1}\subseteq \Ubar_1\}$ and $Q^+=\Ubar_1S^+$ we obtain $f(S^+)=f(S)\cap T^+$ and $f(Q^+)=f(Q)\cap \Pbar^+$.\\

Given $V\in\Rep_R^\infty(\Pbar)$ we view $V$ as a representation of $Q$ via inflation along $f$. We claim that the $R\llbracket(Q^+)^{-1}\rrbracket$-module $D(V)$ (computed from the $Q$-representation $V$ using the monoid $Q^+$) is the scalar restriction of the $R\llbracket(\Pbar^+)^{-1}\rrbracket$-module $D(V)$ (computed from the $\Pbar$-representation $V$ using the monoid $P^+$) along the ring homomorphism $R\llbracket(Q^+)^{-1}\rrbracket\to R\llbracket(\Pbar^+)^{-1}\rrbracket$ induced by $f$.
To see this we first show that $T=f(S)T^+$ and thus $\Pbar=f(Q)T^+$. Reducing modulo $T_0$ the first of equality is equivalent to $X_*(\TT)=X_*^+(\TT)+X_*(\mathbb{S})$. Note that $f$ identifies $X_*(\mathbb{S})$ with a finite index subgroup of $X_*(\TT)$. Let $n$ denote the index of $X_*(\mathbb{S})$ in the coweight lattice $\Lambda$ of $\Phi$. We extend a $\mathbb{Z}$-basis of $X_*(\mathbb{C})$ by the fundamental dominant coweights to a $\mathbb{Z}$-basis $(\lambda_i)_{i\in I}$ of $\Lambda$. If $\lambda=\sum_i r_i\lambda_i\in X_*(\TT)$ choose an integer $m$ with $nm+r_i\geq 0$ for all $i\in I$. Then $\lambda'=\sum_inm\lambda_i\in X_*(\mathbb{S})$, $\lambda+\lambda'\in X_*^+(\TT)$ and $\lambda=\lambda+\lambda'-\lambda'\in X_*^+(\TT)+X_*(\mathbb{S})$ as claimed.\\

Now let $M$ be a $\Pbar^+$-subrepresentation of $V$ which generates $V$ over $P$. Then $M$ is $Q^+$-stable with $R[Q]\cdot M=R[f(Q)]\cdot M=R[\Pbar]\cdot M=V$ because $M$ is stable under $T^+\subseteq \Pbar^+$ and $f(Q)T^+=P$. Conversely, let $M$ be a $Q^+$-subrepresentation of $V$ which generates $V$ over $Q$. The monoid $X_*^+(\TT)$ is finitely generated by Gordon's lemma. Moreover, the index $(T:f(S))$ is finite because $f$ is an isogeny. Therefore, also the index $(T_0:f(S)\cap T_0)$ is finite and there are finitely many elements $t_1,\ldots,t_m\in T^+$ with $T^+=\cup_{j=1}^mt_jf(S^+)$. Let $M'$ be any finitely generated $R$-submodule of $M$. Since $\sum_jt_jM'\subseteq V=R[Q]M$ we can argue as in \cite{SV}, Lemma 3.1, and find an element $s(M')\in f(S^+)$ with $\sum_jt_js(M')M'\subseteq M$. This implies $R[\Pbar^+]s(M')M'\subseteq M$ because $T^+=\cup_jt_jf(S^+)$ and because $M$ is $Q^+$-stable. Now consider the $\Pbar^+$-subrepresentation $N=\sum_{M'}R[\Pbar^+]s(M')M'$ of $V$ in which the sum runs over all finitely generated $R$-submodules $M'$ of $M$. If $v\in V=R[Q]M$ there are finitely many elements $q_i\in Q$ and $m_i\in M$ with $v=\sum_iq_im_i$. Setting $M'=\sum_iRm_i$ we have $s(M')m_i\in N$ for all $i$ and therefore $v=\sum_iq_is(M')^{-1}s(M')m_i\in R[Q]N$. In particular, $R[\Pbar]N=V$. Since $N\subseteq M$ a cofinality argument proves our claim concerning $D(V)$. As a formal consequence, we have analogous assertions for $D^j(V)$ if $j\geq 0$. Namely, the universal resolution of $V\in\Rep_R^\infty(\Pbar)$ in \cite{SV}, \S4, is also acyclic for the functor $D$ computed in $\Rep_R^\infty(Q)$. Since the $\delta$-functor $(D^j)_{j\geq 0}$ is coeffaceable this resolution may be used to compute $D^j(V)$ in $\Rep_R^\infty(Q)$.\\

Note that $\BT$ is also the semisimple Bruhat-Tits building of $\tilde{G}$ and that $\tilde{G}$ acts on $\BT$ through $f$. Likewise, we may use $f$ to view $\F$ as a $\tilde{G}$-equivariant coefficient system on $\BT$. The $\tilde{G}$-representation $\C_c^\ori(\BT_{(i)},\F)$ we obtain is the inflation of the $G$-representation we need to analyze. Altogether, our arguments allow us to assume in (i) that $\GG$ is the direct product of its center and its simply connected derived group.\\

Passing to a suitable subset of $J_i$ as introduced in the proof of Proposition \ref{SV_functor} we see that the $\Pbar$-represen\-tation $\C_c^\ori(\BT_{(i)},\F)$ is a finite direct sum of representations of the form $\ind_{\Pbar\cap P_F^\dagger}^\Pbar(\varepsilon_F\otimes_R\F_F)$. Since the derived group of $\GG$ is simply connected we have $\Omega=Z/(Z\cap T_0)$ and $P_F^\dagger=ZP_F$ by (\ref{Bruhat_parahoric}). To ease notation let us put $U_0=\varepsilon_F\otimes_R\F_F$ and $P_0=\Pbar\cap P_F$ so that $\Pbar\cap P_F^\dagger=ZP_0$. Note that $Z_0=P_0\cap Z$ is the maximal compact subgroup of $Z$ and that $Z/Z_0$ is a free abelian group of finite rank. Choosing generators $\zeta_1,\ldots,\zeta_r\in Z/Z_0$ the elements $\zeta_1-1,\ldots,\zeta_r-1\in o[Z/Z_0]$ form a regular sequence and we consider the associated exact and $o[Z/Z_0]$-linear Koszul complex
\[
0\longrightarrow\bigwedge^\bullet o\otimes_oo[Z/Z_0]\longrightarrow o\longrightarrow 0.
\]
By construction it is the tensor product of $o$-linearly split short exact sequences hence is $o$-linearly split itself. We view it as an exact sequence of smooth $ZP_0$-representations via $ZP_0\to ZP_0/P_0\cong Z/Z_0$. Applying the functor $\ind_{ZP_0}^\Pbar(U_0\otimes_o(\cdot))$ and using the isomorphism $\ind_{P_0}^{ZP_0}(U_0)\cong U_0\otimes_oo[Z/Z_0]$ given by $f\mapsto\sum_{z\in Z/Z_0}zf(z)\otimes z$, we obtain the exact sequence
\[
0\longrightarrow\bigwedge^\bullet o\otimes_o\ind_{P_0}^\Pbar(U_0)\longrightarrow \ind_{ZP_0}^\Pbar(U_0)\longrightarrow 0
\]
in $\Rep_R^\infty(\Pbar)$. Let us denote by $D^j(P_0,\cdot)$ the functors of \cite{SV}, \S4, with respect to the subgroup $P_0$ of $\Pbar$. Note that $P_0=T_0(\Ubar\cap P_0)$ by \cite{BT2}, \S5.2.4. If $r=1$ then the arguments given in the proof of \cite{SV}, Lemma 11.8, show that $D^j(P_0,\ind_{ZP_0}^\Pbar(U_0))=0$ for all $j\geq 1$. The general case is proved inductively by splitting up the above resolution into short exact sequences. It now follows from the base change property in \cite{SV}, Proposition 7.1, that also $D^j(\ind_{ZP_0}^\Pbar(U_0))=0$ for all $j\geq 1$. This uses that the ring homomorphisms $\Lambda(P_+)\to\Lambda(P_+')$ considered in \cite{SV}, \S7, are faithfully flat.\\

In order to prove the second assertion in (i) we need to see that the natural map $D(\ind_{ZP_0}^\Pbar(U_0))\to D^0(\ind_{ZP_0}^\Pbar(U_0))$ is bijective. The same base change techniques together with equation (12) of \cite{SV} allow us to work with $P_0$ instead of with $\Pbar\cap I$. Note that by assumption $Z$ acts via a central character not only on $U_0$ but on the entire $\Pbar$-representation $\ind_{ZP_0}^\Pbar(U_0)$. We let $G'$ denote the group of $K$-rational points of the derived group of $\GG$ and set $\Pbar'=\Pbar\cap G'$, $T'=T\cap G'$ and $P_0'=P_0\cap G'$. Note that $\Ubar$ is also the unipotent radical of $\Pbar'$ so that $P_0'=(T_0\cap G')(\Ubar\cap P_0)$ and the submonoid $\Pbar'^+$ of $\Pbar'$ defined by $P_0'$ is equal to $\Pbar'^+=\Pbar^+\cap G'$. Now if $V$ is an arbitrary object of $\Rep_R^\infty(\Pbar)$ on which $Z$ acts by a central character then an $R$-submodule $M$ of $V$ is $\Pbar^+$-stable (resp.\ generates $V$ over $\Pbar$) if and only if $M$ is $\Pbar'^+$-stable (resp.\ generates $V$ over $\Pbar'$) because $\Pbar=Z\Pbar'$ and $\Pbar^+=Z\Pbar'^+$. As above this implies that the scalar restriction of $D(V)$ along $R\llbracket (\Pbar'^+)^{-1}\rrbracket\to R\llbracket (\Pbar^+)^{-1}\rrbracket$ can also be computed in the category $\Rep_R^\infty(\Pbar')$ using the monoid $\Pbar'^+$. Since $Z$ also acts by a central character on any member of the universal resolution of $V\in\Rep_R^\infty(\Pbar)$ in \cite{SV}, \S4, the above coeffaceability arguments show that we have an analogous statement for the modules $D^j(V)$. In other words, in order to show that $D(\ind_{ZP_0}^\Pbar(U_0))\to D^0(\ind_{ZP_0}^\Pbar(U_0))$ is bijective we may view $\ind_{ZP_0}^\Pbar(U_0)$ as a representation of $\Pbar'$ via restriction. However, there is a $\Pbar'$-equivariant isomorphism $\ind_{ZP_0}^\Pbar(U_0)\cong\ind_{P_0'}^{\Pbar'}(U_0)$ and the assertion is proved in \cite{SV}, Lemma 4.3. Note that our assumption on central characters is stronger than the hypothesis of \cite{SV}, Lemma 11.8 (ii), whence our proof is easier.\\

As for (ii), there is an isomorphism
\[
\varinjlim_{\Cscr}\C^\ori_c(\BT_{(\bullet)}^+(\Cscr),\F)^*\cong D(\C_c^\ori(\BT_{(\bullet)},\F)=D^0(\C_c^\ori(\BT_{(\bullet)},\F)
\]
of complexes of $R\llbracket(\Pbar^+)^{-1}\rrbracket$-modules by (i) and Proposition \ref{SV_functor}. Note that both sides are \'etale $R\llbracket\Pbar^+\rrbracket$-modules. The $R\llbracket(\Pbar^+)^{-1}\rrbracket$-linearity implies that the canonical left inverses of the \'etale module structures coincide (cf.\ \cite{SV}, Remark 6.1, as well as the proof of Proposition \ref{etale}). But then the linearization isomorphisms in \cite{SVZ}, Definition 3.1, have identical inverses. This implies that the above isomorphism is $R\llbracket\Pbar^+\rrbracket$-linear. Therefore, we can apply the cohomological formalism developed in \cite{SV}, \S4. We take the functorial resolution $\mathcal{I}_\bullet(\cdot)$ of each member of the oriented chain complex $\C_c^\ori(\BT_{(\bullet)},\F)$, apply the functor $D(\cdot)$ and obtain the double complex $D(\mathcal{I}_\bullet(\C_c^\ori(\BT_{(\bullet)},\F)))$. Consider the two standard spectral sequences converging to the cohomology of the associated total complex.\\

Fixing $j\geq 0$ the $E_1$-terms of one of them are given by the cohomology groups of the complex $D(\mathcal{I}_\bullet(\C_c^\ori(\BT_{(i)},\F)))$. Since the functor $\mathcal{I}_0=\ind_{\Pbar_1}^\Pbar$ is exact, the snake lemma and induction on $j$ show that the functors $\ker{\rho_j}$ considered in \cite{SV}, \S4, are exact for any $j\geq -1$. By \cite{SV}, Lemma 4.1, the functors $D\circ\mathcal{I}_j$ are exact for any $j\geq 0$. Therefore, the $E_1$-terms of the first spectral sequence are $D(\mathcal{I}_j(\H_i(\BT,\F)))$. Passing to the cohomology in the $j$-direction gives the required $E_2$-terms $D^j(\H_i(\BT,\F))$.\\

The cohomology groups of the total complex are now computed using the other spectral sequence. Its $E_1$-terms are $D^j(\C_c^\ori(\BT_{(i)},\F))$. According to (i) the second spectral sequence degenerates. Its abutments are the cohomology groups of the complex
\[
 D^0(\C_c^\ori(\BT_{(\bullet)},\F))=D(\C_c^\ori(\BT_{(\bullet)},\F))\cong \varinjlim_{\Cscr}\C^\ori_c(\BT_{(\bullet)}^+(\Cscr),\F)^*.
\]
The final assertion of the proposition follows from Proposition \ref{non_vanishing}.
Note that if $F$ is a face of $\BT$ with $F\subseteq\Cbar$ then $\F(M)_F$ is a quotient of $X_F\otimes_{H_F}M$ by Proposition \ref{I_F_invariants} (ii), hence is a finitely generated $R$-module. Moreover, since $\F(M)_F$ is generated by $\F(M)_F^I\cong M$ over $P_F^\dagger$ (cf.\ Lemma \ref{properties_H} (i) and Theorem \ref{quasi_inverse_coefficient_system}) the group $Z$ acts on $\F(M)_F$ by a character (cf.\ Remark \ref{group_ring}). Both conditions hold for any $F$ because of Lemma \ref{face_representatives}.
\end{proof}
\begin{rem}\label{generalized_phi_Gamma}
Assume $K=\Qp$ and $R=o/\pi^no$ for the valuation ring $o$ of some finite field extension of $\Qp$ and some positive integer $n$. One can then apply the complete localization and specialization techniques of \cite{SV} to the complex (\ref{etale_complex}) of \'etale $R\llbracket\Pbar^+\rrbracket$-modules. One obtains a complex of not necessarily finitely generated $(\varphi,\Gamma)$-modules in the classical sense of Fontaine. Assume more specifically that $G=\GL_2(\Qp)$, $n=1$ and that $V\in\Rep^I_R(G)$ is admissible. We then have $V\cong X\otimes_HV^I\cong\H_0(\BT,\F(V^I))$ for a suitable choice of $o$ as follows from Theorem \ref{exceptional_flat} (iii) and \cite{Oll1}, Th\'eor\`eme 1.2 (a). If $V$ admits a central character then the complex (\ref{etale_complex}) with $\F=\F(V^I)$ computes the \'etale $R\llbracket\Pbar^+\rrbracket$-modules $D^j(V)$ for $j\geq 0$ (cf.\ Proposition \ref{etale_spectral_sequence}). In fact, $D^j(V)=0$ for $j>0$. Moreover, the results of \cite{SV}, \S11, show that $D^0(V)$ gives rise to the $(\varphi,\Gamma)$-module associated to $V$ in Colmez's $p$-adic local Langlands correspondence for $\GL_2(\Qp)$ (cf.\ \cite{Col}).
\end{rem}


\vspace{1cm}
Universit\"at Duisburg-Essen\\
Fakult\"at f\"ur Mathematik\\
Thea-Leymann-Stra{\ss}e 9\\
D--45127 Essen, Germany\\
{\it E-mail address:} {\ttfamily jan.kohlhaase@uni-due.de}


\end{document}